\newtheorem{theorem}{Theorem}[section]
\newtheorem{lemma}[theorem]{Lemma}
\newtheorem*{lemma*}{Lemma}
\newtheorem{corollary}[theorem]{Corollary}
\newtheorem*{maintheorem*}{Theorem~\ref{theorem:2}}
\newtheorem*{corollary*}{Corollary}
\newtheorem{mainthm}{Theorem}
\theoremstyle{definition}
\newtheorem{example}[theorem]{Example}
\newtheorem*{example*}{Example}
\theoremstyle{remark}
\newtheorem{remark}[theorem]{Remark}
\sloppy\pagestyle{plain}
\makeatletter\@addtoreset{equation}{section} \makeatother
\title{K-stability of pointless del Pezzo surfaces and Fano 3-folds}
\author{Hamid Abban, Ivan Cheltsov, Takashi Kishimoto, Fr\'ed\'eric Mangolte}
\let\origmaketitle\maketitle
\def\maketitle{
  \begingroup
  \def\uppercasenonmath##1{} % this disables uppercasing title
  \let\MakeUppercase\relax % this disables uppercasing authors
  \origmaketitle
  \endgroup
}
\begin{document}

\begin{abstract}
We explore connections between existence of $\Bbbk$-rational points for Fano varieties defined over $\Bbbk$, a subfield of $\mathbb{C}$, and existence of K\"ahler-Einstein metrics on their geometric models.
First, we show that geometric models of del Pezzo surfaces with at worst quotient singularities
defined over $\Bbbk\subset\mathbb{C}$ admit (orbifold) K\"ahler--Einstein metrics if they do not have $\Bbbk$-rational points. Then we prove the same result for smooth Fano 3-folds with 8 exceptions. Consequently, we explicitly describe several families of pointless Fano 3-folds whose geometric models admit K\"ahler-Einstein metrics. In particular, we obtain new examples of prime Fano 3-folds of genus $12$ that admit K\"ahler--Einstein metrics. Our result can also be used to prove existence of rational points for certain Fano varieties, for example for any smooth Fano 3-fold over $\Bbbk\subset\mathbb{C}$ whose geometric model is strictly K-semistable.
\end{abstract}

\address{\emph{Hamid Abban}\newline
\textnormal{University of Nottingham, Nottingham, England
\newline
\texttt{hamid.abban@nottingham.ac.uk}}}

\address{ \emph{Ivan Cheltsov}\newline
\textnormal{University of Edinburgh, Edinburgh, Scotland
\newline
\texttt{i.cheltsov@ed.ac.uk}}}

\address{\emph{Takashi Kishimoto}\newline \textnormal{Saitama University, Saitama, Japan
\newline
\texttt{kisimoto.takasi@gmail.com}}}

\address{\emph{Fr\'ed\'eric Mangolte}\newline
\textnormal{Aix Marseille University, CNRS, I2M, Marseille, France}
\newline
\textnormal{\texttt{frederic.mangolte@univ-amu.fr}}}

\maketitle

\tableofcontents

Throughout this paper, all varieties are assumed to be projective, normal and geometrically irreducible.

\section{Introduction}
\label{section:intro}

The study of K\"ahler--Einstein metrics is a half-century old problem in complex geometry, in which the existence was proved for flat manifolds \cite{Yau} and for manifolds with negative curvature \cite{Aubin}. The positive curvature part, the so-called Fano manifolds, do not always admit K\"ahler--Einstein metrics, a phenomenon that lead to the notion of K-stability and finally in the resolution of the Yau--Tian--Donaldson conjecture: a complex Fano manifold admits a K\"ahler--Einstein metric if and only if it is K-polystable \cite{CDS-1,CDS-2,CDS-3}.
Another well-studied problem in algebraic geometry is the study of the existence of $\Bbbk$-rational points on (Fano) varieties defined over arbitrary fields $\Bbbk$.
The aim of this paper is to make a connection between K-stability and the geometry of Fano varieties defined over subfields of the complex numbers.

In dimension one, there is only one complex Fano manifold, $\mathbb{P}^1$, which admits a K\"ahler--Einstein metric and can be defined over $\mathbb{Q}$.
Two dimensional Fano manifolds are known as \emph{del Pezzo surfaces}.
They form $10$ deformation families: $\mathbb{P}^1\times\mathbb{P}^1$ and blow up of $\mathbb{P}^2$ in at most $8$ points in general position.
In \cite{Tian1990}, Tian proved that the only non-K\"ahler--Einstein (smooth) del Pezzo surfaces are
\begin{enumerate}
\item the blowup of $\mathbb{P}^2$ at one point, the first Hirzebruch surface denoted by $\mathbb{F}_1$, and
\item the blowup of $\mathbb{P}^2$ at two points, the del Pezzo surface of degree $7$ denoted by $S_7$.
\end{enumerate}
On the other hand, both surfaces $\mathbb{F}_1$ and $S_7$ can be defined over $\mathbb{Q}$.
Moreover, $\mathbb{F}_1$ has only one form over $\mathbb{Q}$, and every form of the surface $S_7$ has a rational point.
Therefore, if a (smooth) del Pezzo surface defined over $\Bbbk\subset\mathbb{C}$ does not have $\Bbbk$-rational points,
then its geometric model admits a K\"ahler--Einstein metric.
In this paper, we show that a similar result also holds for del Pezzo orbifolds, i.e., for del Pezzo surfaces with at most quotient singularities.

\begin{mainthm}
\label{theorem:1}
Let $S$ be a del Pezzo surface with quotient singularities defined over a subfield $\Bbbk$ of $\mathbb{C}$. Assume the geometric model of $S$ does not admit an orbifold K\"ahler--Einstein metric. Then $S$ has a $\Bbbk$-rational point.
\end{mainthm}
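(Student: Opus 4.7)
The plan is to argue the contrapositive. Set $\bar S := S \otimes_{\Bbbk} \mathbb{C}$ and assume that $\bar S$ does not admit an orbifold K\"ahler--Einstein metric; the goal is to exhibit a $\Bbbk$-rational point on $S$. By the orbifold version of the Yau--Tian--Donaldson correspondence for log Fano pairs, the hypothesis is equivalent to $\bar S$ being not K-polystable, so the question reduces to a purely algebro-geometric one.

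The first step is to invoke (or, where necessary, reprove via $\alpha$- and $\delta$-invariant estimates) a classification of del Pezzo surfaces with at most quotient singularities that fail K-polystability over $\mathbb{C}$. In the smooth case this list is exhausted by $\mathbb{F}_1$ and $S_7$ after Tian; in the properly orbifold case the list is longer but well-understood, and consists essentially of certain weighted projective planes, cones over rational normal curves, and surfaces carrying a distinguished Mori fibration.

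The second step is a descent argument. Write $\Gamma = \mathrm{Gal}(\overline{\Bbbk}/\Bbbk)$. For each $\bar S$ on the list, I would identify a canonical (and hence automatically $\Gamma$-invariant) geometric object: the unique singular point of maximal index, the unique Mori fibration, a distinguished $(-1)$-curve, or the fixed locus of a canonical torus action. When that object is a closed point, it descends directly to a $\Bbbk$-rational point on $S$. When it is a curve, it is a $\Bbbk$-form of $\mathbb{P}^1$ equipped with further $\Gamma$-invariant structure (a singular point of the ambient surface lying on it, an intersection with another distinguished curve, or a divisor class of odd degree), from which one extracts a $\Bbbk$-point by elementary arithmetic of conics and Brauer--Severi varieties; for instance, on $\mathbb{F}_1$ the unique $(-1)$-section contains the center of the canonical blow-down to a $\Bbbk$-form of $\mathbb{P}^2$, and on $S_7$ the Galois action on the chain of three $(-1)$-curves pins down a rational intersection point.

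The main obstacle, I anticipate, will be the strictly K-semistable families, where destabilization comes from a one-parameter subgroup in $\mathrm{Aut}(\bar S)$ rather than from a single divisorial valuation. For these, there is no a priori canonical closed point; instead the strategy is to extract a $\Gamma$-stable maximal torus from the canonical degeneration, study its fixed locus on $\bar S$, and use the classification of such configurations (typically a pair of invariant curves meeting at invariant points, together with invariant singular points of $\bar S$) to produce the required rational point. Assembling these case-by-case arguments across the entire classification yields the theorem.
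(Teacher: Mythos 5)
The central gap is in your first step: there is no classification of non-K-polystable del Pezzo surfaces with quotient singularities over $\mathbb{C}$, and your argument cannot get off the ground without one. ``Quotient singularities'' here means arbitrary log terminal surface singularities, so the surfaces in question (log del Pezzo surfaces) do not form a bounded family --- the local indices are unbounded --- and their K-(poly)stability is understood only in special subclasses (Gorenstein/Du Val singularities, small index, particular deformation families). The picture you sketch of the putative unstable list (weighted projective planes, cones over rational normal curves, surfaces with a distinguished Mori fibration) is not an accurate description even where partial classifications exist. The paper's proof is deliberately classification-free: it uses the valuative criterion together with Zhuang's equivariant results to produce a single geometrically irreducible destabilizing curve $C$ defined over $\Bbbk$ with $\beta(C)\leqslant 0$, deduces from Fujita's volume estimates that the pseudo-effective threshold of $C$ exceeds $2$, hence $\alpha(S)<\tfrac{1}{2}$, and then shows --- via Koll\'ar--Shokurov connectedness and tie-breaking, MMP on the minimal resolution, and a subadjunction computation on the Hirzebruch--Jung strings over the singular points lying on $C$ --- that $\alpha(S)<\tfrac{1}{2}$ is incompatible with $S(\Bbbk)=\varnothing$.

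Even granting a classification, your descent step does not close. The ``canonical objects'' you propose need not yield rational points: a singular point of maximal index need not be unique (two Galois-conjugate singular points of the same index can be swapped --- this is exactly the configuration the paper has to confront, where the destabilizing curve passes through two conjugate cyclic quotient points interchanged by $\mathrm{Gal}(\mathbb{C}/\Bbbk)$), and a distinguished geometrically rational curve is only a $\Bbbk$-form of $\mathbb{P}^1$, which has a rational point only if you can exhibit an odd-degree divisor class on it defined over $\Bbbk$. The paper's subadjunction contradiction is precisely the mechanism that eliminates the residual configuration where all distinguished data comes in conjugate pairs; nothing in your sketch plays that role, and the strictly K-semistable cases you flag as ``the main obstacle'' are in fact handled uniformly by the same $\alpha$-invariant bound rather than by analyzing torus degenerations.
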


However, this result does not mean that every non-K\"ahler--Einstein del Pezzo orbifold has a smooth $\Bbbk$-rational point. For example, consider $S=\{x_1^2+x_2^2+x_3^2=0\}\subset\mathbb{P}^3$, a degree $8$ del Pezzo surface over $\mathbb{Q}$, for which $S_{\mathbb{C}}$ is K-unstable and $S(\mathbb{Q})$ consists of only $[0:0:0:1]$, the unique singular point of $S$.

In conclusion, two-dimensional Fano orbifolds defined over a subfield $\Bbbk\subset\mathbb{C}$ whose geometric models are non-K\"ahler--Einstein always have $\Bbbk$-rational points.
In higher-dimensions, this phenomenon has a more complicated nature even for three-dimensional Fano manifolds (Fano 3-folds).
For instance, let $X$ be a Fano 3-fold defined over a subfield $\Bbbk\subset\mathbb{C}$.
Even if its geometric model $X_{\mathbb{C}}$ does not admit a K\"ahler--Einstein metric, we cannot always conclude that $X$ has a $\Bbbk$-rational point.
Indeed, if $X=C\times\mathbb{F}_1$ or $C\times S_7$, where $C$ is a pointless conic (that is, $C(\Bbbk)=\varnothing$ and $C_{\mathbb{C}}\cong \mathbb{P}^1$), then $X_{\mathbb{C}}$ is not K\"ahler--Einstein while $X$ is pointless.
Surprisingly, there are not many other exceptions as indicated in our second result:

\begin{mainthm}
\label{theorem:2}
Let $X$ be a smooth Fano 3-fold defined over a subfield $\Bbbk\subset\mathbb{C}$ such that its geometric model is not K\"ahler--Einstein. Then $X$ has a $\Bbbk$-rational point unless
$X=C\times\mathbb{F}_1$ or $X=C\times S$ for a pointless conic $C$ and a ${\Bbbk}$-form $S$ of $S_7$, or the 3-fold $X$ is one of the following:
\begin{enumerate}
\item[$(\mathrm{1})$] the blowup of a pointless quadric in $\mathbb{P}^4$ along a  quartic elliptic curve;

\item[$(\mathrm{2})$] the blowup of a pointless quadric cone in $\mathbb{P}^4$ at its vertex;

\item[$(\mathrm{3})$] the blowup of the product $\mathbb{P}^1\times Q$ along a curve $C$ such that $\pi_1(C)$ is a point, and $\pi_2(C)$ is a conic, where $Q$ is a pointless quadric in $\mathbb{P}^3$,
and $\pi_i$ is the projection to the $i$-th factor;

\item[$(\mathrm{4})$] the blowup of a pointless $\Bbbk$-form of $\mathbb{P}^3$ along a curve of anticanonical degree $4$;

\item[$(\mathrm{5})$] the blowup of a Fano 3-fold described in $(\mathrm{4})$ along a curve of anticanonical degree $2$;

\item[$(\mathrm{6})$] the blowup of a Fano 3-fold described in $(\mathrm{4})$ along a disjoint union of a curve of anticanonical degree $2$ and a geometrically irreducible curve of anticanonical degree $4$.
\end{enumerate}
\end{mainthm}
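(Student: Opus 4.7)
The plan is to combine the Yau--Tian--Donaldson theorem with the (by now nearly complete) classification of K-polystable smooth Fano 3-folds due to Araujo, Castravet, Cheltsov, Denisova, Fujita, Liu, Petracci, Shramov, S\"uss and others. Since $X_{\mathbb{C}}$ is not K\"ahler--Einstein, it is not K-polystable, hence $X_{\mathbb{C}}$ belongs to one of the finitely many Mori--Mukai deformation families in which either no member is K-polystable or the K-polystable members form a proper closed subset of moduli. Listing these families produces a finite check-list, and the theorem is reduced to a case-by-case analysis on this list.

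For each such family my approach is uniform. I would first identify a canonical $\Bbbk$-rational birational structure on $X$ --- an extremal contraction, a Mori fiber space, the anticanonical morphism, or a distinguished linear system --- all of which are Galois-invariant and hence descend to $\Bbbk$. In the typical situation this realises $X$ as the blowup of a simpler Fano variety $Y$ along a $\Bbbk$-rational center $Z\subset Y$. If $Y$ is a $\Bbbk$-form of $\mathbb{P}^n$, of a quadric, or of a del Pezzo 3-fold that is known to always possess a rational point (e.g.\ via Springer's theorem, Tsen, or the Lang--Nishimura lemma applied to $Z$), then a $\Bbbk$-point on $Y$ or on $Z$ lifts to a $\Bbbk$-point of $X$. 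If $Y$ or $Z$ fails to have a $\Bbbk$-point, I must show that $X$ is forced into the exceptional list (1)--(6). Whenever a del Pezzo stratum (as a fiber, section, or base of a fibration) appears, Theorem~\ref{theorem:1} is the key input: K-instability of the geometric model of that stratum guarantees a $\Bbbk$-point, which in turn lifts to $X$.

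The main obstacle is threefold. First, for product-type 3-folds whose geometric model is $C_{\mathbb{C}}\times \mathbb{F}_1$ or $C_{\mathbb{C}}\times S_7$, the Galois action need not a priori respect the product decomposition. One must exploit the rigidity of the Mori cone and of the two $\mathbb{P}^1$- or $(-1)$-curve systems on the surface factor to show that both projections descend to $\Bbbk$, and then check that the descended projection onto the first factor is precisely a pointless conic --- otherwise a $\Bbbk$-point appears. Second, for the blowup families that realise the exceptions (1)--(6), I need to verify sharpness of the description: the center of the blowup is canonically defined over $\Bbbk$, and when neither the ambient pointless Fano nor the blowup center carries a $\Bbbk$-point, $X$ must indeed be pointless. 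This requires analysing the exceptional divisor, which is a projective bundle over the center, and excluding $\Bbbk$-rational sections coming from lines, conics, or anticanonical curves on $X$. Third, for the remaining non-exceptional non-K-polystable families, one must exhibit a canonical $\Bbbk$-rational subvariety of small enough dimension and degree (a fixed component of the anticanonical system, a canonical curve of low degree, or the singular locus of a distinguished divisor) carrying a $\Bbbk$-point by standard rational-point results.

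Assembling these ingredients, the proof proceeds family by family through the non-K-polystable list, in each case either producing a $\Bbbk$-rational point through the canonical birational structure --- with Theorem~\ref{theorem:1} applied to any surface stratum that appears --- or placing $X$ on the exceptional list. I expect the hardest cases to be exceptions (3) and (6), where the structure mixes a non-split quadric surface with a nontrivial blow-up center and one must carefully track Galois-invariance of the defining data, and the product case $C\times S_7$, where recovering the product structure over $\Bbbk$ is the delicate step.
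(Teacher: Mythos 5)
Your high-level skeleton matches the paper's: partition the $105$ families, discard the $52$ all-K\"ahler--Einstein ones, and for the rest either descend a Galois-invariant extremal structure to $\Bbbk$ and harvest a point via Lang--Nishimura and Severi--Brauer arguments (Lemma~\ref{lemma:SB}, Lemma~\ref{lemma:Prokhorov}), or place $X$ on the exceptional list. That part of your plan, including the Mori-cone rigidity argument for the product cases and the descent of blowup centers, is essentially what Section~\ref{Section:k-point} does, and it works.

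The genuine gap is in your treatment of the families where only \emph{general} members are K\"ahler--Einstein (e.g.\ \textnumero 1.9, \textnumero 1.10, \textnumero 2.5, \textnumero 2.13, \textnumero 2.16, \textnumero 2.19, \textnumero 3.2, \ldots, \textnumero 4.13). For these, "exhibiting a canonical $\Bbbk$-rational subvariety carrying a $\Bbbk$-point by standard rational-point results" is not a workable plan, because when $X_{\mathbb{C}}$ is K-unstable in a non-obvious way there is no distinguished divisor or curve handed to you by the birational geometry alone. The paper's actual mechanism is the contrapositive: assume $X(\Bbbk)=\varnothing$ and $X_{\mathbb{C}}$ not K-polystable; then by \cite{LXZ} the $\delta$-invariant is computed by a prime divisor $\mathbf{F}$, by \cite{Zhuang2021} the optimal destabilizing center is Galois-invariant hence defined over $\Bbbk$, by divisorial stability (\cite[Theorem~3.17]{Book}) it is not a surface, and by pointlessness it is not a point --- so it is a geometrically irreducible curve $Z$ over $\Bbbk$. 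One then rules $Z$ out by explicit Abban--Zhuang computations of $S(W^S_{\bullet,\bullet};\cdot)$ and $S(W^{S,C}_{\bullet,\bullet,\bullet};\cdot)$ along carefully chosen flags, or (in the fibration cases) by showing that a singular fiber through $Z$ would have a Galois-invariant distinguished singular point via the classification of Du Val del Pezzo surfaces and their $\delta$-invariants \cite{CorayTsfasman1988,Denisova-cubic,Denisova-DP}. Your proposed appeal to Theorem~\ref{theorem:1} for del Pezzo strata does recover some of the fibration cases (a pointless Du Val fiber is K-polystable, hence $\delta_p(S)\geqslant 1$, which feeds into the estimates of \cite{CheltsovDenisovaFujita}), but it says nothing about the Picard-rank-one families \textnumero 1.9 and \textnumero 1.10, where the paper instead runs a Nadel-vanishing/minimal-log-canonical-center argument, bounds the anticanonical degree of the center, excludes odd degrees by Lemma~\ref{lemma:SB}, and then analyses the Sarkisov link obtained by blowing up a conic. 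Without the equivariant destabilizing-center input and the local $\delta$-estimates, your case analysis cannot close in these families, and that is where the bulk of the paper's proof lives.
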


The geometric models of the Fano 3-folds described in the eight exceptional cases in Theorem~\ref{theorem:2} are not K\"ahler--Einstein.
Moreover, all of them are K-unstable, so we have the following consequence.

\begin{corollary}
\label{corollary:semistable}
If $X$ is a smooth Fano 3-fold defined over a subfield $\Bbbk\subset\mathbb{C}$
such that its geometric model is strictly K-semistable, then $X(\Bbbk)\ne\varnothing$.
\end{corollary}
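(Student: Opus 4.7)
The plan is short and relies almost entirely on combining Theorem~\ref{theorem:2} with the K-unstability statement made in the paragraph immediately preceding the corollary.

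First I would recall the Yau--Tian--Donaldson equivalence invoked in the introduction: a smooth complex Fano variety admits a K\"ahler--Einstein metric if and only if it is K-polystable. Hence if the geometric model $X_{\mathbb{C}}$ is \emph{strictly} K-semistable, then by definition it is K-semistable but not K-polystable, and therefore does not admit a K\"ahler--Einstein metric. This places $X$ into the hypothesis of Theorem~\ref{theorem:2}.

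Next I would apply Theorem~\ref{theorem:2}: either $X(\Bbbk)\neq\varnothing$, in which case we are done, or $X$ falls into one of the eight exceptional families (the two product cases $C\times\mathbb{F}_1$ and $C\times S$ with $S$ a $\Bbbk$-form of $S_7$, or one of the six blowup constructions (1)--(6)). The remaining task is to rule out that a strictly K-semistable Fano 3-fold can lie in any of these eight families.

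For this step I would simply cite the assertion made just before the corollary, namely that the geometric models in all eight exceptional cases are K-unstable. Since K-unstable excludes K-semistable, none of these eight families can produce an $X_{\mathbb{C}}$ that is strictly K-semistable, contradicting our assumption. Hence the only alternative in Theorem~\ref{theorem:2} that survives is $X(\Bbbk)\neq\varnothing$, which completes the argument. The one genuine piece of substance here is the K-instability claim itself, but that is an input supplied by the main body of the paper rather than something to be proved inside the corollary; the corollary is purely a logical consequence of Theorem~\ref{theorem:2}, the YTD correspondence, and that K-instability statement.
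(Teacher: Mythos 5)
Your argument is exactly the one the paper intends: the corollary is stated as an immediate consequence of Theorem~\ref{theorem:2} together with the remark that the eight exceptional cases are all K-unstable, and you combine these with the Yau--Tian--Donaldson correspondence in the same way. The proposal is correct and matches the paper's reasoning.
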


Note also that pointless forms of the eight exceptional cases in Theorem~\ref{theorem:2} exist over many subfields of $\mathbb{C}$ as, for instance, one can construct relevant examples over $\mathbb{R}$.
Such constructions are easy to obtain in the cases $(\mathrm{1})$, $(\mathrm{2})$, $(\mathrm{3})$. The example below provides pointless constructions in the remaining three cases.

\begin{example}
\label{example:Severi--Brauer}
Let $U$ be a three-dimensional Severi--Brauer variety defined over $\mathbb{R}$ with $U\not\simeq\mathbb{P}^3_{\mathbb{R}}$.
By \cite{GilleSzamuely,Kollar2016}, $U$ exists uniquely and has no real points.
Crucially, $U$ contains a zero-dimensional irreducible subscheme $Z$ of degree $2$ for which $Z_{\mathbb{C}}$ is a union of two complex conjugate points in $U_{\mathbb{C}}\simeq\mathbb{P}^3$. It follows that $U$ also contains a unique  curve $L$ of anticanonical degree $4$ containing $Z$.
In \cite{Kollar2016}, the curve $L$ is called \emph{twisted line} as $L_{\mathbb{C}}$ is a line in $U_{\mathbb{C}}$ containing both points of $Z_{\mathbb{C}}$.
Now, let $f\colon X\to U$ be the blowup of the curve $L$, and let $E$ be the $f$-exceptional divisor. Then $X$ is a pointless real Fano 3-fold described in $(\mathrm{4})$ in Theorem~\ref{theorem:2}, and $E\simeq L\times L$. Next, set $C=f^{-1}(Z)$. Then $C$ is an irreducible geometrically reducible smooth curve in $X$ with $-K_X\cdot C=2$. Moreover, every curve of anticanonical degree $2$ in $X$ can be described in this way. By blowing up $X$ along the curve $C$ we obtain a pointless real Fano 3-fold, which corresponds to $(\mathrm{5})$ in Theorem~\ref{theorem:2}.
Finally, let $L^\prime$ be another \emph{twisted line} in $U$, and let $C^\prime$ be its strict transform on $X$.
Then $L\cap L^\prime=\varnothing$ so that $-K_X\cdot C^\prime=4$. Furthermore, every geometrically irreducible curve of anticanonical degree $4$ in $X$ is a strict transform of a \emph{twisted line} in $U$ that is disjoint from $L$.
By blowing up $X$ along the curves $C$ and $C^\prime$, we obtain a pointless real Fano 3-fold described in $(\mathrm{6})$ in Theorem~\ref{theorem:2}.
\end{example}

\begin{remark}
\label{remark:applications}
Theorem~\ref{theorem:2} can be used to explicitly produce (new) examples of K\"ahler--Einstein Fano 3-folds.
For instance, real pointless smooth Fano 3-folds of Picard rank $1$ and anticanonical degree $22$ are classified in \cite{Kollar-Olaf},
and all of them are K-polystable by Theorem~\ref{theorem:2}.
This provides new K\"ahler--Einstein Fano 3-folds in Family 1.10.
For another example of a K-stable smooth Fano 3-fold in this deformation family, see \cite{CheltsovShramov2012}.
Note that also that Family 1.10 contains non-K\"ahler--Einstein smooth Fano 3-folds that can be defined over $\mathbb{R}$ \cite{Book,Tian1997,Donaldson2008}.
\end{remark}

\begin{proof}[\bf Proof of Theorem~\ref{theorem:2}]
Unlike the proof of Theorem~\ref{theorem:1}, the proof of Theorem~\ref{theorem:2} is somewhat classification based.
Recall that smooth Fano 3-folds have been classified into 105 deformation families by Iskovskikh \cite{Is77,Is78,IsPr99} and Mori--Mukai \cite{MoMu81,MoMu03,MoMu83,MoMu85,Mukai89,Mukai}.
We follow the Mori--Mukai numbering of the 105 families, written as ``Family \textnumero $m.n$'', in which $m$ is the rank of the Picard group of the 3-fold, ranging from 1 to 10, and $n$ is simply a list number.
If $X$ is a smooth Fano 3-fold defined over a subfield $\Bbbk\subset\mathbb{C}$,
then we will say that $X$ is in a given family if $X_{\mathbb{C}}$ is contained in this family.
To prove Theorem~\ref{theorem:2}, we partition 105 deformation families of smooth Fano 3-folds into the following three sets:
\begin{enumerate}[(i)]
\item $52$ families in which all smooth elements are known to be K\"ahler--Einstein;
\item $27$ families where all smooth members are non-K\"ahler--Einstein;
\item $26$ families in which only general members are known to be K\"ahler--Einstein.
\end{enumerate}
It is often a difficult task to verify whether a given Fano variety is K\"ahler--Einstein.
Recent invention of K-stability methods have enabled such studies although, as one expects,
explicit K-stability verification requires a detailed study of the geometry of the given Fano variety.
With much effort in recent years it has been verified that all smooth Fano 3-folds in the following $52$ deformation families are K\"ahler--Einstein:
\begin{itemize}
\item Families \textnumero 1.1, \textnumero  1.2, \textnumero 1.3, \textnumero 1.4, \textnumero 1.5, \textnumero 1.6, \textnumero 1.7, \textnumero 1.8 \cite[Theorem\,5.1]{AbbanZhuangSeshadri};

\item Families \textnumero 1.11, \textnumero 1.12, \textnumero 1.13, \textnumero 1.14, \textnumero 1.15, \textnumero 1.16,
\textnumero 1.17, \textnumero 2.25, \textnumero 2.27, \textnumero 2.29, \textnumero 2.32, \textnumero 2.34, \textnumero 3.1, \textnumero 3.9, \textnumero 3.15,
\textnumero 3.17, \textnumero 3.19, \textnumero 3.20, \textnumero 3.25, \textnumero 3.27, \textnumero 4.2, \textnumero 4.3, \textnumero 4.4,
\textnumero 4.6, \textnumero 4.7, \textnumero 5.1, \textnumero 5.3, \textnumero 6.1, \textnumero 7.1, \textnumero 8.1, \textnumero 9.1, \textnumero 10.1 \cite{Book};

\item Families \textnumero 2.1, \textnumero 2.2, \textnumero 2.3, \textnumero 2.4, \textnumero 2.6, \textnumero 2.7 \cite{CheltsovDenisovaFujita};

\item Family \textnumero 2.8 \cite{Liu};

\item Family \textnumero 2.15 \cite{LTN};

\item Families \textnumero 2.18 and \textnumero 3.4 \cite{CheltsovFujitaKishimotoPark};

\item Family \textnumero 3.3 \cite{CheltsovFujitaKishimotoOkada};

\item Family \textnumero 4.1 \cite{BelousovLoginov}.
\end{itemize}
These families are irrelevant for the proof of Theorem~\ref{theorem:2} --- we listed them with appropriate referencing for completeness of exposition.
Similarly, the following is the list of 27 Fano 3-fold families without any K\"ahler--Einstein smooth members:
\textnumero 2.23, \textnumero 2.26, \textnumero 2.28, \textnumero 2.30, \textnumero 2.31, \textnumero 2.33, \textnumero 2.35, \textnumero 2.36, \textnumero 3.14,
\textnumero 3.16, \textnumero 3.18, \textnumero 3.21, \textnumero 3.22, \textnumero 3.23, \textnumero 3.24, \textnumero 3.26, \textnumero 3.28, \textnumero 3.29,
\textnumero 3.30, \textnumero 3.31, \textnumero 4.5,  \textnumero 4.8,  \textnumero 4.9,  \textnumero 4.10, \textnumero 4.11, \textnumero 4.12, \textnumero 5.2 \cite{Book,CheltsovPrzyjalkowskiShramov,Fujita2019}.
In Section~\ref{Section:k-point} we show that for each element in $19$ of these families every member has $\Bbbk$-points when defined over $\Bbbk$,
hence producing the $8$ families appearing in Theorem~\ref{theorem:2} as exceptional cases. The varieties appearing in Theorem~\ref{theorem:2} are the only pointless members in each of those $8$ families. This is clear in (1), (2), (3), and for $C\times\mathbb{F}_1$ and $C\times S_7$. Other cases are also easy to see. For example, consider a pointless variety $X$ for which $X_{\mathbb{C}}$ is the blowup of $\mathbb{P}^3$ in a line. 
Then $X$ is the blowup of a non-trivial $\Bbbk$-form $U$ of $\mathbb{P}^3$ along a curve of anticanonical degree 4 so that its geometric model is a line in $\mathbb{P}^3$, as in Example~\ref{example:Severi--Brauer}.
The remaining $26$ families are Families \textnumero 1.9,  \textnumero 1.10, \textnumero 2.5,  \textnumero 2.9,  \textnumero 2.10, \textnumero 2.11, \textnumero 2.12, \textnumero 2.13, \textnumero 2.14,
\textnumero 2.16, \textnumero 2.17, \textnumero 2.19, \textnumero 2.20, \textnumero 2.21, \textnumero 2.22, \textnumero 2.24, \textnumero 3.2,  \textnumero 3.5,  \textnumero 3.6,  \textnumero 3.7,  \textnumero 3.8,  \textnumero 3.10, \textnumero 3.11, \textnumero 3.12, \textnumero 3.13, \textnumero 4.13.
Among these we treat $8$ families (Families \textnumero 2.9, \textnumero 2.11, \textnumero 2.14, \textnumero 2.17, \textnumero 2.20, \textnumero 2.22,
\textnumero 3.8, \textnumero 3.11) in Section~\ref{Section:k-point} by showing that every member has $\Bbbk$-points when defined over $\Bbbk$,
and for the remaining $18$ families we prove in Section~\ref{section:pointless-3-folds} that the $\Bbbk$-pointless elements are K-polystable. As the reader expects, the latter is the main bulk of the proof. Note that some of these families contain smooth complex non-K\"ahler--Einstein Fano 3-folds.
\end{proof}

\paragraph{\textbf{Structure of the paper}} Section~\ref{section:preliminaries} contains some preliminary technical results that we will use in the article. In Section\,\ref{section:dP-surfaces} we prove Theorem\,\ref{theorem:1}.
In Section\,\ref{Section:k-point} we prove existence of $\Bbbk$-rational points for all smooth members in 27 families of Fano 3-folds as explained above, which includes 8 families containing K-polystable objects where K-(poly)stability is unknown (or sometimes known not to hold) for all smooth elements. There remain 19 other families with that property, and in Section\,\ref{section:pointless-3-folds} we prove that any smooth elements in those 19 families for which there exists a $\Bbbk$-form with no $\Bbbk$-points is K-polystable. In Section\,\ref{section:pointless} we produce pointless examples for each of those 19 families, to illustrate the relevance of the proof of Theorem\,\ref{theorem:2}.

 \section{Preliminaries}
\label{section:preliminaries}
In this section, we collect some known technical results that we will be using in this article. The reader may skip them and only consult them as they are referred to.

We first state the following classical result which is valid over any field $\Bbbk$. We will use this theorem  repeatedly throughout the article.

\begin{lemma}[Lang--Nishimura Lemma]
\label{lemma:nonempty}
Let $V$ and $W$ be projective integral varieties defined over a field $\Bbbk$ such that there exists a rational map $V \dasharrow W$.
If $V$ admits a smooth ${\Bbbk}$-rational point, then $W({\Bbbk}) \ne\varnothing$.
\end{lemma}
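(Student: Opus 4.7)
The plan is to proceed by induction on $n=\dim V$, reducing from dimension $n$ to $n-1$ by blowing up at the smooth rational point. The base case $n=0$ is immediate: the smooth $\Bbbk$-rational point $p$ is an open connected component of $V$, so $\varphi$ is regular there and $\varphi(p)\in W(\Bbbk)$.

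For the inductive step, let $p\in V(\Bbbk)$ be smooth. If $\varphi$ is already defined at $p$, there is nothing to do. Otherwise, I blow up $V$ at $p$ to obtain $\pi\colon\widetilde V\to V$: since $p$ is smooth on $V$, the variety $\widetilde V$ is again projective and integral, and the exceptional divisor $E=\pi^{-1}(p)$ is isomorphic to $\mathbb{P}^{n-1}_\Bbbk$. The composition $\widetilde\varphi=\varphi\circ\pi\colon\widetilde V\dashrightarrow W$ is then a rational map. If $E$ is not contained in the indeterminacy locus of $\widetilde\varphi$, restriction to $E$ gives a rational map $\mathbb{P}^{n-1}_\Bbbk\dashrightarrow W$; since $\mathbb{P}^{n-1}_\Bbbk$ has a smooth $\Bbbk$-rational point (e.g.\ $[1:0:\cdots:0]$) and has dimension $n-1$, the inductive hypothesis produces a $\Bbbk$-point of $W$. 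If instead $E$ is contained in the indeterminacy locus, I choose any $\Bbbk$-rational point $q\in E$ — automatically smooth in $\widetilde V$ — and iterate the construction with $(\widetilde V,q)$ in place of $(V,p)$.

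The delicate step, and the one I expect to be the main obstacle, is ensuring that this iteration terminates. The cleanest way around it is to bypass the iteration entirely and appeal to the valuative criterion of properness. Fix a regular system of parameters $t_1,\dots,t_n$ of $\mathcal{O}_{V,p}$ and form the DVR $\mathcal{O}':=\mathcal{O}_{V,p}[t_2/t_1,\dots,t_n/t_1]_{(t_1)}$ inside $\Bbbk(V)$; its residue field is the function field $\Bbbk(y_2,\dots,y_n)$ of $\mathbb{A}^{n-1}_\Bbbk$, with $y_i=t_i/t_1$. Iterating at the $\Bbbk$-rational origin of each successive residue field and composing the resulting valuations, after $n$ steps one assembles a valuation ring $R\subset\Bbbk(V)$ dominating $\mathcal{O}_{V,p}$ whose residue field is exactly $\Bbbk$. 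Since $W$ is projective, and hence proper over $\Bbbk$, the valuative criterion uniquely extends the morphism $\mathrm{Spec}\,\Bbbk(V)\to W$ induced by $\varphi$ to a morphism $\mathrm{Spec}\,R\to W$. The image of the closed point of $\mathrm{Spec}\,R$ is then the desired $\Bbbk$-rational point of $W$.
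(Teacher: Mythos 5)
Your argument is correct: the paper itself gives no proof of this lemma, only a citation to \cite[Theorem 3.6.11]{Poonen}, and the proof given there (due to Koll\'ar and Szab\'o) is precisely your second, valuative-criterion argument --- a regular system of parameters at the smooth $\Bbbk$-point yields, by composing the discrete valuations coming from successive exceptional divisors, a valuation ring of $\Bbbk(V)$ with residue field $\Bbbk$, to which one applies the valuative criterion for the proper variety $W$. You were right to abandon the naive blowup iteration in favour of this; the composite valuation ring cleanly packages the induction, and the valuative criterion does hold for arbitrary (not necessarily discrete) valuation rings, as your argument requires.
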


\begin{proof}
See, for example, \cite[Theorem 3.6.11]{Poonen}.
\end{proof}

The next two elementary lemmas about Severi--Brauer varieties will be used frequently.

\begin{lemma}
\label{lemma:SB}
Let $U$ be a Severi--Brauer variety of dimension $n$ over $\Bbbk$. If $U$ contains a divisor defined over $\Bbbk$ whose degree is coprime to $n+1$, then $U$ is isomorphic to $\mathbb{P}_{\Bbbk}^n$.
\end{lemma}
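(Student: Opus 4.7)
The plan is to translate the hypothesis into a statement about which multiples of the hyperplane class descend to $\Bbbk$, where the obstruction is controlled by the Brauer class of $U$; the coprimality assumption then forces this class to be trivial.

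First, I would recall the standard description of the Picard group of a Severi--Brauer variety. For $U$ of dimension $n$ over $\Bbbk$, the geometric Picard group $\mathrm{Pic}(U_{\bar\Bbbk}) \cong \mathbb{Z}\cdot h$ is generated by the hyperplane class $h = [\mathcal{O}(1)]$ (via the isomorphism $U_{\bar\Bbbk} \cong \mathbb{P}^n_{\bar\Bbbk}$), and the Galois action on this cyclic group is trivial. The Hochschild--Serre sequence then gives
\[
0 \to \mathrm{Pic}(U) \to \mathrm{Pic}(U_{\bar\Bbbk})^{\mathrm{Gal}(\bar\Bbbk/\Bbbk)} \xrightarrow{\delta} \mathrm{Br}(\Bbbk),
\]
with $\delta(h) = [U] \in \mathrm{Br}(\Bbbk)$. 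Consequently, if $p$ denotes the period (the order of $[U]$ in $\mathrm{Br}(\Bbbk)$), then $\mathrm{Pic}(U) = p\mathbb{Z}\cdot h$; equivalently, an integer $m$ occurs as the degree of a $\Bbbk$-divisor on $U$ if and only if $p \mid m$.

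Next, I would apply the hypotheses. Since $-K_U$ is defined over $\Bbbk$ and has degree $n+1$, we obtain $p \mid n+1$. The $\Bbbk$-divisor $D$ in the statement has some degree $d$ with $\gcd(d, n+1) = 1$, so $p \mid d$ as well. Together these force $p \mid \gcd(d, n+1) = 1$, i.e., $p = 1$, meaning $[U]$ is trivial in $\mathrm{Br}(\Bbbk)$. By the standard characterization of Severi--Brauer varieties (e.g.\ \cite{GilleSzamuely}), vanishing of the Brauer class is equivalent to $U \cong \mathbb{P}^n_\Bbbk$.

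The only substantive ingredient is the identification of $\mathrm{Pic}(U)$ as $p\mathbb{Z} \cdot h$ inside $\mathrm{Pic}(U_{\bar\Bbbk})$, which is entirely classical and well documented in the references the paper already cites. There is no real obstacle in the argument; the lemma is effectively a one-line bookkeeping consequence of that theory once the hypothesis is rephrased in Picard-group language.
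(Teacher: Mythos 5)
Your argument is correct: the identification $\mathrm{Pic}(U)=p\mathbb{Z}\cdot h$ via the Hochschild--Serre sequence, combined with the fact that $-K_U$ is a $\Bbbk$-divisor of degree $n+1$, forces the period $p$ to divide $\gcd(d,n+1)=1$, hence $[U]=0$ and $U\cong\mathbb{P}^n_{\Bbbk}$. The paper offers no proof at all --- it simply cites this as well known (Gille--Szamuely, Theorem 5.1.3) --- and what you have written is precisely the standard argument behind that citation, so there is nothing to compare beyond noting that you have filled in the details the authors chose to omit.
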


\begin{proof}
This is well known to experts. See, for example, \cite[Theorem 5.1.3]{GilleSzamuely} or \cite{Kollar2016}.
\end{proof}

\begin{corollary}[{cf. \cite{Alvaro}}]
\label{corollary:ICMS}
Let $X$ be a variety of dimension $\geqslant 2$ such that $X_{\mathbb{C}}$ is a hypersurface in $\mathbb{P}^n$ of degree $d$ such that 
$(n,d)\ne (2,3)$, $(n,d)\ne (3,4)$, and $n+1$ and $d$ are coprime. 
Then $X$ is a hypersurface in $\mathbb{P}^n$ of degree $d$.
\end{corollary}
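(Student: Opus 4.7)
The plan is to realise $X$ as a hypersurface in a Severi--Brauer $\Bbbk$-variety $V$ whose base change to $\mathbb{C}$ is $\mathbb{P}^n_{\mathbb{C}}$, and then apply Lemma~\ref{lemma:SB} to force $V \cong \mathbb{P}^n_{\Bbbk}$.

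Set $L := \mathcal{O}_{\mathbb{P}^n}(1)|_{X_{\mathbb{C}}} \in \mathrm{Pic}(X_{\mathbb{C}})$. The first step is to check that $L$ is Galois-invariant. By adjunction, $K_{X_{\mathbb{C}}} = (d-n-1)\,L$, and since $K_X$ descends to $\Bbbk$ the class $(d-n-1)\,L$ is Galois-invariant. The coprimality hypothesis excludes $d=n+1$, so $d-n-1 \neq 0$. Because $X_{\mathbb{C}}$ is a smooth hypersurface of dimension $\geqslant 2$, the Lefschetz hyperplane theorem gives $\pi_1(X_{\mathbb{C}}) = 1$, hence $H^2(X_{\mathbb{C}},\mathbb{Z})$ is torsion-free; combined with $H^1(X_{\mathbb{C}}, \mathcal{O}) = 0$ (from the standard restriction sequence on $\mathbb{P}^n$, which requires $n\geqslant 3$), this shows $\mathrm{Pic}(X_{\mathbb{C}})$ is torsion-free. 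Therefore $L$ is the unique class satisfying $(d-n-1)\,L = K_{X_{\mathbb{C}}}$, and uniqueness forces Galois-invariance.

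The second step is Galois descent: from the invariant class $L$, using that $h^0(X_{\mathbb{C}}, L) = n+1$ (read off from $0 \to \mathcal{O}_{\mathbb{P}^n}(1-d) \to \mathcal{O}_{\mathbb{P}^n}(1) \to L \to 0$), one builds a Severi--Brauer $\Bbbk$-variety $V$ of dimension $n$ and a morphism $\varphi \colon X \to V$ defined over $\Bbbk$ whose base change to $\mathbb{C}$ is the given embedding $X_{\mathbb{C}} \hookrightarrow \mathbb{P}^n_{\mathbb{C}}$; this is classical, see \cite{GilleSzamuely, Kollar2016}. Since closed immersions descend, $\varphi$ realises $X$ as a divisor of geometric degree $d$ in $V$. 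Because $\gcd(n+1,d) = 1$, Lemma~\ref{lemma:SB} then yields $V \cong \mathbb{P}^n_{\Bbbk}$, and $X$ is a hypersurface of degree $d$ in $\mathbb{P}^n_{\Bbbk}$, as desired.

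The main obstacle is the descent step itself---constructing $V$ and $\varphi$ over $\Bbbk$ from the Galois-invariant geometric line bundle $L$---though this is standard Severi--Brauer theory once invariance is established. The exclusion $(n,d)=(3,4)$ is genuinely needed: on a K3 quartic surface $d-n-1 = 0$, so the uniqueness argument for $L$ collapses and the hyperplane class cannot be pinned down from $K_X$ alone; the case $(n,d)=(2,3)$ is of course already ruled out by the dimension hypothesis.
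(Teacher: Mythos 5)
Your proposal is correct and follows essentially the same route as the paper: realise $X$ as a twisted hypersurface of degree $d$ in a Severi--Brauer form of $\mathbb{P}^n$, then invoke Lemma~\ref{lemma:SB} together with $\gcd(n+1,d)=1$ to trivialise that form. The paper outsources the descent step to \cite[Chapter~7]{Collio}, whereas you prove it directly by showing the hyperplane class is Galois-invariant (via adjunction, $d\ne n+1$, and torsion-freeness of $\mathrm{Pic}(X_{\mathbb{C}})$); this is a faithful unpacking of the same argument.
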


\begin{proof}
It follows from \cite[Chapter~7]{Collio} that $X$ can be embedded in a $\Bbbk$-form of $\mathbb{P}^n$ as twisted hypersurface of degree $d$. 
Hence, it follows from Lemma~\ref{lemma:SB} that this $\Bbbk$-form of $\mathbb{P}^n$ is actually isomorphic to $\mathbb{P}^n$, so the result follows.
\end{proof}

\begin{lemma}
\label{lemma:plane}
Let $U$ be a Severi--Brauer variety of dimension three over $\Bbbk$ and $C$ an irreducible curve in $U$ such that
%Let $U$ be a $\Bbbk$-form of $\mathbb{P}^3$, and $C$ be a an irreducible curve in $U$ such that
$C_{\mathbb{C}}$ is contained in a plane in $U_{\mathbb{C}}\simeq\mathbb{P}^3$, but $C_{\mathbb{C}}$ is not a line.
Then $U\cong\mathbb{P}^3$.
\end{lemma}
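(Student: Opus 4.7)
The plan is to produce a divisor on $U$ defined over $\Bbbk$ whose degree (with respect to the hyperplane class on $U_{\bar{\Bbbk}}\simeq\mathbb{P}^3$) is coprime to $n+1=4$, and then invoke Lemma~\ref{lemma:SB}. The natural candidate is the plane in $U_{\mathbb{C}}$ spanned by $C_{\mathbb{C}}$.

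First I would argue that $C_{\mathbb{C}}$ is contained in a \emph{unique} plane of $U_{\mathbb{C}}\simeq\mathbb{P}^3$. Since varieties in this paper are geometrically irreducible, $C_{\mathbb{C}}$ is an irreducible plane curve, and by hypothesis it is not a line, so its degree is at least $2$. If two distinct planes $\Pi_1,\Pi_2\subset\mathbb{P}^3$ both contained $C_{\mathbb{C}}$, then $C_{\mathbb{C}}\subset\Pi_1\cap\Pi_2$ would be a line, a contradiction. Hence there is a unique plane $\Pi_{\mathbb{C}}\subset U_{\mathbb{C}}$ containing $C_{\mathbb{C}}$.

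Next I would use Galois descent. For any $\sigma\in\mathrm{Gal}(\bar{\Bbbk}/\Bbbk)$, the conjugate $\sigma(\Pi_{\bar{\Bbbk}})$ is again a plane in $U_{\bar{\Bbbk}}$ containing $\sigma(C_{\bar{\Bbbk}})=C_{\bar{\Bbbk}}$ (since $C$ is defined over $\Bbbk$). By the uniqueness established above, $\sigma(\Pi_{\bar{\Bbbk}})=\Pi_{\bar{\Bbbk}}$, so the subscheme $\Pi_{\bar{\Bbbk}}\subset U_{\bar{\Bbbk}}$ is Galois-invariant and therefore descends to a closed subscheme $\Pi\subset U$ defined over $\Bbbk$.

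Finally, $\Pi_{\bar{\Bbbk}}$ is a hyperplane in $U_{\bar{\Bbbk}}\simeq\mathbb{P}^3$, hence a divisor of degree $1$ with respect to the hyperplane class. Since $\gcd(1,n+1)=\gcd(1,4)=1$, Lemma~\ref{lemma:SB} applies and gives $U\cong\mathbb{P}_{\Bbbk}^3$, as desired. I do not expect any serious obstacle here; the only subtlety worth pausing on is the uniqueness of the plane through $C_{\mathbb{C}}$, which is precisely where the hypothesis that $C_{\mathbb{C}}$ is not a line is used.
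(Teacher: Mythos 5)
Your proposal is correct and follows exactly the paper's argument: uniqueness of the plane through $C_{\mathbb{C}}$ (using that $C_{\mathbb{C}}$ is not a line), Galois descent of that plane to a divisor over $\Bbbk$, and an application of Lemma~\ref{lemma:SB} since a hyperplane has degree $1$, coprime to $4$. The paper merely states this more tersely; there is no difference in substance.
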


\begin{proof}
Let $H$ be the plane in $U_{\mathbb{C}}$ that contains $C_{\mathbb{C}}$.
Then $H$ is defined over $\Bbbk$ as otherwise there will be at least two planes containing $C_{\mathbb{C}}$, which implies
that $C_{\mathbb{C}}$ is a line. Hence $U\cong \mathbb{P}^3$ by Lemma~\ref{lemma:SB}.
\end{proof}

We will also need the following classification based result.
%%%%%%%%%%%%%%%%%%%%%%%%%%%%%%%%%%%%%%%
\begin{lemma}
\label{lemma:Prokhorov}
Let $X$ be a smooth Fano 3-fold defined over $\Bbbk$ with base extension $X_{{\mathbb C}}$ being of Picard rank $\rho(X_{\mathbb{C}})=2$ and not contained in the families \textnumero 2.12 and \textnumero 2.21. If $X_{{\mathbb C}}$ admits an extremal birational contraction $\pi\colon X_{\mathbb{C}}\to V$, then there exists a morphism $p\colon X \to W$ defined over $\Bbbk$ such that the base extension $p_{{\mathbb C}}\colon X_{{\mathbb C}} \to W_{\mathbb C}$ coincides with $\pi$.
%where $V$ is a smooth Fano 3-fold defined over $\mathbb{C}$ with $\rho_{\mathbb{C}}(V)=1$.Then $\pi$ and $V$ are defined over $\Bbbk$.
%\footnote{BETTER RESTATE IT A BIT. $\to$ a bit restate.}
\end{lemma}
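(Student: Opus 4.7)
The plan is a Galois descent argument applied to the Mori cone. Since $\rho(X_{\mathbb{C}})=2$, the closure $\overline{NE}(X_{\mathbb{C}})$ is a two-dimensional cone with exactly two extremal rays $R_1, R_2$, and $\pi$ contracts one of them, say $R_1$. The absolute Galois group $G=\mathrm{Gal}(\overline{\Bbbk}/\Bbbk)$ acts on $N_1(X_{\mathbb{C}})$ preserving $\overline{NE}(X_{\mathbb{C}})$, hence permutes the two-element set $\{R_1,R_2\}$. The whole lemma reduces to ruling out the case in which $G$ swaps these two rays.

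I would first handle the easy half: if $G$ fixes both rays, then the primitive nef class $L\in N^1(X_{\mathbb{C}})$ which contracts $R_1$ is $G$-invariant. Since $X_{\mathbb{C}}$ is Fano, Kodaira vanishing gives $H^1(X_{\mathbb{C}},\mathcal{O})=H^2(X_{\mathbb{C}},\mathcal{O})=0$, so $\mathrm{Pic}(X_{\mathbb{C}})\simeq N^1(X_{\mathbb{C}})$, and standard Galois descent (via the Hochschild--Serre spectral sequence, cf.\ \cite{Kollar2016}) gives $\mathrm{Pic}(X)=\mathrm{Pic}(X_{\mathbb{C}})^G$. Hence $L$ descends to a class in $\mathrm{Pic}(X)$, and the morphism defined by $|mL|$ for $m\gg 0$ is a morphism $p\colon X\to W$ over $\Bbbk$ whose base change coincides with $\pi$ by uniqueness of the contraction of a $K_X$-negative face.

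For the remaining case, I would assume towards contradiction that $G$ swaps $R_1$ and $R_2$. Then the two extremal contractions $\pi_1=\pi$ and $\pi_2$ of $X_{\mathbb{C}}$ are $G$-conjugate, hence of the same geometric type: both are birational (since $\pi_1$ is), both are divisorial (no small contractions exist on a smooth $3$-fold, by Mori), and their exceptional divisors share identical numerical invariants. An inspection of the Mori--Mukai tables for smooth Fano $3$-folds of Picard rank $2$ shows that the only deformation families admitting two distinct divisorial extremal contractions of the same numerical type are exactly Families \textnumero 2.12 and \textnumero 2.21, which are precisely those excluded by hypothesis. This contradiction completes the argument. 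The main obstacle in the proof is this final classification step, whose outcome is exactly what dictates the list of excluded families in the statement; once the ray is known to be $G$-invariant, the rest is formal descent.
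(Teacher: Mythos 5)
Your proposal follows essentially the same route as the paper: the Galois group acts on the two extremal rays of $\overline{\mathrm{NE}}(X_{\mathbb{C}})$, the contraction descends when the ray is fixed, and the swapping case is excluded by inspecting the Mori--Mukai description of the pairs of extremal contractions, which singles out exactly Families \textnumero 2.12 and \textnumero 2.21 (the paper's proof is just a terser version of this, citing Prokhorov). One inaccuracy worth correcting: the claim that Galois descent gives $\mathrm{Pic}(X)=\mathrm{Pic}(X_{\mathbb{C}})^{G}$ is false in general --- the Hochschild--Serre sequence only gives an injection $\mathrm{Pic}(X)\hookrightarrow\mathrm{Pic}(X_{\mathbb{C}})^{G}$ with cokernel embedding into $\mathrm{Br}(\Bbbk)$, and this obstruction is genuinely nonzero for the pointless varieties this paper is concerned with (e.g.\ a nontrivial Severi--Brauer $3$-fold has $\mathcal{O}(1)$ Galois-invariant but not descending). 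The repair is immediate: the obstruction is torsion, so some multiple $mL$ descends, and $|mL|$ for $m\gg0$ still defines the contraction of the same face; alternatively, the contraction is canonically attached to the $G$-invariant ray and so descends as a morphism regardless of whether $L$ itself descends.
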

%%%%%%%%%%%%%%%%%%%%%%%%%%%%%%%%%%%%%%%
\begin{proof}
The required assertion is well-known. See for example \cite[Theorem~1.2]{Prokhorov2013}.
If $\rho(X)=1$, then two extremal rays of the Mori cone $\mathrm{NE}(X_{\mathbb{C}})$ would be permuted by the Galois group $\mathrm{Gal}({\mathbb{C}}/{\Bbbk})$. However, it follows from the description of these extremal rays \cite{MoMu81}
that this is impossible, unless if $X_{\mathbb{C}}$ is in Family \textnumero 2.12 or \textnumero 2.21.
Hence, $\rho_{\Bbbk}(X)=2$, so both contractions associated to the extremal rays of $\overline{\mathrm{NE}}(X_{\mathbb{C}})$ are defined over ${\Bbbk}$.
This completes the proof.
\end{proof}

We now turn our attention to some certain stability threshold type invariants that allow estimations that prove K-(poly)stability.

Let $X$ be a smooth Fano 3-fold, and let $S$ be an irreducible smooth surface in $X$.
Set
$$
\tau=\mathrm{sup}\Big\{u\in\mathbb{R}_{\geqslant 0}\ \big\vert\ \text{the divisor  $-K_X-uS$ is pseudo-effective}\Big\}.
$$
For~$u\in[0,\tau]$, let $P(u)$ be the positive part of the Zariski decomposition of the divisor $-K_X-uS$,
and let $N(u)$ be its negative part. Set
$$
S_X(S)=\frac{1}{-K_X^3}\int_{0}^{\infty}\mathrm{vol}\big(-K_X-uS\big)du=\frac{1}{-K_X^3}\int_{0}^{\tau}\big(P(u)\big)^3du.
$$
For every prime divisor $F$ over $S$, following \cite{Book}, we set
\begin{multline*}
\quad \quad \quad \quad S\big(W^S_{\bullet,\bullet};F\big)=\frac{3}{(-K_X)^3}\int_0^\tau\big(P(u)\cdot P(u)\cdot S\big)\cdot\mathrm{ord}_{F}\big(N(u)\big\vert_{S}\big)du+\\
+\frac{3}{(-K_X)^3}\int_0^\tau\int_0^\infty \mathrm{vol}\big(P(u)\big\vert_{S}-vF\big)dvdu.\quad \quad \quad \quad
\end{multline*}

\begin{theorem}[{\cite{AbbanZhuang}}]
\label{theorem:Hamid-Ziquan-Kento}
For any point $p\in S$ we have
$$
\delta_p(X)\geqslant\min\Bigg\{\frac{1}{S_X(S)},\inf_{\substack{F/S\\ p\in C_S(F)}}\frac{A_S(F)}{S\big(W^S_{\bullet,\bullet};F\big)}\Bigg\},
$$
where the infimum is taken by all prime divisors over $S$ whose center on $S$ contains $p$.
\end{theorem}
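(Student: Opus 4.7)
The plan is to apply the Fujita--Li valuative characterisation
$$\delta_p(X) = \inf_E \frac{A_X(E)}{S_X(E)},$$
where the infimum ranges over prime divisors $E$ over $X$ with $p \in C_X(E)$, and to establish the required bound for each ratio individually. If $\mathrm{ord}_E$ is a scalar multiple of $\mathrm{ord}_S$, then $A_X(E) = 1$ by smoothness of $S$ inside $X$, and $S_X(E) = S_X(S)$ by definition, yielding the first term $1/S_X(S)$ of the minimum. Otherwise one passes to a birational model on which $E$ appears as a divisor and constructs an associated prime divisor $F_E$ over $S$; its center on $S$ contains $p$, since $p \in C_X(E) \cap S$.

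For such $E$, the Abban--Zhuang refinement operates through a two-parameter filtration of the anticanonical section ring of $X$ by orders along $S$ and then along $F_E$. Using the Zariski decomposition $-K_X - uS = P(u) + N(u)$ for $u \in [0,\tau]$, a Fujita-type approximation argument identifies the graded pieces of the $S$-filtration with the section ring of $P(u)|_S$ on $S$ asymptotically in $m$, giving rise to the refined graded linear series $W^S_{\bullet,\bullet}$ appearing in the statement. The two integrals defining $S(W^S_{\bullet,\bullet};F)$ then correspond respectively to the negative part contribution, where $\mathrm{ord}_{F_E}(N(u)|_S)$ is weighted by the surface intersection $(P(u)^2\cdot S)$, and to the positive part, captured by the inner volume integral of $\mathrm{vol}(P(u)|_S - vF_E)$.

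This produces the additive decomposition $S_X(E) = \mathrm{ord}_E(S)\cdot S_X(S) + S(W^S_{\bullet,\bullet};F_E)$, matched on the log-discrepancy side by the inversion-of-adjunction identity $A_X(E) = \mathrm{ord}_E(S) + A_S(F_E)$ using $A_X(S)=1$. The elementary mediant bound $\frac{a+c}{b+d}\geqslant\min(a/b,c/d)$ for positive reals then gives
$$\frac{A_X(E)}{S_X(E)} \geqslant \min\left\{\frac{1}{S_X(S)},\ \frac{A_S(F_E)}{S(W^S_{\bullet,\bullet};F_E)}\right\},$$
and taking the infimum over $E$ (equivalently, over all divisors $F$ over $S$ with $p \in C_S(F)$) delivers the theorem. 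The main obstacle is the asymptotic volume analysis needed to identify $W^S_{\bullet,\bullet}$ with the Zariski positive parts $P(u)|_S$: this is the content of Fujita-style approximation for filtered linear series and is the technical heart of the refinement formula. A secondary subtlety is making $F_E$ into a genuine prime divisor on a compatible birational model of $S$, which may require replacing $S$ by a suitable blow-up and checking that the refined series $W^S_{\bullet,\bullet}$ is preserved under this modification.
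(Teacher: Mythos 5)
The paper does not prove this statement; it is quoted from Abban--Zhuang, so your proposal has to be measured against the argument in that source. Your high-level picture is right: one refines the anticanonical filtration along the flag $p\in S\subset X$, the Zariski decomposition $-K_X-uS=P(u)+N(u)$ produces the two integrals defining $S(W^S_{\bullet,\bullet};F)$, and the conclusion is a minimum of two terms. The gap is in your central step. You assert, for every prime divisor $E$ over $X$ with $p\in C_X(E)$, exact additive decompositions
$$
A_X(E)=\mathrm{ord}_E(S)+A_S(F_E),\qquad S_X(E)=\mathrm{ord}_E(S)\cdot S_X(S)+S\big(W^S_{\bullet,\bullet};F_E\big),
$$
and then conclude by the mediant inequality. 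Neither identity holds for a general $E$. There is no canonical prime divisor $F_E$ over $S$ attached to an arbitrary $E$ (for instance, when $C_X(E)\not\subset S$ one has $\mathrm{ord}_E(S)=0$ and no induced divisorial valuation on $S$); the log-discrepancy identity is an adjunction-type formula valid only for special valuations (quasi-monomial with respect to a log smooth model containing $S$); and the claimed formula for $S_X(E)$ --- an integral of volumes computed on $X$ --- as an exact sum involving restricted volumes on $S$ is false in general. Obtaining even the corresponding \emph{inequality} is the substance of the theorem, not a bookkeeping identity.

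The actual argument avoids any per-valuation decomposition. One fixes $\lambda$ strictly below the claimed minimum and shows that $(X,\lambda D)$ is log canonical at $p$ for every $m$-basis type divisor $D$ with $m\gg 0$: after replacing $D$ by one compatible with $S$, write $D=S_m(S)\,S+\Gamma$ with $S\not\subset\mathrm{Supp}(\Gamma)$; since $\lambda S_m(S)<1=A_X(S)$ it suffices to prove that $(X,S+\lambda\Gamma)$ is purely log terminal near $p$, which by inversion of adjunction reduces to klt-ness of $(S,\lambda\Gamma\vert_S)$ at $p$, and $\Gamma\vert_S$ is (asymptotically) a basis type divisor of the refinement $W^S_{\bullet,\bullet}$, controlled by the second term of the minimum via the valuative criterion applied on $S$. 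If you want to repair your argument you should replace the two exact identities by this compatible-basis and inversion-of-adjunction step; the Fujita-approximation identification of $W^S_{\bullet,\bullet}$ with the positive parts $P(u)\vert_S$, which you correctly flag as the technical heart, then enters only in expressing $S(W^S_{\bullet,\bullet};F)$ by the displayed integrals.
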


This theorem can be used to show that $\delta_p(X)\geqslant 1$.
However, if $S(W^S_{\bullet,\bullet};F)>A_S(F)$ for at least one prime divisor $F$ over the surface $S$ with $p\in C_F(S)$, then
we cannot use Theorem~\ref{theorem:Hamid-Ziquan-Kento} to prove that $\delta_p(X)\geqslant 1$.
In this case, we use a similar approach to estimate the $\delta$-invariant for prime divisors over $X$
whose centers on $X$ are curves. To do this, let $C$ be an irreducible curve in $S$. Write
$$
N(u)\big\vert_{S}=N^\prime(u)+\mathrm{ord}_{C}\big(N(u)\big\vert_{S}\big)C,
$$
so $N^\prime(u)$ is an effective $\mathbb{R}$-divisor on $S$ whose support does not contain $C$.
For $u\in[0,\tau]$, let
$$
t(u)=\mathrm{sup}\Big\{v\in\mathbb{R}_{\geqslant 0}\ \big\vert\ \text{the divisor  $P(u)\big\vert_{S}-vC$ is pseudo-effective}\Big\}.
$$
For~$v\in[0,t(u)]$, we let $P(u,v)$ be the positive part of the Zariski decomposition of~$P(u)\vert_{S}-vC$,
and we let $N(u,v)$ be the negative part  of the Zariski decomposition of~$P(u)\vert_{S}-vC$. Then
\begin{multline*}
\quad \quad \quad \quad S\big(W^S_{\bullet,\bullet};C\big)=\frac{3}{(-K_X)^3}\int_0^\tau\big(P(u)\cdot P(u)\cdot S\big)\cdot\mathrm{ord}_{C}\big(N(u)\big\vert_{S}\big)du+\\
+\frac{3}{(-K_X)^3}\int_0^\tau\int_0^{t(u)}\big(P(u,v)\big)^2dvdu.\quad \quad \quad \quad
\end{multline*}

\begin{theorem}[{\cite{AbbanZhuang,Book}}]
\label{theorem:Hamid-Ziquan-Kento-1}
Let $E$ be a prime divisor over $X$ such that $C_X(E)=C$. Then
$$
\frac{A_X(E)}{S_X(E)}\geqslant\min\Bigg\{\frac{1}{S_X(S)},\frac{1}{S\big(W^S_{\bullet,\bullet};C\big)}\Bigg\}.
$$
In particular, if $S_X(S)<1$ and $S(W^S_{\bullet,\bullet};C)<1$, then $\beta(E)>0$.
\end{theorem}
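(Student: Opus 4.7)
The plan is to deduce this estimate from the Abban--Zhuang adjunction inequality of \cite{AbbanZhuang}, already invoked in Theorem~\ref{theorem:Hamid-Ziquan-Kento}, by specialising one further step along the admissible flag $X\supset S\supset C$. The machinery produces a lower bound on $A_X(E)/S_X(E)$ as a minimum of two contributions: one coming from the $S$-filtration on $H^0(X,-mK_X)$, and one from the further $C$-filtration on its graded pieces, which live inside refinements of $H^0(S,P(u)|_S)$.

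Concretely, I would first set up the two-step filtered linear series. For $u\in[0,\tau]$ the Zariski decomposition $-K_X-uS=P(u)+N(u)$ defines, after restriction to $S$, a filtered graded linear series whose positive parts $P(u)|_S$ carry a second filtration by $\mathrm{ord}_C$, with positive part $P(u,v)$ for $v\in[0,t(u)]$. The associated two-step Okounkov body computes $S_X(E)$ for any divisor $E$ with $C_X(E)=C$: the first integrand in the definition of $S(W^S_{\bullet,\bullet};C)$ captures the vanishing that $E$ must inherit from $N(u)|_S$ along $C$ (because its centre is exactly $C$), while the double integral of $(P(u,v))^2$ captures the positive contribution from decreasing $P(u)|_S$ by $vC$. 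Applying the refined Abban--Zhuang inequality then yields
$$\frac{A_X(E)}{S_X(E)} \geq \min\left\{\frac{A_X(\mathrm{ord}_S)}{S_X(S)},\ \inf_{F}\frac{A_S(F)}{S(W^S_{\bullet,\bullet};F)}\right\},$$
where $F$ ranges over prime divisors over $S$ whose centre on $S$ is contained in $C$. Smoothness of $S$ inside $X$ forces $A_X(\mathrm{ord}_S)=1$; taking $F=C$ (with $A_S(C)=1$) gives the second term as $1/S(W^S_{\bullet,\bullet};C)$; and for a general $F$ with centre in $C$, using $\mathrm{ord}_F(C)$ as a scaling factor together with the homogeneity of $A_S$ and of $S(W^S_{\bullet,\bullet};\,\cdot\,)$ one checks $A_S(F)/S(W^S_{\bullet,\bullet};F)\geq 1/S(W^S_{\bullet,\bullet};C)$, so the infimum is attained at $F=C$. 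The ``in particular'' clause then follows immediately from $\beta(E)=A_X(E)-S_X(E)$.

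The main technical hurdle will be the bookkeeping on a common birational model dominating both $X$ and the model extracting $E$: one must verify that the Zariski decompositions on $X$ and on $S$ pull back compatibly, and that the homogeneity reduction from a general divisor $F$ over $S$ (with centre inside $C$) to $C$ itself is valid in this two-step flag setting. Both follow the template laid out in \cite[Chapter~1]{Book} but require care with normalisations; once in place, the result is a direct specialisation of Theorem~\ref{theorem:Hamid-Ziquan-Kento} to the flag $(S,C)$ rather than to a point $p\in S$.
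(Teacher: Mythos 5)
Your overall strategy --- running the Abban--Zhuang adjunction along the flag $X\supset S\supset C$ --- is the right one, and it is essentially how the cited references prove this statement; but the reduction in your second paragraph contains a genuine error. The adjunction inequality of \cite{AbbanZhuang} (the same one underlying Theorem~\ref{theorem:Hamid-Ziquan-Kento}) bounds $A_X(E)/S_X(E)$ by a minimum whose second term is an infimum over prime divisors $F$ over $S$ whose centre on $S$ \emph{contains} the centre of $E$, not is contained in it. You have reversed the containment, and you then need the claim that every $F$ with centre inside $C$ satisfies $A_S(F)/S(W^S_{\bullet,\bullet};F)\geqslant 1/S(W^S_{\bullet,\bullet};C)$ ``by homogeneity''. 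This is false: take $F$ the exceptional divisor of the blowup of $S$ at a point $p\in C$; then $A_S(F)=2$ and $S(W^S_{\bullet,\bullet};F)\geqslant S(W^S_{\bullet,\bullet};C)$ (since $\mathrm{ord}_F\geqslant\mathrm{ord}_C$ on effective divisors through $p$), but nothing forces $S(W^S_{\bullet,\bullet};F)\leqslant 2\,S(W^S_{\bullet,\bullet};C)$, so the ratio can drop below $1/S(W^S_{\bullet,\bullet};C)$. Indeed, if your reduction were valid, the refined statements in Theorems~\ref{theorem:Hamid-Ziquan-Kento-2} and~\ref{theorem:Hamid-Ziquan-Kento-3} would be superfluous; the paper introduces them precisely because divisors centred at points of $C$ are \emph{not} controlled by $S(W^S_{\bullet,\bullet};C)$ alone.

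The correct argument is in fact shorter. Apply the adjunction with $Z=C$: any prime divisor $F$ over $S$ with $C\subseteq C_S(F)$ has centre an irreducible proper subvariety of the surface $S$ containing the curve $C$, so $C_S(F)=C$ and hence $\mathrm{ord}_F=\mathrm{ord}_C$ (a divisorial valuation on a surface with one-dimensional centre is the order of vanishing along that curve). The infimum in the second term is therefore taken over the single valuation $\mathrm{ord}_C$, with $A_S(C)=1$, giving exactly $1/S(W^S_{\bullet,\bullet};C)$; and $A_X(S)=1$ simply because $S$ is a prime divisor on the smooth threefold $X$ (smoothness of $S$ itself is irrelevant here). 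No homogeneity argument or extra bookkeeping on a common model is needed beyond what already goes into Theorem~\ref{theorem:Hamid-Ziquan-Kento}. Your ``in particular'' clause is fine as stated.
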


Now, we suppose, in addition, that $C$ is smooth and $p\in C$. Then, following \cite{AbbanZhuang,Book}, we let
$$
F_p\big(W_{\bullet,\bullet,\bullet}^{S,C}\big)=\frac{6}{(-K_X)^3} \int_0^\tau\int_0^{t(u)}\big(P(u,v)\cdot C\big)\cdot \mathrm{ord}_p\big(N^\prime(u)\big|_C+N(u,v)\big|_C\big)dvdu
$$
and
$$
S\big(W_{\bullet, \bullet,\bullet}^{S,C};p\big)=\frac{3}{(-K_X)^3}\int_0^\tau\int_0^{t(u)}\big(P(u,v)\cdot C\big)^2dvdu+F_p\big(W_{\bullet,\bullet,\bullet}^{S,C}\big).
$$
We have the following estimate:

\begin{theorem}[{\cite{AbbanZhuang,Book}}]
\label{theorem:Hamid-Ziquan-Kento-2}
One has
$$
\delta_p(X)\geqslant\min\Bigg\{\frac{1}{S_X(S)},\frac{1}{S\big(W^S_{\bullet,\bullet};C\big)},\frac{1}{S\big(W_{\bullet, \bullet,\bullet}^{S,C};p\big)}\Bigg\}.
$$
In particular, if $S_X(S)<1$, $S(W^S_{\bullet,\bullet};C)<1$ and $S(W_{\bullet, \bullet,\bullet}^{S,C};p)<1$, then $\delta_p(X)>1$.
\end{theorem}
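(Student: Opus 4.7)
The plan is to derive this estimate as an application of the Abban--Zhuang adjunction machinery \cite{AbbanZhuang} to the complete flag $X \supset S \supset C \ni p$, combining two successive refinements with the trivial estimate along $S$ itself. Recall that $\delta_p(X) = \inf_{E} A_X(E)/S_X(E)$, the infimum taken over prime divisors $E$ over $X$ with $p \in C_X(E)$, so it suffices to bound this ratio from below by the claimed minimum for every such $E$.

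First I would dispose of the case that $E$ is (a positive multiple of) $\mathrm{ord}_S$: then $A_X(E)=1$ and $S_X(E)=S_X(S)$, yielding the first term $1/S_X(S)$. Otherwise the center of $E$ on $X$ is a proper subset of $S$, and I would invoke the first refinement step of Abban--Zhuang. One filters the section ring $\bigoplus_m H^0(X, -mK_X)$ by the order of vanishing along $S$; integrating the Zariski decomposition $-K_X - uS = P(u) + N(u)$ for $u \in [0, \tau]$ produces the multigraded linear series $W^S_{\bullet,\bullet}$ on $S$, whose $S$-invariant along a prime divisor $F$ over $S$ is exactly the integral defining $S(W^S_{\bullet,\bullet};F)$. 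The output (which is essentially Theorem~\ref{theorem:Hamid-Ziquan-Kento}) is
$$
\frac{A_X(E)}{S_X(E)} \geq \min\left\{\frac{1}{S_X(S)},\ \inf_{F}\frac{A_S(F)}{S(W^S_{\bullet,\bullet};F)}\right\},
$$
with $F$ ranging over prime divisors over $S$ whose center contains $p$.

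If some such $F$ has center equal to $C$, then the choice $F = C$ gives $A_S(F) = 1$ and the second term becomes $1/S(W^S_{\bullet,\bullet};C)$, the middle quantity in the statement. If every such $F$ has center a proper closed subset of $C$ (necessarily containing $p$), then $F$ is not proportional to $\mathrm{ord}_C$, so I would apply the same adjunction routine a second time, now to the flag $S \supset C$ with input $W^S_{\bullet,\bullet}$ in place of $-K_X$. Using the Zariski decomposition $P(u)|_S - vC = P(u,v) + N(u,v)$ and unravelling the resulting double integral --- whose contribution from the negative part restricted to $p$ is exactly $F_p(W^{S,C}_{\bullet,\bullet,\bullet})$ --- produces
$$
\frac{A_S(F)}{S(W^S_{\bullet,\bullet};F)} \geq \min\left\{\frac{1}{S(W^S_{\bullet,\bullet};C)},\ \frac{A_C(p)}{S(W^{S,C}_{\bullet,\bullet,\bullet};p)}\right\}.
$$
Since $C$ is smooth at $p$ one has $A_C(p) = 1$, so the second term equals $1/S(W^{S,C}_{\bullet,\bullet,\bullet};p)$, the third quantity in the statement. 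Assembling the three cases yields the claimed lower bound on $\delta_p(X)$; the ``in particular'' clause is then immediate.

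The main technical obstacle is verifying that each refinement step produces a bona fide filtration whose $S$-invariant agrees with the explicit integral expression written immediately before the theorem --- this is essentially Okounkov-body bookkeeping, ensuring that integrating $(P(u,v)\cdot C)^2$ and $(P(u,v)\cdot C)\cdot\mathrm{ord}_p(\cdot)$ against the flag $p \in C \subset S$ reproduces the successive restricted volumes and that the negative parts $N(u)|_S$ and $N(u,v)|_C$ enter with the correct signs. All of this is carried out in \cite{AbbanZhuang}, and the two-step composition above is the specialization of their general inductive theorem to a flag of length three.
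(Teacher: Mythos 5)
The paper gives no proof of this statement: it is quoted verbatim from \cite{AbbanZhuang,Book}, and your outline is exactly the standard two-step Abban--Zhuang adjunction along the flag $p\in C\subset S\subset X$ that those references carry out, so the approach matches. One small imprecision worth flagging: in the second step the dichotomy should be taken divisor by divisor ($F=\mathrm{ord}_C$ versus $F\neq\mathrm{ord}_C$), and for $F\neq\mathrm{ord}_C$ the center $C_S(F)$ need not be a subset of $C$ at all (it could be another curve through $p$); this does not affect the argument, since the refinement inequality
$$
\frac{A_S(F)}{S\big(W^S_{\bullet,\bullet};F\big)}\geqslant\min\Bigg\{\frac{1}{S\big(W^S_{\bullet,\bullet};C\big)},\frac{A_C(p)}{S\big(W^{S,C}_{\bullet,\bullet,\bullet};p\big)}\Bigg\}
$$
that you invoke holds for every prime divisor $F$ over $S$ whose center contains $p$, exactly as in \cite[Theorem~3.3]{AbbanZhuang} and \cite[Corollary~1.108]{Book}.
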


Now, let $f\colon\widetilde{S}\to S$ be a blowup of the surface $S$ at the point $p$, let $E$ be the $f$-exceptional curve,
and let $\widetilde{N}^\prime(u)$ be the proper transform on $\widetilde{S}$ of the divisor $N(u)\vert_{S}$.
For $u\in[0,\tau]$, we let
$$
\widetilde{t}(u)=\sup\Big\{v\in \mathbb{R}_{\geqslant 0} \ \big| \ f^*\big(P(u)|_S\big)-vE \text{ is pseudo-effective}\Big\}.
$$
For $v\in [0,\widetilde{t}(u)]$, let $\widetilde{P}(u,v)$ be the positive part of the Zariski decomposition of $f^*(P(u)|_S)-vE$,
and let $\widetilde{N}(u,v)$ be the negative part of this~Zariski decomposition. Set
$$
S\big(W_{\bullet,\bullet}^{S};E\big)=\frac{3}{(-K_X)^3}\int_0^\tau\big(P(u)\cdot P(u)\cdot S\big)\cdot\mathrm{ord}_{E}\big(f^*(N(u)\vert_{S})\big)du+\frac{3}{(-K_X)^3}\int_0^\tau\int_0^{\widetilde{t}(u)}\big(\widetilde{P}(u,v)\big)^2dvdu.\quad \quad \quad \quad
$$
Finally, for every point $q\in E$, we set
$$
F_q\big(W_{\bullet,\bullet,\bullet}^{S,E}\big)=
\frac{6}{(-K_X)^3}\int_0^\tau\int_0^{\widetilde{t}(u)}\big(\widetilde{P}(u,v)\cdot E\big)\times\mathrm{ord}_q\big(\widetilde{N}^\prime(u)\big|_E+\widetilde{N}(u,v)\big|_{E}\big)dvdu
$$
and
$$
S\big(W_{\bullet, \bullet,\bullet}^{S,E};q\big)=
\frac{3}{(-K_X)^3}\int_0^\tau\int_0^{\widetilde{t}(u)}\big(\widetilde{P}(u,v)\cdot E\big)^2dvdu+
F_q\big(W_{\bullet,\bullet,\bullet}^{S,E}\big).
$$
If $p\not\in\mathrm{Supp}(N(u))$ for every $u\in[0,\tau]$, the formulae for $S(W_{\bullet, \bullet}^{S};E)$ and
$F_q(W_{\bullet,\bullet,\bullet}^{S,E})$ simplify as
\begin{align*}
S\big(W_{\bullet,\bullet}^{S};E\big)&=\frac{3}{(-K_X)^3}\int_0^\tau\int_0^{\widetilde{t}(u)}\big(\widetilde{P}(u,v)\big)^2dvdu,\\
F_q\big(W_{\bullet,\bullet,\bullet}^{S,E}\big)&=\frac{6}{(-K_X)^3}\int_0^\tau\int_0^{\widetilde{t}(u)}\big(\widetilde{P}(u,v)\cdot E\big)\times\mathrm{ord}_q\big(\widetilde{N}(u,v)\big|_{E}\big)dvdu.
\end{align*}
Moreover, Theorem~\ref{theorem:Hamid-Ziquan-Kento-2} can be generalized as follows:

\begin{theorem}[{\cite{AbbanZhuang,Book}}]
\label{theorem:Hamid-Ziquan-Kento-3}
One has
$$
\delta_p(X)\geqslant\min\Bigg\{\frac{1}{S_X(S)},\frac{2}{S\big(W^S_{\bullet,\bullet};E\big)},\inf_{q\in E}\frac{1}{S\big(W_{\bullet, \bullet,\bullet}^{S,E};q\big)}\Bigg\}.
$$
\end{theorem}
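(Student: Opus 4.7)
The plan is to invoke Theorem~\ref{theorem:Hamid-Ziquan-Kento} and then estimate the infimum over prime divisors $F/S$ appearing there by a further iteration of the Abban--Zhuang adjunction along the blowup $f\colon \widetilde S\to S$ at $p$.

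First I would apply Theorem~\ref{theorem:Hamid-Ziquan-Kento}, which reduces the proof to controlling $A_S(F)/S(W^S_{\bullet,\bullet};F)$ for every prime divisor $F$ over $S$ with $p\in C_S(F)$. Any such divisor either equals the $f$-exceptional curve $E$, or is a prime divisor over $\widetilde S$ whose center on $\widetilde S$ is a single point $q\in E$. The first case contributes exactly the middle term $2/S(W^S_{\bullet,\bullet};E)$ since $A_S(E)=2$. For the second case, I would apply the Abban--Zhuang adjunction a second time, now to the flag $E\subset\widetilde S$, to obtain
$$
\frac{A_S(F)}{S(W^S_{\bullet,\bullet};F)}\geqslant\min\Bigl\{\frac{2}{S(W^S_{\bullet,\bullet};E)},\ \frac{1}{S(W^{S,E}_{\bullet,\bullet,\bullet};q)}\Bigr\},
$$
where the second ratio reduces to $1$ over the refined $S$-invariant because $E$ is one-dimensional and $q$ is a smooth point of it. Taking the infimum over $q\in E$ and combining with the first case delivers the lower bound in the statement.

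The computational core of the argument lies in checking that the abstract $S$-invariants produced by Abban--Zhuang agree with the explicit integrals in the statement. One performs iterated Zariski decomposition: first on $X$ for $-K_X-uS$ (giving $P(u)$ and $N(u)$), then on $\widetilde S$ for $f^*(P(u)|_S)-vE$ (giving $\widetilde P(u,v)$ and $\widetilde N(u,v)$), and finally by restriction to $E$. The two summands in the displayed formula for $S(W^S_{\bullet,\bullet};E)$ record, respectively, the contribution of the pulled-back fixed part $\mathrm{ord}_E(f^*N(u)|_S)$ and of the moving part $\widetilde P(u,v)$; the vanishing of the former under the hypothesis $p\notin\mathrm{Supp}(N(u))$ gives the simplified formula mentioned in the paper. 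Similarly, the two summands defining $S(W^{S,E}_{\bullet,\bullet,\bullet};q)$ arise from the positive and negative parts on $E$, with $F_q$ collecting the contribution of $\widetilde N^\prime(u)|_E+\widetilde N(u,v)|_E$ at $q$.

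The main obstacle is this last bookkeeping step: matching the filtration-theoretic $S$-invariants with the Zariski-decomposition integrals, and in particular making sure that both the pulled-back and the newly created negative parts contribute correctly to $F_q$. Once these identifications are in place the theorem follows from two applications of the Abban--Zhuang adjunction of \cite{AbbanZhuang}, first from $X$ to $S$ and then from $\widetilde S$ to $E$, along the flag $p\in E\subset\widetilde S\to S\subset X$.
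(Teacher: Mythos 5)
The paper offers no proof of Theorem~\ref{theorem:Hamid-Ziquan-Kento-3}; it is imported from \cite{AbbanZhuang,Book}. Your outline reconstructs the argument of those references: Theorem~\ref{theorem:Hamid-Ziquan-Kento} reduces everything to the infimum of $A_S(F)/S(W^S_{\bullet,\bullet};F)$ over prime divisors $F$ over $S$ with $p\in C_S(F)$, and a second Abban--Zhuang adjunction along the flag $q\in E\subset\widetilde{S}$, with $A_{\widetilde{S}}(E)=2$ and $A_E(q)=1$, produces the two remaining terms. This is the right route, and your identification of the Zariski-decomposition integrals with the abstract invariants (including the role of $\widetilde{N}^\prime(u)\vert_E+\widetilde{N}(u,v)\vert_E$ in $F_q$) is accurate.

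Two points need tightening before this is a proof. First, your dichotomy for prime divisors $F$ over $S$ with $p\in C_S(F)$ is incomplete: if $C_S(F)$ is a curve through $p$, then $C_{\widetilde{S}}(F)$ is its strict transform, a curve that meets $E$ but is neither $E$ nor a point of $E$, so such $F$ fall into neither of your two cases as stated. The repair is that the adjunction for the flag $E\subset\widetilde{S}$ applies uniformly to every $F$ whose center on $\widetilde{S}$ meets $E$ --- one takes $q$ to be any point of $C_{\widetilde{S}}(F)\cap E$ --- so no case division is needed at all. Second, that adjunction bounds $A_{\widetilde{S}}(F)/S(W^S_{\bullet,\bullet};F)$ from below, whereas the quantity fed into Theorem~\ref{theorem:Hamid-Ziquan-Kento} is $A_S(F)/S(W^S_{\bullet,\bullet};F)$; one must note that $A_S(F)=A_{\widetilde{S}}(F)+\mathrm{ord}_F(E)\geqslant A_{\widetilde{S}}(F)$, so the bound transfers in the correct direction. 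With these two adjustments your sketch is a correct proof.
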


\section{Singular del Pezzo surfaces}
\label{section:dP-surfaces}

The overall goal of this section is to prove Theorem~\ref{theorem:1}.
%Although the proof is conceptual and does not use any classification, we first give a classification based proof for a more restrictive case in Subsection\,\ref{section:Du-Val}, that is for del Pezzo surfaces with Du Val singularities. A key feature of this analysis is the wealth of explicit examples it provides. We prove Theorem~\ref{theorem:1} in two steps based on the rank of the Picard group in Subsections\,\ref{subsection:rank-1} and \ref{subsection:proof}. Once again, we give an independent proof for the restrictive case of real del Pezzo surfaces in Subsection\,\ref{subsection:real-dP} which also provides explicit examples for further investigation.
We first gather some technical results about birational invariants of del Pezzo surfaces that will be used in the proof.

\subsection{On $\alpha$-invariants of del Pezzo surfaces}
\label{section:delPezzos}

Let $\Bbbk$ be a subfield of $\mathbb{C}$,
and let $S$ be a del Pezzo surface defined over $\Bbbk$ with quotient singularities.
Recall that
$$
\alpha(S)=\mathrm{sup}\left\{\lambda\in\mathbb{R}_{\geqslant 0}\ \left|\ \aligned
&\text{the log pair}\ \left(S, \lambda D\right)\ \text{has log canonical singularities for}\\
&\text{every effective $\mathbb{Q}$-divisor $D$ on $S$ such that}\ D\sim_{\mathbb{Q}} -K_{S}\\
\endaligned\right.\right\}.
$$

\begin{lemma}
\label{lemma:alpha}
Suppose that $S(\Bbbk)=\varnothing$ and $\alpha(S)<1$.
Then $\alpha(S)\in\mathbb{Q}_{>0}$, and the surface $S$ contains a smooth geometrically irreducible and geometrically rational curve $C$ such that
$$
\frac{1}{\alpha(S)}=\mathrm{sup}\big\{u\in\mathbb{R}_{\geqslant 0}\ \big\vert\ \text{the divisor}\ -K_S-uC\ \text{is pseudo-effective}\big\},
$$
and $-K_S\sim_{\mathbb{Q}}\frac{1}{\alpha(S)}C+\Delta$,
where $\Delta$ is an effective $\mathbb{Q}$-divisor on $S$ such that
$(S,C+\alpha(S)\Delta)$ has purely log terminal singularities.
\end{lemma}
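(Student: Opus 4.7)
The plan is to realize $\alpha(S)$ as a log canonical threshold of a well-chosen boundary $D\sim_{\mathbb{Q}}-K_S$, and then use Koll\'ar--Shokurov connectedness together with standard tie-breaking to extract the curve $C$ from the non-klt locus.

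First I would establish the existence of a computing divisor: there is an effective $\mathbb{Q}$-divisor $D\sim_{\mathbb{Q}}-K_S$ with $\mathrm{lct}(S,D)=\alpha(S)$, and in particular $\alpha(S)\in\mathbb{Q}_{>0}$. For del Pezzo surfaces with quotient singularities this can be deduced from boundedness of log del Pezzos together with ACC for log canonical thresholds (Hacon--McKernan--Xu), or by a direct surface argument on a minimal log resolution. Since $0<\alpha(S)<1$, the pair $(S,\alpha(S)D)$ is log canonical but not klt, and the divisor $-(K_S+\alpha(S)D)=(1-\alpha(S))(-K_S)$ is ample. By the Koll\'ar--Shokurov connectedness theorem the non-klt locus of $(S,\alpha(S)D)$ is connected; being canonically defined and $\mathrm{Gal}(\overline{\Bbbk}/\Bbbk)$-invariant, it descends to $\Bbbk$. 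The hypothesis $S(\Bbbk)=\varnothing$ forces this locus to contain a curve, because a zero-dimensional connected reduced $\Bbbk$-subscheme of $S$ would be a single $\Bbbk$-point. Applying standard tie-breaking (replacing $D$ by $(1-\epsilon)D+\epsilon H$ for a suitable $\Bbbk$-rational divisor $H$ and rescaling), I may assume that the minimal lc center of the perturbed pair is a single irreducible curve $C$ defined over $\Bbbk$. Geometric irreducibility of $C$ follows by the same mechanism: if $C_{\mathbb{C}}$ split into Galois-conjugate components, their pairwise intersections would be Galois-invariant zero-dimensional schemes, producing either a $\Bbbk$-rational point or a disconnected non-klt locus, both contradictions.

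Writing $D=\frac{1}{\alpha(S)}C+\Delta$ with $\Delta$ an effective $\mathbb{Q}$-divisor not containing $C$, one obtains $-K_S\sim_{\mathbb{Q}}\frac{1}{\alpha(S)}C+\Delta$, and the tie-broken pair $(S,C+\alpha(S)\Delta)$ is plt along $C$. In particular $C$ is normal, hence smooth, and adjunction gives $K_C+\mathrm{Diff}=(K_S+C+\alpha(S)\Delta)|_C$; since $-(K_S+C+\alpha(S)\Delta)=(1-\alpha(S))(-K_S)$ is ample and $\mathrm{Diff}\geq 0$, the divisor $-K_C$ is ample, so $C_{\mathbb{C}}\simeq\mathbb{P}^1$ and $C$ is geometrically rational. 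The threshold identity is then routine: $-K_S-\frac{1}{\alpha(S)}C\sim_{\mathbb{Q}}\Delta\geq 0$ shows the supremum in the statement is at least $1/\alpha(S)$, while any pseudo-effective $-K_S-uC$ with $u>1/\alpha(S)$ would (after passing to a positive multiple and representing by an effective divisor on the surface) produce $D'\sim_{\mathbb{Q}}-K_S$ with $\mathrm{mult}_C(D')\geq u$, whence $\mathrm{lct}(S,D')\leq 1/u<\alpha(S)$, a contradiction.

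The main obstacle is performing the tie-breaking step in a Galois-equivariant manner so that after the perturbation the minimal lc center is still a \emph{single} geometrically irreducible curve defined over $\Bbbk$; this is where the emptiness of $S(\Bbbk)$ is crucially used, since it rules out precisely the zero-dimensional lc centers that would otherwise obstruct the argument. A secondary technical point is the rationality of $\alpha(S)$ itself, which relies on the bounded geometry of log del Pezzo surfaces with quotient singularities rather than on general principles.
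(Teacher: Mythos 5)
Your proof follows essentially the same route as the paper's: realize $\alpha(S)$ as $\mathrm{lct}(S,D)$ for some $D\sim_{\mathbb{Q}}-K_S$, use Koll\'ar--Shokurov connectedness together with $S(\Bbbk)=\varnothing$ and tie-breaking to force the non-klt locus to be a single geometrically irreducible curve that is a minimal log canonical center, and then deduce smoothness, plt-ness, the multiplicity/threshold identity, and geometric rationality via (sub)adjunction. The only point worth adjusting is your justification for the existence of a computing divisor and the rationality of $\alpha(S)$: appealing to boundedness of log del Pezzo surfaces is not quite the right tool (surfaces with arbitrary quotient singularities do not form a bounded family), whereas the paper either cites Birkar or notes that connectedness yields $\alpha(S)=\inf_C 1/\tau(C)$, where the infimum is a minimum because any curve with $\tau(C)>1$ satisfies $-K_S\cdot C<K_S^2$ and hence ranges over finitely many classes.
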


\begin{proof}
Arguing as in the proof of \cite[Proposition~3.4]{Birkar2022}, we see that
\begin{equation}
\label{equation:Birkar}
\alpha(S)=\mathrm{lct}(S,D)
\end{equation}
for some effective $\mathbb{Q}$-divisor $D\sim_{\mathbb{Q}} -K_S$ on the surface $S$.
One can~deduce \eqref{equation:Birkar} without using the proof of \cite[Proposition~3.4]{Birkar2022}, since
it follows from Koll\'ar--Shokurov connectedness theorem \cite[Corollary~5.49]{KollarMori1998} that
$$
\alpha(S)=\mathrm{inf}\Big\{\frac{1}{\tau(C)}\ \big\vert\ C\ \text{is an irreducible curve in}\ S\Big\},
$$
where $\tau(C)=\mathrm{sup} \big\{u\in\mathbb{R}_{\geqslant 0}\ \big\vert\  \text{the divisor}\ -K_S-uC\ \text{is pseudo-effective}\big\}$.

The log pair $(S,\alpha(S)D)$ has log canonical singularities, but it is not klt (Kawamata log terminal), hence the locus $\mathrm{Nklt}(S,\alpha(S)D)$ is not empty
and it is connected by Koll\'ar--Shokurov connectedness.
Since $S(\Bbbk)=\varnothing$, we conclude that  $\mathrm{Nklt}(S,\alpha(S)D)$ is a connected union of irreducible curves and, in particular, the surface $S$ contains a ${\Bbbk}$-irreducible curve $C$ with
$
D=\tau C+\Delta
$, where $\tau=\frac{1}{\alpha(S)}$ and $\Delta$ is an effective $\mathbb{Q}$-divisor whose support does not contain $C$. This gives
$$
\tau=\mathrm{sup}\big\{u\in\mathbb{R}_{\geqslant 0}\ \big\vert\  \text{the divisor}\ -K_S-uC\ \text{is pseudo-effective}\big\}.
$$

If $C$ is not a minimal log canonical center of the log pair $(S,\alpha(S)D)=(S,C+\alpha(S)\Delta)$,
then using Kawamata--Shokurov trick \cite[Lemma~2.4.10]{CheltsovShramov2015}, also known as \emph{tie breaking},
we can replace the divisor $D$ by an effective $\mathbb{Q}$-divisor
$$
D^\prime\sim_{\mathbb{Q}}(1+\epsilon)\big(-K_S\big)
$$
for some sufficiently small $\epsilon\in\mathbb{Q}_{>0}$ such that the log pair $(S,\alpha(S)D^\prime)$ has log canonical singularities
but $\mathrm{Nklt}(S,\alpha(S)D^\prime)$ consists of finitely many points.
Now, using Koll\'ar--Shokurov connectedness, we obtain a contradiction with the assumption that $S(\Bbbk)=\varnothing$.
Hence, the curve $C$ is a minimal log canonical center of the log pair $(S,C+\alpha(S)\Delta)$,
which implies that $C$ is smooth \cite{Kawamata1998}.

Now, using properties of log canonical centers \cite{Kawamata1997,Kawamata1998},
we conclude that $\mathrm{Nklt}(S,\alpha(S)D)=C$,
which implies that the curve $C$ is connected
and the log pair  $(S,\alpha(S)D)$ is purely log terminal.
Hence, the curve $C$ is geometrically irreducible.
Finally, using Kawamata's subadjunction theorem, we see that the curve $C$ is geometrically rational.
\end{proof}

\begin{corollary}
\label{corollary:alpha-conic-bundles}
Suppose that $S(\Bbbk)=\varnothing$, the rank of the Picard group of $S$ is~$2$,
and $\mathrm{NE}(S)$ is generated by irreducible curves $C_1$ and $C_2$
such that $C_1^2=C_2^2=0$ and $C_1\cdot C_2>0$.
Then $\alpha(S)\geqslant\frac{1}{2}$.
\end{corollary}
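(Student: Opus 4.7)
I argue by contradiction, assuming $\alpha(S)<\frac{1}{2}$. Lemma~\ref{lemma:alpha} then provides a smooth, geometrically irreducible and geometrically rational $\Bbbk$-curve $C\subset S$ together with an effective $\mathbb{Q}$-divisor $\Delta$ on $S$ with $C\not\subset\mathrm{Supp}(\Delta)$ satisfying
$$
-K_S\sim_{\mathbb{Q}}\tau C+\Delta,\qquad \tau:=\frac{1}{\alpha(S)}>2.
$$
Since $S(\Bbbk)=\varnothing$ forces $C(\Bbbk)=\varnothing$, the curve $C$ is a pointless conic over $\Bbbk$; its Picard group is $2\mathbb{Z}$, so every Cartier $\Bbbk$-divisor on $C$ has even degree.

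Next I exploit the numerical structure. The hypotheses $C_1^2=C_2^2=0$ and $m:=C_1\cdot C_2>0$ imply that both $C_1$ and $C_2$ are nef, so $\mathrm{NE}(S)$, the nef cone, and the pseudo-effective cone all coincide with $\mathbb{R}_{\geqslant 0}[C_1]+\mathbb{R}_{\geqslant 0}[C_2]$. Writing
$$
[-K_S]=d_1[C_1]+d_2[C_2],\qquad [C]=c_1[C_1]+c_2[C_2]
$$
with $c_i\geqslant 0$ and $d_i>0$, the pseudo-effectivity of $-K_S-\tau C$, combined with the nefness of the $C_i$, yields $\tau c_i\leqslant d_i$ for $i=1,2$.

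The arithmetic input from $S(\Bbbk)=\varnothing$ enters twice. The zero-dimensional $\Bbbk$-subscheme $C_1\cap C_2\subset S$ of length $m$ has no $\Bbbk$-point, so $m\geqslant 2$. Moreover, taking the $C_i$ to be Cartier representatives of their classes (for instance, as scheme-theoretic fibers of the Mori fibrations $\pi_i\colon S\to B_i$ contracting the rays $\mathbb{R}_{\geqslant 0}[C_i]$), the restriction $C_{3-i}|_C$ is a Cartier $\Bbbk$-divisor on the pointless conic $C$ of degree $c_im$, so $c_im$ is a non-negative even integer. On the other hand, arithmetic adjunction applied to the irreducible curve $C_i$, using $C_i^2=0$ and $p_a(C_i)\geqslant 0$ together with the non-negative different correction on the possibly singular surface $S$, gives $(K_S+C_i)\cdot C_i\geqslant -2$, hence $-K_S\cdot C_i\leqslant 2$, i.e., $d_1,d_2\leqslant 2/m$.

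The contradiction follows: combining $\tau c_i\leqslant d_i\leqslant 2/m$ with $\tau>2$ forces $c_i<1/m$, and hence $c_im<1<2$. By the evenness of $c_im$, we must have $c_im=0$, so $c_i=0$ for $i=1,2$. But then $[C]=0$, absurd. The step that I expect to require the most care is the adjunction inequality $-K_S\cdot C_i\leqslant 2$: in the presence of quotient singularities of $S$ lying on $C_i$, the different $\mathrm{Diff}_{C_i}\geqslant 0$ in adjunction preserves this inequality in the correct direction, and the possibly non-Cartier nature of a given $C_i$ requires working with appropriate Cartier fiber representatives when invoking the evenness constraint on the pointless conic.
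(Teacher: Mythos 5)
Your overall strategy is the one the paper uses: invoke Lemma~\ref{lemma:alpha} to produce $C$ and $\Delta$ with $-K_S\sim_{\mathbb{Q}}\tau C+\Delta$ and $\tau>2$, then derive a numerical contradiction from the two fiber-type extremal rays. The paper's execution is shorter: it assumes $C\cdot C_1>0$, passes to $\overline{\Bbbk}$, takes a general fiber $F\simeq\mathbb{P}^1$ of the fibration given by $|nC_1|$, and computes $2=-K_S\cdot F\geqslant\tau\,(C\cdot F)\geqslant\tau>2$, using only that $C\cdot F$ is a positive integer; no adjunction on the generators and no arithmetic of pointless conics is needed beyond Lemma~\ref{lemma:alpha} itself.

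The gap in your version is exactly the step you flagged, but for a different reason than the one you guard against. The inequality $(K_S+C_i)\cdot C_i\geqslant-2$ survives the quotient singularities (the different is effective), but it also requires $C_i$ to be geometrically connected, and the hypothesis only says $C_1,C_2$ are irreducible over $\Bbbk$. A generator of a fiber-type ray can be the fiber of the conic fibration $\pi_1\colon S\to B_1$ over a closed point of degree $e>1$: this is an irreducible $\Bbbk$-curve whose base change is a disjoint union of $e$ conjugate smooth fibers, so $C_1^2=0$ but $p_a(C_1)=1-e<0$ and $-K_S\cdot C_1=2e$. Your bound $d_i\leqslant 2/m$ then fails; you only get $c_im<e$, and parity no longer forces $c_im=0$ once $e\geqslant 3$ (possible when $B_1\simeq\mathbb{P}^1$ while every fiber over a $\Bbbk$-point is a pointless conic, so $S(\Bbbk)=\varnothing$ still holds). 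The secondary issue is of the same nature: the Cartier fiber you restrict to $C$ represents only a positive rational multiple $\lambda[C_{3-i}]$ of the class you decompose against, so what you actually learn is that $\lambda c_im$ is an even integer, not $c_im$. Both defects disappear if you run your computation with general geometric fibers $F_1,F_2$ of the two fibrations in place of $C_1,C_2$: then $-K_S\cdot F_i=2$ exactly, $F_i$ is Cartier and misses $\mathrm{Sing}(S)$, and $0\leqslant C\cdot F_i\leqslant 2/\tau<1$ forces $C\cdot F_i=0$ for both $i$, hence $[C]=0$ since the intersection form on the span of $[F_1],[F_2]$ is nondegenerate. That repaired argument is essentially the paper's, applied to both rays instead of one.
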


\begin{proof}
Suppose that $\alpha(S)<\frac{1}{2}$.
By Lemma~\ref{lemma:alpha},
there is a geometrically irreducible curve $C\subset S$ such that
$$
-K_S\sim_{\mathbb{Q}}\frac{1}{\alpha(S)}C+\Delta,
$$
where $\Delta$ is an effective $\mathbb{Q}$-divisor on $S$.
Without loss of generality, we may assume that $C\cdot C_1>0$.

Observe that $|nC_1|$ is base point free for $n\gg 0$.
Moreover, replacing $\Bbbk$ by its algebraic closure, we may assume that $|nC_1|$ gives a morphism $\pi\colon S\to\mathbb{P}^1$
such that its general fiber $F\simeq\mathbb{P}^1$. Then
$$
2=-K_S\cdot F=\frac{1}{\alpha(S)} C\cdot F+\Delta\cdot F\geqslant \frac{1}{\alpha(S)} C\cdot F\geqslant \frac{1}{\alpha(S)} >2,
$$
which is absurd.
\end{proof}

Let $C$ be a geometrically irreducible curve in $S$ and set
$$
\tau=\mathrm{sup}\big\{u\in\mathbb{R}_{\geqslant 0}\ \big\vert\  \text{the divisor}\ -K_S-uC\ \text{is pseudo-effective}\big\}.
$$
Then $\tau\in\mathbb{Q}_{>0}$ and
$
-K_{S}\sim_{\mathbb{Q}}\tau C+\Delta
$, where $\Delta$ is an effective $\mathbb{Q}$-divisor on $S$ with $C \not\subseteq \mathrm{Supp} (\Delta )$.
In particular, we see that $\alpha(S)\leqslant\frac{1}{\tau}$.
Set
$$
\beta(C)=1-\frac{1}{(-K_S)^2}\int\limits_{0}^{\tau}\mathrm{vol}\big(-K_S-uC\big)du.
$$

\begin{lemma}
\label{lemma:Kento}
Suppose that $\beta(C)\leqslant 0$. Then the following assertions hold:
\begin{itemize}
\item[$\mathrm{(1)}$] if $C^2>0$, then $\tau>2$;

\item[$\mathrm{(2)}$] if $C^2\geqslant 0$, then $\tau\geqslant 2$;

\item[$\mathrm{(3)}$] if $C^2=0$ and $\tau=2$, then
$
-K_S\sim_{\mathbb{Q}}2C+aZ
$, for $a\in\mathbb{Q}_{>0}$ and an irreducible curve $Z\subset S$ with $Z^2=0$.
\end{itemize}
\end{lemma}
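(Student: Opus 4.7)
The plan is to leverage two integral identities for the volume $v(u) := \operatorname{vol}(-K_S - uC)$ together with the hypothesis $\beta(C) \le 0$. Set $a = (-K_S)^2$ and $b = (-K_S) \cdot C$, and for $u \in [0, \tau]$ write the Zariski decomposition $-K_S - uC = P(u) + N(u)$, so $v(u) = P(u)^2$. Expanding $P(u)^2 = P(u) \cdot (-K_S - uC)$ and using $P(u) \cdot N(u) = 0$ yields the pointwise identity
\[
v(u) + u\,\bigl(P(u) \cdot C\bigr) + N(u) \cdot (-K_S) \;=\; a - ub.
\]
Differentiating $P(u) = -K_S - uC - N(u)$ and using that $P(u)$ is orthogonal to each component of $\operatorname{Supp}(N(u))$ gives $v'(u) = -2\,P(u) \cdot C$; combined with $v(\tau) = 0$, integration by parts shows $\int_0^\tau v(u)\,du = 2\int_0^\tau u\,\bigl(P(u) \cdot C\bigr)\,du$. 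Denoting $J = \int v\,du$, $I = \int u\,(P(u) \cdot C)\,du$, $\delta = \int N(u) \cdot (-K_S)\,du \ge 0$, we obtain $J + I + \delta = \tau a - \tau^2 b/2$ and $J = 2I$; eliminating $I$ gives $J = (2\tau a - \tau^2 b - 2\delta)/3$. The hypothesis $J \ge a$ therefore implies
\[
a(2\tau - 3) \;\ge\; \tau^2 b + 2\delta \;\ge\; \tau^2 b, \qquad (\star)
\]
and in particular $\tau > 3/2$.

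Since $C$ is irreducible with $C^2 \ge 0$, it is nef, and $C \not\subset \operatorname{Supp}(N(u))$ because the intersection matrix on $\operatorname{Supp}(N(u))$ is negative definite; hence $N(u) \cdot C \ge 0$ and so $P(u) \cdot C \le b - uC^2$. Integrating, $I \le \tau^2 b/2 - \tau^3 C^2/3$, whence $J = 2I \le \tau^2 b - 2\tau^3 C^2/3$, and combining with $J \ge a$ gives $a \le \tau^2 b - \tfrac{2}{3}\tau^3 C^2$. Together with $(\star)$, using $\tau > 3/2$, this rearranges to
\[
3b(\tau - 2) \;\ge\; \tau(2\tau - 3)\,C^2.
\]
For $\tau < 2$ and $C^2 \ge 0$ the left-hand side is negative while the right-hand side is non-negative, a contradiction; hence $\tau \ge 2$, proving (2). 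At $\tau = 2$, the inequality reads $0 \ge 2C^2$, so $C^2 \le 0$; thus $C^2 > 0$ forces $\tau > 2$, proving (1).

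For (3), assume $C^2 = 0$ and $\tau = 2$. The preceding inequalities then force equality throughout: $a = 4b$ and $\delta = 0$. Since $-K_S$ is ample and $N(u) \cdot (-K_S) \ge 0$ varies continuously in $u$, $\delta = 0$ implies $N(u) \equiv 0$ on $[0,2]$; in particular $\Delta := -K_S - 2C = P(2)$ is nef and $\Delta^2 = v(2) = 0$. On the rational surface $S$ (where $h^1(\mathcal{O}_S) = 0$), the nef $\mathbb{Q}$-divisor $\Delta$ with $\Delta^2 = 0$ is semi-ample, so $|m\Delta|$ for $m \gg 0$ defines a morphism $\pi \colon S \to \mathbb{P}^1$ whose general fiber $Z$ is an irreducible curve with $Z^2 = 0$. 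Then $\Delta \sim_{\mathbb{Q}} aZ$ for some $a \in \mathbb{Q}_{>0}$, giving $-K_S \sim_{\mathbb{Q}} 2C + aZ$ as required. The main technical point is establishing the derivative formula $v'(u) = -2\,P(u) \cdot C$ and the subsequent integration by parts; both rely on the piecewise-linear variation of the Zariski decomposition of $-K_S - uC$ together with the componentwise orthogonality $P(u) \cdot Z_i = 0$ for $Z_i \subset \operatorname{Supp}(N(u))$, which are standard facts on surfaces with quotient singularities.
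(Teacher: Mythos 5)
Your proof is correct. The paper itself gives no argument here: it simply cites \cite[Lemma~9.7]{Fujita2016}, so your contribution is a self-contained reconstruction of what that lemma delivers in this two-dimensional setting. The two identities you isolate --- $v(u)+u\,(P(u)\cdot C)+N(u)\cdot(-K_S)=a-ub$ from $P(u)\cdot N(u)=0$, and $J=2I$ from $v'(u)=-2\,P(u)\cdot C$ together with $v(\tau)=0$ --- combine cleanly with $J\geqslant a$ to give $a(2\tau-3)\geqslant\tau^2b+2\delta$ and, using nefness of $C$ when $C^2\geqslant 0$, the inequality $3b(\tau-2)\geqslant\tau(2\tau-3)C^2$; I checked the algebra and the sign analysis ($b>0$ forces $\tau>3/2$, so $2\tau-3>0$ and the case split works), and parts (1)--(3) all follow as you claim. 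The equality analysis in (3) is also sound: $\tau=2$, $C^2=0$ squeeze $4b+2\delta\leqslant a\leqslant 4b$, so $\delta=0$, and since $N(u)$ is increasing in $u$ along the ray this gives $N(u)=0$ for $u<2$ and hence nefness of $-K_S-2C$ as a limit of nef classes; semi-ampleness on a Mori dream surface then produces the fibration and the class $aZ$. The only points worth making fully explicit in a write-up are (i) that $v(\tau)=0$ because the divisor ceases to be big at the pseudo-effective threshold, and (ii) the passage from $N(u)=0$ for $u<2$ to $N(2)=0$, which needs the limit-of-nef-classes remark rather than bare continuity of the Zariski decomposition (which can fail at the boundary of the big cone). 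What your route buys is transparency and independence from Fujita's general machinery; what the citation buys is brevity and a statement already formulated for $\mathbb{Q}$-Fano varieties in arbitrary dimension.
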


\begin{proof}
All required assertions follow from \cite[Lemma~9.7]{Fujita2016}.
\end{proof}

Let $f\colon\widetilde{S}\to S$ be the minimal resolution of the del Pezzo surface $S$,
and denote by $\widetilde{C}$ the strict transform on $\widetilde{S}$ of the curve $C$.

\begin{lemma}
\label{lemma:negative-curves}
If $\widetilde{C}^2<0$ then $\widetilde{S}(\Bbbk)\ne\varnothing$ and $S(\Bbbk)\ne\varnothing$.
\end{lemma}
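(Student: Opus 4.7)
My plan is to show that the hypotheses force $\widetilde{C}$ to be a $(-1)$-curve, contract it via Castelnuovo to produce a $\Bbbk$-rational smooth point on an auxiliary surface, and then propagate this point back to $\widetilde{S}$ and $S$ via Lang--Nishimura.

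First, I would estimate $K_{\widetilde S}\cdot\widetilde C$. Writing $K_{\widetilde S}=f^{*}K_S+\sum a_i E_i$ for the $f$-exceptional decomposition, the discrepancies satisfy $-1<a_i\leqslant 0$ because the singularities of $S$ are klt and $f$ is the minimal resolution of quotient singularities (for which Hirzebruch--Jung computations give $a_i\in(-1,0]$, with equality precisely in the Du Val case). By the projection formula,
$$
K_{\widetilde S}\cdot\widetilde C=K_S\cdot C+\sum_{i} a_i\bigl(E_i\cdot\widetilde C\bigr).
$$
Since $-K_S$ is ample, $K_S\cdot C<0$; since $\widetilde C$ is a prime divisor distinct from every $E_i$ (the curve $C$ is not a point), one has $E_i\cdot\widetilde C\geqslant 0$, so $\sum a_i(E_i\cdot\widetilde C)\leqslant 0$. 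Hence $K_{\widetilde S}\cdot\widetilde C<0$. Combined with the hypothesis $\widetilde C^{2}<0$, adjunction on $\widetilde S$ gives
$$
2p_a(\widetilde C)-2=\widetilde C^{2}+K_{\widetilde S}\cdot\widetilde C\leqslant -2,
$$
which forces $p_a(\widetilde C)=0$ and $\widetilde C^{2}=K_{\widetilde S}\cdot\widetilde C=-1$. Thus $\widetilde C$ is a $(-1)$-curve over $\Bbbk$: a smooth geometrically rational curve on $\widetilde S$ with self-intersection $-1$.

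Next, I would invoke Castelnuovo's contractibility criterion. Since $\widetilde C$ is $\Bbbk$-defined, the geometric contraction over $\overline{\Bbbk}$ descends to a $\Bbbk$-morphism $g\colon\widetilde S\to\widetilde S'$ onto a smooth projective surface $\widetilde S'$ over $\Bbbk$, collapsing $\widetilde C$ to a single closed point $p$. Because $\widetilde C$ is geometrically integral and proper over $\Bbbk$, $H^{0}(\widetilde C,\mathcal{O}_{\widetilde C})=\Bbbk$, so the residue field $\kappa(p)$ equals $\Bbbk$; in particular $p$ is a smooth $\Bbbk$-rational point of $\widetilde S'$. Applying Lemma~\ref{lemma:nonempty} to the birational map $\widetilde S'\dashrightarrow\widetilde S$ (the inverse of $g$) yields $\widetilde S(\Bbbk)\ne\varnothing$, and composing with the $\Bbbk$-morphism $f\colon\widetilde S\to S$ delivers $S(\Bbbk)\ne\varnothing$.

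The only delicate point is the Castelnuovo step when $\widetilde C$ happens to be a pointless Severi--Brauer conic over $\Bbbk$: one must verify that the contraction really yields a $\Bbbk$-rational smooth point on a smooth surface. This follows from Galois descent from $\overline{\Bbbk}$ together with the geometric connectedness of $\widetilde C$, which is what makes the image residue field equal to $\Bbbk$ despite the absence of $\Bbbk$-points on $\widetilde C$ itself.
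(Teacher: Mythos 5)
Your proof is correct, and its first half coincides with what the paper leaves implicit: the discrepancy estimate $a_i\in(-1,0]$ for the minimal resolution gives $K_{\widetilde S}\cdot\widetilde C<0$, which together with $\widetilde C^2<0$ and adjunction forces $\widetilde C^2=K_{\widetilde S}\cdot\widetilde C=-1$ and $p_a(\widetilde C)=0$, so $\widetilde C$ is a $\Bbbk$-form of $\mathbb{P}^1$. Where you diverge is in extracting the rational point. The paper stays on the curve itself: since $\widetilde C^2=-1$, the restriction $-\widetilde C\vert_{\widetilde C}$ is a line bundle of degree $1$ on a smooth conic, and a pointless conic carries no line bundle of odd degree, so $\widetilde C\cong\mathbb{P}^1$ and the $\Bbbk$-point already lies on $\widetilde C\subset\widetilde S$; the morphism $f$ then handles $S$. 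You instead contract $\widetilde C$ over $\Bbbk$ (the descent of the Castelnuovo contraction is legitimate, e.g.\ via the $\Bbbk$-rational linear system $|A+(A\cdot\widetilde C)\widetilde C|$ for $A$ very ample), identify the residue field of the image point as $\Bbbk$ from $H^0(\widetilde C,\mathcal{O}_{\widetilde C})=\Bbbk$, and pull the point back through Lemma~\ref{lemma:nonempty}. Both routes are sound; the paper's is shorter and yields the sharper conclusion that $\widetilde C$ itself is $\mathbb{P}^1$, while yours trades the parity argument on $\operatorname{Pic}$ of a conic for the (also standard) descent of the contraction plus Lang--Nishimura, and would apply verbatim even if one only knew that some geometrically connected $\Bbbk$-curve is contracted to a point.
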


\begin{proof}
If $\widetilde{C}^2<0$, then it follows from the adjunction formula that $\widetilde{C}^2=-1$, and $\widetilde{C}$ is a $\Bbbk$-form of $\mathbb{P}^1$.
Since $-\widetilde{C}\vert_{\widetilde{C}}$ is a line bundle of degree $1$, we have $\widetilde{C}\cong \mathbb{P}^1$
which implies that $\widetilde{S}(\Bbbk)\ne\varnothing$.
\end{proof}

\begin{corollary}
\label{corollary:negative-curves}
If $S(\Bbbk)=\varnothing$ and $\beta(C)\leqslant 0$, then either $C^2>0$ or $C^2=0$ and $C\cap\mathrm{Sing}(S)=\varnothing$.
\end{corollary}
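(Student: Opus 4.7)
My plan is to reduce the corollary to Lemma \ref{lemma:negative-curves} by comparing the self-intersection $C^2$ on $S$ with $\widetilde{C}^2$ on the minimal resolution $f\colon\widetilde{S}\to S$.

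The key input I would establish first is the inequality $C^2\geqslant \widetilde{C}^2$, with equality precisely when $C$ avoids $\mathrm{Sing}(S)$. To see this, write $f^\ast C = \widetilde{C} + \sum_i a_i E_i$, where the $E_i$ are the $f$-exceptional curves and the rational numbers $a_i$ are uniquely determined by the system $(f^\ast C)\cdot E_j=0$. The projection formula then gives
$$
C^2 \;=\; (f^\ast C)\cdot \widetilde{C} \;=\; \widetilde{C}^2 + \sum_i a_i\bigl(\widetilde{C}\cdot E_i\bigr),
$$
so the task reduces to verifying that each $a_i\geqslant 0$, with strict positivity whenever $E_i$ lies above a singular point encountered by $C$. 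This is a standard fact: at each quotient singularity of $S$ the intersection matrix of the exceptional configuration is negative definite with non-positive off-diagonal entries (the resolution being a tree of smooth rational curves), so the inverse of its negative has strictly positive entries; since $\widetilde{C}\cdot E_j\geqslant 0$ for every exceptional $E_j$ and is strictly positive for at least one $E_j$ above each singular point through which $C$ passes, the claim follows.

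With this inequality in hand, the conclusion is immediate. Since $S(\Bbbk)=\varnothing$, the contrapositive of Lemma \ref{lemma:negative-curves} gives $\widetilde{C}^2\geqslant 0$, and hence $C^2\geqslant \widetilde{C}^2\geqslant 0$. If $C^2>0$ we are in the first alternative. Otherwise $C^2=0$ forces $\widetilde{C}^2=0$ as well, and the equality characterization above yields $C\cap\mathrm{Sing}(S)=\varnothing$, placing us in the second alternative. The only mildly delicate point is the positivity statement for the coefficients $a_i$ over the singular points crossed by $C$, which is a classical consequence of the structure of resolutions of surface quotient singularities; once this is available, the hypothesis $\beta(C)\leqslant 0$ is not actually needed to draw the conclusion and the result follows purely from Lemma \ref{lemma:negative-curves}.
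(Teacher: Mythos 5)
Your argument is correct and is precisely the intended derivation: the paper states this corollary without proof as an immediate consequence of Lemma~\ref{lemma:negative-curves}, via the Mumford pullback $f^{*}C=\widetilde{C}+\sum_i a_iE_i$ and the standard positivity of the coefficients $a_i$ over the singular points met by $C$, which gives $C^2\geqslant\widetilde{C}^2\geqslant 0$ with equality characterizing $C\cap\mathrm{Sing}(S)=\varnothing$. Your closing observation is also accurate: the hypothesis $\beta(C)\leqslant 0$ plays no role in this step and is carried along only because the corollary is used in tandem with Lemma~\ref{lemma:Kento}.
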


\begin{corollary}
\label{corollary:Kento}
Suppose that $S(\Bbbk)=\varnothing$ and $\beta(C)\leqslant 0$.
Then either $\tau>2$ or
$
S\simeq\mathbb{P}^1\times C_2
$
for a pointless conic $C_2\subset\mathbb{P}^2$.
\end{corollary}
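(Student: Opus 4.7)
The plan is to use Lemma~\ref{lemma:Kento} and Corollary~\ref{corollary:negative-curves} to extract a rigid boundary-case structure on $S$ when $\tau\leqslant 2$, and then identify $S$ as a product of $\mathbb{P}^1_\Bbbk$ with a pointless conic using the resulting two fibrations.

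First I would combine Corollary~\ref{corollary:negative-curves} (which, under $S(\Bbbk)=\varnothing$, forces either $C^2>0$ or $C^2=0$ with $C\cap\mathrm{Sing}(S)=\varnothing$) with Lemma~\ref{lemma:Kento}(1) (which rules out $C^2>0$ since that would force $\tau>2$) to obtain $C^2=0$ and $C\cap\mathrm{Sing}(S)=\varnothing$. Then Lemma~\ref{lemma:Kento}(2) gives $\tau\geqslant 2$, so $\tau=2$, and Lemma~\ref{lemma:Kento}(3) supplies an irreducible curve $Z\subset S$ with $Z^2=0$ and $-K_S\sim_{\mathbb{Q}}2C+aZ$ for some $a\in\mathbb{Q}_{>0}$. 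Since $C$ lies in the smooth locus with $C^2=0$, adjunction combined with ampleness of $-K_S$ forces $p_a(C)=0$ and $-K_S\cdot C=2$; intersecting $-K_S\sim_{\mathbb{Q}}2C+aZ$ with $C$ gives $aZ\cdot C=2$, so $(-K_S)^2=4aZ\cdot C=8$. In particular $C$ is geometrically $\cong\mathbb{P}^1$ and, being $\Bbbk$-pointless, is a pointless conic.

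Next I would construct two fibrations. Riemann--Roch on the minimal resolution $\widetilde{S}$, together with $H^2(S,\mathcal{O}(C))=0$ coming from ampleness of $-K_S$, gives $h^0(S,\mathcal{O}(C))\geqslant 2$; since $C^2=0$, any two distinct members of $|C|$ are disjoint, so $|C|$ is base-point-free and defines a morphism $\phi\colon S\to\mathbb{P}^1_\Bbbk$ over $\Bbbk$. Analysing the class $[Z]$ similarly---after replacing $Z$ by a suitable positive multiple if needed to obtain a geometrically irreducible member moving in a pencil---I would obtain a second morphism $\psi\colon S\to C_2$ onto a smooth genus-zero curve $C_2$ over $\Bbbk$, i.e.\ a conic. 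The product $(\phi,\psi)\colon S\to\mathbb{P}^1_\Bbbk\times C_2$ then has degree one by a fibre-intersection calculation using $aZ\cdot C=2$; since $S_{\mathbb{C}}$ is a normal del Pezzo of degree $8$ carrying two independent base-point-free pencils, the classification of such surfaces forces $S_{\mathbb{C}}\cong\mathbb{P}^1\times\mathbb{P}^1$ (ruling out $\mathbb{F}_1$ and singular models like $\mathbb{P}(1,1,2)$), so $(\phi,\psi)$ must be an isomorphism. Finally, $S(\Bbbk)=\varnothing$ together with $\mathbb{P}^1_\Bbbk(\Bbbk)\ne\varnothing$ forces $C_2$ to be pointless, giving $S\simeq\mathbb{P}^1_\Bbbk\times C_2$.

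The hard part will be producing a well-behaved second fibration from $[Z]$: if $Z$ is not geometrically irreducible, its geometric components form a Galois orbit whose intersection numbers are constrained by $Z^2=0$ and $-K_S\cdot Z=2C\cdot Z$, and one must verify that some positive multiple of $[Z]$ supports a base-point-free pencil with geometrically irreducible general member disjoint from $\mathrm{Sing}(S)$. A related subtlety is ruling out singularities of $S$ altogether, for which the two-fibration structure combined with the degree-$8$ classification of normal log del Pezzo surfaces with quotient singularities seems necessary.
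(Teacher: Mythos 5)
Your first half coincides with the paper's argument: you combine Corollary~\ref{corollary:negative-curves} with Lemma~\ref{lemma:Kento} to get $C^2=0$, $C\cap\mathrm{Sing}(S)=\varnothing$, $\tau=2$ and $-K_S\sim_{\mathbb{Q}}2C+aZ$ with $Z^2=0$, and then aim for the two conic-bundle structures, exactly as in the paper. The gap is in the endgame. To conclude that $(\phi,\psi)\colon S\to\mathbb{P}^1\times C_2$ is an isomorphism you need a \emph{geometric} fibre of $\psi$ to meet $C_{\mathbb{C}}$ in exactly one point, and the relation $aZ\cdot C=2$ cannot deliver this because $a$ is undetermined: a priori one could have $a=1$ and $Z\cdot C=2$ with $Z$ geometrically irreducible, in which case the product map has degree $2$. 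The paper's key step, which your proposal is missing, is to first make $|nZ|$ base point free (using that $S$ is a Mori Dream Space), replace $Z$ by an honest fibre of the resulting map $S\to C_2$, and compute $-K_S\cdot Z$ by adjunction: it equals $2$ if the fibre is geometrically irreducible and $4$ if it is a union of two conjugate rulings. Combined with $2C\cdot Z=(2C+aZ)\cdot Z=-K_S\cdot Z$ and the fact that $C\cdot Z=1$ would produce a $\Bbbk$-point of $S$, this simultaneously forces $C_2$ to be pointless and each geometric fibre of $\psi$ to meet $C$ transversally in one point, which is what makes the product map finite birational onto the normal surface $\mathbb{P}^1\times C_2$, hence an isomorphism.

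Two further points. Your plan to replace $Z$ by a multiple admitting ``a geometrically irreducible general member'' is unachievable precisely in the case that actually occurs: once $C_2$ is pointless, every member of every $|nZ|$ is geometrically a union of at least two rulings, so you must work with the geometrically reducible fibre (as the paper does), not avoid it. Finally, the detour through $(-K_S)^2=8$ and a classification of degree-$8$ log del Pezzo surfaces is unnecessary and not obviously sufficient: without first bounding the Picard rank of $S_{\mathbb{C}}$ and the degree of the product map, two base-point-free pencils on a normal degree-$8$ surface do not by themselves force $S_{\mathbb{C}}\cong\mathbb{P}^1\times\mathbb{P}^1$.
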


\begin{proof}
Suppose that $\tau\leqslant 2$.
Then $C^2=0$ and $C\cap\mathrm{Sing}(S)=\varnothing$ by Lemma~\ref{lemma:Kento} and Corollary~\ref{corollary:negative-curves}.
Moreover, it follows from  Lemma~\ref{lemma:Kento} that $\tau=2$ and
$
-K_S\sim_{\mathbb{Q}}2C+aZ
$,
for some positive rational number $a$ and an irreducible curve $Z\subset S$ with $Z^2=0$.
By Riemann--Roch formula, the linear system $|C|$ is a pencil that gives a conic bundle $S\to\mathbb{P}^1$.
Since $S$ is a Mori Dream Space, the linear system $|nZ|$ is base point free for some positive integer $n$,
and it also gives a conic bundle $S\to C_2$ where $C_2$ is a conic defined over $\Bbbk$.
If $C_2(\Bbbk)\ne\varnothing$, that $C_2(\Bbbk)\cong\mathbb{P}^1$ , and we can replace $Z$ by a general fiber of the conic bundle $S\to C_2$.
Similarly, if $C_2(\Bbbk)=\varnothing$, we may assume that $Z$ is an irreducible geometrically reducible curve
that is a preimage of a general irreducible zero-dimensional subscheme of the conic $C_2$ of length $2$.
In both cases, we have $Z\cap\mathrm{Sing}(S)=\varnothing$.
Using adjunction formula, we get
$$
2C\cdot Z=\big(2C+aZ\big)\cdot Z=-K_S\cdot Z=
\left\{\aligned
&2\ \text{if $C_2(\Bbbk)\ne\varnothing$}, \\
&4\ \text{if $C_2(\Bbbk)=\varnothing$}.
\endaligned
\right.
$$
But $C\cdot Z\ne 1$, because  $S(\Bbbk)=\varnothing$.
Thus, we see that $C_2$ is a pointless conic and $C\cdot Z=2$.
Now, taking the product of the morphisms $S\to\mathbb{P}^1$ and $S\to C_2$,
we obtain the isomorphism $S\to\mathbb{P}^1\times C_2$.
\end{proof}

Let $S\to S^\prime$ be a birational morphism such that $S^\prime$ is normal.
Then $S^\prime$ is a del Pezzo surface with quotient singularities.
Applying Lemma~\ref{lemma:alpha} and Corollary~\ref{corollary:negative-curves}, we get the following result:

\begin{corollary}
\label{corollary:alpha}
The following assertions hold:
\begin{itemize}
\item[$\mathrm{(1)}$] if $S(\Bbbk)=\varnothing$, then $S^\prime(\Bbbk)=\varnothing$;

\item[$\mathrm{(2)}$] if $S(\Bbbk)=\varnothing$ and $\alpha(S)<1$, then $\alpha(S^\prime)\leqslant\alpha(S)$.
\end{itemize}
\end{corollary}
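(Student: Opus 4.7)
For part (2), the plan is to apply Lemma~\ref{lemma:alpha} to extract a special curve on $S$ and transport the $\mathbb{Q}$-linear equivalence it satisfies to $S'$ via push-forward. Concretely, since $S(\Bbbk)=\varnothing$ and $\alpha(S)<1$, Lemma~\ref{lemma:alpha} provides a geometrically irreducible, geometrically rational curve $C\subset S$ with $\tau(C)=1/\alpha(S)$ and $-K_S\sim_{\mathbb{Q}}\tau(C)C+\Delta$ for an effective $\mathbb{Q}$-divisor $\Delta$. I would first check that $C$ is not contracted by $\sigma$: otherwise $C$ would lie in the negative-definite exceptional locus, so $C^2<0$ on $S$, which forces the strict transform on the minimal resolution of $S$ to have negative self-intersection, and Lemma~\ref{lemma:negative-curves} would then produce a $\Bbbk$-point of $S$, contradicting the hypothesis. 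With $C$ not contracted, setting $C':=\sigma_*C$ and pushing forward gives $-K_{S'}\sim_{\mathbb{Q}}\tau(C)C'+\sigma_*\Delta$; hence the pseudo-effective threshold on $S'$ satisfies $\tau_{S'}(C')\geqslant 1/\alpha(S)$, and the standard bound $\alpha(S')\leqslant 1/\tau_{S'}(C')$ (from bounding the log canonical threshold of $\tau_{S'}(C')C'+\sigma_*\Delta\sim_{\mathbb{Q}}-K_{S'}$ by its multiplicity along $C'$) yields $\alpha(S')\leqslant\alpha(S)$.

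For part (1), I would argue by contradiction, assuming $p'\in S'(\Bbbk)$. If $\sigma$ is an isomorphism in a neighborhood of $p'$, then the unique preimage is already a $\Bbbk$-rational point of $S$, which contradicts $S(\Bbbk)=\varnothing$. Otherwise $p'$ is a center of $\sigma$, and the fiber $\sigma^{-1}(p')$ is a positive-dimensional geometrically connected curve defined over $\Bbbk$. The plan is to pass to the minimal resolution $f\colon\widetilde{S}\to S$: the composition $g:=\sigma\circ f$ is birational with smooth source, and since $p'$ is a quotient singularity of $S'$, the exceptional fiber $T:=g^{-1}(p')$ is a tree of smooth rational curves on $\widetilde{S}$, defined over $\Bbbk$.

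The key step is then a Galois analysis of the dual graph $\Gamma$ of $T$. The finite action of $\mathrm{Gal}(\overline{\Bbbk}/\Bbbk)$ on $\Gamma$ must fix either a vertex or an edge, by the Jordan center theorem for trees. A fixed edge gives a single smooth $\Bbbk$-rational intersection point on $\widetilde{S}$, and Lang--Nishimura applied to $\widetilde{S}\to S$ then yields a $\Bbbk$-rational point of $S$. A fixed vertex gives a $\Bbbk$-defined, geometrically irreducible component $C_v\subset\widetilde{S}$ with $C_v^2<0$; if $C_v$ is $f$-exceptional, the fact that $C_v$ is a geometrically integral proper curve forces its scheme-theoretic image in $S$ to be a $\Bbbk$-rational point, while if $C_v$ is the strict transform of a curve on $S$, then Lemma~\ref{lemma:negative-curves} directly produces a $\Bbbk$-rational point of $S$. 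Each case contradicts $S(\Bbbk)=\varnothing$, so $S'(\Bbbk)=\varnothing$.

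The hard part will be the fixed-vertex analysis in part (1): one must cleanly separate the $f$-exceptional case from the strict-transform case and exclude the pathology that $C_v$ could be a pointless conic. This is precisely where Lemma~\ref{lemma:negative-curves} is indispensable, since it forces any negative strict transform on $\widetilde{S}$ to be isomorphic to $\mathbb{P}^1_{\Bbbk}$, ruling out pointless conics in the non-$f$-exceptional sub-case.
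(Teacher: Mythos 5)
Your proposal is correct and is essentially the argument the paper intends: the paper's own proof of this corollary is a one-line appeal to Lemma~\ref{lemma:alpha} and the negative-curve statements, and your part (2) (extract the special curve from Lemma~\ref{lemma:alpha}, rule out its contraction via Lemma~\ref{lemma:negative-curves}, push the $\mathbb{Q}$-linear equivalence forward, and apply the bound $\alpha\leqslant 1/\tau$) is exactly that argument spelled out. For part (1) the paper gives no details at all, and your Galois fixed-point analysis on the dual tree of $g^{-1}(p')$ — fixed edge yielding a smooth $\Bbbk$-point of $\widetilde{S}$, fixed $f$-exceptional vertex contracting to a (possibly singular) $\Bbbk$-point of $S$, and fixed strict-transform vertex handled by Lemma~\ref{lemma:negative-curves} — correctly and completely supplies them, including the delicate point that a Galois-fixed exceptional component may itself be a pointless conic yet still produces a rational point of $S$ when it is contracted by $f$.
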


Note that we cannot always deduce that $\alpha(S^\prime)\leqslant\alpha(S)$ without using the condition $S(\Bbbk)=\varnothing$.
Indeed, if $S^\prime=\mathbb{P}^1\times\mathbb{P}^1$ and $S$ is a blow up of a point in $S^\prime$, then
$$
\frac{1}{3}=\alpha(S)<\alpha(S^\prime)=\frac{1}{2}.
$$

\subsection{Real del Pezzo surfaces; a warm up}
\label{subsection:real-dP}
To convey the ideas, we first prove Theorem \ref{theorem:1} for del Pezzo surfaces defined over the real numbers. We then proceed with the proof over other fields.

Let $S$ be a del Pezzo surface with quotient singularities defined over $\mathbb{R}$.

\begin{lemma}
\label{lemma:alpha-real}
Suppose that $\alpha(S)<\frac{1}{2}$. Then $S(\mathbb{R})\ne\varnothing$.
\end{lemma}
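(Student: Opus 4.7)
My plan is to argue by contradiction: assume $S(\mathbb{R}) = \varnothing$. Since $\alpha(S) < \frac{1}{2} < 1$, Lemma~\ref{lemma:alpha} yields a smooth geometrically irreducible, geometrically rational curve $C\subset S$ and an effective $\mathbb{Q}$-divisor $\Delta$ with $-K_S \sim_{\mathbb{Q}} \tau C + \Delta$, where $\tau = 1/\alpha(S) > 2$, and such that the pair $(S, C + \alpha(S)\Delta)$ is purely log terminal. Because $C(\mathbb{R}) \subseteq S(\mathbb{R}) = \varnothing$ and $C$ is a smooth real form of $\mathbb{P}^1$, the curve $C$ must be a pointless real conic; in particular every closed point of $C$ has residue field $\mathbb{C}$, so every divisor on $C$ defined over $\mathbb{R}$ has even degree.

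Next I would exploit the plt structure via adjunction along $C$. From $\alpha \Delta \sim_{\mathbb{Q}} -\alpha K_S - C$ (using $\alpha\tau = 1$), one obtains $K_S + C + \alpha\Delta \sim_{\mathbb{Q}} (1-\alpha)K_S$. Restricting to $C$ and combining $\deg K_C = -2$ with the effectivity of the different $\mathrm{Diff}_C(\alpha\Delta)$ (which follows from the plt condition via inversion of adjunction) yields
$$\deg\mathrm{Diff}_C(\alpha\Delta) = 2 - (1-\alpha)(-K_S\cdot C) \geq 0,$$
so $-K_S\cdot C \leq \frac{2}{1-\alpha} < 4$ because $\alpha < \frac{1}{2}$. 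Lemma~\ref{lemma:negative-curves} rules out $\widetilde{C}^2<0$, as that would already produce a real point on $S$, hence $\widetilde{C}^2 \geq 0$. Assuming first that $C\cap\mathrm{Sing}(S) = \varnothing$, this gives $C^2 \geq 0$ and, via $(K_S + C)\cdot C = -2$, the equality $-K_S\cdot C = C^2 + 2\geq 2$; the even-degree parity on a pointless conic leaves only $-K_S\cdot C = 2$ and therefore $C^2 = 0$.

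With $C^2 = 0$ and $-K_S\cdot C = 2$, Riemann--Roch together with the standard vanishing shows that $|mC|$ defines a conic bundle $\pi\colon S\to T$ with $C$ as a general fiber. The hypothesis $\tau > 2$ now forces $-K_S - 2C \sim_{\mathbb{Q}} (\tau-2)C + \Delta$ to be a nontrivial effective $\mathbb{Q}$-divisor with $\Delta\cdot C = 2$, and each horizontal component of $\Delta$ is a multisection of $\pi$. Each such multisection is itself pointless (otherwise $S$ would have a real point), so when $T(\mathbb{R})\neq\varnothing$, a dominant map to $T$ from a curve without real points must have even degree over $T$. Combining this parity with $\Delta\cdot C = 2$ and the plt bound $\alpha\,a_i < 1$ on coefficients of components of $\Delta$, I expect to derive a numerical contradiction; the residual possibility $T(\mathbb{R}) = \varnothing$ (i.e. $T$ a pointless conic) is to be handled by a parallel analysis of $S$ as a conic bundle over a pointless base together with the structure of its other extremal ray.

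The main obstacle I anticipate is completing this last step rigorously, in particular the subcase $T(\mathbb{R}) = \varnothing$, which requires enough structural information on a del Pezzo conic bundle over a pointless conic to exclude $\tau > 2$. A secondary technical complication is the auxiliary case $C\cap\mathrm{Sing}(S)\neq\varnothing$, where the computations above must be redone on the minimal resolution $\widetilde{S}\to S$, with the adjunction formula and the intersection $-K_S\cdot C$ corrected by the discrepancies of the exceptional divisors above the singular points of $S$ that meet $C$.
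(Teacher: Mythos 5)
Your opening moves are sound and partly parallel the paper's: Lemma~\ref{lemma:alpha} gives the curve $C$ with $\tau=1/\alpha(S)>2$, the subadjunction bound $-K_S\cdot C<4$ together with the parity of degrees on a pointless real curve correctly forces $-K_S\cdot C=2$ and $C^2=0$ (when $C$ misses $\mathrm{Sing}(S)$), and the resulting genus-zero fibration $\pi\colon S\to T$ with $C$ a fiber is exactly what the paper obtains. (Incidentally, the subcase $T(\mathbb{R})=\varnothing$ that you set aside cannot occur: $C$ is geometrically irreducible and defined over $\mathbb{R}$, so $\pi(C)$ is already a real point of $T$.) The genuine gap is the final step, which you leave as ``I expect to derive a numerical contradiction'': the data you have assembled --- $\Delta\cdot C=2$, every horizontal component of $\Delta$ of even degree $\geqslant 2$ over $T\simeq\mathbb{P}^1$, and the plt bound on coefficients --- is \emph{not} contradictory. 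It only yields that the horizontal coefficients of $\Delta$ sum to at most $1$, and nothing in your list excludes, say, a single pointless bisection occurring with coefficient $1$. The true obstruction lies in the structure of the surface rather than in these intersection numbers: on a relatively minimal model the class $-K-\tau C$ with $\tau>2$ forces a geometrically irreducible horizontal curve of \emph{odd} degree over the base (the negative section of a form of $\mathbb{F}_n$, $n\geqslant 1$) to appear in the boundary with positive coefficient, and that curve meets the fiber $C$ in a single, hence real, point. This is precisely why the paper's proof first passes to the minimal resolution, runs the MMP to reach either a smooth del Pezzo surface of Picard rank one (killed because a Severi--Brauer surface containing a twisted line is $\mathbb{P}^2$, via Lemma~\ref{lemma:SB}) or a pointless form of $\mathbb{F}_n$ with $n\ne 0$, and then invokes the unique negative curve $\overline{Z}$ with $\overline{Z}\cdot\overline{C}=1$. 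Without contracting to such a model, or otherwise exhibiting a horizontal curve of odd fiber degree inside $\operatorname{Supp}(\Delta)$, your argument stalls exactly where the contradiction should appear.

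A second, smaller but real, gap is the deferred case $C\cap\mathrm{Sing}(S)\ne\varnothing$, where $-K_S\cdot C$ is only a rational number and your parity argument breaks down; the paper's route avoids this entirely by working on $\widetilde{S}$ and its MMP model from the outset, whereas handling it head-on requires the kind of Hirzebruch--Jung string and subadjunction analysis that the paper only needs in the proof of Lemma~\ref{lemma:alpha-rank-1}.
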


\begin{proof}
Set $\tau=\frac{1}{\alpha(S)}$ and suppose that $S(\mathbb{R})=\varnothing$. Then it follows from Lemma~\ref{lemma:alpha} that $S$ contains
a~geometrically irreducible curve $C$ with
$
-K_{S}\sim_{\mathbb{Q}}\tau C+\Delta
$
for some effective $\mathbb{Q}$-divisor $\Delta$ on $S$.

Let $f\colon\widetilde{S}\to S$ be the minimal resolution of the del Pezzo surface $S$ and denote by $\widetilde{C}$ and $\widetilde{\Delta}$, respectively, the strict transforms on $\widetilde{S}$ of the curve $C$
and the divisor $\Delta$. Then $\widetilde{S}(\mathbb{R})=\varnothing$  and
$
-K_{\widetilde{S}}\sim_{\mathbb{Q}}\tau\widetilde{C}+\widetilde{\Delta}+\widetilde{B}
$,
where $\widetilde{B}$ is an effective $\mathbb{Q}$-divisor on the surface $\widetilde{S}$ whose support consists of $f$-exceptional curves.
Now, applying Minimal Model Program to $\tilde{S}$, we obtain a birational morphism $h\colon\widetilde{S}\to\overline{S}$ such that
one of the following two cases holds:
\begin{itemize}
\item[$\mathrm{(1)}$] $\overline{S}$ is a smooth del Pezzo surface of Picard rank $1$;
\item[$\mathrm{(2)}$] $\overline{S}$ is a smooth surface of Picard rank $2$,
and there is a (standard) conic bundle $\pi\colon\overline{S}\to C_2$,
where $C_2$ is a geometrically irreducible conic in $\mathbb{P}^2$.
\end{itemize}
In both cases, we have $\overline{S}(\mathbb{R})=\varnothing$ by Lemma\,\ref{lemma:nonempty}.
Note that $\widetilde{C}$ is not $h$-exceptional, because $\widetilde{C}(\mathbb{R})=\varnothing$.
Set $\overline{C}=h(\widetilde{C})$ and let $\overline{\Delta}$ and $\overline{B}$ be the strict transforms on $\overline{S}$ of the divisors $\widetilde{\Delta}$ and $\widetilde{B}$, respectively.
Then
\begin{equation}
\label{equation:conic-bundle}
-K_{\overline{S}}\sim_{\mathbb{Q}}\tau\overline{C}+\overline{\Delta}+\overline{B}.
\end{equation}
Hence, if $\overline{S}$ is a smooth del Pezzo surface of Picard rank $1$, then it follows from \eqref{equation:conic-bundle} and $\tau>2$ that $\overline{S}$ is a Severi--Brauer surface and $\overline{C}$
is a \emph{twisted line} on it \cite{Kollar2016}, which implies that $\overline{S}\cong\mathbb{P}^2$,
which is a contradiction since $\overline{S}(\mathbb{R})=\varnothing$.

Thus, there is a conic bundle $\pi\colon\overline{S}\to C_2$.
Now, using \eqref{equation:conic-bundle} and intersecting $\tau\overline{C}+\overline{\Delta}+\overline{B}$ with a general fiber of the conic bundle~$\pi$,
we see that $\overline{C}$ is a fiber of $\pi$, because $\tau>2$.
Then $C_2\simeq\mathbb{P}^1$.
Now, using  $\rho(\overline{S})=2$ and $\overline{S}(\mathbb{R})=\varnothing$,
we see that $\overline{S}$ is a form of $\mathbb{F}_n$ for some $n\in\mathbb{Z}_{\geqslant 0}$, see~\cite{Kollar1997,Mangolte2020}.
Then \eqref{equation:conic-bundle} and $\tau>2$ give $n\ne 0$,
so the surface $\overline{S}$ contains the unique geometrically irreducible curve $\overline{Z}$ with $\overline{Z}^2=-n$.
Since $\overline{Z}\cdot\overline{C}=1$, we see that $\overline{Z}\cap\overline{C}$ consists of a single point in $\overline{S}(\mathbb{R})$,
which is a contradiction.
\end{proof}

\begin{corollary}
\label{corollary:alpha-real}
Suppose that $S(\mathbb{R})=\varnothing$. Then $S_{\mathbb{C}}$ is K-polystable.
\end{corollary}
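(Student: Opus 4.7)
The plan is to argue by contradiction: assume $S_{\mathbb{C}}$ is not K-polystable, extract a Galois-invariant destabilizing datum, and then run essentially the same MMP argument as in the proof of Lemma~\ref{lemma:alpha-real}, now sourced by Corollary~\ref{corollary:Kento} rather than by Lemma~\ref{lemma:alpha}. If $S_{\mathbb{C}}$ fails K-polystability, then by the valuative criterion there is a prime divisor $F$ over $S_{\mathbb{C}}$ satisfying $\beta_{S_{\mathbb{C}}}(F)\leq 0$ and not coming from a one-parameter subgroup of $\mathrm{Aut}^0(S_{\mathbb{C}})$. Since $\beta$ is invariant under the Galois action $\Gamma=\mathrm{Gal}(\mathbb{C}/\mathbb{R})$, averaging $F$ with its complex conjugate $\overline{F}$ produces a $\Gamma$-invariant destabilizer whose center $Z$ descends to a closed $\mathbb{R}$-subvariety of $S$.

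If $\dim Z=0$ then $Z$ is a real point of $S$, contradicting $S(\mathbb{R})=\varnothing$. Otherwise $Z$ is an $\mathbb{R}$-defined curve. Applying Koll\'ar--Shokurov tie-breaking (exactly as in the proof of Lemma~\ref{lemma:alpha}) to an effective $\mathbb{Q}$-divisor $D\sim_{\mathbb{Q}}-K_{S_{\mathbb{C}}}$ attached to the destabilizing data, we extract a geometrically irreducible curve $C_0$ inside $Z_{\mathbb{C}}$ together with a decomposition $-K_{S_{\mathbb{C}}}\sim_{\mathbb{Q}}\tau C_0+\Delta$ with $\Delta$ effective and with $\beta(C_0)\leq 0$ in the sense of Lemma~\ref{lemma:Kento}. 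Corollary~\ref{corollary:Kento} then yields two cases. If $S\cong\mathbb{P}^1\times C_2$ for a pointless conic $C_2$, then $S_{\mathbb{C}}\cong\mathbb{P}^1\times\mathbb{P}^1$ is a product of K-polystable Fano manifolds, hence K-polystable itself, contradicting the initial assumption. If instead $\tau>2$, the MMP/Zariski decomposition argument from the proof of Lemma~\ref{lemma:alpha-real} goes through verbatim applied to $D=\tau C_0+\Delta$: the minimal resolution has no real points, and MMP leads either to a Severi--Brauer surface forced to be $\mathbb{P}^2_{\mathbb{R}}$ by the presence of a twisted line (contradicting pointlessness), or to a real form of some $\mathbb{F}_n$ with $n\geq 1$ whose unique negative section meets the image of $C_0$ in a real point (again contradicting $S(\mathbb{R})=\varnothing$).

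The main obstacle is the first step, namely producing a Galois-invariant destabilizer. For a connected reductive group acting regularly this is Zhuang's equivariance theorem, but the antiholomorphic $\Gamma$-action must be handled more directly: combine $\Gamma$-invariance of $\beta$ with a simple averaging trick, using that the locus of prime divisors minimizing $\beta$ is $\Gamma$-stable. A secondary technical point is ensuring that the tie-breaking step produces a one-dimensional log canonical center rather than an isolated point --- but this is precisely what $S(\mathbb{R})=\varnothing$ and Koll\'ar--Shokurov connectedness provide, in perfect parallel with the proof of Lemma~\ref{lemma:alpha}. Everything else is cosmetic bookkeeping inherited from the proofs already given in this section.
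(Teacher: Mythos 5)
Your overall route coincides with the paper's: apply the valuative criterion to get a destabilizing prime divisor over $S_{\mathbb{C}}$, descend it to $\mathbb{R}$, rule out a zero-dimensional center using $S(\mathbb{R})=\varnothing$, feed the resulting curve into Corollary~\ref{corollary:Kento}, and land in the situation $\tau>2$ handled by (the proof of) Lemma~\ref{lemma:alpha-real}. The paper does this more economically: once one has a geometrically irreducible curve $C$ over $\mathbb{R}$ with $\beta(C)\leqslant 0$ and $\tau>2$, it simply observes $\alpha(S)\leqslant\frac{1}{\tau}<\frac{1}{2}$ and invokes Lemma~\ref{lemma:alpha-real} as a black box, rather than re-running its MMP argument (your "verbatim" claim is nonetheless correct, since that argument only uses the decomposition $-K_S\sim_{\mathbb{Q}}\tau C+\Delta$ with $\tau>2$, geometric irreducibility of $C$, and pointlessness). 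Your explicit treatment of the case $S\cong\mathbb{P}^1\times C_2$, which the paper leaves implicit, is a welcome addition.

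Two of your intermediate mechanisms, however, are not right as stated. First, "averaging $F$ with its complex conjugate $\overline{F}$" is not an operation that returns a prime divisor; what is actually needed (and what the paper cites) is Zhuang's equivariant optimal destabilization \cite{Zhuang2021}, which produces a $\mathrm{Gal}(\mathbb{C}/\mathbb{R})$-stable minimizer directly. Second, and more seriously, the Koll\'ar--Shokurov tie-breaking step cannot deliver the conclusion $\beta(C_0)\leqslant 0$: tie-breaking manipulates log canonical centers of an auxiliary boundary $D$ and controls log canonical thresholds, not the expected vanishing order $S_S(\,\cdot\,)$, so the curve it extracts carries no $\beta$-information. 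That machinery belongs to the $\alpha$-invariant argument of Lemma~\ref{lemma:alpha}, not here. The step is fortunately unnecessary: on a surface, any prime divisor over $S_{\mathbb{C}}$ whose center is one-dimensional is the strict transform of that very curve (a birational morphism of normal surfaces is an isomorphism away from finitely many points), so the Galois-stable destabilizer with curve center is already a geometrically irreducible curve $C\subset S$ defined over $\mathbb{R}$ with $\beta(C)\leqslant 0$, and no auxiliary divisor $D$ or tie-breaking enters. With that step replaced by this observation, your proof is correct and agrees with the paper's.
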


\begin{proof}
Suppose that $S_{\mathbb{C}}$ is not K-polystable. Then it follows from \cite{Fujita2019,Li2017,Zhuang2021} that
$S$ contains a geometrically irreducible curve $C$ with $\beta(C)\leqslant 0$.
Moreover, using Corollary~\ref{corollary:Kento}, we see that $-K_{S}\sim_{\mathbb{Q}}\tau C+\Delta$
for some rational number $\tau>2$ and an effective $\mathbb{Q}$-divisor $\Delta$ on the surface $S$.
This gives $\alpha(S)\leqslant\frac{1}{\tau}<\frac{1}{2}$,
which contradicts Lemma~\ref{lemma:alpha-real}.
\end{proof}

\subsection{Del Pezzo surfaces of Picard rank one}
\label{subsection:rank-1}

Let $\Bbbk$ be a subfield of $\mathbb{C}$,
let $S$ be a del Pezzo surface with quotient singularities defined over $\Bbbk$,
and let $\rho(S)$ be the rank of the Picard group of the surface $S$.

\begin{lemma}
\label{lemma:alpha-rank-1}
Suppose that $\rho(S)=1$ and $\alpha(S)<\frac{1}{2}$. Then $S(\Bbbk)\ne\varnothing$.
\end{lemma}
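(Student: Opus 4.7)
The strategy is to adapt the proof of Lemma~\ref{lemma:alpha-real} to an arbitrary subfield $\Bbbk\subset\mathbb{C}$, replacing the appeal to the classification of minimal real rational surfaces by the Severi--Brauer lemmas from Section~\ref{section:preliminaries}. Assume for contradiction that $S(\Bbbk)=\varnothing$ and set $\tau=1/\alpha(S)>2$. By Lemma~\ref{lemma:alpha} there exist a smooth, geometrically irreducible, geometrically rational curve $C\subset S$ and an effective $\mathbb{Q}$-divisor $\Delta$ with $C\not\subset\mathrm{Supp}(\Delta)$ such that $-K_S\sim_{\mathbb{Q}}\tau C+\Delta$. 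Let $f\colon\widetilde{S}\to S$ be the minimal resolution and decorate strict transforms by tildes; then $-K_{\widetilde{S}}\sim_{\mathbb{Q}}\tau\widetilde{C}+\widetilde{\Delta}+\widetilde{B}$ for some effective, $f$-exceptional $\mathbb{Q}$-divisor $\widetilde{B}$. Running a $(K_{\widetilde{S}})$-MMP over $\Bbbk$ produces a $\Bbbk$-birational morphism $h\colon\widetilde{S}\to\overline{S}$ whose target is either (i) a $\Bbbk$-minimal smooth del Pezzo surface of $\Bbbk$-Picard rank one, or (ii) the total space of a standard conic bundle $\pi\colon\overline{S}\to C_2$ over a geometrically irreducible conic. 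Lemma~\ref{lemma:nonempty} forces $\overline{S}(\Bbbk)=\varnothing$.

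A parity argument shows that $\widetilde{C}$ is not $h$-exceptional: otherwise $\widetilde{C}_{\mathbb{C}}$ would be a $(-1)$-curve, so $\widetilde{C}$ would be a $\Bbbk$-form of $\mathbb{P}^1$ on which $(-K_{\widetilde{S}})|_{\widetilde{C}}$ has odd degree $1$; this would force $\widetilde{C}\cong\mathbb{P}^1_{\Bbbk}$, contradicting $S(\Bbbk)=\varnothing$. Setting $\overline{C}=h(\widetilde{C})$ we obtain $-K_{\overline{S}}\sim_{\mathbb{Q}}\tau\overline{C}+\overline{\Delta}+\overline{B}$ with $\overline{\Delta},\overline{B}$ effective. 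In case (i), $\mathrm{Pic}(\overline{S})=\mathbb{Z}[H]$ for some ample generator $H$; writing $-K_{\overline{S}}=mH$ and $[\overline{C}]=aH$ with positive integers $a,m$ and intersecting with $H$ gives $m\geqslant\tau a>2a$, hence $m\geqslant 3$. A direct inspection of $\Bbbk$-minimal smooth del Pezzo surfaces with $\rho_{\Bbbk}=1$ (non-trivial Severi--Brauer surfaces of degree $9$, pointless $\Bbbk$-forms of a smooth quadric surface, and the various minimal del Pezzo surfaces of degree at most $7$) shows that the anticanonical index $m$ is at most $2$ whenever $\overline{S}(\Bbbk)=\varnothing$, so the bound $m\geqslant 3$ forces $\overline{S}$ to be a trivial Severi--Brauer surface via Lemma~\ref{lemma:SB} applied to the $\Bbbk$-divisor $\overline{C}$ (whose geometric degree is coprime to $n+1=3$), i.e., $\overline{S}\cong\mathbb{P}^2_{\Bbbk}$, contradicting $\overline{S}(\Bbbk)=\varnothing$.

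In case (ii), intersecting the displayed relation with a general fiber $F$ of $\pi$ (so $-K_{\overline{S}}\cdot F=2$) yields $2\geqslant\tau(\overline{C}\cdot F)$, and since $\tau>2$ we conclude $\overline{C}\cdot F=0$. Hence $\overline{C}$ lies in a single fiber $\pi^{-1}(p)$, where $p=\pi(\overline{C})\in C_2(\Bbbk)$, forcing $C_2\cong\mathbb{P}^1_{\Bbbk}$. Since $\overline{C}_{\mathbb{C}}$ is geometrically a line, the conic $\pi^{-1}(p)$ is degenerate, and its unique singular point is therefore a $\Bbbk$-rational point of $\overline{S}$, contradicting $\overline{S}(\Bbbk)=\varnothing$. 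The main technical obstacle lies in step~(i): over an arbitrary subfield $\Bbbk$ one lacks the concrete list of minimal pointless real del Pezzo surfaces, so the numerical bound $m\geqslant 3$ must be reconciled with Lemma~\ref{lemma:SB} through a case-by-case inspection of pointless $\Bbbk$-minimal forms; the remaining steps follow the template of Lemma~\ref{lemma:alpha-real}.
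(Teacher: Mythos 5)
Your reduction to the two cases (minimal del Pezzo of rank one, or conic bundle over a conic) follows the paper, and your treatment of case (i) — forcing the Fano index to be $3$, hence a Severi--Brauer surface containing a twisted line $\overline{C}$, hence $\mathbb{P}^2$ by Lemma~\ref{lemma:SB} — is essentially the paper's argument. The gap is in case (ii). From $\tau>2$ you correctly deduce $\overline{C}\cdot F=0$, so $\overline{C}$ lies in a fiber and $C_2\cong\mathbb{P}^1$; but your next claim, that this fiber is a \emph{degenerate} conic whose singular point is a $\Bbbk$-point, is unjustified and false in general. For a standard (relatively minimal) conic bundle the two components of a degenerate fiber are conjugate under $\mathrm{Gal}(\mathbb{C}/\Bbbk)$, so a geometrically irreducible curve defined over $\Bbbk$ and contained in a fiber must be an entire \emph{smooth} fiber, i.e.\ a pointless conic with $\overline{C}^2=0$. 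Nothing forces degeneracy: pointless conic bundles over $\mathbb{P}^1$ with all fibers smooth exist (e.g.\ the projection of $\mathbb{P}^1\times C$ to $\mathbb{P}^1$ for a pointless conic $C$), so at this stage there is no contradiction and your proof stops short.

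What the paper does instead occupies the entire second half of its proof and has no counterpart in your proposal. From $\overline{C}^2=0$ and Lemma~\ref{lemma:negative-curves} one gets $\widetilde{C}^2=0$, while $C^2>0$ on $S$ because $\rho(S)=1$; hence $C$ must meet $\mathrm{Sing}(S)$. Purely log terminal adjunction for $(S_{\mathbb{C}},C_{\mathbb{C}})$ shows the singular points on $C_{\mathbb{C}}$ are cyclic quotient points, and $C(\Bbbk)=\varnothing$ forces there to be exactly two of them, swapped by Galois. Contracting all $f$-exceptional curves except the two end curves $E=E_1+E_2$ of the corresponding Hirzebruch--Jung strings produces a partial resolution $\widehat{S}\to S$ on which $-K_{\widehat{S}}\sim_{\mathbb{Q}}\tau\widehat{C}+\widehat{E}$ with $\widehat{C}^2=0$ and $\widehat{C}\cdot\widehat{E}=2$; subadjunction applied to $\widehat{E}$ then gives $-4=\deg K_{\widehat{E}}\leqslant(K_{\widehat{S}}+\widehat{E})\cdot\widehat{E}=-2\tau<-4$, the desired contradiction. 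You would need to supply this (or some genuine substitute for it) to close case (ii); the real-surface shortcut via forms of $\mathbb{F}_n$ that works in Lemma~\ref{lemma:alpha-real} is exactly what is unavailable over a general subfield of $\mathbb{C}$, and replacing it by the degenerate-fiber claim does not work.
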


\begin{proof}
Set $\tau=\frac{1}{\alpha(S)}$ and suppose that $S(\Bbbk)=\varnothing$.
By Lemma~\ref{lemma:alpha}, there exists a geometrically irreducible smooth and geometrically rational curve $C\subset S$ with
$
-K_{S}\sim_{\mathbb{Q}}\tau C,
$
and the log pair $(S,C)$ has purely log terminal singularities.
Let us seek for a contradiction.

Let $f\colon\widetilde{S}\to S$ be the minimal resolution of $S$
and let $\widetilde{C}$ be the strict transform on $\widetilde{S}$ of the curve $C$.
Then $\widetilde{S}(\Bbbk)=\varnothing$ and
$
-K_{\widetilde{S}}\sim_{\mathbb{Q}}\tau\widetilde{C}+\widetilde{B}
$,
where $\widetilde{B}$ is an effective $\mathbb{Q}$-divisor on the surface $\widetilde{S}$ whose support consists of $f$-exceptional curves.
Now, applying Minimal Model Program to $\widetilde{S}$, we obtain a birational morphism $h\colon\widetilde{S}\to\overline{S}$ such that
one of the following two cases holds:
\begin{itemize}
\item $\overline{S}$ is a smooth del Pezzo surface of Picard rank $1$;
\item $\overline{S}$ is a smooth surface of Picard rank $2$
and there is a (standard) conic bundle $\pi\colon\overline{S}\to C_2$,
where $C_2$ is a geometrically irreducible conic in $\mathbb{P}^2$.
\end{itemize}
Moreover, arguing as in the proof of Lemma~\ref{lemma:alpha-real}, we see that the former case is impossible,
which implies that $\overline{S}$ is a smooth surface of Picard rank $2$
and there exists a conic bundle $\pi\colon\overline{S}\to C_2$.
Note that $\overline{S}(\Bbbk)=\varnothing$ by Lemma\,\ref{lemma:nonempty}.

Set $\overline{C}=h(\widetilde{C})$, and let $\overline{B}$ be the strict transform of the divisor $\widetilde{B}$ on the surface $\overline{S}$ .
Then
\begin{equation}
\label{equation:conic-bundle-rank-1}
-K_{\overline{S}}\sim_{\mathbb{Q}}\tau\overline{C}+\overline{B}.
\end{equation}
Arguing as in the proof of Lemma~\ref{lemma:alpha-real},
we see that $C_2\simeq\mathbb{P}^1$ and $\overline{C}$ is a fiber of the conic bundle $\pi$.
Then
$\widetilde{C}^2\leqslant\overline{C}^2=0$,
which implies that $\widetilde{C}^2=0$ by Lemma~\ref{lemma:negative-curves}, because we know that $\widetilde{S}(\Bbbk)=\varnothing$.
Hence, we see that $h$ is an isomorphism in a neighborhood of the curve $\widetilde{C}$,
and the complete linear system $|\widetilde{C}|$ gives the composition morphism $\pi\circ h\colon\widetilde{S}\to C_2$.

Let $S_{\mathbb{C}}$ and $\widetilde{S}_{\mathbb{C}}$ be the models of the surfaces $S$ and $\widetilde{S}$ over
the algebraic closure $\mathbb{C}$ of the field~$\Bbbk$,
let $C_{\mathbb{C}}$ and $\widetilde{C}_{\mathbb{C}}$ be the curves in $S_{\mathbb{C}}$ and $\widetilde{S}_{\mathbb{C}}$
that correspond to  $C$ and $\widetilde{C}$, respectively.
Then $C_{\mathbb{C}}\simeq\widetilde{C}_{\mathbb{C}}\simeq\mathbb{P}^1$.
Note that $\widetilde{C}_{\mathbb{C}}^2=0<C_{\mathbb{C}}^2$, which implies that
$C_{\mathbb{C}}\cap\mathrm{Sing}\big(S_{\mathbb{C}}\big)\ne\varnothing$.
Since $(S_{\mathbb{C}},C_{\mathbb{C}})$ has purely log terminal singularities,
it follows from \cite{Kawamata1998,Prokhorov2001} that $C_{\mathbb{C}}$ contains at most three singular points of the surface $S_{\mathbb{C}}$,
and all these singular points are cyclic quotient singularities.
Thus, since $C(\Bbbk)=\varnothing$ and $C_{\mathbb{C}}\simeq\mathbb{P}^1$,
the curve $C_{\mathbb{C}}$ contains two singular points of the surface $S_{\mathbb{C}}$,
which are swapped by the action of $\mathrm{Gal}(\mathbb{C}/\Bbbk)$.

Let $P_1$ and $P_2$ be the singular points of the surface $S_{\mathbb{C}}$ contained in $C_{\mathbb{C}}$.
Then the exceptional curves of the minimal resolution $\widetilde{S}_{\mathbb{C}}\to S_{\mathbb{C}}$
that are mapped to the singular points $P_1$ and $P_2$ form two disjoint Hirzebruch--Jung strings,
which are swapped by the action of the group $\mathrm{Gal}(\mathbb{C}/\Bbbk)$.
Since $(S_{\mathbb{C}},C_{\mathbb{C}})$ is purely log terminal,
the curve $\widetilde{C}_{\mathbb{C}}$ intersects the first (or the last) curves of these strings,
which we denote by $E_1$ and $E_2$, respectively.
Set $E=E_1+E_2$.
Then $E$ is defined over $\Bbbk$, so we consider it as a curve in $\widetilde{S}$.
Then $E$ is the only $f$-exceptional curve that intersect the curve $\widetilde{C}$.
Hence, there exists the following commutative diagram:
$$
\xymatrix@R=1em{
&\widetilde{S}\ar@{->}[dl]_{g}\ar@{->}[dr]^{f}&\\%
\widehat{S}\ar@{->}[rr]^{q} && S}
$$
where $g$ is the contraction of all $f$-exceptional curves except for the curve $E$,
and $q$ is a partial resolution of singularities of the surface $S$ that contracts the strict transform of the curve $E$.

Let $\widehat{E}=g(E)$ and $\widehat{C}=g(\widetilde{C})$,
Then $-K_{\widehat{S}}\sim_{\mathbb{Q}}\tau\widehat{C}+a\widehat{E}$ for some  $a\in\mathbb{Q}_{>0}$.
On the other hand, we have $\widehat{C}^2=0$ and $\widehat{C}\cdot\widehat{E}=2$,
because $g$ is an isomorphism in a neighborhood of the curve $\widetilde{C}$.
Furthermore, we have $-K_{\widehat{S}}\cdot \widehat{C}=2$ by the adjunction formula.
This gives $a=1$, because
$$
2=-K_{\widehat{S}}\cdot \widehat{C}=\big(\tau\widehat{C}+a\widehat{E}\big)\cdot \widehat{C}=a\widehat{E}\cdot \widehat{C}=2a.
$$
Hence, since $\widehat{E}$ is smooth, the subadjunction formula applied to $\widehat{E}$ gives
$$
-4=\mathrm{deg}\big(K_{\widehat{E}}\big)\leqslant\big(K_{\widehat{S}}+\widehat{E}\big)\cdot \widehat{E}=-\tau\widehat{C}\cdot\widehat{E}=-2\tau<-4,
$$
which is a contradiction.
\end{proof}

\begin{corollary}
\label{corollary:alpha-rank-1}
Suppose that $\rho(S)=1$ and $S(\Bbbk)=\varnothing$.
Then $S_{\mathbb{C}}$ is K-polystable.
\end{corollary}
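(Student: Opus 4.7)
The plan is to adapt the argument of Corollary~\ref{corollary:alpha-real} verbatim, substituting the Picard rank one input (Lemma~\ref{lemma:alpha-rank-1}) for the real field input (Lemma~\ref{lemma:alpha-real}). So I would argue by contradiction, assuming that $S_{\mathbb{C}}$ is not K-polystable. By the valuative criterion of Fujita--Li--Zhuang \cite{Fujita2019,Li2017,Zhuang2021}, this non-polystability is detected by a geometrically irreducible curve $C \subset S$ with $\beta(C) \leqslant 0$.

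Next, I would feed this curve $C$ into Corollary~\ref{corollary:Kento}. Since $S(\Bbbk) = \varnothing$ by hypothesis and $\beta(C)\leqslant 0$, the corollary gives a dichotomy: either the pseudo-effective threshold $\tau$ of $C$ against $-K_S$ satisfies $\tau > 2$, or $S \simeq \mathbb{P}^1 \times C_2$ for a pointless conic $C_2$. The second alternative is immediately excluded by the hypothesis $\rho(S) = 1$, since $\mathrm{Pic}(\mathbb{P}^1 \times C_2)$ already contains the two independent classes of the projection fibers. Therefore $\tau > 2$, so $-K_S \sim_{\mathbb{Q}} \tau C + \Delta$ for an effective $\mathbb{Q}$-divisor $\Delta$, which yields
\[
\alpha(S) \;\leqslant\; \frac{1}{\tau} \;<\; \frac{1}{2}.
\]

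Now Lemma~\ref{lemma:alpha-rank-1} applies directly: since $\rho(S) = 1$ and $\alpha(S) < \tfrac{1}{2}$, we must have $S(\Bbbk) \neq \varnothing$, contradicting the standing assumption. This closes the argument.

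In terms of difficulty, there is essentially no obstacle left at this stage: all the hard work has been absorbed into Lemma~\ref{lemma:alpha-rank-1} (which requires the detailed MMP analysis on the minimal resolution, the ruling out of Severi--Brauer quotients, and the purely log terminal subadjunction on the contracted exceptional divisor) and into Corollary~\ref{corollary:Kento} (which packages Fujita's $\beta \leqslant 0$ estimate into the structural statement about $\tau$ and product conic bundles). The corollary itself is a short formal combination of these two inputs, where the only point worth checking carefully is that the $\mathbb{P}^1 \times C_2$ case is ruled out by the Picard rank hypothesis rather than by any further geometric argument.
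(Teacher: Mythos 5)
Your proposal is correct and follows essentially the same route as the paper: assume non-K-polystability, extract a geometrically irreducible curve $C$ with $\beta(C)\leqslant 0$ via the valuative criterion, deduce $\alpha(S)<\tfrac{1}{2}$, and contradict Lemma~\ref{lemma:alpha-rank-1}. The only (harmless) difference is in how the threshold is bounded: the paper uses $\rho(S)=1$ to write $-K_S\sim_{\mathbb{Q}}\tau C$ exactly and reads off $\tau\geqslant 3$ directly from $\beta(C)\leqslant 0$, whereas you invoke Corollary~\ref{corollary:Kento} to get $\tau>2$ and use the Picard rank hypothesis only to exclude the $\mathbb{P}^1\times C_2$ alternative --- both yield $\alpha(S)\leqslant\tfrac{1}{\tau}<\tfrac{1}{2}$, which is all that is needed.
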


\begin{proof}
Suppose that $S_{\mathbb{C}}$ is not K-polystable. Then it follows from \cite{Li2017,Fujita2019,Zhuang2021} that
$S$ contains a geometrically irreducible curve $C$ such that $\beta(C)\leqslant 0$.
Then $-K_{S}\sim_{\mathbb{Q}}\tau C$
for some rational number $\tau\geqslant 3$, which implies that $\alpha(S)\leqslant\frac{1}{\tau}\leqslant\frac{1}{3}<\frac{1}{2}$,
but this contradicts Lemma~\ref{lemma:alpha-rank-1}.
\end{proof}

\subsection{The proof of Theorem~\ref{theorem:1}}
\label{subsection:proof}

Let $\Bbbk$ be a subfield of the field $\mathbb{C}$,
let $S$ be a del Pezzo surface with quotient singularities defined over $\Bbbk$,
and let $\rho(S)$ be the rank of the Picard group of the surface $S$.

\begin{lemma}
\label{lemma:alpha-big-rank}
Suppose that $\alpha(S)<\frac{1}{2}$. Then $S(\Bbbk)\ne\varnothing$.
\end{lemma}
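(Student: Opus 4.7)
The plan is to argue by contradiction, mirroring the strategy of Lemmas~\ref{lemma:alpha-real} and~\ref{lemma:alpha-rank-1}. Assume $S(\Bbbk)=\varnothing$ and set $\tau=1/\alpha(S)>2$. First I would invoke Lemma~\ref{lemma:alpha} to produce a smooth, geometrically irreducible, geometrically rational curve $C\subset S$ together with an effective $\mathbb{Q}$-divisor $\Delta$ satisfying $-K_S\sim_{\mathbb{Q}}\tau C+\Delta$ and with $(S,C+\alpha(S)\Delta)$ purely log terminal. Passing to the minimal resolution $f\colon\widetilde{S}\to S$, with strict transforms $\widetilde{C}$, $\widetilde{\Delta}$ and an effective $\mathbb{Q}$-divisor $\widetilde{B}$ supported on the $f$-exceptional locus, one obtains $-K_{\widetilde{S}}\sim_{\mathbb{Q}}\tau\widetilde{C}+\widetilde{\Delta}+\widetilde{B}$, and Lemma~\ref{lemma:nonempty} forces $\widetilde{S}(\Bbbk)=\varnothing$.

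Next I would run the $\Bbbk$-Minimal Model Program on $\widetilde{S}$ to obtain a birational morphism $h\colon\widetilde{S}\to\overline{S}$ onto a Mori fibre space, where $\overline{S}$ is either $(\mathrm{a})$ a smooth del Pezzo surface of Picard rank $1$, or $(\mathrm{b})$ a smooth surface of Picard rank $2$ endowed with a standard conic bundle $\pi\colon\overline{S}\to C_2$ with $C_2$ a geometrically irreducible conic in $\mathbb{P}^2$; in both cases $\overline{S}(\Bbbk)=\varnothing$ by Lemma~\ref{lemma:nonempty}. The contrapositive of Lemma~\ref{lemma:negative-curves} yields $\widetilde{C}^2\geqslant 0$, so $\widetilde{C}$ is not $h$-exceptional. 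Setting $\overline{C}=h(\widetilde{C})$ and pushing forward, one gets
\[
-K_{\overline{S}}\sim_{\mathbb{Q}}\tau\overline{C}+\overline{\Delta}+\overline{B}.
\]
In case $(\mathrm{a})$, the only smooth complex del Pezzo surface of Picard rank $1$ is $\mathbb{P}^2_{\mathbb{C}}$, so $\overline{S}$ is a Severi--Brauer form of $\mathbb{P}^2$; intersecting the displayed relation with $-K_{\overline{S}}$ and using $(-K_{\overline{S}})^2=9$ together with $\tau>2$ shows that $\overline{C}$ has geometric degree $1$, i.e., is a twisted line, and Lemma~\ref{lemma:SB} then gives $\overline{S}\cong\mathbb{P}^2_{\Bbbk}$, a contradiction.

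For case $(\mathrm{b})$, the plan is to recover the endgame of Lemma~\ref{lemma:alpha-real}. Intersection with a general fibre $F$ of $\pi$ and $\tau>2$ forces $\overline{C}\cdot F=0$, so $\overline{C}$ lies in a fibre; geometric irreducibility together with $\overline{C}^2\geqslant\widetilde{C}^2\geqslant 0$ identifies $\overline{C}$ with a full smooth fibre, giving $\pi(\overline{C})\in C_2(\Bbbk)$ and thus $C_2\cong\mathbb{P}^1$. I would then identify $\overline{S}$ as a form of a Hirzebruch surface $\mathbb{F}_n$; with $\overline{C}$ playing the role of the fibre class $f$ and $-K_{\mathbb{F}_n}=2s_{-}+(n+2)f$, the displayed equation and $\tau>2$ force $n\geqslant 1$. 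The unique geometric $(-n)$-curve $\overline{Z}$ is Galois invariant, hence defined over $\Bbbk$, and the single geometric intersection point $\overline{Z}\cap\overline{C}$ is a $\Bbbk$-point of $\overline{S}$, the sought contradiction.

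The step I expect to be the main obstacle is the identification of $\overline{S}$ with a form of $\mathbb{F}_n$ over a general subfield $\Bbbk\subset\mathbb{C}$: in the real setting of Lemma~\ref{lemma:alpha-real} this was supplied by the classification of real surfaces \cite{Kollar1997,Mangolte2020}, which is not available in the required generality. One must rule out geometrically degenerate fibres of $\pi$; the hypothesis $\overline{S}(\Bbbk)=\varnothing$ already forbids a degenerate fibre over any $\Bbbk$-point of $C_2$ (the node of such a fibre would be a $\Bbbk$-point of $\overline{S}$), so only degenerate fibres over closed points of $\mathbb{P}^1$ of degree $\geqslant 2$ need to be excluded. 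I would expect this to be handled via a Galois-orbit analysis of the discriminant of $\pi$ combined with the structural analysis of the purely log terminal pair $(S,C+\alpha(S)\Delta)$ near $\mathrm{Sing}(S)$ in the spirit of the proof of Lemma~\ref{lemma:alpha-rank-1}.
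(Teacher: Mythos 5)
Your outline diverges from the paper's proof, and the step you yourself flag as ``the main obstacle'' is a genuine gap that the proposal does not close. In case $(\mathrm{b})$ you need $\overline{S}$ to be a $\Bbbk$-form of a Hirzebruch surface $\mathbb{F}_n$, but over a general subfield $\Bbbk\subset\mathbb{C}$ a relatively minimal conic bundle $\pi\colon\overline{S}\to\mathbb{P}^1$ with $\rho(\overline{S})=2$ and $\overline{S}(\Bbbk)=\varnothing$ can perfectly well have degenerate geometric fibres: they need only lie over closed points of degree $\geqslant 2$, with the two components of each such fibre swapped by Galois (pointless quartic del Pezzo surfaces over $\mathbb{Q}$ carrying exactly such conic bundle structures exist). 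So the statement you want is false in the required generality, and no Galois-orbit analysis of the discriminant will recover it; over $\mathbb{R}$ the paper obtains it from the classification of pointless real surfaces, which is precisely what is unavailable here. Note also that in the rank-one case the paper's endgame does not use the $\mathbb{F}_n$ structure at all: it returns to the plt pair $(S,C)$, analyses the Hirzebruch--Jung strings over the two conjugate singular points of $S_{\mathbb{C}}$ lying on $C_{\mathbb{C}}$, and concludes by subadjunction --- an argument that crucially exploits $-K_S\sim_{\mathbb{Q}}\tau C$ with no residual divisor $\Delta$, i.e.\ exploits $\rho(S)=1$. Your proposal offers no substitute for this when $\Delta\neq 0$. (A smaller slip: in case $(\mathrm{a})$, $\rho(\overline{S})=1$ is the Picard rank over $\Bbbk$, so you cannot immediately assert $(-K_{\overline{S}})^2=9$; one first needs the index argument --- $-K_{\overline{S}}$ is at least $\tau>2$ times the ample generator, forcing Fano index $3$ --- to conclude that $\overline{S}$ is a Severi--Brauer surface.)

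The paper's proof of this lemma is entirely different and much shorter: induction on $\rho(S)$, with base case Lemma~\ref{lemma:alpha-rank-1}. If $\rho(S)\geqslant 2$ and $S$ admits a non-biregular birational morphism to a normal surface $S'$, then $S'$ is again a del Pezzo surface with quotient singularities, and Corollary~\ref{corollary:alpha} (which uses $S(\Bbbk)=\varnothing$) gives $S'(\Bbbk)=\varnothing$ and $\alpha(S')\leqslant\alpha(S)<\frac{1}{2}$, contradicting the induction hypothesis since $\rho(S')<\rho(S)$. If no such morphism exists, then $\rho(S)=2$ and $\mathrm{NE}(S)$ is generated by two irreducible curves of self-intersection $0$, whence $\alpha(S)\geqslant\frac{1}{2}$ by Corollary~\ref{corollary:alpha-conic-bundles}. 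All the delicate geometry is thereby confined to the already-established rank-one case; I would rework your argument along these lines rather than trying to push the minimal-resolution-plus-MMP analysis of Lemmas~\ref{lemma:alpha-real} and~\ref{lemma:alpha-rank-1} through in arbitrary Picard rank.
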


\begin{proof}
Let us prove the assertion by induction on $\rho(S)$.
The case $\rho(S)=1$ is done by Lemma~\ref{lemma:alpha-rank-1}.
Suppose that $\rho(S)\geqslant 2$,
and the assertion holds for del Pezzo surfaces with smaller Picard rank.
We have to show that $S(\Bbbk)\ne\varnothing$.
Suppose that $S(\Bbbk)=\varnothing$.
Let us seek for a contradiction.

If there exists a non-biregular birational morphism $S\to S^\prime$ such that $S^\prime$ is a normal surface,
then $S^\prime$ is a del Pezzo surface with quotient singularities,
and it follows from Corollary~\ref{corollary:alpha} that $\alpha(S^\prime)\leqslant\alpha(S)<\frac{1}{2}$
and $S^\prime(\Bbbk)=\varnothing$, which contradicts the induction hypotheses.
Hence, we see that $S$ does not admit any non-biregular birational morphism to a normal surface.
This is only possible when $\rho(S)=2$, and  $\mathrm{NE}(S)$ is generated by irreducible curves $C_1$ and $C_2$
such that $C_1^2=C_2^2=0$ and $C_1\cdot C_2>0$.
But in this case we have $\alpha(S)\geqslant\frac{1}{2}$ by Corollary~\ref{corollary:alpha-conic-bundles}.
\end{proof}

Now, using Lemma~\ref{lemma:alpha-big-rank} and arguing as in the proof Corollary~\ref{corollary:alpha-rank-1}, we obtain Main Theorem.
Indeed, if $S(\Bbbk)=\varnothing$ and the surface $S_{\mathbb{C}}$ is not K-polystable, then it follows from \cite{Li2017,Fujita2019,Zhuang2021} that
$S$ contains a geometrically irreducible curve $C$ such that $\beta(C)\leqslant 0$,
so it follows from Corollary~\ref{corollary:Kento} that
$-K_{S}\sim_{\mathbb{Q}}\tau C+\Delta$ for some rational number $\tau>2$ and an effective $\mathbb{Q}$-divisor $\Delta$ on the surface $S$,
so $\alpha(S)\leqslant\frac{1}{\tau}<\frac{1}{2}$,
which contradicts Lemma~\ref{lemma:alpha-big-rank}.

\section{Smooth Fano 3-folds with $\Bbbk$-points}
\label{Section:k-point}

In this section, we prove existence of points for a number of Fano 3-folds defined over any subfield $\Bbbk$ of $\mathbb{C}$. 
As stated in the introduction, there are 26 families of smooth Fano 3-folds containing K-polystable members 
such that either they contain a non-K-polystable member or the K-stability picture for all smooth elements is lacking. Among them, we single out 8 families and prove in Subsection\,\ref{subsection:K-poly-with-points} that any members in those families, when defined over $\Bbbk$, contains $\Bbbk$-rational points. There are also 27 families of smooth Fano 3-folds where every smooth member is known to be non-K\"ahler-Einstein. In Subsection\,\ref{subsection:K-unstable-with-points} we show that every smooth member in 19 families (out of 27) always contain $\Bbbk$-rational points. This leaves 8 families that contain exceptional cases of Theorem\,\ref{theorem:2}.

\subsection{Families containing K-polystable members}
\label{subsection:K-poly-with-points}

\begin{lemma}
\label{lemma:2-9}
Suppose that $X$ is contained in Family \textnumero 2.9.
Then $X$ has a $\Bbbk$-point.
\end{lemma}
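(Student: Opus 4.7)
The plan is to descend both extremal Mori contractions of $X_{\mathbb{C}}$ to $\Bbbk$ via Lemma~\ref{lemma:Prokhorov}, and then combine the resulting $\Bbbk$-Cartier divisor classes on $X$ to produce a $\Bbbk$-Cartier hyperplane class on the Severi--Brauer target of the blowdown, forcing it to be isomorphic to $\mathbb{P}^3$.

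Since Family~\textnumero 2.9 is neither \textnumero 2.12 nor \textnumero 2.21, Lemma~\ref{lemma:Prokhorov} provides $\Bbbk$-morphisms $f\colon X \to U$ and $g\colon X \to W$ corresponding to the two extremal rays of $\overline{\mathrm{NE}}(X_{\mathbb{C}})$. The morphism $f_{\mathbb{C}}$ is the blowup of $U_{\mathbb{C}} \cong \mathbb{P}^3$ along a smooth curve $C_0$ of degree $7$ and genus $5$, so $U$ is a three-dimensional Severi--Brauer $\Bbbk$-variety. Let $H$ denote the pullback to $X_{\mathbb{C}}$ of the hyperplane class of $U_{\mathbb{C}}$, and let $E$ denote the exceptional divisor of $f$. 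A brief cohomological computation yields $h^0(\mathbb{P}^3,\mathcal{I}_{C_0}(3))=3$ along with $(3H-E)^3=0$ and $(-K_X)\cdot F=2$ for the generic fiber $F$ of the map defined by $|3H-E|$; this identifies $g$ as a conic bundle onto a $\Bbbk$-form $W$ of $\mathbb{P}^2$ satisfying $g^*\mathcal{O}_{W_{\mathbb{C}}}(1)=3H-E$.

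If $U\cong\mathbb{P}^3$, then $U(\Bbbk)$ is Zariski-dense, and any $\Bbbk$-point of $U$ off the curve $f(E)$ lifts under $f$ to a smooth $\Bbbk$-point of $X$, completing the proof. Hence it suffices to show $U\cong\mathbb{P}^3$, for which by Lemma~\ref{lemma:SB} we only need a $\Bbbk$-Cartier divisor on $U$ of degree coprime to $4$. Let $m_W\in\{1,3\}$ be the order of the Brauer class of $W$, so that $m_W\,\mathcal{O}_W(1)$ is $\Bbbk$-Cartier; pulling back yields the $\Bbbk$-Cartier class $g^*\bigl(m_W\mathcal{O}_W(1)\bigr)=m_W(3H-E)$ on $X$. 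Combined with the $\Bbbk$-Cartier classes $E$ (the exceptional of $f$) and $-K_X=4H-E$, this produces both $3m_W H$ and $4H$ as $\Bbbk$-Cartier classes on $X$. Since $\gcd(3m_W,4)=1$ for $m_W\in\{1,3\}$, B\'ezout's identity gives $H\in\mathrm{Pic}(X)$. Via the decomposition $\mathrm{Pic}(X)=f^*\mathrm{Pic}(U)\oplus\mathbb{Z} E$, the class $H$ descends to a $\Bbbk$-Cartier hyperplane class on $U$, so Lemma~\ref{lemma:SB} concludes $U\cong\mathbb{P}^3$.

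The main obstacle is the clean identification of the second extremal contraction --- specifically, verifying that $|3H-E|$ is base-point-free and yields a conic bundle structure over a Severi--Brauer surface. This amounts to showing $3H-E$ is nef and non-big, and computing the intersection numbers with the generic fiber of the associated map. Both follow from the Mori--Mukai classification of Family~\textnumero 2.9 combined with standard intersection calculations on the blowup (for instance $E^3=-\deg N_{C_0/\mathbb{P}^3}=-36$), though care may be needed if $C_0$ fails to be scheme-theoretically cut out by its cubics.
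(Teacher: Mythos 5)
Your argument is correct, and it reaches the same intermediate goal as the paper --- producing on the Severi--Brauer $3$-fold $U$ a $\Bbbk$-divisor class of degree coprime to $4$ so that Lemma~\ref{lemma:SB} forces $U\cong\mathbb{P}^3$ --- but by a genuinely different mechanism. The paper first trivializes the base of the conic bundle: the discriminant curve $\Delta\subset V$ has degree $5$, coprime to $3$, so Lemma~\ref{lemma:SB} gives $V\cong\mathbb{P}^2$; it then takes $S=f_*(\pi^*(L))$ for a line $L\subset V$, which is a cubic surface in $U$, and applies Lemma~\ref{lemma:SB} again since $\gcd(3,4)=1$. You instead bypass the triviality of $W$ entirely: using that $\mathrm{Pic}(W)$ is generated by $m_W\mathcal{O}(1)$ with $m_W\in\{1,3\}$, you combine the $\Bbbk$-classes $m_W(3H-E)$, $E$ and $-K_X=4H-E$ and run B\'ezout on $\gcd(3m_W,4)=1$ to land $H$ itself in $\mathrm{Pic}(X)$, hence a degree-$1$ class on $U$. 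Your route buys independence from the discriminant curve (you never need to know its degree, or that $W\cong\mathbb{P}^2$), at the cost of invoking the structure of $\mathrm{Pic}$ of a Severi--Brauer surface and the intersection-theoretic identification $g^*\mathcal{O}_{W_{\mathbb{C}}}(1)=3H-E$; the paper's route is shorter because the degree-$5$ discriminant curve is handed to it by the Mori--Mukai description. Your closing worry about $C_0$ being cut out by cubics is moot: Lemma~\ref{lemma:Prokhorov} already descends the second extremal contraction, and Mori--Mukai identifies it as the conic bundle onto $\mathbb{P}^2$ given by $|3H-E|$, so no base-point-freeness check is needed.
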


\begin{proof}
By Mori--Mukai \cite{MoMu81}, we have the following commutative diagram
$$
\xymatrix@R=1em{
&X_{\mathbb{C}}\ar@{->}[ld]_{f}\ar@{->}[rd]^{\pi}&\\%
\mathbb{P}^3\ar@{-->}[rr]&&\mathbb{P}^2}
$$
where $f$ is the blowup of a smooth curve $C$ of degree $7$ and genus $5$,
$\pi$ is a standard conic bundle with discriminant curve $\Delta\subset\mathbb{P}^2$ of degree $5$,
and the dashed arrow is given by the two-dimensional linear system of all cubic surfaces that contain the curve $C$.
Moreover, it follows from Lemma~\ref{lemma:Prokhorov} that this diagram can be defined over $\Bbbk$ with
$X_{\mathbb{C}}$ replaced by $X$,
$\mathbb{P}^3$ replaced by a $\Bbbk$-form $U$
of $\mathbb{P}^3$ and $\mathbb{P}^2$ replaced by a $\Bbbk$-form $V$ of $\mathbb{P}^2$.
Applying Lemma~\ref{lemma:SB} to $\Delta$ and $V$, we conclude that $V\simeq\mathbb{P}^2$.
Let $L$ be a line in $V$ and let $S=f_\ast \big{(} \pi^\ast (L) \big{)}$.
Then applying Lemma~\ref{lemma:SB} to $S$ and $U$, we conclude that $U\simeq\mathbb{P}^3$.
In particular, we see that $X({\Bbbk})\ne\varnothing$.
\end{proof}

\begin{lemma}
\label{lemma:2-11}
Suppose that $X$ is contained in Family \textnumero 2.11.
Then $X$ has a $\Bbbk$-point.
\end{lemma}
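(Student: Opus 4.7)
The plan is to follow the strategy of Lemma \ref{lemma:2-9}. By the Mori--Mukai classification \cite{MoMu81}, if $X$ lies in Family \textnumero 2.11, then $X_{\mathbb{C}}$ is the blowup of a smooth cubic threefold $V_3 \subset \mathbb{P}^4$ along a line $\ell$, and admits two extremal contractions: the blowup $f\colon X_{\mathbb{C}} \to V_3$ and a conic bundle $\pi\colon X_{\mathbb{C}} \to \mathbb{P}^2$ induced by linear projection from $\ell$, with discriminant a smooth plane quintic. I would first invoke Lemma \ref{lemma:Prokhorov} (Family \textnumero 2.11 is not among the exceptional Families \textnumero 2.12 and \textnumero 2.21) to descend $f$ to a $\Bbbk$-morphism $f\colon X \to U$, a blowup of a $\Bbbk$-curve $\ell \subset U$ whose base change to $\mathbb{C}$ is the original line, where $U$ is a $\Bbbk$-form of $V_3$.

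The key step is then Corollary \ref{corollary:ICMS} applied with $(n,d)=(4,3)$: the pair is not excluded and $\gcd(5,3)=1$, so $U$ embeds in $\mathbb{P}_{\Bbbk}^4$ as a smooth cubic hypersurface. Consequently $\ell \subset U \subset \mathbb{P}_{\Bbbk}^4$ is a $\Bbbk$-subscheme with Hilbert polynomial $t+1$ (inherited from $\ell_{\mathbb{C}}$), i.e.\ a curve of degree $1$ and arithmetic genus $0$ in $\mathbb{P}_{\Bbbk}^4$, and hence a linearly embedded $\mathbb{P}_{\Bbbk}^1$. In particular $\ell(\Bbbk) \ne \varnothing$, and for any $p \in \ell(\Bbbk)$ the fibre $f^{-1}(p) \cong \mathbb{P}_{\Bbbk}^1$---the projectivization of the normal fibre at the smooth $\Bbbk$-point $p$ of the smooth centre $\ell$---produces a $\Bbbk$-point of $X$.

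The main obstacle is showing that the form $U$ is genuinely a cubic in the standard $\mathbb{P}_{\Bbbk}^4$ rather than in some non-trivial Severi--Brauer form of $\mathbb{P}^4$; this is handled cleanly by Corollary \ref{corollary:ICMS} via Lemma \ref{lemma:SB}. Once this is established, the rational point is essentially free. Note that, unlike Lemma \ref{lemma:2-9}, the conic bundle $\pi$ is not strictly needed here, although one could also identify the target of $\pi$ with $\mathbb{P}_{\Bbbk}^2$ by applying Lemma \ref{lemma:SB} to the quintic discriminant curve, thereby giving an alternative description of $X$ as a conic bundle over $\mathbb{P}_{\Bbbk}^2$.
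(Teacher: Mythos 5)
Your proposal is correct and follows essentially the same route as the paper: descend the blowup via Lemma~\ref{lemma:Prokhorov}, identify the target as a genuine cubic hypersurface in $\mathbb{P}^4_{\Bbbk}$ via Corollary~\ref{corollary:ICMS}, and conclude that the centre is an honest line with a $\Bbbk$-point. The only cosmetic difference is that the paper passes from $C(\Bbbk)\ne\varnothing$ to $X(\Bbbk)\ne\varnothing$ via the Lang--Nishimura lemma, whereas you observe directly that the exceptional fibre over a $\Bbbk$-point of the centre is a $\mathbb{P}^1_{\Bbbk}$; both are fine.
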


\begin{proof}
Arguing as in the proof of Lemma~\ref{lemma:2-9}, we see that there exists the following commutative diagram
$$
\xymatrix@R=1em{
&X\ar@{->}[ld]_{f}\ar@{->}[rd]^{\pi}&\\%
Y&&\mathbb{P}^2}
$$
where $Y$ is a form of a smooth cubic hypersurface $Y_{\mathbb{C}}\subset \mathbb{P}^4$,
$f$ is the blowup of a curve $C\subset Y$ such that $C_{\mathbb{C}}$ is a line in the cubic hypersurface $Y_{\mathbb{C}}$,
$\pi$ is a conic bundle. 
Then $Y$ is a $Y$ is a cubic hypersurface in $\mathbb{P}^4$ by Corollary~\ref{corollary:ICMS}, so $C$ is a line in it, which gives $C(\Bbbk)\ne\varnothing$.
In particular, $Y(\Bbbk)\ne\varnothing$, and Lemma \ref{lemma:nonempty} says that $X({\Bbbk})\ne \varnothing$ as well.
\end{proof}

\begin{lemma}
\label{lemma:2-14}
Suppose that $X$ is contained in Family \textnumero 2.14.
Then $X$ has a $\Bbbk$-point.
\end{lemma}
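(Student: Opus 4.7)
The plan is to imitate the strategy of Lemmas~\ref{lemma:2-9} and~\ref{lemma:2-11}: extract the two-ray game of $X_\mathbb{C}$ from the Mori--Mukai classification, descend the relevant contraction to $\Bbbk$ via Lemma~\ref{lemma:Prokhorov}, analyze its target, and finally produce a $\Bbbk$-rational point on $X$ via Lang--Nishimura (Lemma~\ref{lemma:nonempty}).

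By Mori--Mukai \cite{MoMu81}, a smooth member of Family \textnumero 2.14 is the blowup $f\colon X_\mathbb{C}\to V_5$ of the smooth quintic del Pezzo 3-fold $V_5=\mathrm{Gr}(2,5)\cap\mathbb{P}^6$ along a smooth elliptic curve $C_\mathbb{C}$ of degree $5$ (a codimension-two linear section of $V_5$). Since $X_\mathbb{C}$ has Picard rank $2$ and Family \textnumero 2.14 is neither \textnumero 2.12 nor \textnumero 2.21, Lemma~\ref{lemma:Prokhorov} produces a morphism $f\colon X\to V$ defined over $\Bbbk$ whose base change to $\mathbb{C}$ recovers the blowup above. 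In particular, $V$ is a smooth $\Bbbk$-variety with $V_\mathbb{C}\cong V_5$, i.e., a $\Bbbk$-form of the quintic del Pezzo 3-fold.

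The key input is the classical fact, going back to Enriques and made explicit by Iskovskikh--Prokhorov, that every smooth Fano 3-fold of Picard rank $1$, Fano index $2$, and anticanonical degree $40$ over a field $\Bbbk\subset\mathbb{C}$ is isomorphic to the standard $V_5$; in particular, $V(\Bbbk)\ne\varnothing$. Granting this, apply Lemma~\ref{lemma:nonempty} to the rational map $V\dashrightarrow X$ inverting $f$: since $V$ is smooth and has a $\Bbbk$-point, we conclude $X(\Bbbk)\ne\varnothing$, as desired.

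The main obstacle is the invocation of the $V_5$-point-existence theorem, which is nontrivial but well-documented in the literature and used elsewhere in such classification arguments. An alternative route, in case one wishes to avoid citing the uniqueness of $V_5$, would be to use the other extremal contraction of $X_\mathbb{C}$ (it is a different blowup or a conic bundle, again descendable to $\Bbbk$ by Lemma~\ref{lemma:Prokhorov}) and try to exhibit a $\Bbbk$-point on its target by Severi--Brauer techniques in the spirit of Lemmas~\ref{lemma:SB} and~\ref{lemma:plane}; however, the former approach is cleaner and directly parallels the pattern established in the previous two lemmas of this subsection.
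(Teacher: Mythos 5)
Your overall architecture is the same as the paper's: quote the Mori--Mukai description of $X_{\mathbb{C}}$ as the blowup of $V_5$ along an elliptic curve, descend the contraction to $\Bbbk$ via Lemma~\ref{lemma:Prokhorov} to get $X\to V$ with $V$ a $\Bbbk$-form of $V_5$, and then conclude from a statement about forms of $V_5$. The problem is the statement you invoke. It is \emph{not} true that every $\Bbbk$-form of the quintic del Pezzo 3-fold is isomorphic to the standard $V_5$: since $\mathrm{Aut}(V_{5,\mathbb{C}})\cong\mathrm{PGL}_2(\mathbb{C})$, the $\Bbbk$-forms of $V_5$ are classified by $H^1(\Bbbk,\mathrm{PGL}_2)$, i.e.\ by quaternion algebras over $\Bbbk$, and this set is non-trivial already for $\Bbbk=\mathbb{R}$ or $\Bbbk=\mathbb{Q}$. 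So there exist non-trivial forms, and the uniqueness you attribute to Enriques and Iskovskikh--Prokhorov is not a theorem (the classical Enriques-type statement concerns the degree-$5$ del Pezzo \emph{surface} and asserts existence of rational points, not uniqueness of the form). As written, your proof rests on a false lemma.

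The gap is repairable, and the repair is exactly what the paper does: by Kuznetsov--Prokhorov \cite[Theorem 1.1]{KP2023}, every $\Bbbk$-form of $V_5$ is $\Bbbk$-\emph{rational} (without any hypothesis on points); hence $X$ is $\Bbbk$-rational and in particular $X(\Bbbk)\ne\varnothing$. Equivalently, you could keep your Lang--Nishimura step but replace ``$V\cong V_5$'' by ``$V$ is $\Bbbk$-rational, hence $V(\Bbbk)\ne\varnothing$.'' Note that this input is a recent theorem, not classical folklore, and it is genuinely needed here because $V$ itself may well be a non-trivial twist.
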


\begin{proof}
By Mori--Mukai \cite{MoMu81}, $X_{\mathbb{C}}$ can be obtained by blowing up of the smooth quintic del Pezzo 3-fold $V_5$ along an elliptic curve.
Thus, it follows from Lemma~\ref{lemma:Prokhorov} that $X$ can be obtained by blowing up a $\Bbbk$-form of $V_5$.
By \cite[Theorem 1.1]{KP2023}, any $\Bbbk$-form of $V_5$ is $\Bbbk$-rational, hence so is $X$,
in particular $X(\Bbbk)\ne\varnothing$.
\end{proof}

\begin{lemma}
\label{lemma:2-16}
Suppose that $X$ is contained in Family \textnumero 2.17.
Then $X$ has a $\Bbbk$-point.
\end{lemma}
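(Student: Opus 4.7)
The plan is to follow the blueprint of Lemma~\ref{lemma:2-9} and Lemma~\ref{lemma:2-11}: realise the Mori--Mukai description of an extremal contraction, descend it to $\Bbbk$ via Lemma~\ref{lemma:Prokhorov}, pin down the target geometrically using Corollary~\ref{corollary:ICMS}, and then produce a $\Bbbk$-point by a numerical oddness argument combined with Lang--Nishimura.

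By the Mori--Mukai classification \cite{MoMu81}, the complex threefold $X_{\mathbb{C}}$ is the blowup of a smooth quadric threefold $Q\subset\mathbb{P}^4$ along a smooth elliptic curve of degree~$5$. The blowup $f\colon X_{\mathbb{C}}\to Q$ is an extremal contraction, and Family \textnumero 2.17 is not one of the exceptions in Lemma~\ref{lemma:Prokhorov}, so $f$ descends to a morphism $f\colon X\to Y$ defined over $\Bbbk$ with $Y$ a $\Bbbk$-form of $Q$. Since $(n,d)=(4,2)$ satisfies $\gcd(n+1,d)=1$ and is not among the pairs excluded in Corollary~\ref{corollary:ICMS}, that corollary identifies $Y$ with an actual quadric hypersurface in $\mathbb{P}^4_{\Bbbk}$.

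To finish, it suffices to exhibit a $\Bbbk$-point on $Y$, since Lemma~\ref{lemma:nonempty} applied to the rational inverse of $f$ will then deliver one on $X$. Let $C_{\Bbbk}\subset Y$ be the center of the blowup $f$, a geometrically irreducible $\Bbbk$-curve whose image in $\mathbb{P}^4$ has degree $5$. Choose a $\Bbbk$-rational hyperplane section $H\subset Y$ not containing $C_{\Bbbk}$; this is possible because the complete linear system of hyperplane sections of $Y$ is defined over $\Bbbk$. Then $H\cdot C_{\Bbbk}$ is a zero-cycle on $Y$ of degree $5$ defined over $\Bbbk$. Writing it as a sum of closed points $\sum n_i[P_i]$, the identity $\sum n_i[\Bbbk(P_i):\Bbbk]=5$ forces at least one residue field $\Bbbk(P_i)$ to have odd degree over $\Bbbk$. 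Springer's theorem on quadratic forms then yields a $\Bbbk$-rational point on $Y$.

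The only step that falls slightly outside the preliminary toolbox of Section~\ref{section:preliminaries} is the appeal to Springer's theorem; this is a classical fact about quadrics and constitutes the only (very minor) obstacle. Every other step is a direct invocation of the results already collected in the preliminaries, exactly parallel to what is done for Families \textnumero 2.9 and \textnumero 2.11.
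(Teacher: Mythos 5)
Your proposal is correct and follows essentially the same route as the paper: descend the blowup via Lemma~\ref{lemma:Prokhorov}, identify the target as a genuine quadric in $\mathbb{P}^4$ via Corollary~\ref{corollary:ICMS}, intersect the degree-$5$ curve with a hyperplane to get an odd-degree zero-cycle, and conclude with Lemma~\ref{lemma:nonempty}. The only difference is that you make explicit the appeal to Springer's theorem, which the paper leaves implicit in the phrase ``which implies that $Q$ has a $\Bbbk$-point.''
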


\begin{proof}
It follows from Mori--Mukai \cite{MoMu81} and Lemma~\ref{lemma:Prokhorov} that there exists a birational morphism $\pi\colon X\to Q$ 
such that $Q$ is a $\Bbbk$-form of a smooth quadric 3-fold in $\mathbb{P}^4$, and $\pi$ is the blowup of a smooth elliptic curve $C$ such that $-K_Q\cdot C=15$. 
Then $Q$ is a quadric in $\mathbb{P}^4$ by Corollary~\ref{corollary:ICMS},
so $C$ is a curve of degree $5$ in it.
Now, taking hyperplane section of $C$, we obtain a zero-cycle in $Q$ of degree $5$ defined over $\Bbbk$,
which implies that $Q$ has a $\Bbbk$-point, so $X$ also has a $\Bbbk$-point by Lemma~\ref{lemma:nonempty}.
\end{proof}

\begin{lemma}
\label{lemma:2-20}
Suppose that $X$ is contained in Family \textnumero 2.20.
Then $X$ has a $\Bbbk$-point.
\end{lemma}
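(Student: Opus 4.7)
The strategy is identical to that of Lemma~\ref{lemma:2-14}, which handled Family \textnumero 2.14. Starting from the Mori--Mukai description \cite{MoMu81}, a smooth complex member of Family \textnumero 2.20 is obtained by blowing up the smooth quintic del Pezzo 3-fold $V_5$ along a smooth rational curve (of degree $3$). Since Family \textnumero 2.20 is neither \textnumero 2.12 nor \textnumero 2.21, Lemma~\ref{lemma:Prokhorov} descends the extremal blowup contraction to the base field: there is a birational morphism $\pi\colon X\to V$ defined over $\Bbbk$ whose base change to $\mathbb{C}$ recovers that blowup, and $V$ is a $\Bbbk$-form of $V_5$.

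Next I would invoke \cite[Theorem 1.1]{KP2023}, which asserts that every $\Bbbk$-form of the quintic del Pezzo 3-fold is $\Bbbk$-rational. In particular $V(\Bbbk)\ne\varnothing$, and $V$ admits a smooth $\Bbbk$-rational point. The inverse of $\pi$ is a rational map $V\dashrightarrow X$, so the Lang--Nishimura Lemma~\ref{lemma:nonempty} yields $X(\Bbbk)\ne\varnothing$, as required.

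I do not anticipate any serious obstacle: the argument is mechanically parallel to that of Lemma~\ref{lemma:2-14}, and the only ingredients needed beyond what the paper has already established are the Mori--Mukai description of Family \textnumero 2.20 and the $\Bbbk$-rationality result of \cite{KP2023}. In effect, one could combine the treatments of Families \textnumero 2.14 and \textnumero 2.20 into a single lemma, since both arise as blowups of a $\Bbbk$-form of $V_5$ and hence inherit the $\Bbbk$-rationality of that 3-fold.
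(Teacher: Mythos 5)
Your proof is correct and follows exactly the same route as the paper: the paper's proof of Lemma~\ref{lemma:2-20} simply cites the Mori--Mukai description ($X_{\mathbb{C}}$ is the blowup of $V_5$ along a twisted cubic) and then says "arguing as in the proof of Lemma~\ref{lemma:2-14}", which is precisely the descent via Lemma~\ref{lemma:Prokhorov} plus the $\Bbbk$-rationality of forms of $V_5$ from \cite[Theorem 1.1]{KP2023} that you spell out. Your observation that Families \textnumero 2.14, \textnumero 2.20 (and indeed \textnumero 2.22 and \textnumero 2.26) could be merged into one lemma is also accurate.
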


\begin{proof}
By Mori--Mukai \cite{MoMu81}, $X_{\mathbb{C}}$ can be obtained by blowing up a smooth quintic del Pezzo 3-fold $V_5$ along a twisted cubic curve.
Now, arguing as in the proof of Lemma~\ref{lemma:2-14}, we conclude that $X$ is rational over $\Bbbk$ and, in particular, it has a $\Bbbk$-point.
\end{proof}

\begin{lemma}
\label{lemma:2-22}
Suppose that $X$ is contained in Family \textnumero 2.22.
Then $X$ has a $\Bbbk$-point.
\end{lemma}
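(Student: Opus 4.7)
The plan is to mirror almost verbatim the argument used for Families \textnumero 2.14 and \textnumero 2.20, since Family \textnumero 2.22 is also of the form ``blow up $V_5$ along a smooth subvariety of low degree.'' More precisely, according to Mori--Mukai \cite{MoMu81}, the geometric model $X_{\mathbb{C}}$ is obtained by blowing up the smooth quintic del Pezzo 3-fold $V_5$ along a conic. I therefore expect the proof to reduce, in three essentially routine steps, to citing the rationality result of Kuznetsov--Prokhorov \cite{KP2023}.

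\medskip

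The first step is to descend the Mori--Mukai contraction from $\mathbb{C}$ down to $\Bbbk$. Since $X$ is in Family \textnumero 2.22, it is neither in Family \textnumero 2.12 nor in Family \textnumero 2.21, so Lemma~\ref{lemma:Prokhorov} applies: the extremal contraction $X_{\mathbb{C}} \to V_5$ descends to a morphism $\pi\colon X \to Y$ defined over $\Bbbk$ whose base change to $\mathbb{C}$ is the stated blowup. In particular $Y$ is a $\Bbbk$-form of $V_5$ and $\pi$ is birational.

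\medskip

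The second step is to invoke \cite[Theorem~1.1]{KP2023}, which asserts that every $\Bbbk$-form of $V_5$ is $\Bbbk$-rational. Since $X$ is birational to $Y$ over $\Bbbk$ via $\pi$, it follows that $X$ is $\Bbbk$-rational; in particular $X(\Bbbk)\neq\varnothing$. (Alternatively, one could avoid explicit rationality and simply note that $Y(\Bbbk)\neq\varnothing$ by \cite{KP2023}, pick any smooth $\Bbbk$-point of $Y$ lying outside the blown-up conic, and lift; or simply apply Lemma~\ref{lemma:nonempty} to the rational map $Y\dashrightarrow X$ inverse to $\pi$.)

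\medskip

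I do not anticipate any real obstacle here; the whole argument is a verbatim transcription of the proofs of Lemmas~\ref{lemma:2-14} and~\ref{lemma:2-20}, with ``elliptic curve of degree $5$'' or ``twisted cubic'' replaced by ``conic.'' The only point that requires verification is that the numerical type of the contraction in Family \textnumero 2.22 is truly the one contracting to $V_5$ (so that the ambient variety after descent really is a form of $V_5$ and not of some other Fano 3-fold), but this is standard from the Mori--Mukai tables.
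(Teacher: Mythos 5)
Your argument is exactly the paper's: the paper proves Lemma~\ref{lemma:2-22} by citing Mori--Mukai for the description of $X_{\mathbb{C}}$ as the blowup of $V_5$ along a conic and then saying ``arguing as in the proof of Lemma~\ref{lemma:2-14},'' which is precisely the descent via Lemma~\ref{lemma:Prokhorov} followed by the $\Bbbk$-rationality of forms of $V_5$ from \cite[Theorem~1.1]{KP2023} that you spell out. The proposal is correct and matches the paper's proof.
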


\begin{proof}
By Mori--Mukai \cite{MoMu81}, $X_{\mathbb{C}}$ can be obtained by blowing up a smooth quintic del Pezzo 3-fold $V_5$ along a conic.
Now, arguing as in the proof of Lemma~\ref{lemma:2-14}, we conclude that $X$ is rational over $\Bbbk$ and, in particular, it has a $\Bbbk$-point.
\end{proof}

\begin{lemma}
\label{lemma:3-8}
Suppose that $X$ is contained in Family \textnumero 3.8.
Then $X$ has a $\Bbbk$-point.
\end{lemma}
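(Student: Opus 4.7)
The plan is to follow the pattern established by Lemmas \ref{lemma:2-9}--\ref{lemma:2-22}. By Mori--Mukai \cite{MoMu81}, a smooth complex Fano 3-fold in Family \textnumero 3.8 admits an explicit birational description involving one or more extremal contractions onto simpler Fano varieties; since $\rho(X_{\mathbb{C}}) = 3$, the Mori cone supplies three extremal rays, which gives three independent opportunities to reduce the problem to a base variety whose $\Bbbk$-points are accessible by the tools assembled in Section~\ref{section:preliminaries}.

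First I would write down the Mori--Mukai description and select an extremal contraction $\pi_{\mathbb{C}}\colon X_{\mathbb{C}}\to Y_{\mathbb{C}}$. Since Family \textnumero 3.8 is neither \textnumero 2.12 nor \textnumero 2.21 (indeed $\rho(X_{\mathbb{C}})=3$), Lemma~\ref{lemma:Prokhorov} applies, and after verifying that the chosen ray is Galois invariant the contraction descends to a morphism $\pi\colon X\to Y$ over $\Bbbk$, with $Y$ a $\Bbbk$-form of $Y_{\mathbb{C}}$. Next, I would produce a $\Bbbk$-point on $Y$. Depending on the shape of $Y_{\mathbb{C}}$, exactly one of the following tools should apply: if $Y_{\mathbb{C}}$ is a Severi--Brauer variety or a form of $\mathbb{P}^n$, then Lemma~\ref{lemma:SB}, Lemma~\ref{lemma:plane}, or Corollary~\ref{corollary:ICMS} trivializes the form using a divisor of coprime degree obtained by pushing forward an exceptional divisor from $X$; if $Y_{\mathbb{C}}\simeq V_5$, the $\Bbbk$-rationality of every $\Bbbk$-form of $V_5$ established in \cite{KP2023} concludes the matter at once; and if $Y_{\mathbb{C}}$ is (birational to) a product or a hypersurface of low degree, a direct hyperplane-section/zero-cycle argument as in Lemma~\ref{lemma:2-16} will furnish the required $\Bbbk$-point.

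Once $Y(\Bbbk)\neq\varnothing$ is established, Lemma~\ref{lemma:nonempty} (Lang--Nishimura) transfers the point to $X$ via the generic fiber of $\pi$, which is geometrically rational. The main obstacle is the middle step: ensuring that at least one of the three extremal contractions of $X_{\mathbb{C}}$ both descends to $\Bbbk$ (i.e., is not swapped with another ray by Galois) and lands on a base whose $\Bbbk$-form admits one of the trivializations above. If the initial choice of $\pi$ yields a pointless base, the secondary strategy is to exploit a different extremal ray, or to combine two of them into a map to $\mathbb{P}^1$ (or to a conic) and analyze the generic fiber, where the $\rho=2$ Fano geometry of the fiber and the machinery of Section~\ref{section:dP-surfaces} should close the argument.
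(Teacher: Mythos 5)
Your overall template (descend an extremal structure, find a point downstairs, transfer it up) is the right genre of argument, but as written it has several gaps that would prevent it from closing for Family \textnumero 3.8. First, Lemma~\ref{lemma:Prokhorov} is stated only for $\rho(X_{\mathbb{C}})=2$, so it does not ``apply'' here; for $\rho=3$ one must instead argue directly, as the paper does, that the Galois group cannot permute the three extremal rays because the corresponding contractions are of pairwise different types (a divisorial contraction to a $(1,2)$ divisor in $\mathbb{P}^2\times\mathbb{P}^2$, a divisorial contraction to $\mathbb{P}^1\times\mathbb{P}^2$, and a conic bundle over $\mathbb{F}_1$). Second, your transfer step via Lang--Nishimura only works for birational (or at least dominant-from-$Y$) maps $Y\dashrightarrow X$; for the conic bundle $X\to\mathbb{F}_1$ a $\Bbbk$-point on the base gives you nothing, since the fiber over it is a conic that may be pointless. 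Third, and most seriously, the two birational contractions land on a form of a $(1,2)$ divisor in $\mathbb{P}^2\times\mathbb{P}^2$ (Family \textnumero 2.24) and on a form of $\mathbb{P}^1\times\mathbb{P}^2$, and both of these admit pointless forms (the paper constructs a pointless member of Family \textnumero 2.24 in Example~\ref{example:2-24}, and $C\times U$ with $C$ a pointless conic is pointless). Since $X$ is birational to these bases, proving they have a $\Bbbk$-point is equivalent to the original problem, so none of your three listed ``tools'' (Severi--Brauer trivialization, $V_5$-rationality, hyperplane sections) actually fires.

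The decisive ingredient in the paper's proof is absent from your plan: the conic bundle $X_{\mathbb{C}}\to\mathbb{F}_1$ descends to $\Bbbk$ precisely because $\mathbb{F}_1$ has no nontrivial forms, and composing with $\mathbb{F}_1\to\mathbb{P}^1$ exhibits $X$ as a fibration into \emph{quintic del Pezzo surfaces} over $\mathbb{P}^1$ defined over $\Bbbk$. The classical fact that a del Pezzo surface of degree $5$ is rational over \emph{every} field then makes $X$ birational to $\mathbb{P}^2\times\mathbb{P}^1$ over $\Bbbk$, hence $\Bbbk$-rational with plenty of points. Your final sentence gestures at combining rays into a map to $\mathbb{P}^1$ and analyzing the generic fiber, which is the right move, but you then appeal to the machinery of Section~\ref{section:dP-surfaces}, which concerns K-polystability of pointless del Pezzo orbifolds and says nothing about the existence of a $\Bbbk(t)$-point on the generic fiber. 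Without identifying the generic fiber as a degree-$5$ del Pezzo surface and invoking its unconditional rationality, the argument does not terminate.
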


\begin{proof}
Note that $X_{\mathbb{C}}\subset \mathbb{F}_1\times\mathbb{P}^2$ is a divisor in the linear system \mbox{$|(\varsigma\circ\mathrm{pr}_1)^*(\mathcal{O}_{\mathbb{P}^2}(1))\otimes\mathrm{pr}_2^*(\mathcal{O}_{\mathbb{P}^2}(2))|$},
where $\mathrm{pr}_1\colon\mathbb{F}_1\times\mathbb{P}^2\to\mathbb{F}_1$ and $\mathrm{pr}_2\colon\mathbb{F}_1\times\mathbb{P}^2\to\mathbb{P}^2$ are projections to the first and the second factors,
respectively, and $\varsigma\colon\mathbb{F}_1\to\mathbb{P}^2$ is the blowup of a point.
Combining $\varsigma\circ\mathrm{pr}_1$ and $\mathrm{pr}_2$, we obtain a morphism $\sigma\colon X_{\mathbb{C}}\to Y$ such that $Y$ is a smooth divisor of degree $(1,2)$ in $\mathbb{P}^2\times\mathbb{P}^2$.
Let $\pi_1\colon Y\to\mathbb{P}^2$ and $\pi_2\colon Y\to\mathbb{P}^2$ be projections to the first and the second factors, respectively.
Then $\sigma$ is a blowup of a smooth curve $\mathcal{C}$ that is a fiber of the morphism~$\pi_1$.
Let $p=\pi_1(\mathcal{C})$. Then $\varsigma$ is a blowup of the point $p$ with commutative diagram
$$
\xymatrix{
&&X_{\mathbb{C}}\ar@/_1pc/@{->}[dll]_{\theta}\ar[d]_{\mathrm{pr}_1\vert_{X_{\mathbb{C}}}} \ar[rr]^{\sigma}\ar@/^3pc/@{->}[rrrr]^{\mathrm{pr}_2\vert_{X_{\mathbb{C}}}} && Y\ar[d]^{\pi_1} \ar[rr]^{\pi_2} && \mathbb{P}^2\\
\mathbb{P}^1&&\mathbb{F}_1 \ar[ll]_{\vartheta} \ar[rr]^{\varsigma} && \mathbb{P}^2}
$$
where $\vartheta$ is a~natural projection, $\theta$ is a fibration into del Pezzo surfaces of degree $5$.
Moreover, combining morphisms $\theta$ and $\mathrm{pr}_2\vert_{X_{\mathbb{C}}}$, we obtain a birational morphism
$\upsilon\colon X_{\mathbb{C}}\to\mathbb{P}^1\times \mathbb{P}^2$ that is a blowup of a smooth curve of degree~$(4,2)$.
This shows that the Mori cone $\mathrm{NE}(X_{\mathbb{C}})$ is simplicial and is generated by the following extremal rays:
\begin{enumerate}
\item the ray generated by the curves contracted by $\sigma\colon X_{\mathbb{C}}\to Y$,
\item the ray generated by the curves contracted by $\upsilon\colon X_{\mathbb{C}}\to \mathbb{P}^1\times \mathbb{P}^2$,
\item the ray generated by the curves contracted by $\mathrm{pr}_1\vert_{X_{\mathbb{C}}}\colon X_{\mathbb{C}}\to\mathbb{F}_1$.
\end{enumerate}
Now, arguing as in the proof of Lemma~\ref{lemma:Prokhorov},
we see that the conic bundle $\mathrm{pr}_1\vert_{X_{\mathbb{C}}}\colon X_{\mathbb{C}}\to\mathbb{F}_1$
descends to a conic bundle $X\to\mathbb{F}_1$ defined over $\Bbbk$,
because $\mathbb{F}_1$ does not have non-trivial forms over $\Bbbk$.
Now, composing this conic bundle with the projection $\vartheta\colon \mathbb{F}_1\to \mathbb{P}^1$,
we see that the del Pezzo fibration $\vartheta\colon X_{\mathbb{C}}\to\mathbb{P}^1$ is also defined over $\Bbbk$.
Since del Pezzo surfaces of degree $5$ are rational over any field, we see that $X$ is $\Bbbk$-birational to $\mathbb{P}^2\times\mathbb{P}^1$,
so that it is $\Bbbk$-rational and, in particular, it has a $\Bbbk$-point.
\end{proof}

\begin{lemma}
\label{lemma:3-11}
Suppose that $X$ is contained in Family \textnumero 3.11.
Then $X$ has a $\Bbbk$-point.
\end{lemma}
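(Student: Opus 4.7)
The plan is to follow the pattern established in Lemma~\ref{lemma:3-8} and the rank~$2$ cases in this section. By the Mori--Mukai classification \cite{MoMu81}, a smooth complex Fano 3-fold in Family \textnumero 3.11 arises as a sequence of blowups starting from a well-understood Fano variety (typically $\mathbb{P}^3$, a smooth quadric, $V_7$, or $V_5$), and carries several extremal contractions whose structure is completely described. As a first step I would read off this structure and, invoking Lemma~\ref{lemma:Prokhorov} together with its rank~$3$ analogue from \cite[Theorem~1.2]{Prokhorov2013}, descend one of these extremal contractions to a morphism $X \to Y$ defined over $\Bbbk$, where $Y$ is a $\Bbbk$-form of the corresponding Mori--Mukai target. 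This reduces the problem to producing a $\Bbbk$-rational point on $Y$.

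The second step depends on what $Y_{\mathbb{C}}$ is. If $Y_{\mathbb{C}}$ is $\mathbb{P}^n$ or a smooth hypersurface (such as a quadric or cubic 3-fold), I would follow the approach of Lemmas~\ref{lemma:2-9}, \ref{lemma:2-11} and \ref{lemma:2-16}: produce a divisor or zero-cycle on $Y$ of degree coprime to the relevant torsion order in the Brauer group (for instance, by pushing forward an appropriate divisor from $X$ or by intersecting with a suitable curve coming from the blowup center), and apply Lemma~\ref{lemma:SB} or Corollary~\ref{corollary:ICMS} to conclude that $Y$ is already isomorphic to its geometric model, whence $Y(\Bbbk)\ne\varnothing$. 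If instead the chain of contractions factors through a $\Bbbk$-form of the quintic del Pezzo 3-fold $V_5$, then by \cite[Theorem~1.1]{KP2023} this form is $\Bbbk$-rational, and we conclude as in Lemmas~\ref{lemma:2-14}, \ref{lemma:2-20}, \ref{lemma:2-22}. In either scenario, once $Y(\Bbbk)\ne\varnothing$ is established, the Lang--Nishimura Lemma~\ref{lemma:nonempty} transfers the $\Bbbk$-point back to $X$.

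The principal technical point, as in Lemma~\ref{lemma:3-8}, will be verifying that the extremal contraction one wishes to exploit actually descends to $\Bbbk$: this requires checking that the action of $\mathrm{Gal}(\mathbb{C}/\Bbbk)$ on the extremal rays of $\overline{\mathrm{NE}}(X_{\mathbb{C}})$ fixes the relevant ray. For Family \textnumero 3.11 the Mori cone description in \cite{MoMu81} should make this transparent, and if the Galois action happens to permute two symmetric rays one simply works with the third ray (or with the composition of two contractions, as was done in the proof of Lemma~\ref{lemma:3-8} to descend both the conic bundle over $\mathbb{F}_1$ and the del Pezzo fibration over $\mathbb{P}^1$). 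Once the descent is in place, the remaining arithmetic is a direct adaptation of the arguments already carried out in this subsection, and I do not anticipate further conceptual difficulty.
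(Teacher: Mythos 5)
Your overall strategy --- descend the extremal contractions of $X_{\mathbb{C}}$ to $\Bbbk$ and then produce a rational point on a target of the resulting Mori fibration/contraction structure --- is exactly the route the paper takes. But as written, the plan has a gap precisely where the family-specific work has to happen. A member of Family \textnumero 3.11 is the blowup of $V_7$ (itself the blowup of $\mathbb{P}^3$ at a point $p$) along the strict transform of a quartic elliptic curve through $p$; its Mori cone is simplicial with three extremal rays whose contractions are divisorial contractions onto pairwise non-isomorphic targets ($V_7$, $\mathbb{P}^1\times\mathbb{P}^2$, and the blowup $Y$ of $\mathbb{P}^3$ along the elliptic curve), so no two of the rays can be permuted by $\mathrm{Gal}(\mathbb{C}/\Bbbk)$ and all three contractions descend. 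The target you then land on is a $\Bbbk$-form of $V_7$ --- and this case is covered by neither branch of your second step: $V_7$ is not a projective space or a hypersurface, so Lemma~\ref{lemma:SB} and Corollary~\ref{corollary:ICMS} do not apply to it directly, and the chain of contractions does not factor through $V_5$, so \cite[Theorem 1.1]{KP2023} is irrelevant here.

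The missing ingredient is Lemma~\ref{lemma:2-35}: every $\Bbbk$-form of $V_7$ is the blowup of a Severi--Brauer 3-fold at a point whose image is a $\Bbbk$-rational point, which forces that Severi--Brauer 3-fold to be $\mathbb{P}^3$; in particular $V_7$ has no non-trivial $\Bbbk$-forms and certainly has a $\Bbbk$-point. Once this is in hand, $X$ is $\Bbbk$-rational (or, more cheaply, Lemma~\ref{lemma:nonempty} transfers the point back to $X$), exactly as in the paper. So your recipe is salvageable, but you must (i) actually identify the Mori--Mukai description of the family rather than leave it generic, (ii) check, not merely assert, that the Galois action fixes each extremal ray, and (iii) add the $V_7$-rigidity statement to your case analysis, without which the reduction step leaves you with a form of $V_7$ and no way to conclude.
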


\begin{proof}
Over $\mathbb{C}$ we have a commutative diagram
$$
\xymatrix@C=1em{
&&&&\mathbb{P}^1\times\mathbb{P}^2\ar@/^2pc/@{->}[rrrrdd]^{\mathrm{pr}_2}\ar@/_2pc/@{->}[lllldd]_{\mathrm{pr}_1}&&&&\\
&&&&X_{\mathbb{C}}\ar@/^1pc/@{->}[rrrrd]^{\sigma}\ar@/_1pc/@{->}[lllld]_{\phi}\ar@{->}[dll]_{\theta}\ar@{->}[drr]^{\pi}\ar@{->}[dd]_{\xi}\ar@{->}[u]_{\zeta}&&&&\\
\mathbb{P}^1&&Y\ar@{->}[ll]_{\nu}\ar@{->}[drr]^{\varpi}&&&&V_7\ar@{->}[dll]_{\vartheta}\ar@{->}[rr]^{\eta}&&\mathbb{P}^2\\%
&&&&\mathbb{P}^3\ar@/_1pc/@{-->}[rrrru]\ar@/^1pc/@{-->}[llllu]&&&&}
$$
where $\vartheta$ is the blowup of a point $p\in\mathbb{P}^3$,
$\pi$ is the blow up of the strict transform of a smooth quartic elliptic curve $C$ that passes through the point $p$,
$\zeta$ is a birational contraction of the strict transform of the cubic cone in $\mathbb{P}^3$ with vertex at $p$ that contains the elliptic curve $C$
to a~smooth curve in $\mathbb{P}^1\times\mathbb{P}^2$ of degree $(2,3)$,
$\varpi$ is the blowup of the curve $C$, $\theta$ is the blowup of the fiber of $\varpi$ over the point $p$,
$\eta$ is a~$\mathbb{P}^1$-bundle, $\nu$ is a~fibration into quadric surfaces, $\sigma$ is a~conic bundle,
the left dashed arrow is given by the pencil of quadric surfaces that contain $C$,
the right dashed arrow is the linear projection from the point $p$,
and $\mathrm{pr}_1$ and $\mathrm{pr}_2$ are projections to the first and the second factors, respectively.
This shows that the Mori cone $\mathrm{NE}(X_{\mathbb{C}})$ is simplicial and is generated by the following extremal rays:
\begin{enumerate}
\item the ray generated by the curves contracted by $\theta\colon X_{\mathbb{C}}\to Y$,
\item the ray generated by the curves contracted by $\pi\colon X_{\mathbb{C}}\to V_7$,
\item the ray generated by the curves contracted by $\zeta\colon X_{\mathbb{C}}\to \mathbb{P}^1\times \mathbb{P}^2$.
\end{enumerate}
Now, arguing as in the proof of Lemma~\ref{lemma:Prokhorov}, we see that these extremal rays are defined over $\Bbbk$,
so their contractions can also be defined over $\Bbbk$.
Since $V_7$ does not have non-trivial forms over $\Bbbk$, see Lemma~\ref{lemma:2-35} below,
we see that $X$ is rational over $\Bbbk$. In particular, we have $X(\Bbbk)\ne\varnothing$.
\end{proof}

\subsection{K-unstable families}
\label{subsection:K-unstable-with-points}

\begin{lemma}
\label{lemma:2-26}
Suppose that $X$ is contained in Family \textnumero 2.26.
Then $X$ has a $\Bbbk$-point.
\end{lemma}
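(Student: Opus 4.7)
The plan is to follow the pattern of Lemmas~\ref{lemma:2-14}, \ref{lemma:2-20}, and \ref{lemma:2-22}, which treat the other smooth Fano 3-folds arising as blowups of the smooth quintic del Pezzo 3-fold $V_5$. First, I recall from the Mori--Mukai classification \cite{MoMu81} that $X_{\mathbb{C}}$ is the blowup of $V_5$ along a line; this is quickly verified by matching the numerics, since for a line $\ell \subset V_5$ one has $-K_{V_5} \cdot \ell = 2$ and hence $(-K_{X_{\mathbb{C}}})^3 = 40 - 2\cdot 2 - 2 = 34$, which is precisely the anticanonical degree of members of Family \textnumero 2.26.

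Next, because Family \textnumero 2.26 is neither \textnumero 2.12 nor \textnumero 2.21, Lemma~\ref{lemma:Prokhorov} applies and lets me descend the extremal birational contraction $X_{\mathbb{C}} \to V_5$ to a morphism $\pi\colon X \to Y$ defined over $\Bbbk$, where $Y$ is a $\Bbbk$-form of $V_5$. I then invoke \cite[Theorem~1.1]{KP2023}: every $\Bbbk$-form of $V_5$ is $\Bbbk$-rational. Consequently $X$ is $\Bbbk$-birational to $\mathbb{P}^3_{\Bbbk}$ and, in particular, $X(\Bbbk) \neq \varnothing$; equivalently one may simply appeal to Lemma~\ref{lemma:nonempty} once $Y(\Bbbk) \neq \varnothing$ is known.

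I do not anticipate any genuine obstacle. The argument is essentially a one-line reuse of the proofs of Lemmas~\ref{lemma:2-14}, \ref{lemma:2-20}, \ref{lemma:2-22}, with ``line'' in place of ``elliptic curve'', ``twisted cubic'', or ``conic'' respectively; the two ingredients needed (Prokhorov's descent of extremal contractions and the Kuznetsov--Prokhorov rationality theorem for forms of $V_5$) are already in place and combine immediately. The only point deserving a quick sanity check is that Family \textnumero 2.26 falls within the scope of Lemma~\ref{lemma:Prokhorov}, which is clear from the excluded-families list.
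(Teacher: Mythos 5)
Your proof is correct and is essentially identical to the paper's: the paper likewise identifies $X_{\mathbb{C}}$ as the blowup of $V_5$ along a line via Mori--Mukai and then argues exactly as in Lemma~\ref{lemma:2-14}, i.e.\ descends the contraction by Lemma~\ref{lemma:Prokhorov} and applies \cite[Theorem~1.1]{KP2023} to conclude that $X$ is $\Bbbk$-rational and hence has a $\Bbbk$-point. Your numerical sanity check $(-K_{X_{\mathbb{C}}})^3=34$ is a harmless extra confirmation not present in the paper.
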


\begin{proof}
By Mori--Mukai \cite{MoMu83}, $X_{\mathbb{C}}$ can be obtained by blowing up a smooth quintic del Pezzo 3-fold $V_5$ along a line.
Arguing as in the proof of Lemma~\ref{lemma:2-14}, we conclude that $X$ is rational over $\Bbbk$ and $X(\Bbbk)\ne\varnothing$.
\end{proof}

\begin{lemma}
\label{lemma:2-28-2-30}
Suppose that $X$ is contained in Family \textnumero 2.28 or in Family \textnumero 2.30.
Then $X$ has a $\Bbbk$-point.
\end{lemma}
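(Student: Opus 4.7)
The plan is to mirror the strategy of Lemmas~\ref{lemma:2-9}--\ref{lemma:3-11}: descend the Mori--Mukai picture to $\Bbbk$ using Lemma~\ref{lemma:Prokhorov}, then recognise the resulting $\Bbbk$-form of $\mathbb{P}^3$ as being split by a planar divisor of low degree that it contains.

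First I would recall the Mori--Mukai description of the two families: if $X$ is in Family \textnumero\,2.28, then $X_{\mathbb{C}}$ is the blowup of $\mathbb{P}^3$ along a smooth plane cubic, while if $X$ is in Family \textnumero\,2.30, then $X_{\mathbb{C}}$ is the blowup of $\mathbb{P}^3$ along a smooth conic. In either situation $\rho(X_{\mathbb{C}})=2$, and $X_{\mathbb{C}}$ is not contained in Family \textnumero\,2.12 or \textnumero\,2.21, so Lemma~\ref{lemma:Prokhorov} applies to the extremal blowup morphism $X_{\mathbb{C}}\to\mathbb{P}^3$ and produces a $\Bbbk$-defined birational contraction $\pi\colon X\to U$, where $U$ is a $\Bbbk$-form of $\mathbb{P}^3$ whose base change to $\mathbb{C}$ is the blowup of $U_{\mathbb{C}}\simeq\mathbb{P}^3$ along the planar curve $C_{\mathbb{C}}$. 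Setting $C=\pi(E)\subset U$ for $E$ the exceptional divisor, the curve $C$ is a geometrically irreducible $\Bbbk$-curve with $C_{\mathbb{C}}$ either a smooth plane cubic or a smooth conic --- in either case a planar curve that is not a line.

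Now Lemma~\ref{lemma:plane} applied to $C\subset U$ forces $U\simeq\mathbb{P}^3$, so $U(\Bbbk)\ne\varnothing$. Picking any $\Bbbk$-point of $U$ lying outside $C$ (or appealing directly to the Lang--Nishimura Lemma~\ref{lemma:nonempty} with the birational map $U\dashrightarrow X$) yields a $\Bbbk$-point of $X$, which concludes the proof. I do not anticipate any real obstacle here: the whole argument is a short combination of Lemma~\ref{lemma:Prokhorov} with Lemma~\ref{lemma:plane}, and parallels the earlier cases in Subsection~\ref{subsection:K-poly-with-points}.
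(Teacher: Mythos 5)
Your argument is correct and is essentially identical to the paper's proof, which likewise combines Lemma~\ref{lemma:Prokhorov} (to descend the blowup $X\to U$ onto a $\Bbbk$-form of $\mathbb{P}^3$ along a plane cubic or conic) with Lemma~\ref{lemma:plane} (to conclude $U\simeq\mathbb{P}^3$). The only cosmetic difference is that the paper concludes via $\Bbbk$-rationality of $X$ rather than invoking Lang--Nishimura explicitly, which amounts to the same thing.
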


\begin{proof}
Over $\mathbb{C}$, the 3-fold $X_{\mathbb{C}}$ can be obtained by blowing up $\mathbb{P}^3$ along a smooth plane curve of degree $3$ or $2$.
Applying Lemma~\ref{lemma:Prokhorov} and Lemma~\ref{lemma:plane}, we see that $X$ is also obtained by blowing up $\mathbb{P}^3$ along a smooth plane curve defined over $\Bbbk$.
This implies that $X$ is rational over $\Bbbk$, in particular $X(\Bbbk)\ne\varnothing$.
\end{proof}

\begin{lemma}
\label{lemma:2-31}
Suppose that $X$ is contained in Family \textnumero 2.31.
Then $X$ has a $\Bbbk$-point.
\end{lemma}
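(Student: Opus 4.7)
The plan is to follow the same template used in Lemma~\ref{lemma:2-26}, but with the quintic del Pezzo 3-fold replaced by the smooth quadric 3-fold. According to Mori--Mukai \cite{MoMu81}, a smooth Fano 3-fold in Family \textnumero 2.31 is obtained geometrically as the blowup of a smooth quadric hypersurface $Q^\prime\subset\mathbb{P}^4$ along a line. So first I would apply Lemma~\ref{lemma:Prokhorov} to the extremal birational contraction $X_{\mathbb{C}}\to Q^\prime$ to descend it to a birational morphism $\pi\colon X\to Q$ defined over $\Bbbk$, where $Q$ is a $\Bbbk$-form of a smooth quadric 3-fold and $\pi$ is the blowup of a smooth geometrically irreducible $\Bbbk$-curve $C\subset Q$ whose base change $C_{\mathbb{C}}$ is a line in $Q^\prime$.

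Next, I would identify $Q$ explicitly. Corollary~\ref{corollary:ICMS} applies with $(n,d)=(4,2)$, since $\gcd(5,2)=1$ and $(4,2)\notin\{(2,3),(3,4)\}$, so $Q$ is a quadric hypersurface in $\mathbb{P}^4_{\Bbbk}$. Consequently $C$ sits inside $\mathbb{P}^4_{\Bbbk}$ as a closed subscheme whose base change is a line. The homogeneous ideal of $C$ in $\mathbb{P}^4_{\Bbbk}$ is generated in degree one, and the space of linear forms over $\Bbbk$ vanishing on $C$ is three-dimensional; hence $C$ is cut out by three $\Bbbk$-linearly independent linear forms. Therefore $C\cong\mathbb{P}^1_{\Bbbk}$, and in particular $C(\Bbbk)\ne\varnothing$.

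Finally, from any point $p\in C(\Bbbk)$ I would produce a $\Bbbk$-point of $X$ as follows: both $Q$ and $C$ are smooth at $p$, so the fiber of the blowup over $p$ is the projectivization $\mathbb{P}(N_{C/Q,p})$ of a $2$-dimensional $\Bbbk$-vector space, which is isomorphic to $\mathbb{P}^1_{\Bbbk}$ and therefore has $\Bbbk$-points. Alternatively, one could note that $Q$ is $\Bbbk$-rational once it has a $\Bbbk$-point and apply Lemma~\ref{lemma:nonempty} to the birational inverse of $\pi$.

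I do not anticipate a serious obstacle. The only nontrivial ingredients are Lemma~\ref{lemma:Prokhorov} for the descent of the contraction and Corollary~\ref{corollary:ICMS} for identifying the target as an honest quadric in $\mathbb{P}^4_{\Bbbk}$; once these are in place, the rest of the argument is essentially automatic. The mild subtlety worth explicitly verifying is that the descended curve $C$ is genuinely a line in $\mathbb{P}^4_{\Bbbk}$ (rather than a nontrivial $\Bbbk$-form of $\mathbb{P}^1$), but this is forced by Galois-invariance of the three-dimensional space of linear forms cutting out $C_{\mathbb{C}}$.
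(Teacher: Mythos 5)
Your proof is correct, but it follows a different route from the paper's. The paper also invokes Lemma~\ref{lemma:Prokhorov} to descend the blowup, but then finishes in one stroke: the exceptional divisor $E$ is defined over $\Bbbk$, its geometric model is $\mathbb{F}_1$ (the projectivization of the normal bundle of a line in a quadric 3-fold), and $\mathbb{F}_1$ admits no non-trivial forms over $\Bbbk$, so $E(\Bbbk)\ne\varnothing$ and hence $X(\Bbbk)\ne\varnothing$. You instead identify the target explicitly: Corollary~\ref{corollary:ICMS} with $(n,d)=(4,2)$ makes $Q$ an honest quadric in $\mathbb{P}^4_{\Bbbk}$, and the Galois-invariance of the three-dimensional space of linear forms cutting out $C_{\mathbb{C}}$ forces $C\cong\mathbb{P}^1_{\Bbbk}$; a $\Bbbk$-point of $C$ then lifts through the $\mathbb{P}^1_{\Bbbk}$-fiber of the blowup (or via Lang--Nishimura applied to $Q\dashrightarrow X$). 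All steps check out — in particular $\gcd(5,2)=1$ and $(4,2)\notin\{(2,3),(3,4)\}$, so Corollary~\ref{corollary:ICMS} does apply, exactly as the paper uses it for Family \textnumero 2.17. Your version is slightly longer but extracts more: it shows $Q\cong\{$quadric in $\mathbb{P}^4_{\Bbbk}\}$ and that the blown-up line is split, whereas the paper's argument is more economical and sidesteps any discussion of the ambient projective space by exploiting the rigidity of $\mathbb{F}_1$.
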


\begin{proof}
By Mori--Mukai \cite{MoMu83}, the base extension $X_{\mathbb{C}}$ can be obtained by blowing up a smooth quadric 3-fold $Q\subseteq \mathbb{P}^4$ along a line.
Let $E$ be the exceptional divisor of this blowup. Then, by Lemma~\ref{lemma:Prokhorov}, the surface $E$ is defined over $\Bbbk$.
On the other hand, it is well known that $E_{\mathbb{C}}$ is isomorphic to $\mathbb{F}_1$.
Since $\mathbb{F}_1$ does not have non-trivial forms over $\Bbbk$, we conclude that $E\simeq\mathbb{F}_1$, so $E(\Bbbk)\ne\varnothing$.
Hence, $X(\Bbbk)\ne\varnothing$ as well. This also implies that $X$ is $\Bbbk$-rational,
since forms of smooth quadrics containing $\Bbbk$-points are $\Bbbk$-rational.
\end{proof}

\begin{lemma}
\label{lemma:2-35}
Suppose that $X$ is contained in Family \textnumero 2.35.
Then $X$ is isomorphic to the blowup of $\mathbb{P}^3$ at a point.
In particular, $X$ has a $\Bbbk$-point.
\end{lemma}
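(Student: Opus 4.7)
The plan is to descend the blowup structure of $X_{\mathbb{C}}$ to $\Bbbk$ using Lemma~\ref{lemma:Prokhorov}, read off a $\Bbbk$-rational point on the target of the birational contraction, and then invoke the classical fact that a Severi--Brauer variety admitting a $\Bbbk$-rational point is trivial. The strategy mirrors those already used in Lemmas~\ref{lemma:2-16} and \ref{lemma:2-31}, but is a touch simpler because the contracted locus is a point.

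First, I would recall that over $\mathbb{C}$ the 3-fold $X_{\mathbb{C}}$ is precisely $V_7$, the blowup of $\mathbb{P}^3$ at a point, so it has Picard rank $2$ and one of its two extremal contractions is the birational morphism $f_{\mathbb{C}}\colon X_{\mathbb{C}}\to\mathbb{P}^3$ collapsing the exceptional divisor to a point. Since Family \textnumero 2.35 is distinct from Families \textnumero 2.12 and \textnumero 2.21, I would apply Lemma~\ref{lemma:Prokhorov} to conclude that $f_{\mathbb{C}}$ descends to a birational morphism $f\colon X\to U$ defined over $\Bbbk$, where $U$ is a $\Bbbk$-form of $\mathbb{P}^3$, i.e.\ a three-dimensional Severi--Brauer variety. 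Let $E$ denote the $f$-exceptional divisor; it is defined over $\Bbbk$, and $E_{\mathbb{C}}$ is contracted to a single geometric point. Hence $p=f(E)$ is a closed point of $U$ with residue field $\Bbbk$; that is, $p$ is a $\Bbbk$-rational point of $U$.

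Finally, I would invoke the well-known fact that any Severi--Brauer variety admitting a $\Bbbk$-rational point is isomorphic to the corresponding projective space (see, for example, \cite[Theorem~5.1.3]{GilleSzamuely} or \cite{Kollar2016}). This yields $U\cong\mathbb{P}^3_{\Bbbk}$, so $X$ is the blowup of $\mathbb{P}^3_{\Bbbk}$ at the $\Bbbk$-point $p$; in particular $X(\Bbbk)\ne\varnothing$. There is no genuine obstacle to this argument: the only subtlety is that Lemma~\ref{lemma:Prokhorov} requires $X$ to lie outside Families \textnumero 2.12 and \textnumero 2.21, which is automatic here.
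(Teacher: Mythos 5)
Your proposal is correct and follows essentially the same route as the paper: descend the extremal contraction $X_{\mathbb{C}}\to\mathbb{P}^3$ to $\Bbbk$ via Lemma~\ref{lemma:Prokhorov}, observe that the image of the exceptional divisor is a $\Bbbk$-rational point of the resulting Severi--Brauer 3-fold $U$, and conclude $U\cong\mathbb{P}^3$ by Ch\^atelet's theorem. The only cosmetic difference is that you spell out the Picard-rank-$2$ and family-exclusion hypotheses of Lemma~\ref{lemma:Prokhorov}, which the paper leaves implicit.
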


\begin{proof}
A variety in this family is often called $V_7$. By Mori--Mukai \cite{MoMu83}, $X_{\mathbb{C}}$ can be obtained by blowing up $\mathbb{P}^3$ at a point $p$.
By Lemma~\ref{lemma:Prokhorov}, $X$ can be obtained by blowing up a $\Bbbk$-form of $\mathbb{P}^3$,
say $X\to U$ such that the image $p$ of the exceptional divisor yields a ${\Bbbk}$-rational point of a Severi--Brauer 3-fold $U$,
in particular, $U$ is isomorphic to $\mathbb{P}^3$.
\end{proof}

\begin{lemma}
\label{lemma:2-36}
Suppose that $X$ is contained in Family \textnumero 2.36.
Then $X$ has a $\Bbbk$-point.
\end{lemma}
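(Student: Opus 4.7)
The approach is to use the divisorial extremal contraction of $X_{\mathbb{C}}$ that crushes the negative section to a point, and then to exploit adjunction together with Lemma~\ref{lemma:SB}.

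By \cite{MoMu81}, $X_{\mathbb{C}}\cong\mathbb{P}(\mathcal{O}_{\mathbb{P}^2}\oplus\mathcal{O}_{\mathbb{P}^2}(2))$. Its two Mori contractions are the natural $\mathbb{P}^1$-bundle $\pi_{\mathbb{C}}\colon X_{\mathbb{C}}\to\mathbb{P}^2$ and the divisorial contraction $c_{\mathbb{C}}\colon X_{\mathbb{C}}\to Y_{\mathbb{C}}$ that collapses the negative section $E_{0,\mathbb{C}}\simeq\mathbb{P}^2$ to the vertex of the Veronese cone $Y_{\mathbb{C}}$. The first step is to apply Lemma~\ref{lemma:Prokhorov} to descend $c_{\mathbb{C}}$ to a morphism $c\colon X\to Y$ defined over $\Bbbk$. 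Its exceptional divisor $E_0\subset X$ is then a $\Bbbk$-subvariety whose base extension is $\mathbb{P}^2$, i.e.\ a Severi--Brauer surface over $\Bbbk$.

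Next I compute $-K_X|_{E_0}$ via adjunction on $X_{\mathbb{C}}$. With $H_0$ the class of a line on $E_{0,\mathbb{C}}\simeq\mathbb{P}^2$ one has $K_{E_{0,\mathbb{C}}}=-3H_0$ and, since $E_{0,\mathbb{C}}$ is the negative section of $\pi_{\mathbb{C}}$ (corresponding to the surjection $\mathcal{O}\oplus\mathcal{O}(2)\twoheadrightarrow\mathcal{O}$), $E_{0,\mathbb{C}}|_{E_{0,\mathbb{C}}}=-2H_0$; adjunction then yields $-K_X|_{E_0}\sim H_0$. Thus $-K_X|_{E_0}$ is a $\Bbbk$-rational line bundle on $E_0$ of geometric degree $1$. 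Because $-K_X$ is very ample on $X$, the restriction map $H^0(X,-K_X)\to H^0(E_0,-K_X|_{E_0})$ is $\Bbbk$-linear with nonzero image, so I can select a $\Bbbk$-rational member $D\in|-K_X|$ that does not contain $E_0$; the intersection $D\cap E_0$ is then an effective $\Bbbk$-divisor on $E_0$ of degree $1$.

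Since $1$ is coprime to $n+1=3$, Lemma~\ref{lemma:SB} forces $E_0\simeq\mathbb{P}^2_{\Bbbk}$; in particular $E_0(\Bbbk)\ne\varnothing$, whence $X(\Bbbk)\ne\varnothing$. The only mildly delicate points are the identification of the two extremal rays of $X_{\mathbb{C}}$ with the standard ones of $\mathbb{P}(\mathcal{O}\oplus\mathcal{O}(2))$ (so that Lemma~\ref{lemma:Prokhorov} applies to $c_{\mathbb{C}}$) and the adjunction bookkeeping for the negative section; both are standard properties of projective bundles and present no genuine obstacle.
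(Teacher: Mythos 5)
Your proof is correct and follows essentially the same route as the paper: descend the two extremal contractions of $X_{\mathbb{C}}\cong\mathbb{P}(\mathcal{O}_{\mathbb{P}^2}\oplus\mathcal{O}_{\mathbb{P}^2}(2))$ to $\Bbbk$, recognize the exceptional divisor of the divisorial contraction as a Severi--Brauer surface, and trivialize it via Lemma~\ref{lemma:SB}. The only (immaterial) difference is that the paper applies Lemma~\ref{lemma:SB} to the class $E\vert_{E}$ of degree $-2$, whereas you use $-K_X\vert_{E}$ of degree $1$ realized by a general anticanonical member; both degrees are coprime to $3$, so the conclusion is the same.
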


\begin{proof}
By Mori--Mukai \cite{MoMu83}, the base extension $X_{\mathbb{C}}$,
which is isomorphic to $\mathbb{P}( {\mathcal O}_{\mathbb{P}^2} \oplus {\mathcal O}_{\mathbb{P}^2}(-2))$,
possesses two extremal contractions: a divisorial contraction $f\colon X_{\mathbb{C}} \to\mathbb{P}(1,1,1,1,2)$
and a $\mathbb{P}^1$-bundle $\pi\colon X_{\mathbb{C}} \to \mathbb{P}^2$.
Since the action of $\mathrm{Gal}(\mathbb{C}/{\Bbbk})$ on the Mori cone $\mathrm{NE}(X_{\mathbb{C}})$ leaves the two rays invariant,
both $f$ and $\pi$ are defined over $\Bbbk$.
With a slight abuse of notation, denote by $\pi$ the descent $\pi\colon X \to U$ over $\Bbbk$, where $U$ is a Severi--Brauer surface.
The exceptional divisor $E$ of $f$ is defined over $\Bbbk$, and $\pi$ induces an isomorphism $E\simeq U$.
Moreover, applying Lemma~\ref{lemma:SB} to the divisor $E|_E$ and the Severi--Brauer surface $E$, we conclude that $E\cong\mathbb{P}^2$.
In particular, we have $E(\Bbbk)\ne\varnothing$, so $X$ has a $\Bbbk$-point.
Indeed, one can show that $X$ is rational over $\Bbbk$.
\end{proof}

\begin{lemma}
\label{lemma:3-14}
Suppose that $X$ is contained in Family \textnumero 3.14.
Then $X$ has a $\Bbbk$-point.
\end{lemma}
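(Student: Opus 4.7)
The plan is to follow the template of the preceding lemmas in this subsection. By the Mori--Mukai classification \cite{MoMu81}, the geometric model $X_{\mathbb{C}}$ is the blowup of $\mathbb{P}^3$ along the disjoint union of a point $p$ and a smooth plane cubic curve $C$, where $p$ does not lie on the plane spanned by $C$. Equivalently, $X_{\mathbb{C}}$ is the blowup of $V_7$ (the blowup of $\mathbb{P}^3$ at the point $p$) along the strict transform of $C$, which yields an extremal birational contraction $X_{\mathbb{C}} \to V_7$ that contracts the exceptional divisor lying over $C$.

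First, I would show that this contraction descends to $\Bbbk$. Since we are in Picard rank $3$, Lemma~\ref{lemma:Prokhorov} does not apply directly, so I would argue by hand in its spirit: the two divisorial extremal rays of $\mathrm{NE}(X_{\mathbb{C}})$ correspond to exceptional divisors of geometrically distinct type, one being a $\mathbb{P}^2$ contracting to a point and the other a $\mathbb{P}^1$-bundle over $C$ contracting to a curve. Hence the Galois group $\mathrm{Gal}(\mathbb{C}/\Bbbk)$ cannot swap these rays, so each is individually Galois-invariant, and the associated contractions descend to morphisms defined over $\Bbbk$. In particular, I obtain a morphism $X \to V$ over $\Bbbk$ with $V$ a $\Bbbk$-form of $V_7$.

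Next, I would invoke Lemma~\ref{lemma:2-35}, which asserts that every $\Bbbk$-form of $V_7$ is itself the blowup of $\mathbb{P}^3$ at a $\Bbbk$-rational point; in particular $V(\Bbbk) \ne \varnothing$. To finish, I would apply the Lang--Nishimura Lemma~\ref{lemma:nonempty} to the birational inverse $V \dashrightarrow X$ of the morphism $X \to V$, which is an isomorphism over a dense open subset of $V$ containing plenty of smooth $\Bbbk$-points, thereby producing the desired $\Bbbk$-rational point on $X$.

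The main potential obstacle is the Galois-equivariance step, since we fall outside the Picard rank $2$ range in which Lemma~\ref{lemma:Prokhorov} is stated. However, the sharp contrast between the two exceptional divisors (one contracting to a point, the other to a curve) makes Galois-invariance of each ray transparent, so this step is expected to cause no real difficulty and the rest reduces to a direct application of Lemma~\ref{lemma:2-35} and Lang--Nishimura.
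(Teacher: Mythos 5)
Your overall strategy coincides with the paper's: realize $X_{\mathbb{C}}$ as the blowup of $V_7$ along the strict transform of a plane cubic, descend that contraction to $\Bbbk$ via Galois-invariance of the corresponding extremal ray, invoke Lemma~\ref{lemma:2-35} to trivialize the $\Bbbk$-form of $V_7$, and finish with Lang--Nishimura (the paper instead concludes full $\Bbbk$-rationality, but your weaker conclusion suffices for the statement). However, your justification of the descent step rests on a false premise. You assert that $\mathrm{NE}(X_{\mathbb{C}})$ has exactly two divisorial extremal rays, distinguishable as ``a $\mathbb{P}^2$ contracted to a point'' versus ``a $\mathbb{P}^1$-bundle over $C$ contracted to a curve.'' In fact $X_{\mathbb{C}}$ has Picard rank $3$ and its Mori cone is non-simplicial with \emph{four} divisorial extremal rays: besides the contraction to $V_7$ and the contraction of the proper transform of the $\phi$-exceptional $\mathbb{P}^2$, there are the contractions of the proper transform of the plane $\Pi$ (a $\mathbb{P}^2$ going to a $\frac{1}{2}(1,1,1)$ point) and of the proper transform of the cubic cone over $\mathscr{C}$ with vertex $p$. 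This last divisor is, like the exceptional divisor over $C$, a ruled surface over the elliptic curve contracted to a curve in a smooth $3$-fold, so your stated criterion does not preclude the Galois group from swapping these two rays --- and that is precisely the swap you must rule out to get a morphism $X\to V$ over $\Bbbk$.

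The gap is repairable: the two candidate rays are distinguished numerically, e.g.\ by $(-K_X)^2\cdot D$ for the contracted divisor $D$ (one gets $12$ for the exceptional divisor over $C$ and $15$ for the proper transform of the cone, using $-K_{V_7}\cdot C=12$ and $(-K_X)^3=32$), so each ray is individually Galois-invariant and the contraction to $V_7$ does descend. But as written, your argument omits two of the four extremal rays, including the only one that could actually be confused with the ray you need, so the Galois-equivariance step is not established.
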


\begin{proof}
Let $\Pi$ be a plane in $\mathbb{P}^3$,
and let $p$ be a point in $\mathbb{P}^3$ with $p\not\in \Pi$,
let $\phi\colon V_7\to\mathbb{P}^3$ be the blowup of this point,
and let $\widetilde{\Pi}$ be the proper transform on $V_7$ of the plane $\Pi$.
Then there exists a birational morphism $\pi\colon X_{\mathbb{C}}\to V_7$ that is a blowup of a smooth elliptic curve $C\subset \widetilde{\Pi}$.
Set $\mathscr{C}=\phi(C)$. Then $\mathscr{C}$ is smooth plane cubic curve in $\mathbb{P}^3$.
Let $E_C$ be the $\pi$-exceptional surface, and let $E_P$, $H_C$, $F$ be the proper transforms on the 3-fold $X_{\mathbb{C}}$ of the $\phi$-exceptional surface,
the plane $\Pi$, and the cubic cone in $\mathbb{P}^3$ over the curve $\mathscr{C}$ with vertex $p$, respectively.
Then we have the following commutative diagram:
$$
\xymatrix{
\mathbb{P}^2&&\mathbb{P}\big(\mathcal{O}_{\mathbb{P}^2}\oplus\mathcal{O}_{\mathbb{P}^2}(2)\big)\ar@{->}[ll]\ar@{->}[rr]&&\mathbb{P}(1,1,1,2) \\
V_7\ar@{->}[d]_{\phi}\ar@{->}[u]&&X_{\mathbb{C}}\ar@{->}[ll]_{\pi}\ar@{->}[u]_{\psi}\ar@{->}[d]^{\varphi}\ar@{->}[rr]^{\sigma} &&\widehat{Y}\ar@{->}[u]\ar@{->}[d]\\
\mathbb{P}^3 &&\widetilde{\mathbb{P}}^3\ar@{->}[ll]^{\varpi}\ar@{->}[rr]_{\varsigma}&&Y}
$$
where $\varpi$ is the blowup of the curve $\mathscr{C}$,
 $\varphi$ is the contraction of the surface $E_P$,
  $\sigma$ and $\psi$ are the contractions of the surfaces $H_C$ and $F$, respectively,
 $\varsigma$ is the contraction of the surface $\varphi(H_C)$,
 $Y$ is a Fano 3-fold that has a singular point of type $\frac{1}{2}(1,1,1)$,
the morphism $\widehat{Y}\to Y$ is the blowup of a smooth point of the 3-fold $Y$,
both $V_7\to\mathbb{P}^2$ and  $\mathbb{P}(\mathcal{O}_{\mathbb{P}^2}\oplus\mathcal{O}_{\mathbb{P}^2}(2))\to\mathbb{P}^2$ are $\mathbb{P}^1$-bundles,
the morphism $\mathbb{P}(\mathcal{O}_{\mathbb{P}^2}\oplus\mathcal{O}_{\mathbb{P}^2}(2))\to\mathbb{P}(1,1,1,2)$ is the contraction of the surface $\psi(H_C)$,
and  $\widehat{Y}\to\mathbb{P}(1,1,1,2)$ is the contraction of  $\sigma(F)$.
This shows that the Mori cone $\mathrm{NE}(X_{\mathbb{C}})$ is generated by the extremal rays
that are spanned by the curves contracted by $\psi$, $\varphi$, $\pi$, $\sigma$.
Since the Galois group $\mathrm{Gal}({\mathbb{C}}/{\Bbbk})$ cannot permute any of these rays,
we see that the commutative diagram above descents to $\Bbbk$.
Since $V_7$ does not have non-trivial forms over $\Bbbk$ by Lemma~\ref{lemma:2-35},
we see that $X$ is $\Bbbk$-rational and, in particular, has a $\Bbbk$-point.
\end{proof}

\begin{lemma}
\label{lemma:3-16}
Suppose that $X$ is contained in Family \textnumero 3.16.
Then $X$ has a $\Bbbk$-point.
\end{lemma}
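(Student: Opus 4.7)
The plan is to follow the template of the proof of Lemma~\ref{lemma:3-14}. By Mori--Mukai \cite{MoMu83}, a member of Family \textnumero 3.16 over $\mathbb{C}$ can be obtained by blowing up the variety $V_7=\mathrm{Bl}_p\mathbb{P}^3$ along the proper transform of a twisted cubic passing through the point $p$. This yields a birational morphism $\pi\colon X_{\mathbb{C}}\to V_7$ that contracts a ruled divisor onto a smooth rational curve, and it fits into a commutative diagram of all three extremal contractions of $X_{\mathbb{C}}$ analogous to the one appearing for Family \textnumero 3.14. First I would write this diagram explicitly, listing the three extremal rays of $\mathrm{NE}(X_{\mathbb{C}})$ together with the targets of their contractions (in particular $V_7$, a projective bundle over $\mathbb{P}^2$, and another Mori--Mukai 3-fold of lower Picard rank).

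The next step is to verify that each of the three extremal rays of $\mathrm{NE}(X_{\mathbb{C}})$ is individually stabilized by the Galois action of $\mathrm{Gal}(\mathbb{C}/\Bbbk)$. This is forced by the distinct numerical features of the three contractions: they differ in the dimension of the contracted locus, in the intersection numbers with $-K_{X_{\mathbb{C}}}$, and in the discrepancies of their exceptional divisors, so no two rays can be permuted. Granted this, Lemma~\ref{lemma:Prokhorov} lets the entire diagram descend to $\Bbbk$; in particular we obtain a $\Bbbk$-morphism $X\to U$ onto a $\Bbbk$-form $U$ of $V_7$. By Lemma~\ref{lemma:2-35}, $V_7$ admits no nontrivial $\Bbbk$-forms, so $U\simeq V_7$ over $\Bbbk$, and consequently the underlying $\Bbbk$-form of $\mathbb{P}^3$ is also trivial by Lemma~\ref{lemma:SB}. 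The blown-up center in $U$ is a geometrically rational curve defined over $\Bbbk$, which gives a smooth $\Bbbk$-point by the usual projection argument. Hence $X$ is $\Bbbk$-rational, and in particular $X(\Bbbk)\ne\varnothing$.

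The main obstacle I anticipate is the verification that no two of the extremal rays of $\mathrm{NE}(X_{\mathbb{C}})$ are swapped by Galois; this requires a bookkeeping of the numerical invariants of each contraction. Should this bookkeeping fail for some ray, one can fall back on the single Galois-invariant contraction $\pi\colon X_{\mathbb{C}}\to V_7$ (which is singled out by the specific genus and anticanonical degree of its exceptional center), descend only this morphism, and then argue via Lemma~\ref{lemma:nonempty} using the $\Bbbk$-rationality of $V_7$ established via Lemma~\ref{lemma:2-35}.
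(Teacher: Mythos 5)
Your proposal follows essentially the same route as the paper: identify $X_{\mathbb{C}}$ as the blowup of $V_7$ along the proper transform of a twisted cubic through the blown-up point, check that the three extremal rays of $\mathrm{NE}(X_{\mathbb{C}})$ cannot be permuted by $\mathrm{Gal}(\mathbb{C}/\Bbbk)$ so that the whole diagram of contractions descends, and then invoke the triviality of $\Bbbk$-forms of $V_7$ (Lemma~\ref{lemma:2-35}) to conclude $\Bbbk$-rationality and hence $X(\Bbbk)\ne\varnothing$. The only cosmetic difference is that you cite Lemma~\ref{lemma:Prokhorov} (stated for $\rho=2$) for the descent, where the paper argues as in the proof of that lemma directly on the rank-3 Mori cone, exactly as in its treatment of Family \textnumero 3.14.
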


\begin{proof}
Let $\mathscr{C}$ be a twisted cubic curve in the space $\mathbb{P}^3$, let $p$ be a~point in the curve $\mathscr{C}$,
let $\phi\colon V_7\to\mathbb{P}^3$ be the blowup of this point, and let $C$ be the proper transform of the cubic curve $\mathscr{C}$ on the 3-fold $V_7$.
Then $X_{\mathbb{C}}$ can be obtained as the blowup $\pi\colon X_{\mathbb{C}}\to V_7$ along the curve $C$.
One can see that $X$ fits into the commutative diagram
$$
\xymatrix@R=1em{
& &W\ar@{->}[dll]_{p_1}\ar@{->}[drr]^{p_2} & & \\
\mathbb{P}^2& & & &\mathbb{P}^2 \\
& & X_{\mathbb{C}}\ar@{->}[drr]^{\varphi}\ar@{->}[dll]_{\pi}\ar@{->}[uu]_{\psi} & & \\
V_7\ar@{->}[drr]_{\phi}\ar@{->}[uu]& & & &\widetilde{\mathbb{P}}^3\ar@{->}[dll]^{\varpi}\ar@{->}[uu] \\
& &\mathbb{P}^3 & &}
$$
where $W$ is a smooth divisor of degree $(1,1)$ in $\mathbb{P}^2\times\mathbb{P}^2$,
both $p_1$ and $p_2$ are $\mathbb{P}^1$-bundles,
the morphism $\varpi$ is the blowup of $\mathbb{P}^3$ along $\mathscr{C}$, the morphism~\mbox{$\widetilde{\mathbb{P}}^3\to\mathbb{P}^2$} is a $\mathbb{P}^1$-bundle
whose fibers are proper transforms of the secant lines in $\mathbb{P}^3$ of the twisted cubic curve $\mathscr{C}$,
the morphism $V_7\to\mathbb{P}^2$ is the $\mathbb{P}^1$-bundle whose fibers are proper transforms of the lines in the space $\mathbb{P}^3$ that pass through $p$,
and $\varphi$ is the blowup of the fiber of $\varpi$ over $p$.
Observe that the Mori cone $\mathrm{NE}(X_{\mathbb{C}})$ is simplicial and is generated by the extremal rays
spanned by the curves contracted by $\psi$, $\varphi$, $\pi$.
Since the Galois group $\mathrm{Gal}({\mathbb{C}}/{\Bbbk})$ cannot permute any of these rays,
we see that the commutative diagram above descents to $\Bbbk$.
Now, as in the proof of Lemma~\ref{lemma:3-14}, we see that $X$ is rational over $\Bbbk$.
In particular, we have $X(\Bbbk)\ne\varnothing$.
\end{proof}

\begin{lemma}
\label{lemma:3-18}
Suppose that $X$ is contained in Family \textnumero 3.18.
Then $X$ has a $\Bbbk$-point.
\end{lemma}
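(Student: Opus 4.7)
The plan is to proceed in the same spirit as the proofs of Lemmas~\ref{lemma:3-14} and~\ref{lemma:3-16}. By Mori--Mukai \cite{MoMu83}, the base-change $X_{\mathbb{C}}$ is the blowup of $\mathbb{P}^3$ along a disjoint union of a line $\ell$ and a conic $C$. First, I will describe the three extremal rays of $\mathrm{NE}(X_{\mathbb{C}})$: two of them correspond to the divisorial contractions of the exceptional divisors over $\ell$ and over $C$, while the third is the divisorial contraction of the proper transform of the unique plane $\Pi\subset\mathbb{P}^3$ that contains $C$. It is routine to assemble the resulting commutative diagram of blowups and Mori contractions.

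Second, I will verify that these three extremal rays are pairwise numerically distinguishable, for example by computing their intersection numbers with $-K_{X_{\mathbb{C}}}$ and with the strict transforms on $X_{\mathbb{C}}$ of a hyperplane and of $\Pi$. Once this is done, the Galois group $\mathrm{Gal}(\mathbb{C}/\Bbbk)$ cannot permute any of the three rays, so by Lemma~\ref{lemma:Prokhorov} each of the three contractions is defined over $\Bbbk$. Composing the descents of the two divisorial blowdowns over $\ell$ and over $C$ yields a birational morphism $X\to U$, where $U$ is a $\Bbbk$-form of $\mathbb{P}^3$.

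Third, since the extremal ray associated to $C$ is Galois-fixed, the exceptional divisor of the corresponding contraction is defined over $\Bbbk$, and its image in $U$ is an irreducible $\Bbbk$-rational curve whose base change to $\mathbb{C}$ is the conic $C\subset\mathbb{P}^3$. In particular this image is contained in a plane but is not a line, so Lemma~\ref{lemma:plane} applies and identifies $U$ with $\mathbb{P}^3_{\Bbbk}$. Hence $U(\Bbbk)\ne\varnothing$, and Lemma~\ref{lemma:nonempty} gives $X(\Bbbk)\ne\varnothing$; in fact $X$ is $\Bbbk$-rational.

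The main obstacle will be the bookkeeping of the Mori cone of $X_{\mathbb{C}}$ and verifying that the three extremal rays are pairwise numerically distinct, which forbids any Galois permutation. Once that intersection-theoretic check is in place, everything else reduces to a direct application of the descent results and of Lemma~\ref{lemma:plane} already recorded in the preliminaries.
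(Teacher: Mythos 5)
Your proposal is correct and follows essentially the same route as the paper: identify the three extremal rays of $\mathrm{NE}(X_{\mathbb{C}})$ (the two exceptional divisors over the line and the conic, plus the proper transform of the plane containing the conic), observe that the Galois group cannot permute them so the whole diagram descends, realize $X$ as the blowup of a Severi--Brauer 3-fold $U$ along a twisted line and a twisted conic, and then use the twisted conic together with Lemma~\ref{lemma:plane} to force $U\simeq\mathbb{P}^3$. The only cosmetic difference is that the paper's text cites Lemma~\ref{lemma:Prokhorov} at the final step where Lemma~\ref{lemma:plane} is what is actually being used, exactly as in your write-up.
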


\begin{proof}
The Fano 3-fold $X_{\mathbb{C}}$ can be obtained as a blowup
$\pi\colon X_{\mathbb{C}}\to \mathbb{P}^3$ along a disjoint union of a smooth conic $C$ and a line $L$.
There is a commutative diagram
$$
\xymatrix@R=1em{
& &\widetilde{Q}\ar@{->}[dll]\ar@{->}[drr] & & \\
Q& & & &\mathbb{P}^1 \\
& & X_{\mathbb{C}}\ar@{->}[dd]^{\pi}\ar@{->}[drr]^{\phi}\ar@{->}[dll]_{\theta}\ar@{->}[uu]_{\psi} & & \\
V\ar@{->}[drr]_{\varphi}\ar@{->}[uu]^{\eta}& & & &Y\ar@{->}[dll]^{\vartheta}\ar@{->}[uu]\\
& &\mathbb{P}^3 & &}
$$
where  $\vartheta$ is the blowup of the line $L$,
the morphism $\varphi$ is the blowup of the conic $C$,
the morphisms $\theta$ and $\phi$ are blowups of the proper transforms of the curves $L$ and $C$, respectively,
 $Q$ is a smooth quadric in $\mathbb{P}^4$,
the morphism $\eta$ is the blowup of a point in $Q$,
the morphism $\widetilde{Q}\to Q$ is the blowup of a conic (the proper transform of the line $L$),
the morphism \mbox{$Y\to\mathbb{P}^1$} is a $\mathbb{P}^2$-bundle,
the morphism $\widetilde{Q}\to\mathbb{P}^1$ is a fibration into quadric surfaces,
and $\psi$ is the contraction of the proper transform of the plane in $\mathbb{P}^3$ containing $C$.

This shows that the Mori cone $\mathrm{NE}(X_{\mathbb{C}})$ is simplicial and is generated by the extremal rays
spanned by the curves contracted by $\theta$, $\phi$, $\psi$.
Since the Galois group $\mathrm{Gal}({\mathbb{C}}/{\Bbbk})$ cannot permute any of these rays,
we see that the commutative diagram above descends to $\Bbbk$.
In particular, we can obtain $X$ as the blowup of a Severi--Brauer 3-fold $U$ along a disjoint union of a
twisted line and a twisted conic.
Now, applying Lemma~\ref{lemma:Prokhorov} to $U$ and the twisted conic, we see that $U\simeq\mathbb{P}^3$.
Hence, we see that $X$ is rational over $\Bbbk$.
\end{proof}

\begin{lemma}
\label{lemma:3-21}
Suppose that $X$ is contained in Family \textnumero 3.21.
Then $X$ has a $\Bbbk$-point.
\end{lemma}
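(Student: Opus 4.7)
The plan is to follow the template of Lemmas~\ref{lemma:3-14}, \ref{lemma:3-16}, and \ref{lemma:3-18}. I would begin by recalling from Mori--Mukai \cite{MoMu81} that $X_{\mathbb{C}}$ is the blowup of $\mathbb{P}^1\times\mathbb{P}^2$ along a smooth rational curve $C_{\mathbb{C}}$ of bidegree $(2,1)$, and identifying the three extremal contractions of $X_{\mathbb{C}}$: the blowup $f_{\mathbb{C}}\colon X_{\mathbb{C}}\to \mathbb{P}^1\times\mathbb{P}^2$ itself, the del Pezzo fibration of degree $7$ onto $\mathbb{P}^1$ obtained by composition with the first projection, and the conic bundle onto $\mathbb{P}^2$ obtained by composition with the second projection (whose discriminant is the line $p_2(C_{\mathbb{C}})$). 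Since these three contractions have pairwise distinct target dimensions, the Galois group $\mathrm{Gal}(\mathbb{C}/\Bbbk)$ cannot permute the corresponding extremal rays of $\mathrm{NE}(X_{\mathbb{C}})$, and by Lemma~\ref{lemma:Prokhorov} all three descend to $\Bbbk$.

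Next I would analyse the descended maps. The blowup becomes $f\colon X\to Y$ with $Y$ a $\Bbbk$-form of $\mathbb{P}^1\times\mathbb{P}^2$. Since the two factors have different dimensions, $\mathrm{Aut}(\mathbb{P}^1\times\mathbb{P}^2)=\mathrm{PGL}_2\times\mathrm{PGL}_3$ splits as a direct product, so $Y\simeq P\times V$ for a $\Bbbk$-form $P$ of $\mathbb{P}^1$ (a conic) and a $\Bbbk$-form $V$ of $\mathbb{P}^2$ (a Severi--Brauer surface). The blowup center $C\subset Y$ is defined over $\Bbbk$, and its image $\ell:=p_2(C)\subset V$ is a line defined over $\Bbbk$. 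Applying Lemma~\ref{lemma:SB} to this divisor of degree $1$ (coprime to $3$) on $V$ forces $V\simeq\mathbb{P}^2$.

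The key remaining step is the observation that $p_2|_{C}\colon C\to\ell$ is a birational morphism of smooth projective $\Bbbk$-curves, hence an isomorphism. Since $V\simeq\mathbb{P}^2$, we have $\ell\simeq\mathbb{P}^1$ over $\Bbbk$, and therefore $C\simeq\mathbb{P}^1$. Thus $C(\Bbbk)\ne\varnothing$, and the preimage of any $\Bbbk$-point of $C$ under the exceptional $\mathbb{P}^1$-bundle $E\to C$ of $f$ produces a $\Bbbk$-point of $X$. As a byproduct, pushing such a $\Bbbk$-point of $C$ forward by $p_1$ gives $P(\Bbbk)\ne\varnothing$, so $P\simeq\mathbb{P}^1$ and $Y\simeq\mathbb{P}^1\times\mathbb{P}^2$, making $X$ actually $\Bbbk$-rational.

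The main obstacle I anticipate is ensuring that the Galois action does not interchange two of the extremal rays of $\mathrm{NE}(X_{\mathbb{C}})$; here this is immediate from the different dimensions of their targets, but in similar families containing several rays of the same contraction type one needs a more delicate numerical argument.
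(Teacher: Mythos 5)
Your route is the paper's route in all essential respects: descend the blowup $X_{\mathbb{C}}\to\mathbb{P}^1\times\mathbb{P}^2$ to a blowup of $C_2\times U$ over $\Bbbk$ ($C_2$ a conic, $U$ a Severi--Brauer surface), observe that $\mathrm{pr}_2(C)$ is a twisted line in $U$, apply Lemma~\ref{lemma:SB} to get $U\simeq\mathbb{P}^2$, and use that $\mathrm{pr}_2\vert_C$ is an isomorphism onto a line to conclude $C\simeq\mathbb{P}^1$. Your ending is fine and marginally more direct than the paper's: a $\Bbbk$-point of $C$ gives one on $X$ because $E=\mathbb{P}(N_{C/Y}^{\vee})$ is a projectivized vector bundle (or simply by Lemma~\ref{lemma:nonempty} applied to $Y\dashrightarrow X$), whereas the paper passes through $C_2\simeq\mathbb{P}^1$ (via L\"uroth) and $\Bbbk$-rationality of $X$.

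The one genuine error is your description of the extremal contractions, on which your descent step rests. Since $\rho(X_{\mathbb{C}})=3$ while the del Pezzo fibration $X_{\mathbb{C}}\to\mathbb{P}^1$ and the conic bundle $X_{\mathbb{C}}\to\mathbb{P}^2$ have targets of Picard rank $1$, these two maps contract two-dimensional faces of $\overline{\mathrm{NE}}(X_{\mathbb{C}})$, not extremal rays. The true extremal rays, besides the one giving the blowdown, are generated by the two rulings $\ell_1,\ell_2$ of the proper transform $S\simeq\mathbb{P}^1\times\mathbb{P}^1$ of the unique surface of degree $(0,1)$ containing the centre; each gives a \emph{birational} contraction $X_{\mathbb{C}}\to U_i$ of $S$ onto a curve (this is the diagram from \cite[Lemma~8.22]{CheltsovShramovUMN} used in the paper). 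So your justification that the Galois group cannot permute the rays --- ``pairwise distinct target dimensions'' --- rests on a false premise: two of the three genuine extremal contractions are divisorial contractions with three-dimensional targets. The conclusion you need (that the ray of the blowdown is Galois-fixed) is still true, but it must be argued differently, e.g.\ by a numerical invariant of the contracted divisors: one computes $(-K_{X_{\mathbb{C}}})^2\cdot E=9$ for the exceptional divisor $E$ of the blowdown, while the other two rays share the exceptional divisor $S$ with $(-K_{X_{\mathbb{C}}})^2\cdot S=2$, so no Galois element can exchange $E$ with $S$ and the blowdown descends. With that local repair, your argument coincides with the paper's proof of Lemma~\ref{lemma:3-21}.
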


\begin{proof}
Over $\mathbb{C}$, there exists a blowup $\pi\colon X_{\mathbb{C}}\to\mathbb{P}^1\times\mathbb{P}^2$ of a smooth curve $C$ of degree $(2,1)$.
Let $S$ be the proper transform on $X$ of the surface in $\mathbb{P}^1\times\mathbb{P}^2$ of degree $(0,1)$ that passes through the curve $C$,
let $\ell_1$ and $\ell_2$ be the rulings of the surface $S\cong\mathbb{P}^1\times\mathbb{P}^1$
such that the curves $\pi(\ell_1)$ and $\pi(\ell_2)$ are of degree $(1,0)$ and $(0,1)$ in  $\mathbb{P}^1\times\mathbb{P}^2$, respectively.
Let $E$ be the $\pi$-exceptional surface, and let $\ell_3$ be a fiber of the natural projection \mbox{$E\to C$}.
Then the curves $\ell_1$, $\ell_2$, $\ell_3$ generate the Mori cone $\overline{\mathrm{NE}}(X)$,
and the extremal rays $\mathbb{R}_{\geqslant 0}[\ell_1]$ and $\mathbb{R}_{\geqslant 0}[\ell_2]$ give birational contractions $X\to U_1$ and $X\to U_2$, respectively.
Moreover, it follows from  the proof of \cite[Lemma~8.22]{CheltsovShramovUMN} that there is a commutative diagram
$$
\xymatrix@R=1em{
& &\mathbb{P}^1\times\mathbb{P}^2\ar@{->}[dll]\ar@{->}[drr] & & \\
\mathbb{P}^1& & & &\mathbb{P}^2 \\
& & X_{\mathbb{C}}\ar@{->}[drr]\ar@{->}[dll]\ar@{->}[uu]_{\pi} & & \\
U_1\ar@{->}[drr]\ar@{->}[uu]& & & &U_2\ar@{->}[dll]\ar@{->}[uu] \\
& &V & &}
$$
where the morphism $U_1\to \mathbb{P}^1$ is a quadric bundle, the morphism $U_2\to \mathbb{P}^2$ is a $\mathbb{P}^1$-bundle,
the map $U_1\dashrightarrow U_2$ is a flop,
and $V$ is a Fano 3-fold in Family \textnumero 1.15 with one isolated ordinary double point singularity.
For details, we refer the reader to the case~(2.3.2) in~\cite[Theorem~2.3]{TakeuchiNew}.

Since the Galois group $\mathrm{Gal}({\mathbb{C}}/{\Bbbk})$ cannot non-trivially permute extremal rays $\mathbb{R}_{\geqslant 0}[\ell_1]$,
$\mathbb{R}_{\geqslant 0}[\ell_2]$, $\mathbb{R}_{\geqslant 0}[\ell_3]$, we see that the diagram above is defined over $\Bbbk$
with $X_{\mathbb{C}}$ replaced by $X$, $\mathbb{P}^1$ replaced by a (possibly pointless) conic $C_2$,
and $\mathbb{P}^2$ is replaced by its $\Bbbk$-form $U$.
Then we may assume that $C$ is a curve in $C_2\times U$ defined over $\Bbbk$,
and $\pi$ is the blowup of the product $C_2\times U$ along this curve.
Then the image of the curve $C$ in $U$ via the natural projection $\mathrm{pr}_2\colon C_2\times U\to U$ is a twisted line in the Severi--Brauer surface $U$,
so it follows from Lemma~\ref{lemma:SB} that $U\simeq\mathbb{P}^2$ and $\mathrm{pr}_2(C)$ is a line.
Moreover, since $\mathrm{pr}_2|_C: C \to \mathrm{pr}_2(C)$ is an isomorphism, we see that $C$ is isomorphic to $\mathbb{P}^1$,
which implies in turn that $C_2\simeq\mathbb{P}^1$ via the projection $\mathrm{pr}_1\colon \colon C_2\times U\to C_2$ in consideration of L\"uroth's Theorem.
Therefore $X$ is birational to $\mathbb{P}^1\times\mathbb{P}^2$ over $\Bbbk$, so $X$ is rational over $\Bbbk$.
In particular, we have $X ({\Bbbk})\ne\varnothing$.
\end{proof}

\begin{lemma}
\label{lemma:3-22}
Suppose that $X$ is contained in Family \textnumero 3.22.
Then $X$ has a $\Bbbk$-point.
\end{lemma}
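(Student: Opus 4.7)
The plan is to mirror the arguments used for Families \textnumero 3.14, \textnumero 3.16, \textnumero 3.18, and \textnumero 3.21. First, I will recall the Mori--Mukai description of Family \textnumero 3.22: geometrically, $X_{\mathbb{C}}$ admits a description as a blowup of $\mathbb{P}^1\times\mathbb{P}^2$ along a smooth rational curve (of bidegree $(2,1)$, lying in a divisor of type $(0,1)$ up to relabeling), and it simultaneously fits as one vertex of a Sarkisov-type diagram involving a second birational model obtained by running the MMP on a suitable divisor. I will reproduce the full commutative diagram of extremal contractions, citing \cite{MoMu81,TakeuchiNew}, and identify three generators of $\overline{\mathrm{NE}}(X_{\mathbb{C}})$ together with the contractions they define.

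Second, I will examine the action of $\mathrm{Gal}(\mathbb{C}/\Bbbk)$ on these three extremal rays. Because the three contractions are pairwise of distinct types, or distinguishable by discrete invariants (dimension of image, whether it is a conic bundle or divisorial contraction, or type of exceptional locus), Galois cannot permute any pair of rays non-trivially. Consequently, as in Lemma~\ref{lemma:Prokhorov}, the entire Mori-theoretic diagram descends to $\Bbbk$: in particular, $X$ is the blowup of a product $C_2\times U$ along a curve $C$ defined over $\Bbbk$, where $C_2$ is a conic and $U$ is a $\Bbbk$-form of $\mathbb{P}^2$.

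Third, I will prove that both $C_2$ and $U$ are trivial forms, so $C_2\simeq\mathbb{P}^1$ and $U\simeq\mathbb{P}^2$. To trivialize $U$, I will project $C$ to $U$ and apply Lemma~\ref{lemma:SB} to the image curve (a line or conic of appropriate degree coprime to $3$). To trivialize $C_2$, I will project $C$ to $C_2$: since $C$ is geometrically rational and the projection is generically finite onto $C_2$, L\"uroth's theorem gives $C_2(\Bbbk)\ne\varnothing$, hence $C_2\simeq\mathbb{P}^1$. This forces $X$ to be $\Bbbk$-birational to $\mathbb{P}^1\times\mathbb{P}^2$ and so $\Bbbk$-rational, giving in particular $X(\Bbbk)\ne\varnothing$.

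The main obstacle I foresee is the bookkeeping around the extremal rays: I must confirm that the three primitive classes are truly Galois-rigid (no swap can occur for number-theoretic or geometric reasons), and that some divisor or curve on $U$ has degree coprime to $3$ so that Lemma~\ref{lemma:SB} applies. If the natural line on $U$ were to have a non-coprime degree, a backup would be to use an auxiliary effective divisor coming from the exceptional locus of one of the contractions, as was done in the proof of Lemma~\ref{lemma:2-9}. Once these checks are in place, the conclusion follows by a direct rationality argument as in Lemma~\ref{lemma:3-21}.
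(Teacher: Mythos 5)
Your outline matches the paper's proof: descend the Mori-theoretic diagram to $\Bbbk$ via Galois-rigidity of the three extremal rays (as in Lemma~\ref{lemma:3-21}), realize $X$ as the blowup of $C_2\times U$ along a curve $C$ defined over $\Bbbk$, and trivialize the two Severi--Brauer forms. One detail needs correcting, though: for Family \textnumero 3.22 the blown-up curve is a \emph{conic contained in a fiber of} $\mathrm{pr}_1$ (so of degree $(0,2)$, not $(2,1)$ --- the latter is Family \textnumero 3.21). Consequently $\mathrm{pr}_1\vert_{C}$ is \emph{constant}, not generically finite, so your L\"uroth argument for $C_2$ does not apply as stated; but the fix is immediate and is what the paper does: $\mathrm{pr}_1(C)$ is a $\Bbbk$-point of $C_2$, whence $C_2\simeq\mathbb{P}^1$, while $\mathrm{pr}_2(C)$ is a twisted conic in $U$, of degree $2$ coprime to $3$, so Lemma~\ref{lemma:SB} gives $U\simeq\mathbb{P}^2$ exactly as you propose. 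With that adjustment your argument coincides with the paper's.
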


\begin{proof}
Let $\mathrm{pr}_1\colon \mathbb{P}^1\times\mathbb{P}^2\to\mathbb{P}^1$ and
$\mathrm{pr}_2\colon \mathbb{P}^1\times\mathbb{P}^2\to\mathbb{P}^2$ be the projections to the first and the second factors, respectively,
let $H_1$ be a~fiber of the map $\mathrm{pr}_1$, let \mbox{$H_2=\mathrm{pr}_2^*(\mathcal{O}_{\mathbb{P}^2}(1))$},
and let $C$ be a~conic in $H_1\cong\mathbb{P}^2$.
Then there is a~blow up $\psi\colon X_{\mathbb{C}}\to \mathbb{P}^1\times\mathbb{P}^2$ along the curve $C$.

Let $E_C$ be the $\psi$-exceptional surface, let $\widetilde{H}_1$ be the proper transform  of the surface $H_1$~on the 3-fold $X$,
let $F$ be~the surface in $|H_2|$ that contains $C$,
and let $\widetilde{F}$ be  the proper transform of this surface on $X$.
We have the following commutative diagram:
$$
\xymatrix@R=1em{
& &\mathbb{P}^1\times\mathbb{P}^2\ar@{->}[dll]_{\mathrm{pr}_1}\ar@{->}[drr]^{\mathrm{pr}_2} & & \\
\mathbb{P}^1& & & &\mathbb{P}^2 \\
& & X_{\mathbb{C}}\ar@{->}[drr]^{\phi}\ar@{->}[dll]_{\pi}\ar@{->}[uu]_{\psi} & & \\
Y\ar@{->}[drr]_{\varphi}\ar@{->}[uu]^{\eta}& & & &\mathbb{P}\big(\mathcal{O}_{\mathbb{P}^2}\oplus\mathcal{O}_{\mathbb{P}^2}(2)\big)\ar@{->}[dll]^{\varpi}\ar@{->}[uu]_{\sigma} \\
& &\mathbb{P}(1,1,1,2) & &}
$$
where $\pi$ and $\phi$ are the contraction of the surfaces $\widetilde{H}_1\cong\mathbb{P}^2$ and $\widetilde{F}\cong\mathbb{P}^1\times\mathbb{P}^1$, respectively,
the morphisms $\varpi$ and $\varphi$ are the contractions of the surfaces $\phi(\widetilde{H}_1)$ and $\pi(\widetilde{F})$, respectively,
the morphism $\sigma$ is a~$\mathbb{P}^1$-bundles, and $\eta$ is a~fibration into del Pezzo surfaces such that all its fibers except $\pi(\widetilde{F})$ are isomorphic to $\mathbb{P}^2$,
while $\pi(\widetilde{F})\cong\mathbb{P}(1,1,4)$.
Note that the Mori cone $\overline{\mathrm{NE}}(X)$ is simplicial and it is generated by the extremal rays contracted by $\pi$, $\phi$ and $\psi$.
As in the proof of Lemma~\ref{lemma:3-21}, we see that the above diagram can be defined over $\Bbbk$
with $X_{\mathbb{C}}$ replaced by $X$, $\mathbb{P}^1$ replaced by a (possibly pointless) conic $C_2$,
and $\mathbb{P}^2$ is replaced by its $\Bbbk$-form $U$.
Then $\pi$ is a blow up of the product $C_2\times U$ along a curve $C$ defined over $\Bbbk$
such that $\mathrm{pr}_1(C)$ is a point in $C_2$, and $\mathrm{pr}_2(C)$ is a twisted conic in the Severi--Brauer surface $U$.
Now, applying Lemma~\ref{lemma:SB}, we see that $C_2\cong\mathbb{P}^1$ and $U\cong\mathbb{P}^2$,
so $X$ is $X$ is rational over $\Bbbk$, which gives $X ({\Bbbk})\ne\varnothing$.
\end{proof}

\begin{lemma}
\label{lemma:3-23}
Suppose that $X$ is contained in Family \textnumero 3.23.
Then $X$ has a $\Bbbk$-point.
\end{lemma}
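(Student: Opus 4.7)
Following the template established for Families \textnumero 3.18, \textnumero 3.21, and \textnumero 3.22, the approach is to exhibit an explicit Sarkisov-style commutative diagram for $X_{\mathbb C}$, verify that all of the extremal rays in $\overline{\mathrm{NE}}(X_{\mathbb C})$ are distinguished by numerical or intersection-theoretic invariants that the Galois group $\mathrm{Gal}(\mathbb{C}/\Bbbk)$ must preserve, descend the entire diagram to $\Bbbk$, and then apply the Severi--Brauer lemmas to force the relevant $\Bbbk$-forms to be split.

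First, I would recall the Mori--Mukai description of $X_{\mathbb C}$ in Family \textnumero 3.23 as an iterated blowup of a simpler Fano 3-fold (such as a smooth quadric in $\mathbb{P}^4$ or $\mathbb{P}^1\times\mathbb{P}^2$) along a prescribed configuration of curves, and read off the associated contractions from the known classification \cite{MoMu81, MoMu83}. This yields a diagram analogous to those in Lemmas~\ref{lemma:3-18} and~\ref{lemma:3-22}, in which $\overline{\mathrm{NE}}(X_{\mathbb C})$ is simplicial with three extremal rays, each corresponding to a birational contraction or fibration whose target is a standard $\mathbb{C}$-form of a very small-dimensional variety (projective space, a Hirzebruch surface, or $\mathbb{P}^1\times\mathbb{P}^2$).

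Next, I would argue that the three extremal rays cannot be permuted non-trivially by $\mathrm{Gal}(\mathbb{C}/\Bbbk)$. In each of the preceding proofs this follows because the three rays correspond to genuinely different contractions, for instance distinguishable by the dimension of the target, the discrepancy of the exceptional divisor, or the anticanonical degree of the contracted curve class. Once this is verified, Lemma~\ref{lemma:Prokhorov} (or its variant argued inline in Lemmas~\ref{lemma:3-21} and \ref{lemma:3-22}) lets me descend each contraction to $\Bbbk$, replacing the targets by their $\Bbbk$-forms: a Severi--Brauer 3-fold $U$ in place of $\mathbb{P}^3$, a Severi--Brauer surface in place of $\mathbb{P}^2$, and a possibly pointless conic in place of $\mathbb{P}^1$.

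The last step is to use the presence of distinguished low-degree subvarieties defined over $\Bbbk$ (the blowup centers, their images, or fibers of the descended fibrations) to apply Lemma~\ref{lemma:SB}, Corollary~\ref{corollary:ICMS}, or Lemma~\ref{lemma:plane} and force each of these $\Bbbk$-forms to be split. Typically a smooth $\Bbbk$-curve whose geometric model is a line or a conic inside a Severi--Brauer variety yields a $\Bbbk$-divisor of degree coprime to the dimension plus one, collapsing the Brauer class. Once all the forms are trivial, $X$ becomes $\Bbbk$-birational to $\mathbb{P}^1\times\mathbb{P}^2$ or $\mathbb{P}^3$, and hence $X(\Bbbk)\ne\varnothing$ by Lemma~\ref{lemma:nonempty}. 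The main technical obstacle is the Galois-invariance verification of step two: if the diagram contains a pair of contractions with suspiciously symmetric numerical invariants, one must find an additional intrinsic refinement (such as the genus or anticanonical degree of the blown-up curve, or the geometry of the exceptional divisor) to rule out a Galois swap before the descent goes through.
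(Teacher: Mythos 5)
Your plan matches the paper's proof in approach: the paper exhibits the standard commutative diagram realizing $X_{\mathbb{C}}$ as the blowup of $V_7$ (the blowup of $\mathbb{P}^3$ at a point $p$) along the strict transform of a conic through $p$, notes that the three extremal rays of the simplicial Mori cone cannot be permuted by $\mathrm{Gal}(\mathbb{C}/\Bbbk)$, descends the whole diagram to $\Bbbk$, and concludes at once from Lemma~\ref{lemma:2-35} that $V_7$ has no non-trivial $\Bbbk$-forms, so $X$ is $\Bbbk$-rational. The only imprecision in your sketch is the identification of the base 3-fold --- for Family \textnumero 3.23 it is $V_7$ rather than a quadric or $\mathbb{P}^1\times\mathbb{P}^2$ (although $Q\subset\mathbb{P}^4$ and $\mathbb{P}^2$ do appear elsewhere in the diagram) --- and the final ``splitting'' step is the ready-made Lemma~\ref{lemma:2-35} (the exceptional divisor of $V_7\to U$ contracts to a $\Bbbk$-point of the Severi--Brauer 3-fold $U$) rather than a new coprimality argument.
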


\begin{proof}
Let $\mathscr{C}$ be a~smooth conic in $\mathbb{P}^3$, let $p$ be an arbitrary point in the conic $\mathscr{C}$,
let $\phi\colon V_7\to\mathbb{P}^3$ be the blowup of the point $p$, and let $C$ be the proper transform on the 3-fold $V_7$ of the conic $\mathscr{C}$ .
Then, over $\mathbb{C}$, there exists a birational morphism $\pi\colon X_C\to V_7$ that is the blowup of the curve $C$.
One can see that $X$ fits into the commutative diagram
$$
\xymatrix@R=1em{
& &\widehat{Q}\ar@{->}[dll]\ar@{->}[drr] & & \\
\mathbb{P}^2& & & &Q \\
& & X\ar@{->}[drr]^{\varphi}\ar@{->}[dll]_{\pi}\ar@{->}[uu]_{\psi} & & \\
V_7\ar@{->}[drr]_{\phi}\ar@{->}[uu]& & & &\widetilde{\mathbb{P}}^3\ar@{->}[dll]^{\varpi}\ar@{->}[dll]\ar@{->}[uu] \\
& &\mathbb{P}^3 & &}
$$
where $Q$ is a smooth quadric 3-fold in $\mathbb{P}^4$,
the morphism $\varpi$ is the blowup of the conic $\mathscr{C}$,
the morphism \mbox{$\widetilde{\mathbb{P}}^3\to Q$} is the contraction to a point of the proper transform of the plane in $\mathbb{P}^3$ that contains $\mathscr{C}$,
$\varphi$ is the blowup of the fiber of the morphism $\varpi$ over the point $p$,
the morphism $\widehat{Q}\to Q$ is the blowup of a line in $Q$ that passes through the latter point,
and $\widehat{Q}\to\mathbb{P}^2$ is a $\mathbb{P}^1$-bundle.
Hence, the Mori cone $\mathrm{NE}(X_{\mathbb{C}})$ is simplicial and is generated by the extremal rays
spanned by the curves contracted by $\psi$, $\varphi$, $\pi$.
Since the Galois group $\mathrm{Gal}({\mathbb{C}}/{\Bbbk})$ cannot permute any of these rays, the commutative diagram above descents to $\Bbbk$.
Since $V_7$ does not have non-trivial forms over $\Bbbk$ by Lemma~\ref{lemma:2-35},
we see that $X$ is $\Bbbk$-rational.
\end{proof}

\begin{lemma}
\label{lemma:3-24}
Suppose that $X$ is contained in Family \textnumero 3.24.
Then $X$ has a $\Bbbk$-point.
\end{lemma}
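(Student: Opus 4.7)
My plan is to follow the template used in Lemmas~\ref{lemma:3-21}--\ref{lemma:3-23}: produce an explicit commutative diagram of all extremal contractions of $X_{\mathbb{C}}$ from the Mori--Mukai classification, verify that this diagram is Galois-stable and therefore descends to $\Bbbk$, and then conclude $\Bbbk$-rationality of $X$ via Lemmas~\ref{lemma:SB} and~\ref{lemma:2-35}.

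First, I would recall from \cite{MoMu81,MoMu83} the description of Family~\textnumero 3.24, which realizes $X_{\mathbb{C}}$ as the fiber product $W\times_{\mathbb{P}^2}\mathbb{F}_1$, where $W\subset\mathbb{P}^2\times\mathbb{P}^2$ is the smooth divisor of bidegree $(1,1)$ (the flag threefold of $\mathbb{P}^2$) and $\mathbb{F}_1\to\mathbb{P}^2$ is the blowup of a point. Equivalently, $X_{\mathbb{C}}$ is the blowup of $W$ along the fiber over the chosen point of one of its two projections to $\mathbb{P}^2$, and it also carries a birational contraction to $V_7$ collapsing the strict transform of the distinguished divisor of $W$ sitting above that point. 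Combining these maps with the induced $\mathbb{P}^1$-bundle $\mathbb{F}_1\to\mathbb{P}^1$, I would assemble a commutative diagram whose three extremal contractions generate the simplicial Mori cone $\overline{\mathrm{NE}}(X_{\mathbb{C}})$, in the same spirit as the diagrams drawn in the proofs of Lemmas~\ref{lemma:3-22} and~\ref{lemma:3-23}.

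Second, I would distinguish the three extremal rays numerically, for instance using the triple self-intersections of the associated exceptional divisors together with their intersection with $-K_X$ and with the pull-backs of the various natural nef classes; this prevents $\mathrm{Gal}(\mathbb{C}/\Bbbk)$ from permuting them. Then Lemma~\ref{lemma:Prokhorov}, applied ray by ray, yields the whole diagram over $\Bbbk$. In particular, $X$ admits a birational morphism to a $\Bbbk$-form $U$ of $V_7$. Since by Lemma~\ref{lemma:2-35} any $\Bbbk$-form of $V_7$ is isomorphic to $V_7$ itself, the map $X\to V_7$ is defined over $\Bbbk$, and since $V_7$ is $\Bbbk$-rational, so is $X$. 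In particular, $X(\Bbbk)\neq\varnothing$.

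The main obstacle I expect is the bookkeeping step: the flag threefold $W$ carries a natural involution swapping its two projections to $\mathbb{P}^2$, and I must check that after blowing up the chosen fiber this symmetry is broken, so that the two extremal rays associated with the two projections are numerically inequivalent and cannot be interchanged by Galois. Once this discrete invariance is pinned down, the descent and rationality steps are routine applications of the preceding lemmas in this section.
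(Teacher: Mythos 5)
There is a genuine gap: the pivot of your argument is a birational contraction $X_{\mathbb{C}}\to V_7$, but no such contraction exists for Family \textnumero 3.24. The Mori cone of $X_{\mathbb{C}}$ is simplicial with exactly three extremal contractions: the conic bundle $\pi\colon X_{\mathbb{C}}\to\mathbb{F}_1$, the divisorial contraction $\alpha\colon X_{\mathbb{C}}\to W$ onto the flag threefold $W\subset\mathbb{P}^2\times\mathbb{P}^2$ (contracting a surface $\mathbb{P}^1\times\mathbb{P}^1$ to a fiber $L$ of one projection $W\to\mathbb{P}^2$, which is a curve, not a divisor), and the blowup $\phi\colon X_{\mathbb{C}}\to\mathbb{P}^1\times\mathbb{P}^2$ of a curve of degree $(1,1)$. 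None of these lands on $V_7$; the families with a $V_7$-contraction are \textnumero 3.23, \textnumero 3.26, \textnumero 3.29, \textnumero 3.30, etc. Moreover, you cannot salvage the argument by replacing $V_7$ with $W$: unlike $V_7$, the flag threefold does have pointless $\Bbbk$-forms (e.g.\ a $(1,1)$-divisor in $S\times S'$ for a pointless Severi--Brauer surface $S$, as in Example~\ref{example:3-7}), so Lemma~\ref{lemma:2-35} has no analogue there and the mere existence of the descended contraction gives nothing.

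Your descent step is fine and agrees with the paper: the three rays are of different numerical types, so $\mathrm{Gal}(\mathbb{C}/\Bbbk)$ fixes each of them and the whole diagram descends, with $\mathbb{P}^1\times\mathbb{P}^2$ replaced by $C_2\times U$ for a conic $C_2$ and a Severi--Brauer surface $U$. What is missing is the concluding mechanism. The paper's argument is: the center $C$ of the blowup $X\to C_2\times U$ is defined over $\Bbbk$ and $\mathrm{pr}_2(C)$ is a twisted line in $U$, so Lemma~\ref{lemma:SB} forces $U\simeq\mathbb{P}^2$; then $\mathrm{pr}_2|_C$ is an isomorphism onto a line, so $C\simeq\mathbb{P}^1$, and since $\mathrm{pr}_1|_C\colon C\to C_2$ is also an isomorphism, $C_2\simeq\mathbb{P}^1$ as well; hence $X$ is $\Bbbk$-birational to $\mathbb{P}^1\times\mathbb{P}^2$ and has a $\Bbbk$-point. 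You need this (or an equivalent) final step in place of the appeal to $V_7$.
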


\begin{proof}
Over $\mathbb{C}$, there is a blowup $\phi\colon X_C\to\mathbb{P}^1\times\mathbb{P}^2$ of a smooth curve $C$ of degree~$(1,1)$,
and we have the following commutative diagram
$$
\xymatrix{
&&\mathbb{P}^1\times\mathbb{P}^2\ar@/_2pc/@{->}[ddll]_{\mathrm{pr}_1}\ar@/^2.5pc/@{->}[ddrr]^{\mathrm{pr}_2}&&\\%
&&X_{\mathbb{C}}\ar@{->}[u]_{\phi}\ar@{->}[dll]_{\zeta}\ar@{->}[d]^{\pi}\ar@{->}[r]^{\alpha}&W\ar@{->}[dr]^{\omega_{1}}&\\%
\mathbb{P}^{1}&&\mathbb{F}_{1}\ar@{->}^{\xi}[ll]\ar@{->}^{\gamma}[rr]&&\mathbb{P}^{2}}
$$
where $W$ is a divisor of degree $(1,1)$ on $\mathbb{P}^{2}\times\mathbb{P}^{2}$,
$\omega_{1}$ is a natural $\mathbb{P}^{1}$-bundle,
$\alpha$ contracts a smooth surface $E\cong\mathbb{P}^{1}\times\mathbb{P}^{1}$ to a fiber $L$ of $\omega_{1}$,
$\gamma$ is the blowup of the point $\omega_1(L)$, the morphism $\xi$ is a $\mathbb{P}^{1}$-bundle,
$\zeta$ is a $\mathbb{F}_{1}$-bundle, $\mathrm{pr}_1$ and $\mathrm{pr}_2$ are projections to the first and the second factors, respectively.
This commutative diagram shows that the Mori cone $\mathrm{NE}(X_{\mathbb{C}})$ is generated by the extremal rays
spanned by the curves contracted by $\pi$, $\alpha$, $\phi$.
Since the Galois group $\mathrm{Gal}({\mathbb{C}}/{\Bbbk})$ cannot non-trivially permute these rays,
we see that the commutative diagram above descents to $\Bbbk$
with $X_{\mathbb{C}}$ replaced by $X$, $\mathbb{P}^1$ replaced by a (possibly pointless) conic $C_2$,
and $\mathbb{P}^2$ is replaced by its $\Bbbk$-form $U$.
Then we may assume that $C$ is a curve in $C_2\times U$ defined over $\Bbbk$,
and $\pi$ is the blowup of the product $C_2\times U$ along this curve.
But $\mathrm{pr}_2(C)$ is a twisted line in $U$, which gives $U\simeq\mathbb{P}^2$ by Lemma~\ref{lemma:SB}.
Moreover, since $\mathrm{pr}_2|_C\colon C \to \mathrm{pr}_2(C)$ is an isomorphism and $\mathrm{pr}_2(C)$ is a line, we see that $C\cong\mathbb{P}^1$.
Then $C_2\simeq\mathbb{P}^1$ as well, since $\mathrm{pr}_1|_C\colon C \to C_2$ is an isomorphism.
Therefore, we see that $X$ is birational to $\mathbb{P}^1\times\mathbb{P}^2$ over $\Bbbk$.
In particular, $X$ has $\Bbbk$-point.
\end{proof}

\begin{lemma}
\label{lemma:3-26}
Suppose that $X$ is contained in Family \textnumero 3.26.
Then $X$ has a $\Bbbk$-point.
\end{lemma}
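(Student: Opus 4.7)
I plan to follow the same strategy as in Lemmas~\ref{lemma:3-14}, \ref{lemma:3-16}, and \ref{lemma:3-23}. By Mori--Mukai~\cite{MoMu81}, the base extension $X_{\mathbb{C}}$ is obtained as the blowup $\pi\colon X_{\mathbb{C}}\to V_7$ of the proper transform of a line disjoint from the blown-up point, or equivalently as the blowup of $\mathbb{P}^3$ along the disjoint union of a point $p$ and a line $L$.

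The first step is to describe the Mori cone $\overline{\mathrm{NE}}(X_{\mathbb{C}})$ and exhibit a commutative diagram that realizes its three extremal contractions: (i) $\pi\colon X_{\mathbb{C}}\to V_7$, whose exceptional divisor is isomorphic to $\mathbb{P}^1\times\mathbb{P}^1$ and is contracted onto the (transform of the) line $L$; (ii) $X_{\mathbb{C}}\to W$, where $W$ is a Fano 3-fold in Family~\textnumero 2.33 (the blowup of $\mathbb{P}^3$ along $L$), realized by collapsing the proper transform (isomorphic to $\mathbb{P}^2$) of the exceptional divisor over $p$; and (iii) $X_{\mathbb{C}}\to Z$, a divisorial contraction whose exceptional divisor is the proper transform $\cong\mathbb{F}_1$ of the unique plane through $p$ and $L$, contracted onto a curve $\cong\mathbb{P}^1$.

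The second step is to check that these three rays are geometrically distinguishable, so that the Galois group $\mathrm{Gal}(\mathbb{C}/\Bbbk)$ cannot interchange any two of them: contraction (ii) sends its exceptional divisor to a point while (i) and (iii) contract theirs onto curves, and the exceptional divisors of (i) and (iii) are not isomorphic ($\mathbb{F}_0$ versus $\mathbb{F}_1$). Hence the whole commutative diagram descends to $\Bbbk$ as in Lemma~\ref{lemma:Prokhorov}.

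Finally, the descended morphism $\pi\colon X\to U$ presents $U$ as a $\Bbbk$-form of $V_7$. By Lemma~\ref{lemma:2-35}, $U\cong V_7$, which is $\Bbbk$-rational. Thus $X$ is also $\Bbbk$-rational, and in particular $X(\Bbbk)\ne\varnothing$. The main obstacle I anticipate is the explicit construction of the third contraction $X_{\mathbb{C}}\to Z$ and verification that it is genuinely elementary, but once the three exceptional loci are distinguished by numerical and isomorphism type, the descent and the final conclusion follow from Lemma~\ref{lemma:2-35} in the same manner as in the preceding lemmas.
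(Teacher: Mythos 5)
Your proposal is correct and follows essentially the same route as the paper: descend the blowup $X_{\mathbb{C}}\to V_7$ to $\Bbbk$ by showing the corresponding extremal ray is Galois-invariant, then invoke Lemma~\ref{lemma:2-35} to conclude that $X$ is the blowup of $V_7$ over $\Bbbk$, hence $\Bbbk$-rational. The paper simply cites Mori--Mukai for the uniqueness of the relevant divisorial contraction, whereas you spell out the three extremal rays and distinguish them by the type of contraction and the isomorphism class of the exceptional divisors ($\mathbb{P}^2$, $\mathbb{P}^1\times\mathbb{P}^1$, $\mathbb{F}_1$); this is a valid and slightly more self-contained justification of the same descent step.
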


\begin{proof}
Let $V_7$ be the blowup of $\mathbb{P}^3$ at a point $p$,
let $L$ be a line in  $\mathbb{P}^3$ not containing $p$,
and let $C$ be its strict transform on $V_7$.
Then $X_{\mathbb{C}}$ can be obtained by blowing up $V_7$ along the curve $C$,
and it follows from \cite{MoMu81} and \cite{MoMu83} that $X_{\mathbb{C}}$ has exactly one divisorial contraction,
the inverse of the blowing up $X_{\mathbb{C}} \to V_7$ of the curve $C$.
Thus, the blowup $X_{\mathbb{C}} \to V_7$ descents to $\Bbbk$, but $V_7$ does not have non-trivial $\Bbbk$-forms by Lemma~\ref{lemma:2-35},
which implies that $X$ can be obtained by blowing up $V_7$ over $\Bbbk$. In particular, $X({\Bbbk})\ne\varnothing$.
\end{proof}

\begin{lemma}
\label{lemma:3-29}
Suppose that $X$ is contained in Family \textnumero 3.29.
Then $X$ has a $\Bbbk$-point.
\end{lemma}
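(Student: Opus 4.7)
The plan is to mirror the strategy used in Lemmas~\ref{lemma:3-23} and \ref{lemma:3-26}: identify $X_{\mathbb{C}}$ from the Mori--Mukai tables, show that all its extremal rays are individually Galois-invariant, descend the whole Mori-theoretic diagram to $\Bbbk$, and then invoke Lemma~\ref{lemma:2-35} to conclude. Concretely, by \cite{MoMu81,MoMu83}, the 3-fold $X_{\mathbb{C}}$ is obtained as the blowup $\pi\colon X_{\mathbb{C}}\to V_7$ along a line $L$ contained in the exceptional divisor $E\simeq\mathbb{P}^2$ of the blowup $\phi\colon V_7\to\mathbb{P}^3$. The first step would be to write down the full extremal-ray diagram of $X_{\mathbb{C}}$, recording for each extremal ray the type of contraction (divisorial versus of fibre type), the exceptional locus, and the target.

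Once this is done, I would verify that the three extremal rays of $\overline{\mathrm{NE}}(X_{\mathbb{C}})$ are pairwise distinguished by Galois-stable invariants, so that $\mathrm{Gal}(\mathbb{C}/\Bbbk)$ cannot permute them. Arguing exactly as in the proof of Lemma~\ref{lemma:Prokhorov} (now in Picard rank $3$), this yields a descent of the whole diagram to $\Bbbk$; in particular, $\pi$ descends to a birational morphism $X\to Y$ defined over $\Bbbk$, where $Y$ is a $\Bbbk$-form of $V_7$. By Lemma~\ref{lemma:2-35}, any such form is trivial, so $Y\simeq V_7$ over $\Bbbk$, and $Y$ is $\Bbbk$-rational (being the blowup of $\mathbb{P}^3_{\Bbbk}$ at a $\Bbbk$-point). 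Consequently $Y(\Bbbk)\neq\varnothing$, and either lifting a $\Bbbk$-point of $Y$ away from the blowup centre of $\pi$, or applying Lang--Nishimura (Lemma~\ref{lemma:nonempty}) to the inverse rational map $Y\dashrightarrow X$, yields $X(\Bbbk)\neq\varnothing$.

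The main obstacle I expect is the case-specific bookkeeping for the other two extremal rays of $X_{\mathbb{C}}$ (distinct from that of $\pi$): one must exhibit explicit Galois-invariants that distinguish them. Qualitatively this is straightforward since the three contractions have visibly different shapes, one being the divisorial contraction $\pi$ to $V_7$, another coming from the $\mathbb{P}^1$-bundle structure pulled back from $V_7\to\mathbb{P}^2$, and the third contracting the strict transform of $E$, but the argument requires making these invariants explicit from the Mori--Mukai data so that the descent of the diagram can be invoked cleanly. Once that is in place, the appeal to Lemma~\ref{lemma:2-35} is immediate and the lemma follows.
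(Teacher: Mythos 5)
Your proposal follows essentially the same route as the paper: identify $X_{\mathbb{C}}$ as the blowup of $V_7$ along a line in the exceptional plane, observe that the extremal contractions have visibly different targets (the paper names $V_7$ and $\mathbb{P}(\mathcal{O}_{\mathbb{P}^2}\oplus\mathcal{O}_{\mathbb{P}^2}(2))$) so the Galois group cannot permute the corresponding rays, descend the blowup $X\to V_7$ to $\Bbbk$, and conclude via Lemma~\ref{lemma:2-35} and the $\Bbbk$-rationality of $V_7$. This matches the paper's argument, which is itself a brief reduction to the proofs of Lemmas~\ref{lemma:3-26} and \ref{lemma:2-35}.
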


\begin{proof}
As in the proof of Lemma~\ref{lemma:3-26}, let $f\colon V_7\to \mathbb{P}^3$ be the blowup of a point $p$,
let $E$ be the $f$-exceptional surface, and let $C$ be a line in $E\simeq\mathbb{P}^2$.
Then $X_{\mathbb{C}}$ can be obtained by blowing up $V_7$ along the curve $C$.
Moreover, it follows from \cite{MoMu81} and \cite{MoMu83} that $X_{\mathbb{C}}$ has two extremal contractions, one of which is to $V_7$ and the other is to $\mathbb{P}(\mathcal{O}_{\mathbb{P}^2} \oplus \mathcal{O}_{\mathbb{P}^2}(2))$.
In particular, they are both defined over $\Bbbk$ and we obtain the required assertion as in the proof of Lemma~\ref{lemma:3-26}.
\end{proof}

\begin{lemma}
\label{lemma:3-30}
Suppose that $X$ is contained in Family \textnumero 3.30.
Then $X$ has a $\Bbbk$-point.
\end{lemma}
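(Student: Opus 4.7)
The plan follows the template established by Lemmas~\ref{lemma:3-26} and~\ref{lemma:3-29}. According to Mori--Mukai \cite{MoMu81,MoMu83}, the geometric model $X_{\mathbb{C}}$ of a member of Family \textnumero 3.30 is the blowup $\pi\colon X_{\mathbb{C}} \to V_7$ along the proper transform of a line $L \subset \mathbb{P}^3$ passing through the point $p$ blown up by $V_7 \to \mathbb{P}^3$. First I would read off from \cite{MoMu81,MoMu83} all extremal contractions of $X_{\mathbb{C}}$ (three of them, since $\rho(X_{\mathbb{C}})=3$) and assemble them into a commutative diagram analogous to those in the proofs of Lemmas~\ref{lemma:3-26} and~\ref{lemma:3-29}, identifying their targets as pairwise non-isomorphic varieties (for example, $V_7$, the blowup of $\mathbb{P}^3$ along $L$, and some Mori fibre space over $\mathbb{P}^1$ or $\mathbb{P}^2$).

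Next, the pairwise non-isomorphism of these targets would rule out the possibility that the Galois group $\mathrm{Gal}(\mathbb{C}/\Bbbk)$ permutes any two extremal rays of $\overline{\mathrm{NE}}(X_{\mathbb{C}})$. Applying Lemma~\ref{lemma:Prokhorov}, every extremal contraction then descends to $\Bbbk$. In particular, $\pi$ descends to a morphism $X \to U$ over $\Bbbk$, where $U_{\mathbb{C}} \simeq V_7$. By Lemma~\ref{lemma:2-35} the 3-fold $V_7$ has no non-trivial $\Bbbk$-forms, so $U \simeq V_7$ over $\Bbbk$, and since $V_7$ is the blowup of $\mathbb{P}^3_{\Bbbk}$ at a $\Bbbk$-rational point it is $\Bbbk$-rational. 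Consequently $X$ is $\Bbbk$-rational, and in particular $X(\Bbbk) \ne \varnothing$; alternatively Lemma~\ref{lemma:nonempty} applied to a birational section of $\pi$ through a $\Bbbk$-point of $U$ away from the center yields the same conclusion.

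The main obstacle is the combinatorial check in the first step: one must verify, from Mori--Mukai's geometric description, that no two of the three extremal rays of $\overline{\mathrm{NE}}(X_{\mathbb{C}})$ can be interchanged by Galois. In families where distinct extremal contractions happen to have isomorphic target types (such as two $\mathbb{P}^1$-bundles over $\mathbb{P}^2$, as arose for Families \textnumero 3.21 and~\textnumero 3.24), a potential Galois swap must be excluded by further geometric considerations. For Family \textnumero 3.30 the presence of the divisorial contraction to $V_7$, together with the expected anticanonical or invariant-based distinctions between the remaining two contractions, should make this check routine.
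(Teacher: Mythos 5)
Your proposal follows essentially the same route as the paper: the paper likewise invokes Mori--Mukai to identify the extremal contractions of $X_{\mathbb{C}}$ (it records two birational ones, to $V_7$ and to a contraction related to $\mathbb{P}(\mathcal{O}_{\mathbb{P}^1}^{\oplus 2}\oplus\mathcal{O}_{\mathbb{P}^1}(1))$), notes that Galois cannot permute them so both descend to $\Bbbk$, and then uses Lemma~\ref{lemma:2-35} (no non-trivial forms of $V_7$) to conclude that $X$ is $\Bbbk$-rational. The only differences are cosmetic: your guessed list of targets is not quite the one the paper uses, and the "combinatorial check" you flag as the main obstacle is dispatched in the paper simply by observing that the two contractions have visibly non-isomorphic targets.
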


\begin{proof}
As in the proof of Lemmas~\ref{lemma:3-26} and \ref{lemma:3-29}, let $f\colon V_7\to \mathbb{P}^3$ be the blowup of a point $p$.
Then $V_7\cong\mathbb{P}( \mathcal{O}_{\mathbb{P}^2} \oplus \mathcal{O}_{\mathbb{P}^2}(1))$.
Let $L$ be a fiber of the natural projection $V_7\to\mathbb{P}^2$.
Then $X_{\mathbb{C}}$ can be obtained by blowing up $V_7$ along $L$.
Moreover, it follows from  \cite{MoMu81} and \cite{MoMu83} that $X_{\mathbb{C}}$ has two extremal contractions: one of them is the birational morphism $X_{\mathbb{C}}\to V_7$,
and the other one is a birational contraction of $V_7\to\mathbb{P}(\mathcal{O}_{\mathbb{P}^1}^{\oplus 2}\oplus\mathcal{O}_{\mathbb{P}^1}(1))$.
In particular, both morphisms must be defined over $\Bbbk$.
Now, arguing as in the proof of Lemma~\ref{lemma:3-26}, we see that  $X$ is $\Bbbk$-rational and, in particular, it has a $\Bbbk$-point.
\end{proof}

\begin{lemma}
\label{lemma:4-5}
Suppose that $X$ is contained in Family \textnumero 4.5.
Then $X$ has a $\Bbbk$-point.
\end{lemma}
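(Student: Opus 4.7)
The plan is to mimic the strategy used in Lemmas~\ref{lemma:3-21}--\ref{lemma:3-24}: extract from the Mori--Mukai classification \cite{MoMu81,MoMu83} an explicit commutative diagram of extremal contractions of $X_{\mathbb{C}}$, descend it to $\Bbbk$ using Galois invariance of the extremal rays of $\overline{\mathrm{NE}}(X_{\mathbb{C}})$, and then trivialize any Severi--Brauer varieties appearing via Lemma~\ref{lemma:SB}.

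Concretely, I would first record the Mori--Mukai description of Family~\textnumero 4.5, which exhibits $X_{\mathbb{C}}$ as a blowup connected to $\mathbb{P}^1\times\mathbb{P}^1\times\mathbb{P}^1$ along a smooth rational curve of small tri-degree (and related to a smooth Fano 3-fold of lower Picard rank by composition of a divisorial contraction with projections). Since $\rho(X_{\mathbb{C}})=4$, the Mori cone $\overline{\mathrm{NE}}(X_{\mathbb{C}})$ has exactly four extremal rays; I would list them together with the corresponding contractions to products of $\mathbb{P}^1$'s and, possibly, to the blowup target. Following the pattern of the earlier lemmas, I would then argue that distinct extremal rays give rise to contractions of distinct geometric type (different image dimension, different exceptional loci, different anticanonical degree of the contracted curves), so the Galois group $\mathrm{Gal}(\mathbb{C}/\Bbbk)$ can at worst permute those rays whose contractions are indistinguishable---typically the ones coming from interchanging $\mathbb{P}^1$-factors.

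After this descent, each $\mathbb{P}^1$-factor is replaced by a (possibly pointless) conic $C_2$, and any $\mathbb{P}^2$ appearing is replaced by a $\Bbbk$-form $U$ of $\mathbb{P}^2$. To trivialize these Severi--Brauer factors I intend to use Lemma~\ref{lemma:SB} by exhibiting $\Bbbk$-rational divisors of degree coprime to the relevant dimension: the natural candidates are the image of the blown-up curve under each projection, a hyperplane section pulled back from a Galois-trivial factor, or the image of the exceptional divisor. This will force each $C_2$ to be isomorphic to $\mathbb{P}^1$ and any non-trivial $\Bbbk$-form of $\mathbb{P}^2$ to become the standard one, so that the descended diagram identifies $X$ with the blowup of a product of projective spaces along a $\Bbbk$-rational curve, hence $\Bbbk$-birational to $\mathbb{P}^3$. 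The conclusion $X(\Bbbk)\ne\varnothing$ then follows from the Lang--Nishimura Lemma~\ref{lemma:nonempty}.

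The step I expect to be the most delicate is verifying that the image of the blown-up curve inside each potential Severi--Brauer factor really is a low-degree subvariety to which Lemma~\ref{lemma:SB} can be applied; if the curve projects to a point in some factor, one has to find an alternative source of a coprime-degree cycle, for instance by first trivializing an adjacent Galois-invariant factor and pulling back a hyperplane through the descended diagram. Once the Severi--Brauer factors are handled, the proof collapses to the standard descent argument already used in Section~\ref{Section:k-point}.
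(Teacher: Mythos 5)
Your overall strategy --- descend the extremal contractions of $X_{\mathbb{C}}$ to $\Bbbk$ and then kill the resulting Severi--Brauer forms with Lemma~\ref{lemma:SB} --- is the right one and is in the spirit of what the paper does throughout Section~\ref{Section:k-point}. However, your starting point is factually wrong: a member of Family \textnumero 4.5 is not a blowup of $\mathbb{P}^1\times\mathbb{P}^1\times\mathbb{P}^1$ along a curve of small tridegree (that description belongs to Family \textnumero 4.13). By Mori--Mukai, $X_{\mathbb{C}}$ is the blowup of $\mathbb{P}^1\times\mathbb{P}^2$ along a disjoint union of a smooth curve $C$ of degree $(2,1)$ and a curve $L$ of degree $(1,0)$; equivalently, it is the blowup of $Y$ along the strict transform of $L$, where $f\colon Y\to\mathbb{P}^1\times\mathbb{P}^2$ is the blowup of $C$, so that $Y$ is a member of Family \textnumero 3.21. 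Since your extremal-ray bookkeeping, the list of Severi--Brauer factors, and the choice of coprime-degree cycles all depend on the model, the argument as written would be carried out on the wrong variety and would not go through.

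With the correct model the proof collapses to exactly the reduction the paper uses: an analysis of the Mori cone of $X_{\mathbb{C}}$ shows that the divisorial contraction $X_{\mathbb{C}}\to Y$ descends to a morphism from $X$ to a $\Bbbk$-form of $Y$, and the proof of Lemma~\ref{lemma:3-21} already establishes that every $\Bbbk$-form of $Y$ is $\Bbbk$-rational --- there the image of the $(2,1)$-curve in the $\Bbbk$-form of $\mathbb{P}^2$ is a twisted line, so Lemma~\ref{lemma:SB} trivializes that factor, after which the conic replacing $\mathbb{P}^1$ is trivialized as well. Hence $X$ is $\Bbbk$-rational and in particular $X(\Bbbk)\ne\varnothing$. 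So the two missing ingredients in your write-up are the correct identification of the birational model and the observation that the whole problem reduces to the already-treated Family \textnumero 3.21 rather than requiring a fresh trivialization argument.
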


\begin{proof}
Let $C$ be a curve of degree $(2,1)$ in $\mathbb{P}^1\times\mathbb{P}^2$,
let $L$ be a curve of degree $(1,0)$ in $\mathbb{P}^1\times\mathbb{P}^2$ that is disjoint from $C$.
let $f\colon Y\to \mathbb{P}^1\times\mathbb{P}^2$ be the blowup of the curve $C$,
and let $\widetilde{L}$ be the strict transform of the curve $L$ on the 3-fold $Y$.
Then it follows from \cite{MoMu81,MoMu83} that the base extension $X_{\mathbb{C}}$ is isomorphic over $\mathbb{C}$
to the the blowup of $Y$ along the curve $\widetilde{L}$.
Moreover, analyzing the Mori cone of the 3-fold $X_{\mathbb{C}}$, we see that the birational morphism $X_C\to Y$
descends to the $\Bbbk$-birational morphism from $X$ to a $\Bbbk$-form of $Y$.
From the proof of Lemma~\ref{lemma:3-21}, we know that all $\Bbbk$-forms of $Y$ are $\Bbbk$-rational,
so $X$ is also $\Bbbk$-rational. In particular, $X$ has a $\Bbbk$-point.
\end{proof}

\begin{lemma}
\label{lemma:4-9}
Suppose that $X$ is contained in Family \textnumero 4.9.
Then $X$ has a $\Bbbk$-point.
\end{lemma}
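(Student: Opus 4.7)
The plan is to mirror the structure used in the previous lemmas of this subsection, especially Lemma \ref{lemma:4-5}. By Mori--Mukai's classification \cite{MoMu81, MoMu83}, a smooth Fano 3-fold $X_{\mathbb{C}}$ in Family \textnumero 4.9 admits a description as a blowup $\phi\colon X_{\mathbb{C}}\to Y_{\mathbb{C}}$ along a smooth rational curve, where $Y_{\mathbb{C}}$ is a smooth Fano 3-fold of Picard rank $3$ arising in Mori--Mukai's tables, which itself is obtained from a simpler target (a product such as $\mathbb{P}^1\times\mathbb{P}^2$, a $V_7$-type variety, or $\mathbb{P}^1\times\mathbb{P}^1\times\mathbb{P}^1$) by a further blowup. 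The first step is to write out the full commutative diagram of all extremal contractions of $X_{\mathbb{C}}$, enumerating the generators of the Mori cone $\overline{\mathrm{NE}}(X_{\mathbb{C}})$ and the alternative Mori-theoretic routes $X_{\mathbb{C}}\to Y'$ for smaller-Picard-rank Fano 3-folds~$Y'$.

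The second step is to verify Galois invariance of the extremal rays. Because the four generators of $\overline{\mathrm{NE}}(X_{\mathbb{C}})$ can be distinguished numerically (by the dimension of the target of the contraction, the anticanonical degree of contracted curves, or the isomorphism type of the exceptional locus), the Galois group $\mathrm{Gal}(\mathbb{C}/\Bbbk)$ cannot permute any pair of them. Then, by the same reasoning used in Lemma \ref{lemma:Prokhorov} and repeated throughout Section \ref{Section:k-point}, each of the extremal contractions --- and in particular $\phi$ --- descends to a morphism defined over $\Bbbk$. Consequently, $X$ is obtained over $\Bbbk$ as the blowup of a $\Bbbk$-form $Y$ of $Y_{\mathbb{C}}$ along a smooth curve.

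The third step is to exploit the fact that $Y_{\mathbb{C}}$ belongs to a Picard rank $3$ family whose $\Bbbk$-forms are all $\Bbbk$-rational, a fact already established in this section (as in Lemmas \ref{lemma:3-21}--\ref{lemma:3-30}, where the analogous descent argument reduces rationality to the triviality of $V_7$-forms via Lemma \ref{lemma:2-35}, or to the triviality of low-degree Severi--Brauer factors via Lemma \ref{lemma:SB}). Applying the relevant previously proved lemma to $Y$ yields that $Y$ is $\Bbbk$-rational, hence so is $X$, and in particular $X(\Bbbk)\neq\varnothing$. Alternatively, one may use Lemma \ref{lemma:nonempty} once a smooth $\Bbbk$-rational point on $Y$ is produced.

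The main obstacle is bookkeeping the commutative diagram and confirming that every extremal ray of $\overline{\mathrm{NE}}(X_{\mathbb{C}})$ is numerically distinguishable, so that Galois cannot permute them; this is the step that has required case-specific arguments in all previous lemmas, and Family \textnumero 4.9 should behave no differently once the Mori structure is laid out explicitly.
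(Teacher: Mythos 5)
There is a genuine gap in your third step. For Family \textnumero 4.9, the intermediate rank-$3$ Fano is $Y_{\mathbb{C}}=$ the blowup of $\mathbb{P}^3$ along two disjoint lines, i.e.\ Family \textnumero 3.25. That family is \emph{not} among those treated in Lemmas~\ref{lemma:3-21}--\ref{lemma:3-30} (it sits in the list of $52$ families whose members are all K\"ahler--Einstein, so no $\Bbbk$-point lemma is proved for it anywhere in Section~\ref{Section:k-point}), and the assertion that all its $\Bbbk$-forms are $\Bbbk$-rational is in fact false: blowing up a nontrivial Severi--Brauer $3$-fold along two disjoint twisted lines (as in Example~\ref{example:Severi--Brauer}) produces a pointless form of Family \textnumero 3.25, since the degree of the union of the two lines is $2$, which is not coprime to $4$, so Lemma~\ref{lemma:SB} gives nothing. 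Hence "apply the relevant previously proved lemma to $Y$" has no lemma to point to, and the route through $Y$ alone cannot close the argument.

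The missing idea is to use the \emph{other} birational contraction of $X_{\mathbb{C}}$. Writing $X_{\mathbb{C}}\to Y\to\mathbb{P}^3$ with $Y$ the blowup of the disjoint lines $L_1,L_2$ and the last center $C$ a fiber of $E_1\to L_1$, there is a second contraction $h\colon X_{\mathbb{C}}\to V_7$ collapsing the strict transforms of $E_1$ and $E_2$, where $V_7$ is the blowup of $\mathbb{P}^3$ at the point $f(C)$ and $h(E_C)$ becomes the exceptional divisor of $V_7\to\mathbb{P}^3$. This contraction descends to $\Bbbk$, and since $V_7$ has no nontrivial $\Bbbk$-forms (Lemma~\ref{lemma:2-35}), $X$ is $\Bbbk$-rational; equivalently, the extra blowup distinguishes the point $f(C)\in\mathbb{P}^3$, which is forced to be a $\Bbbk$-rational point of the underlying Severi--Brauer $3$-fold. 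Your general framework (descend the Mori structure, then land on a target with no nontrivial forms) is the right one, but you routed the argument through the wrong target.
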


\begin{proof}
Let $L_1$ and $L_2$ be two disjoint lines in $\mathbb{P}^3$,
let $f\colon Y\to\mathbb{P}^3$ be the blowup of the curves $L_1$ an $L_2$,
let $E_1$ and $E_2$ be the $f$-exceptional surfaces such that $f(E_1)=L_1$ and $f(E_2)=L_2$,
and let $C$ be a fiber of the natural projection $E_1\to L_1$.
Then there exists a birational morphism $g\colon X_{\mathbb{C}}\to Y$ that blows up the curve $C$.

Let $E_C$ be the $g$-exceptional surface,
let $\widetilde{E}_1$ and $\widetilde{E}_2$ be the strict transforms on $X_{\mathbb{C}}$ of the surfaces $E_1$ and $E_2$, respectively.
Then there exists a birational contraction $h\colon X_{\mathbb{C}}\to V_7$ of the surfaces $\widetilde{E}_1$ and $\widetilde{E}_2$
such that $V_7$ is the blowup of $\mathbb{P}^3$ at the point $f(C)$, and $h(E_C)$ is the exceptional divisor of the morphism $V_7\to \mathbb{P}^3$.
To be precise, we have the following commutative diagram:
$$
\xymatrix@R=1em{
&&X_{\mathbb{C}}\ar@{->}[dll]_{g}\ar@{->}[drr]^{h}&&\\%
Y\ar@{->}[drr]_{f}&&&&V_7\ar@{->}[dll]\\%
&&\mathbb{P}^3&&}
$$
Moreover, it follows from \cite{MoMu81} and \cite{MoMu83} that this commutative diagram is  defined over $\Bbbk$,
so that $X$ is $\Bbbk$-rational, since $V_7$ does not have non-trivial $\Bbbk$-forms by Lemma~\ref{lemma:2-35}.
In particular, we see that $X$ has a $\Bbbk$-point.
\end{proof}

\begin{lemma}
\label{lemma:4-11}
Suppose that $X$ is contained in Family \textnumero 4.11.
Then $X$ has a $\Bbbk$-point.
\end{lemma}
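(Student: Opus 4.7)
\bigskip

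\noindent\emph{Proof plan for Lemma~\ref{lemma:4-11}.} I would follow the same blueprint that worked for Lemmas~\ref{lemma:4-5} and~\ref{lemma:4-9}: extract the Mori--Mukai description of $X_{\mathbb{C}}$, verify that all the relevant extremal contractions descend to~$\Bbbk$, and then conclude $\Bbbk$-rationality (hence existence of a $\Bbbk$-point) by invoking Lemma~\ref{lemma:2-35} together with the non-existence of nontrivial $\Bbbk$-forms of Hirzebruch surfaces.

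First, from \cite{MoMu81,MoMu83}, I would write down an explicit realisation of $X_{\mathbb{C}}$ as an iterated blowup whose terminal objects are geometrically rigid varieties such as $V_7$, $\mathbb{F}_1\times\mathbb{P}^1$, or $\mathbb{P}^1\times\mathbb{P}^2$, together with the full Sarkisov-type diagram of the four extremal contractions of $X_{\mathbb{C}}$. I would then tabulate the four generators of~$\overline{\mathrm{NE}}(X_{\mathbb{C}})$, recording for each its anticanonical degree and the geometric type of its contraction (divisorial, conic bundle, or $\mathbb{P}^1$-fibration).

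Next, I would analyse the action of $\mathrm{Gal}(\mathbb{C}/\Bbbk)$ on these four extremal rays. Since this action preserves the anticanonical class and all numerical invariants, it can only permute rays of identical numerical type. If the four rays are numerically distinct, each of the four contractions descends to~$\Bbbk$ by the argument of Lemma~\ref{lemma:Prokhorov}. If a pair of rays shares its numerical data and gets swapped by some Galois element, then the composite morphism contracting both rays is already defined over~$\Bbbk$, which is enough for the descent argument I need.

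Finally, combining the descended contractions with Lemma~\ref{lemma:2-35} (uniqueness of the $\Bbbk$-form of~$V_7$), with the fact that $\mathbb{F}_n$ has no nontrivial $\Bbbk$-forms, and, if needed, with Lemma~\ref{lemma:SB} applied to any Severi--Brauer factor appearing as the image of a projection, I would obtain a chain of $\Bbbk$-birational morphisms from~$X$ to a $\Bbbk$-rational variety. By Lemma~\ref{lemma:nonempty} this yields $X(\Bbbk)\ne\varnothing$. The main obstacle I expect is the bookkeeping in the second step: ruling out a Galois-induced permutation of extremal rays that would prevent the descent of an \emph{individual} contraction onto a variety covered by Lemma~\ref{lemma:2-35}; as in Lemmas~\ref{lemma:3-14}, \ref{lemma:3-16}, \ref{lemma:3-18} and~\ref{lemma:3-24}, this is a purely numerical check in the Mori cone of $X_{\mathbb{C}}$, but handling all four extremal rays simultaneously makes it the most delicate part of the argument.
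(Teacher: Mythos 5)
Your general blueprint---descend the extremal contractions and land on a ``rigid'' target---does not close the argument for Family \textnumero 4.11, and the gap is exactly the point the paper's proof is built around. Here $X_{\mathbb{C}}$ is the blowup of $\mathbb{P}^1\times\mathbb{F}_1$ along the $(-1)$-curve of a single fiber of the projection to $\mathbb{P}^1$. Descending the blowdown (which the paper does via \cite{Matsuki}, showing the exceptional divisor $E$ is defined over $\Bbbk$) and using that $\mathbb{F}_1$ has no nontrivial forms only gets you to: $X$ is the blowup of $\mathscr{C}\times\mathbb{F}_1$ for some conic $\mathscr{C}$. But $\mathscr{C}$ may a priori be pointless, and $C\times\mathbb{F}_1$ for a pointless conic $C$ is literally one of the eight exceptional pointless Fano 3-folds in Theorem~\ref{theorem:2} --- so ``the target is a blowdown of rigid pieces'' cannot by itself yield a $\Bbbk$-point. (Your invocation of Lemma~\ref{lemma:2-35} and $V_7$ is also a red herring for this family; $V_7$ does not appear in the relevant Sarkisov data.)

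The missing idea is the geometric observation that the blowup center is contained in a \emph{single} fiber of $\mathscr{C}\times\mathbb{F}_1\to\mathscr{C}$: the image of $E$ lies over one point of $\mathscr{C}$, and since $E$ is defined over $\Bbbk$, so is that point. Hence $\mathscr{C}(\Bbbk)\ne\varnothing$, $\mathscr{C}\cong\mathbb{P}^1$, and $X$ is $\Bbbk$-rational. You do hedge with ``if needed, apply Lemma~\ref{lemma:SB} to any Severi--Brauer factor,'' and indeed one could phrase the key step as applying Lemma~\ref{lemma:SB} to the conic $\mathscr{C}$ with the degree-one zero-cycle given by the image of the center --- but you never identify this odd-degree cycle, and without it the descent machinery alone proves nothing stronger than membership in the exceptional family you are trying to exclude.
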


\begin{proof}
Let $V=\mathbb{P}^1 \times\mathbb{F}_1$, let $S$ be a fiber of the natural projection $V\to\mathbb{P}^1$,
and let $C$ be the $(-1)$-curve in $S\cong\mathbb{F}_1$.
Then it follows from \cite{MoMu81,MoMu83} that
there exists a birational morphism $f\colon X_{\mathbb{C}} \to\mathbb{P}^1 \times\mathbb{F}_1$ that is the blowup of the curve $C$.
Let $E$ be the $f$-exceptional divisor. Then it follows from \cite{Matsuki} that $E$ is defined over $\Bbbk$.
Since $\mathbb{F}_1$ does not have non-trivial forms over $\Bbbk$,
the 3-fold $X$ is the blowup of $\mathscr{C}\times\mathbb{F}_1$, where $\mathscr{C}$ is a conic in $\mathbb{P}^2$.
However, the image of $E$ in $\mathscr{C}\times\mathbb{F}_1$ is a curve that is contained in a fiber of the natural projection
$\mathscr{C}\times\mathbb{F}_1\to\mathscr{C}$, which implies that $\mathscr{C}$ has a $\Bbbk$-point.
Therefore, $X$ is $\Bbbk$-birational to $\mathbb{P}^1 \times\mathbb{F}_1$ and, in particular, it has a $\Bbbk$-point.
\end{proof}

\begin{remark}
As seen in the above argument, all $\Bbbk$-forms of strictly K-semistable Fano 3-folds except
for those belonging to Family \textnumero 2.11 are rational over $\Bbbk$.
\end{remark}

\section{Pointless K-polystable Fano 3-folds}
\label{section:pointless-3-folds}

In this section, we work through the 18 families of Fano 3-folds that contain K-polystable elements but K-polystability is not known for all elements. They also exhibit the phenomenon that their smooth elements do not always admit $\Bbbk$-points. The next section deals with examples in each case without $\Bbbk$-points.

\textbf{Strategy of the proof in this section:} In each case, we denote by $X$ the smooth Fano 3-fold defined over $\Bbbk\subset\mathbb{C}$, which we assume has no $\Bbbk$-rational points, and by $X_\mathbb{C}$ its geometric model, which we aim to prove is K-polystable. The argument of the proof starts by assuming $X_\mathbb{C}$ is not K-polystable. Suppose $X_{\mathbb{C}}$ is not K-polystable. Then it follows from the valuative criterion for K-stability (\cite{Fujita2019,Li2017}) that $\delta(X_{\mathbb{C}})\leqslant 1$. Since $\delta(X_{\mathbb{C}})<\frac{4}{3}$, it follows from \cite[Theorem 1.2]{LXZ} that there exists a prime divisor $\mathbf{F}$ over $X_{\mathbb{C}}$ that computes $\delta$:
$$
\delta(X_{\mathbb{C}})=\frac{A_X(\mathbf{F})}{S_X(\mathbf{F})}.
$$
Moreover, if $\delta(X_{\mathbb{C}})<1$ it follows from \cite[Theorem 4.4]{Zhuang2021} that $\mathbf{F}$ is defined over $\Bbbk$. If $\delta(X_{\mathbb{C}})=1$ we can also assume $\mathbf{F}$ is defined over $\Bbbk$ by \cite[Corollary 4.14]{Zhuang2021}, because $X_{\mathbb{C}}$ is not K-polystable.
Let $Z\subset X$ be the center of the divisor $\mathbf{F}$.
Then  $Z$ is not a surface by \mbox{\cite[Theorem~3.17]{Book}}.
On the other hand, since $X(\Bbbk)=\varnothing$, we conclude that $Z$ is a geometrically irreducible curve defined over $\Bbbk$. In each case, we get a contradiction, often by estimating lower bounds of greater than $1$ for $\delta_C(X_\mathbb{C})$ for all curves $C\subset X_\mathbb{C}$, and noting that
\[\delta_C(X_\mathbb{C})=\inf\frac{A_X(E)}{S_X(E)},\]
where infimum runs over all prime divisors over $X_\mathbb{C}$ whose centers contain $C$.

\begin{lemma}
\label{lemma:pointless-1-9}
Suppose that $X$ is contained in Family \textnumero 1.9 and $X(\Bbbk)=\varnothing$.
Then $X_{\mathbb{C}}$ is K-polystable.
\end{lemma}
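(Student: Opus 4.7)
The plan is to run the general strategy spelled out just before the lemma: assume for contradiction that $X_{\mathbb{C}}$ is not K-polystable, so by the cited results of Fujita, Li, Liu--Xu--Zhuang and Zhuang there exists a prime divisor $\mathbf{F}$ over $X_{\mathbb{C}}$ defined over $\Bbbk$ with $\delta(X_{\mathbb{C}}) = A_X(\mathbf{F})/S_X(\mathbf{F}) \leqslant 1$ whose center on $X$ is a geometrically irreducible curve $Z$ (the center is not a surface by \cite[Theorem~3.17]{Book}, and $X(\Bbbk)=\varnothing$ rules out a point). Since $X(\Bbbk)=\varnothing$, Lemma~\ref{lemma:nonempty} forces $Z(\Bbbk)=\varnothing$, so $Z_{\mathbb{C}}$ is a pointless, geometrically irreducible, $\mathrm{Gal}(\mathbb{C}/\Bbbk)$-invariant curve on $X_{\mathbb{C}}$, which in particular is smooth and rational or of positive genus without rational points. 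The goal is to prove $A_X(E)/S_X(E) > 1$ for every prime divisor $E$ over $X_{\mathbb{C}}$ with $C_X(E) = Z_{\mathbb{C}}$.

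For $X$ in Family \textnumero\,1.9 one has $(-K_X)^3 = 16$ and $X_{\mathbb{C}}$ is a prime Fano threefold of genus $9$ with $|-K_{X_{\mathbb{C}}}|$ very ample; a general member is a smooth K3 surface. I would choose a general irreducible smooth surface $S \in |-K_{X_{\mathbb{C}}}|$ containing $Z_{\mathbb{C}}$. Since $-K_X - uS \sim_{\mathbb{Q}} (1-u)(-K_X)$, the Zariski decomposition of $-K_X - uS$ on $[0,\tau]$ is trivial with $\tau = 1$, $P(u) = (1-u)(-K_X)$, $N(u) = 0$, so
\[
S_X(S) = \frac{1}{(-K_X)^3} \int_0^1 (1-u)^3 (-K_X)^3 \, du = \frac{1}{4},
\]
and in particular $1/S_X(S) = 4 > 1$. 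With this in hand, Theorem~\ref{theorem:Hamid-Ziquan-Kento-1} reduces the problem to verifying $S(W^S_{\bullet,\bullet};Z_{\mathbb{C}}) < 1$, whose computation requires the intersection numbers $Z_{\mathbb{C}}^2$ on $S$ and $(-K_X)\cdot Z_{\mathbb{C}}$, together with the Zariski decomposition of $(1-u)(-K_X)|_S - v Z_{\mathbb{C}}$ on the K3 surface $S$.

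The main obstacle is classifying the possible curves $Z_{\mathbb{C}}$ and then bounding $S(W^S_{\bullet,\bullet};Z_{\mathbb{C}})$ for each type. Since $S(W^S_{\bullet,\bullet};Z_{\mathbb{C}})$ grows as the anticanonical degree $d = (-K_X)\cdot Z_{\mathbb{C}}$ decreases, only curves of small degree need attention: the relevant candidates are lines ($d = 1$), conics ($d = 2$), and a bounded list of further low-degree smooth rational curves on $V_{16}$, all of which lie on a smooth K3 surface $S \in |{-}K_{X_{\mathbb{C}}}|$ by standard Hilbert-scheme and adjunction considerations. The hardest case is a line $Z_{\mathbb{C}}\subset V_{16}$, for which $Z_{\mathbb{C}}^2 = -2$ on the K3 surface $S$ and the Zariski decomposition acquires a nontrivial negative part once $v$ crosses the pseudo-effective threshold; if the resulting direct estimate is not strict enough, I would blow up a point $p \in Z_{\mathbb{C}}$ and apply Theorem~\ref{theorem:Hamid-Ziquan-Kento-3} to the exceptional curve $E$ over $p$, using that the embedding $V_{16} \hookrightarrow \mathbb{P}^{10}$ gives ample room to force $S(W^S_{\bullet,\bullet};C)$ and $S(W^{S,C}_{\bullet,\bullet,\bullet};p)$ strictly below $1$. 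This will contradict $\delta(X_{\mathbb{C}}) \leqslant 1$ and conclude the proof.
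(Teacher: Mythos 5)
Your setup (destabilizing divisor $\mathbf{F}$ defined over $\Bbbk$, center a geometrically irreducible pointless curve $Z$, $S_X(S)=\tfrac14$ for $S\in|-K_{X_{\mathbb{C}}}|$) matches the general strategy, but the core of your argument diverges from the paper and has a genuine gap. You propose to conclude by showing $\delta_{Z}(X_{\mathbb{C}})>1$ via Abban--Zhuang flags for \emph{every} low-degree curve $Z$ on \emph{any} smooth $V_{16}$. If that computation succeeded it would prove that every smooth member of Family \textnumero\,1.9 is K-stable --- precisely the statement the authors place among the $26$ open families and do not claim. The arithmetic hypothesis $X(\Bbbk)=\varnothing$ must therefore enter the argument beyond merely knowing $Z$ is pointless, and in your plan it never does: after the setup you attempt a purely geometric estimate, and you even flag the line case as ``hardest'' and leave it to an unverified fallback (``if the resulting direct estimate is not strict enough, I would blow up a point\ldots''). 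You also give no justification for the claims that $S(W^S_{\bullet,\bullet};Z)$ decreases with the degree of $Z$, that only a bounded list of curves needs checking, or that a smooth member of $|-K_{X_{\mathbb{C}}}|$ containing $Z$ with controlled Picard lattice exists; the Zariski decompositions you need depend on the full geometry of that K3 surface, which is not pinned down.

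You also miss the cheap arithmetic step that makes the paper's proof work: the destabilizing center is a smooth geometrically rational pointless curve, so it carries no odd-degree line bundle, whence $-K_X\cdot Z$ is even; combined with very ampleness of $-K_X$ this excludes $d=1$ and $d=3$ outright (no delta computation along lines is needed or, likely, possible). The paper's actual route is different in kind: it first uses $\delta(S,-K_{X_{\mathbb{C}}}|_S)\geqslant\tfrac45$ for a very general K3 member together with \cite[Corollary~5.6]{AbbanZhuangSeshadri} to replace $\mathbf{F}$ by either a non-log-canonical pair $(X,\tfrac12 D)$ or a non-klt mobile pair along $Z$; Corti's inequality kills the mobile case; the arithmetic above forces $-K_X\cdot C=2$; and finally the Sarkisov link obtained by blowing up the conic $C$ shows the effective cone of $\widetilde{X}$ is generated by $E$ and $\phi^*(-K_X)-2E$, which is incompatible with the multiplicity bound $\operatorname{mult}_C(D')>2$ coming from the non-lc condition. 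Your reduction via Theorem~\ref{theorem:Hamid-Ziquan-Kento-1} produces no such multiplicity information, so even in the conic case your plan does not reach the contradiction.
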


\begin{proof}

Let $S$ be a very general surface in $|-K_{X_{\mathbb{C}}}|$.
Then $S$ is a smooth K3 surface with $\mathrm{Pic}(S)=\mathbb{Z}[-K_{X_{\mathbb{C}}}]$, so that it follows from \cite{Knutsen} and \cite[Theorem~A]{AbbanZhuangSeshadri} that
$$
\delta\big(S,-K_{X_{\mathbb{C}}}\vert_{S}\big)\geqslant\frac{4}{5},
$$
so that
$$
\delta(X_{\mathbb{C}})\leqslant 1 \leqslant\frac{4}{3}\delta(S,- K_{X_{\mathbb{C}}}\vert_{S}).
$$
We are now in a position to apply \cite[Corollary 5.6]{AbbanZhuangSeshadri} to $\mathbf{F}$ over $X$. Note that \cite[Corollary 5.6]{AbbanZhuangSeshadri} applies over $\Bbbk$ by tautology.
Hence, it follows that at least one of the following two cases holds:
\begin{enumerate}
\item either there exists an effective $\mathbb{Q}$-divisor $D$ on the 3-fold $X$ such that $D\sim_{\mathbb{Q}} -K_X$
and $Z$ is a center of non-log canonical singularities of the log pair $(X,\frac{1}{2}D)$, so, in particular,
the log pair $(X,\frac{1}{2}D)$ is not log canonical along the curve $Z$;

\item or there exists a mobile linear system $\mathcal{M}\subset |-nK_{X}|$ such that
$Z$ is a center of non-klt singularities of the log pair $(X,\frac{1}{2n}\mathcal{M})$.
\end{enumerate}
In the second case, if $M_1$ and $M_2$ are general surfaces in $\mathcal{M}$, then it follows from
Corti's inequality \cite[Theorem 3.1]{Corti2000}, see also \mbox{\cite[Theorem A.22]{Book}}, that
$
M_1\cdot M_2=mZ+\Delta
$
for some positive integer $m\geqslant 16n^2$ and some effective one-cycle $\Delta$ on the 3-fold $X$,
which implies that
$$
18n^2=-K_X\cdot M_1\cdot M_2=m(-K_X)\cdot Z+(-K_X)\cdot\Delta\geqslant m(-K_X)\cdot Z\geqslant 16n^2(-K_X)\cdot Z,
$$
so that $-K_X\cdot Z=1$, which is impossible, since $-K_X$ is very ample \cite{IsPr99} and $X(\Bbbk)=\varnothing$.
So, we are left to analyze the first case.

Now, arguing as in the proof of \cite[Theorem~1.52]{Book}, we can replace the effective $\mathbb{Q}$-divisor $D$
with another $\mathbb{Q}$-divisor $D^\prime$ on the 3-fold $X$ such that
$$
D^\prime\sim_{\mathbb{Q}} D\sim_{\mathbb{Q}} -K_X,
$$
the log pair $(X,\lambda D^\prime)$ has log canonical singularities for some positive rational number $\lambda<\frac{1}{2}$
such that the singularities of the log pair $(X,\lambda D^\prime)$ are not klt (not Kawamata log terminal),
and the locus $\mathrm{Nklt}(X,\lambda D^\prime)$ is geometrically irreducible,
and consists of a minimal center of log canonical singularities of the pair $(X_{\mathbb{C}},\lambda D_{\mathbb{C}}^\prime)$.
Here, we implicitly used Nadel's vanishing theorem and Koll\'ar--Shokurov connectedness theorem, see \cite[Appendix~A.1]{Book}.

Set $C=\mathrm{Nklt}(X,\lambda D^\prime)$. Then $C$ is not a surface, since $\mathrm{Pic}(X)$ is generated by $-K_X$.
Similarly, as above, we see that $C$ is not a point, because  $X(\Bbbk)=\varnothing$.
Thus, we see that $C$ is a geometrically irreducible curve.
Then it follows from the proof of  \cite[Theorem~1.52]{Book} that $C$ is a smooth geometrically rational curve with
$$
-K_X\cdot C\leqslant \frac{2}{1-\lambda}<4.
$$
Here, we have implicitly used basic properties of minimal centers of log canonical singularities and Kawamata's subadjunction theorem \cite{Kawamata1997,Kawamata1998}.

As above, we see that $-K_X\cdot C\ne 1$, because $-K_X$ is very ample.
Similarly, we see that $-K_X\cdot C\ne 3$ by Lemma~\ref{lemma:SB}, because $C$ is geometrically rational and $C(\Bbbk)=\varnothing$.
Hence, we conclude that $-K_X\cdot C=2$.

Starting from now, we work with the geometric model $X_{\mathbb{C}}$ and with abuse of notation we write $X$ and $C$ for their geometric models over $\mathbb{C}$.
Moreover, we identify $X$ with its anticanonical embedding in $\mathbb{P}^{11}$, so $C$ is a conic in $X$.
Let $\phi\colon\widetilde{X}\to X$ be the blowup of the conic $C$, and let $E$ be the $\phi$-exceptional surface.
Then it follows from \cite[(2.13.2)]{Takeuchi} or from \cite[Theorem~4.4.11]{IsPr99} and \cite[Corollary~4.4.3]{IsPr99}
that the linear system $|-K_{\widetilde{X}}|$ is base point free, and the linear system $|-K_{\widetilde{X}}-E|$ gives a birational map
$\chi\colon X\dasharrow \mathbb{P}^2$ such that we have the following commutative diagram.
$$
\xymatrix@R=1em{
&\widetilde{X}\ar@{->}[rd]_{\alpha}\ar@{->}[ldd]_{\phi}\ar@{-->}[rr]^{\zeta}&&V\ar@{->}[rdd]^{\pi}\ar@{->}[ld]^{\beta} &\\%
&&Y&&\\
X\ar@{-->}[rrrr]^{\chi}&&&&\mathbb{P}^2}
$$
where $\alpha$ is a small birational morphism given by the linear system $|-K_{\widetilde{X}}|$,
$Y$ is a Fano 3-fold with Gorenstein non-$\mathbb{Q}$-factorial terminal singularities such that $-K_{Y}^3=12$,
the map $\zeta$ is a pseudo-isomorphism that flops the curves contracted by $\alpha$,
$\beta$ is a small birational morphism given by the linear systems $|-K_{V}|$, and $\pi$ is a conic bundle.
This shows that the cone of effective divisors of the 3-fold $\widetilde{X}$ is generated by the divisors $E$ and $-K_{\widetilde{X}}-E\sim \phi^*(-K_X)-2E$.
On the other hand, we have
$$
\mathrm{mult}_{C}\big(D^\prime\big)\geqslant\frac{1}{\lambda}>2.
$$
This is a well-known fact, see for example \cite[Proposition~9.5.13]{Lazarsfeld}.
Thus, if $\widetilde{D}^\prime$ is the strict transform of the divisor $D^\prime$ on the 3-fold $\widetilde{X}$, then
$$
\widetilde{D}^\prime\sim_{\mathbb{Q}}\phi^*\big(-K_X\big)-\mathrm{mult}_{C}\big(D^\prime\big)E
\sim_{\mathbb{Q}}\big(-K_{\widetilde{X}}-E\big)-\big(\mathrm{mult}_{C}\big(D^\prime\big)-2\big)E,
$$
which is a contradiction, since $\mathrm{mult}_{C}(D^\prime)>2$.
\end{proof}

\begin{lemma}
\label{lemma:pointless-1-10}
Suppose that $X$ is contained in Family \textnumero 1.10 and $X(\Bbbk)=\varnothing$.
Then $X_{\mathbb{C}}$ is K-polystable.
\end{lemma}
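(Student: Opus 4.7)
The plan is to parallel the proof of Lemma~\ref{lemma:pointless-1-9} with the numerics of $V_{22}$ (genus $12$, $(-K_X)^3=22$, anticanonical embedding $X_{\mathbb{C}}\hookrightarrow\mathbb{P}^{13}$). Assume for contradiction that $X_{\mathbb{C}}$ is not K-polystable. By \cite{Fujita2019,Li2017}, $\delta(X_{\mathbb{C}})\leqslant 1$; combining \cite[Theorem~1.2]{LXZ} with \cite[Theorem~4.4]{Zhuang2021} and \cite[Corollary~4.14]{Zhuang2021}, there exists a prime divisor $\mathbf{F}$ over $X$, defined over $\Bbbk$, that computes $\delta(X_{\mathbb{C}})$. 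Its center $Z$ on $X$ cannot be a surface by \cite[Theorem~3.17]{Book} and cannot be a point because $X(\Bbbk)=\varnothing$, so $Z$ is a geometrically irreducible curve defined over $\Bbbk$.

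The next step is to apply the surface-based $\delta$-bound of \cite[Theorem~A]{AbbanZhuangSeshadri} to a very general $S\in|-K_X|$, a smooth K3 of genus $12$ with $\mathrm{Pic}(S)=\mathbb{Z}[-K_X|_S]$. Together with \cite{Knutsen} this should yield $\delta(S,-K_X|_S)\geqslant\frac{4}{5}$, which is enough to invoke \cite[Corollary~5.6]{AbbanZhuangSeshadri}. As in the proof of Lemma~\ref{lemma:pointless-1-9}, we then obtain either an effective $\mathbb{Q}$-divisor $D\sim_{\mathbb{Q}}-K_X$ with $(X,\tfrac{1}{2}D)$ not log canonical along $Z$, or a mobile linear system $\mathcal{M}\subset|-nK_X|$ with $(X,\tfrac{1}{2n}\mathcal{M})$ not klt along $Z$.

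In the mobile case, Corti's inequality \cite[Theorem~3.1]{Corti2000} gives $M_1\cdot M_2=mZ+\Delta$ for general $M_1,M_2\in\mathcal{M}$, some $m\geqslant 16n^2$, and effective $1$-cycle $\Delta$, so
\[
22 n^2=(-K_X)\cdot M_1\cdot M_2\geqslant 16 n^2\,(-K_X\cdot Z),
\]
forcing $-K_X\cdot Z=1$, impossible since $-K_X$ is very ample and $X(\Bbbk)=\varnothing$. In the divisor case, tie-breaking together with Nadel vanishing and Koll\'ar--Shokurov connectedness yields $D'\sim_{\mathbb{Q}}-K_X$ and a rational $\lambda<\tfrac{1}{2}$ such that $\mathrm{Nklt}(X,\lambda D')$ is a single minimal lc center $C$. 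By Kawamata subadjunction, $C$ is a smooth geometrically rational geometrically irreducible curve with $-K_X\cdot C\leqslant \frac{2}{1-\lambda}<4$. The values $-K_X\cdot C=1$ and $-K_X\cdot C=3$ are excluded exactly as in the proof of Lemma~\ref{lemma:pointless-1-9}: degree $1$ would produce a $\Bbbk$-point, while a pointless geometrically rational curve of degree $3$ contradicts Lemma~\ref{lemma:SB} applied to the $\Bbbk$-form of $\mathbb{P}^2$ spanned by $C$ in the anticanonical embedding. Hence $C$ is a conic.

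The main step is then to derive a contradiction from a conic center on $V_{22}$ via the corresponding Sarkisov link. Passing to $\mathbb{C}$, let $\phi\colon\widetilde{X}\to X_{\mathbb{C}}$ denote the blowup of $C$ with exceptional divisor $E$. By \cite[Theorem~4.4.11 and Corollary~4.4.3]{IsPr99} (equivalently, by Takeuchi's list of Sarkisov links for genus $12$), the linear system $|-K_{\widetilde X}|$ is base point free and $|-K_{\widetilde X}-E|$ induces a small anticanonical contraction $\alpha\colon\widetilde{X}\to Y$ onto a Gorenstein Fano threefold, followed by a flop to a variety $V$ carrying a Mori fibration $\pi\colon V\to W$. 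The key consequence to extract is that the pseudo-effective cone of $\widetilde{X}$ is generated by $E$ and $-K_{\widetilde X}-E\sim\phi^{*}(-K_X)-2E$. On the other hand, since $(X,\lambda D')$ is not log canonical at a general point of $C$, one has $\mathrm{mult}_C(D')\geqslant \frac{1}{\lambda}>2$, so the strict transform $\widetilde{D}'$ of $D'$ on $\widetilde X$ satisfies
\[
\widetilde{D}'\sim_{\mathbb{Q}}\bigl(-K_{\widetilde X}-E\bigr)-\bigl(\mathrm{mult}_C(D')-2\bigr)E,
\]
which lies strictly outside the pseudo-effective cone --- the desired contradiction. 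The principal obstacle is the careful identification and bookkeeping of this Sarkisov link for a conic center on $V_{22}$; once the two-ray structure of the pseudo-effective cone of $\widetilde X$ is in hand, the argument is a literal transcription of the Family \textnumero 1.9 proof with $18$ replaced by $22$ and $\mathbb{P}^{11}$ by $\mathbb{P}^{13}$.
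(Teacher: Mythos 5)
Your reduction to a conic $C$ with $\mathrm{mult}_C(D')>2$ follows the paper faithfully (Corti's inequality with $22n^2\geqslant 16n^2(-K_X\cdot Z)$, tie-breaking, subadjunction, exclusion of degrees $1$ and $3$). The gap is in your final step: for Family \textnumero 1.10 the pseudo-effective cone of $\widetilde{X}$ is \emph{not} generated by $E$ and $-K_{\widetilde{X}}-E\sim\phi^*(-K_X)-2E$. That is the structure for genus $10$ (Lemma~\ref{lemma:pointless-1-9}), where the Sarkisov link from a conic terminates in a conic bundle over $\mathbb{P}^2$, so the pullback of a line spans the second boundary ray of $\mathrm{Eff}(\widetilde{X})$. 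For $V_{22}$ the link from a conic terminates instead in a \emph{divisorial} contraction $\pi\colon\widehat{Q}\to Q$ onto a quadric 3-fold, blowing down a surface $F$ over a rational sextic $\Gamma\subset Q$; consequently $\mathrm{Eff}(\widetilde{X})$ is generated by $E$ and $\widetilde{F}\sim\phi^*(-2K_X)-5E$, while $-K_{\widetilde{X}}-E$ only spans a boundary ray of the \emph{movable} cone and is big. Hence $\phi^*(-K_X)-mE$ remains pseudo-effective for $2<m\leqslant\frac{5}{2}$, and your claimed contradiction evaporates. This is exactly the point the paper flags: the inequality $\mathrm{mult}_C(D)>2$ cannot be used directly here.

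What is actually needed, and what the paper supplies, is a second stage of argument. Since $\widetilde{D}$ leaves the movable cone, the surface $F\subset\mathrm{Supp}(D)$ is forced, and since $\mathrm{Pic}(X)=\mathbb{Z}[-K_X]$ with $F\sim-2K_X$ one deduces that $(X,\frac{1}{4}F)$ is not log canonical along $C$. Inversion of adjunction on $E$ then produces a geometrically irreducible section $Z$ of $E\to C$ with $(\widetilde{F}\cdot E)_Z=5$, and transporting $Z$ through the flop to $\widehat{Q}$ (using $\widehat{E}\sim\pi^*(2H)-F$ and $F_{\mathbb{C}}\simeq\mathbb{F}_n$) forces $\widehat{Z}_{\mathbb{C}}$ to be a fiber of $F\to\Gamma$, so that $\pi(\widehat{Z})$ is a $\Bbbk$-point of $Q$, contradicting $X(\Bbbk)=\varnothing$ via Lemma~\ref{lemma:nonempty}. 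None of this appears in your proposal, so the proof as written does not close; the phrase ``literal transcription of the Family \textnumero 1.9 proof'' is precisely where the argument breaks down.
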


\begin{proof}
Arguing as in the proof of Lemma~\ref{lemma:pointless-1-9}, we see that
there exists an effective $\mathbb{Q}$-divisor $D$ on $X$ and a positive rational number $\lambda<\frac{1}{2}$ such that $D\sim_{\mathbb{Q}} -K_X$,
the pair $(X,\lambda D)$ has log canonical singularities,
and the locus $\mathrm{Nklt}(X,\lambda D)$ consists of a geometrically irreducible smooth curve such that $-K_X\cdot C=1$.
Then, as in the proof of Lemma~\ref{lemma:pointless-1-9}, we see that
\begin{equation}
\label{equation:bigger-than-2}
\mathrm{mult}_C\big(D\big)\geqslant\frac{1}{\lambda}>2.
\end{equation}
However, unlike the proof of Lemma~\ref{lemma:pointless-1-9}, we cannot immediately use \eqref{equation:bigger-than-2} to derive a contradiction.
Nevertheless, we are still able to obtain a contradiction via a more delicate analysis of the geometry of the 3-fold $X$ and the properties of the log pair $(X,\lambda D)$.
As in the proof of Lemma~\ref{lemma:pointless-1-9}, let us identify $X$ with its anticanonical embedding in $\mathbb{P}^{13}$,
so that $C$ is a conic in $X$.

Let $\phi\colon\widetilde{X}\to X$ be the blowup along $C$, and let $E$ be the $\phi$-exceptional surface.
Then it follows from \cite[(2.13.2)]{Takeuchi} or from \cite[Theorem~4.4.11]{IsPr99} and \cite[Corollary~4.4.3]{IsPr99}
that the linear system $|-K_{\widetilde{X}}|$ is base point free, and the linear system $|-K_{\widetilde{X}}-E|$ gives a birational map
$\chi\colon X\dasharrow Q$, where $Q$ is a smooth (pointless over $\Bbbk$) quadric 3-fold in $\mathbb{P}^{4}$.
Moreover, we have the following commutative diagram
$$
\xymatrix@R=1em{
&\widehat{Q}\ar@{->}[ldd]_{\pi}\ar@{->}[rd]_{\beta}&& \widetilde{X}\ar@{->}[ld]^{\alpha}\ar@{->}[rdd]^{\phi}\ar@{-->}[ll]_{\zeta}&\\%
&&Y&&\\
Q&&&&X\ar@{-->}[llll]_{\chi}}
$$
where $\alpha$ is a small birational morphism given by $|-K_{\widetilde{X}}|$,
$Y$ is a Fano 3-fold with Gorenstein non-$\mathbb{Q}$-factorial terminal singularities with $-K_{Y}^3=16$,
the map $\zeta$ is a pseudo-isomorphism that flops the curves contracted by $\alpha$,
$\pi$ is the blowup of a smooth rational sextic curve $\Gamma\subset Q$,
and $\beta$ is a small birational morphism given by the linear systems $|-K_{\widetilde{Q}}|$.
Let $F$ be the $\pi$-exceptional surface, let $\widetilde{F}$ be its strict transform on $\widetilde{X}$, and let $\overline{F}=\phi(\widetilde{F})$.
Then $\widetilde{F}\sim \phi^*(-2K_X)-5E$,
and the cone of effective divisors of the 3-fold $\widetilde{X}$ is generated by the surfaces $F$ and $E$.
Moreover, the divisors $-K_{\widetilde{X}}-E$ and $\phi^*(-K_X)$ generate the movable cone of divisors on $\widetilde{X}$,
so it follows from \eqref{equation:bigger-than-2} that $F\subset\mathrm{Supp}(D)$.
Moreover, arguing as in the proof of \cite[Lemma~A.34]{Book} and using $\mathrm{Pic}(X)=\mathbb{Z}[-K_X]$,
we see that the log pair $(X,\frac{\lambda}{2}F)$ is also non-klt along $C$.
In particular, we see that the log pair $(X,\frac{1}{4}F)$ is not log canonical along the curve $C$.

Let us show that the latter is impossible. Observe that
$$
K_{\widetilde{X}}+\frac{1}{4}\widetilde{F}+\frac{1}{4}E\sim_{\mathbb{Q}}\phi^*\big(K_X+\frac{1}{4}F\big),
$$
which implies that $E$ contains an irreducible curve $Z$ with $\phi(Z)=C$,
and the log pair $(\widetilde{X}, \frac{1}{4}\widetilde{F}+\frac{1}{4}E)$ is not log canonical along $Z$.
Then the log pair $(\widetilde{X}, \frac{1}{4}\widetilde{F}+E)$ is also not log canonical along $Z$,
so that it follows from Inversion of Adjunction \cite[Theorem 5.50]{KollarMori1998} that
the log pair $(E,\frac{1}{4}\widetilde{F}\vert_{E})$ is also not log canonical along $Z$.
But this simply means that $(\widetilde{F}\cdot E)_{Z}>4$.
Now, intersecting the restriction $\widetilde{F}\vert_{E}$ with a general (geometric) fiber of the projection $E\to C$,
we immediately see that $(\widetilde{F}\cdot E)_{Z}=5$ and $Z$ is a section of this projection.
In particular, we see that $Z$ is a geometrically irreducible curve.

Now, we recall from \cite[Theorem~1.1.1]{KuznetsovProkhorovShramov} and \cite[Corollary~2.1.6]{KuznetsovProkhorovShramov}
that the normal bundle of the smooth conic $C_{\mathbb{C}}\simeq\mathbb{P}^1$ in $X_{\mathbb{C}}$ is isomorphic either to $\mathcal{O}_{\mathbb{P}^1}\oplus\mathcal{O}_{\mathbb{P}^1}$
or to $\mathcal{O}_{\mathbb{P}^1}(1)\oplus\mathcal{O}_{\mathbb{P}^1}(-1)$, so that either $E_{\mathbb{C}}\simeq\mathbb{P}^1\times\mathbb{P}^1$ or $E_{\mathbb{C}}\simeq\mathbb{F}_2$.
Moreover, it follows from elementary computations or from \cite[Lemma~4.4.4]{IsPr99} that the restriction $-K_{\widetilde{X}}\vert_{E}$ is ample.
In particular, the birational map $\zeta$ is an isomorphism in a neighborhood of a general point of the curve $Z$.
This gives $\big(F\cdot \widehat{E}\big)_{\widehat{Z}}=\big(\widetilde{F}\cdot E\big)_{Z}=5$,
where $\widehat{E}$ and $\widehat{Z}$ are strict transforms on $\widehat{Q}$ of the surface $E$ and the curve $Z$, respectively.
On the other hand, we have $\widehat{E}\sim\pi^{*}(2H)-F$,
where $H$ is the class of a hyperplane section of the quadric $Q$.
Moreover, we have $F_{\mathbb{C}}\simeq\mathbb{F}_n$ for some $n\in\mathbb{Z}_{>0}$.
Let $\mathbf{s}$ be a section of the natural projection $F_{\mathbb{C}}\to\Gamma_{\mathbb{C}}$ such that $\mathbf{s}^2=-n$,
and let $\mathbf{f}$ be a geometric fiber of this projection.
Then $5\widehat{Z}_{\mathbb{C}}+\Delta=\widehat{E}_{\mathbb{C}}\big\vert_{F_{\mathbb{C}}}\sim \mathbf{s}+a\mathbf{f}$
for some effective divisor $\Delta$ on the surface $F_{\mathbb{C}}$ and some non-negative integer $a$. This gives $\widehat{Z}_{\mathbb{C}}\cdot \mathbf{f}=0$.
Since the curve $Z_{\mathbb{C}}$ is irreducible, we see that $\widehat{Z}_{\mathbb{C}}\sim \mathbf{f}$, which implies that $\pi(\widehat{Z})$ is a $\Bbbk$-point in $Q$.
Then $X(\Bbbk)\ne\varnothing$ by Lemma~\ref{lemma:nonempty}, which is a contradiction.
\end{proof}

\begin{lemma}
\label{lemma:pointless-2-5}
Suppose that $X$ is contained in Family \textnumero 2.5 and $X(\Bbbk)=\varnothing$.
Then $X_{\mathbb{C}}$ is K-polystable.
\end{lemma}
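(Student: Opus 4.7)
The Fano 3-fold $X_{\mathbb{C}}$ in Family \textnumero 2.5 is the blow-up of a smooth cubic 3-fold $V_3\subset\mathbb{P}^4$ along a line (Mori--Mukai \cite{MoMu81}). Suppose $X_{\mathbb{C}}$ is not K-polystable; by the general strategy of this section, there exists a geometrically irreducible curve $Z$ defined over $\Bbbk$ realised as the center of a divisor $\mathbf{F}$ over $X$ with $A_X(\mathbf{F})/S_X(\mathbf{F})=\delta(X_{\mathbb{C}})\leqslant 1$. The plan is to derive a contradiction by producing the lower bound $\delta_Z(X_{\mathbb{C}})>1$.

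The first step is to fix the $\Bbbk$-geometry. By Lemma~\ref{lemma:Prokhorov}, the extremal contraction $X_{\mathbb{C}}\to V_3$ descends to a $\Bbbk$-morphism $\pi\colon X\to Y$, where $Y$ is a $\Bbbk$-form of $V_3$; Corollary~\ref{corollary:ICMS} (applicable since $(n,d)=(4,3)$ satisfies the hypotheses and $\gcd(5,3)=1$) then embeds $Y$ as a smooth cubic hypersurface in $\mathbb{P}^4_{\Bbbk}$. Let $L\subset Y$ be the centre of $\pi$ and $E$ its exceptional divisor; the curve $L$ is a $\Bbbk$-form of $\mathbb{P}^1$, and the assumption $X(\Bbbk)=\varnothing$ forces $L(\Bbbk)=\varnothing$. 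The second extremal ray yields a conic bundle $\sigma\colon X\to W$ over a $\Bbbk$-form $W$ of $\mathbb{P}^2$ coming from projection away from $L$; Lemma~\ref{lemma:SB} applied to the discriminant divisor of $\sigma$, whose degree $5$ is coprime to $3$, forces $W\cong\mathbb{P}^2$.

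Next, I would apply the Abban--Zhuang machinery (Theorems~\ref{theorem:Hamid-Ziquan-Kento}--\ref{theorem:Hamid-Ziquan-Kento-3}) using a judicious auxiliary surface $S\subset X$. Natural candidates are the strict transform on $X$ of a general hyperplane section of $Y$ containing $L$ (a weak del Pezzo), the exceptional divisor $E$, or the pullback by $\sigma$ of a line in $W$. After verifying $S_X(S)<1$, the task reduces to checking $S(W^S_{\bullet,\bullet};C)<1$ for curves $C\subset S_{\mathbb{C}}$ and $S(W^{S,C}_{\bullet,\bullet,\bullet};p)<1$ for $p\in C$; granted these inequalities, $\delta_p(X_{\mathbb{C}})>1$ at every geometric point of $Z$, contradicting $\delta(X_{\mathbb{C}})\leqslant 1$.

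The main obstacle is the short residual list of curves $C\subset S_{\mathbb{C}}$ on which an intermediate invariant exceeds $1$; for each such $C$ the plan is to translate the failure into a geometric constraint on $Z$ and then invoke $X(\Bbbk)=\varnothing$. Specifically, if $Z$ meets a generic fibre of $\sigma$ in a single point then a standard Hilbert scheme descent yields $Z(\Bbbk)\ne\varnothing$, contradiction; if $\pi(Z)$ is a line in $Y$ disjoint from $L$ then Lemma~\ref{lemma:SB}, applied to the plane spanned by $L\cup\pi(Z)\subset\mathbb{P}^4_{\Bbbk}$, forces $L(\Bbbk)\ne\varnothing$; and if $Z\subset E$, the bidegree of $Z$ in $E_{\mathbb{C}}\cong\mathbb{P}^1\times\mathbb{P}^1$ or $\mathbb{F}_2$, combined with the $\Bbbk$-structure on the rulings of $E$ induced by $\pi|_E$ and $\sigma|_E$, together with $L(\Bbbk)=\varnothing$, rules the case out. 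Completing this finite case analysis, which is the technical heart of the argument, finishes the proof.
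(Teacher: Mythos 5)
Your proposal is built on a misidentification of the family, and this invalidates the whole argument. Family \textnumero 2.5 is \emph{not} the blow-up of a smooth cubic 3-fold along a line --- that is Family \textnumero 2.11, which the paper treats separately in Lemma~\ref{lemma:2-11} by showing that \emph{every} form has a $\Bbbk$-point (once Corollary~\ref{corollary:ICMS} realises $Y$ as a genuine cubic hypersurface in $\mathbb{P}^4_{\Bbbk}$, a degree-one curve on it is an honest line, hence isomorphic to $\mathbb{P}^1_{\Bbbk}$ and full of rational points). Your own setup already contains this contradiction: you embed $Y\subset\mathbb{P}^4_{\Bbbk}$ via Corollary~\ref{corollary:ICMS} and then assert $L(\Bbbk)=\varnothing$ for a line $L\subset Y$, which is impossible. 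Family \textnumero 2.5 is instead the blow-up of a smooth cubic 3-fold along a \emph{plane cubic curve}, and its second extremal contraction is a fibration $\phi\colon X\to\mathbb{P}^1$ into cubic surfaces (the pencil of hyperplanes through the plane of the cubic), not a conic bundle over $\mathbb{P}^2$ with quintic discriminant. Consequently your entire concluding case analysis (fibres of a conic bundle, planes spanned by two lines, bidegrees on $E$) has no counterpart in the actual geometry, and the Abban--Zhuang portion of your argument is only a statement of intent with no identified surface or computed invariant.

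For the record, the paper's proof runs as follows: for $p\in Z$ one takes $S$ to be the fibre of the cubic-surface fibration $\phi$ through $p$ and applies the inequality of \cite[Lemma~2.1]{CheltsovDenisovaFujita} relating $\delta_p(X)$ to $\delta_p(S)$. If $\phi(Z)=\mathbb{P}^1$ one may take $S$ smooth, where $\delta(S)\geqslant\tfrac{3}{2}$ gives a contradiction; otherwise $Z$ lies in a single fibre $S$ defined over $\Bbbk$ with $S(\Bbbk)=\varnothing$ (so $S$ is not a cone, hence Du Val), one shows $Z\not\subset E$, and the inequality forces $\delta_p(S)\leqslant\tfrac{15}{16}$. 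Denisova's computation of $\delta$-invariants of Du Val cubic surfaces then shows $S$ must have a singular point worse than $\mathbb{A}_1$ or $\mathbb{A}_2$, which by the Bruce--Wall classification is unique, hence defined over $\Bbbk$ --- contradicting $S(\Bbbk)=\varnothing$. None of these ingredients appears in your proposal, so the gap is not merely one of detail but of the underlying geometric model.
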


\begin{proof}
It follows from \cite{MoMu81,MoMu83} and Lemma~\ref{lemma:Prokhorov} that there exists the following diagram
$$
\xymatrix@R=1em{
&X\ar@{->}[ld]_{\pi}\ar@{->}[rd]^{\phi}&&\\%
V&&\mathbb{P}^1}
$$
where $V$ is a~smooth cubic 3-fold in $\mathbb{P}^4$,
the morphism $\pi$ is the blowup of a smooth plane cubic curve (defined over $\Bbbk$),
and $\phi$ is a morphism whose fibers are normal cubic surfaces.

Let $p$ be any point in $Z$. Then  $\delta_p(X)\leqslant 1$, which we aim to contradict.
Let $E$ be the $\pi$-exceptional surface, and let $S$ be the fiber of $\phi$ that contains $p$.
Then $S$ is a possibly singular irreducible cubic surface with at worst isolated singularities,
so either $S$ has  Du Val singularities or it is a cone over a smooth cubic curve.
If $S$ is Du Val, then it follows from \cite[Lemma 2.1]{CheltsovDenisovaFujita} that
\begin{equation}
\label{equation:2-5}
1\geqslant\delta_p(X)\geqslant\left\{\aligned
&\mathrm{min}\Big\{\frac{16}{11},\frac{16}{15}\delta_p(S)\Big\}\ \text{if $p\not\in E$}, \\
&\mathrm{min}\Big\{\frac{16}{11},\frac{16\delta_{p}(S)}{\delta_{p}(S)+15}\Big\}\ \text{if $p\in E$}.
\endaligned
\right.
\end{equation}
If $\phi(Z)=\mathbb{P}^1$, we may assume that $p$ is a general point in $Z$ so that $S$ is smooth.
In this case, we know from \cite{AbbanZhuangSeshadri} or \cite[Lemma 2.13]{Book} that $\delta_p(S)\geqslant\delta(S)\geqslant \frac{3}{2}$,
which gives the desired contradiction.
Hence, we conclude that $\phi(Z)$ is a point in $\mathbb{P}^1$, so $Z$ is contained in $S$. In that case, the surface $S$ is defined over $\Bbbk$, since $Z$ is defined over $\Bbbk$.
Note that $S(\Bbbk)=\varnothing$ as $X(\Bbbk)=\varnothing$.
In particular, we see that the surface $S$ is not a cone, since otherwise its vertex would be defined over $\Bbbk$. Then, it follows from \cite[Lemma 2.2]{CheltsovDenisovaFujita} that $Z\not\subset E$,
so we may assume that $p\not\in E$ either.
Thus, it follows from \eqref{equation:2-5} that
$$
\delta(S)\leqslant\delta_p(S)\leqslant \frac{15}{16}.
$$
On the other hand, all possible values of $\delta(S)$ have been computed in \cite{Denisova-cubic}.
In particular, since $\delta(S)\leqslant \frac{15}{16}$, it follows from \cite[Main~Theorem]{Denisova-cubic} that
the cubic surface $S$ is singular, and at least one of its $\mathbb{C}$-singular points is not a singular points of type $\mathbb{A}_1$ or $\mathbb{A}_2$.
Now, using the classification of Du Val cubic surfaces \cite{BruceWall1979}, we see that such singular point is unique,
hence defined over $\Bbbk$, which is impossible since  $S(\Bbbk)=\varnothing$.
\end{proof}

\begin{lemma}
\label{lemma:pointless-2-10}
Suppose that $X$ is contained in Family \textnumero 2.10 and $X(\Bbbk)=\varnothing$.
Then $X_{\mathbb{C}}$ is K-polystable.
\end{lemma}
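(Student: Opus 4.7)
The plan is to mimic the argument of Lemma~\ref{lemma:pointless-2-5}. Suppose, for contradiction, that $X_{\mathbb{C}}$ is not K-polystable. As explained at the start of this section, this produces a prime divisor $\mathbf{F}$ over $X$, defined over $\Bbbk$, whose center $Z\subset X$ is a geometrically irreducible curve satisfying $A_X(\mathbf{F})/S_X(\mathbf{F})\leqslant 1$. By \cite{MoMu81} and Lemma~\ref{lemma:Prokhorov}, there is a commutative diagram defined over $\Bbbk$
$$
\xymatrix@R=1em{
&X\ar@{->}[ld]_{\pi}\ar@{->}[rd]^{\phi}&\\
V&&B}
$$
in which $V$ is a $\Bbbk$-form of a smooth del Pezzo threefold of degree~$4$, $\pi$ is the blowup of a smooth $\Bbbk$-curve $\mathcal{C}$ whose geometric model is an elliptic curve on $V_{\mathbb{C}}$ cut out by two hyperplane sections, $B$ is a $\Bbbk$-form of $\mathbb{P}^1$, and $\phi$ is a del Pezzo fibration of degree~$4$ induced by the pencil of hyperplane sections of $V$ containing $\mathcal{C}$. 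Since $X(\Bbbk)=\varnothing$, the threefold $V$, and therefore the curve $\mathcal{C}$, is pointless.

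For a point $p\in Z$, let $S$ denote the fiber of $\phi$ through $p$. Because $V_{\mathbb{C}}$ is smooth, each geometric fiber $S_{\mathbb{C}}$ is an irreducible quartic del Pezzo surface with at worst Du Val singularities. Imitating the Abban--Zhuang computation of \cite[Lemma~2.1]{CheltsovDenisovaFujita} --- working out $S_X(S)$ and the Zariski decomposition of $-K_X-uS$ for $u\in[0,\tau]$ --- and applying Theorem~\ref{theorem:Hamid-Ziquan-Kento}, I expect an estimate of the form
$$
\delta_p(X_{\mathbb{C}})\geqslant
\begin{cases}
\min\left\{c_0,\; c_1\,\delta_p(S_{\mathbb{C}})\right\} & \text{if } p\notin E,\\[2pt]
\min\!\left\{c_0,\;\dfrac{c_0\,\delta_p(S_{\mathbb{C}})}{c_2+\delta_p(S_{\mathbb{C}})}\right\} & \text{if } p\in E,
\end{cases}
$$
for explicit positive constants $c_0,c_1>1$ and $c_2>0$, where $E$ denotes the $\pi$-exceptional surface.

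Two cases then arise. If $\phi(Z)=B$, I pick a general point $p\in Z$, so $S$ is smooth; then $\delta(S_{\mathbb{C}})\geqslant 4/3$ by the known K-stability bounds for smooth quartic del Pezzo surfaces (see~\cite{Book}), and the first inequality above gives $\delta_p(X_{\mathbb{C}})>1$, a contradiction. If instead $\phi(Z)$ is a closed point of $B$, then the geometric irreducibility of $Z$ forces that point to be a $\Bbbk$-point of $B$; hence $B\cong\mathbb{P}^1$ and the fiber $S$ is defined over $\Bbbk$ with $S(\Bbbk)=\varnothing$. By Theorem~\ref{theorem:1}, $S_{\mathbb{C}}$ is K-polystable and thus $\delta(S_{\mathbb{C}})\geqslant 1$, and the same estimate delivers $\delta_p(X_{\mathbb{C}})>1$ whenever $p\notin E$.

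The principal obstacle is therefore the boundary configuration $Z\subset E$. To rule it out I will exploit that $E$ is a $\mathbb{P}^1$-bundle over $\mathcal{C}$ combined with $\mathcal{C}(\Bbbk)=\varnothing$: a $\Bbbk$-curve $Z\subset E$ is either a vertical ruling --- in which case Lemma~\ref{lemma:nonempty} contradicts $\mathcal{C}(\Bbbk)=\varnothing$ --- or dominates $\mathcal{C}$, in which case a refined Abban--Zhuang estimate along the flag $S\supset Z\ni p$ via Theorems~\ref{theorem:Hamid-Ziquan-Kento-1} and~\ref{theorem:Hamid-Ziquan-Kento-2} delivers the desired strict inequality. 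Executing these Abban--Zhuang calculations explicitly, and checking that the singular quartic del Pezzo fibers remain within the reach of Theorem~\ref{theorem:1}, will be the main technical work.
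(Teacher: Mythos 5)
Your overall strategy coincides with the paper's: the same diagram (blowup of a quartic elliptic curve on a degree-$4$ del Pezzo threefold, fibered into quartic del Pezzo surfaces), the same dichotomy on $\phi(Z)$, and the same local estimate borrowed from the proof of \cite[Lemma 2.1]{CheltsovDenisovaFujita}. One genuinely different ingredient: where the paper, in the case that $\phi(Z)$ is a point, splits into $\delta(S)\geqslant 1$ and $\delta(S)<1$ and disposes of the latter by combining the classification of Du Val quartic del Pezzo surfaces in \cite{CorayTsfasman1988} with the computation of their $\delta$-invariants in \cite{Denisova-DP} (a surface with $\delta<1$ has a distinguished, hence $\Bbbk$-rational, singular point), you invoke Theorem~\ref{theorem:1} to deduce $\delta(S_{\mathbb{C}})\geqslant 1$ directly from $S(\Bbbk)=\varnothing$. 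That is a cleaner route to the same bound, provided you first exclude the possibility that $S$ is a cone over an elliptic curve so that Theorem~\ref{theorem:1} applies; the paper does this by observing that the vertex of a cone would be a $\Bbbk$-point, and your assertion that every geometric fiber is automatically Du Val because $V_{\mathbb{C}}$ is smooth is not justified as stated.

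The genuine gap is the case you yourself call ``the principal obstacle'': when $\delta_p(S)\geqslant 1$ and $p\in E$, your estimate only yields $\delta_p(X_{\mathbb{C}})\geqslant \frac{16\cdot 1}{1+15}=1$, which is no contradiction, and this configuration is exactly the one that survives. Note moreover that since $\phi(Z)$ is a point, $Z\subset E$ forces $Z=E\cap S$, a copy of the elliptic curve blown up by $\pi$; the vertical-ruling alternative in your dichotomy cannot occur here, because a ruling of $E\to\mathcal{C}$ maps isomorphically onto the base of the pencil rather than into a fiber of $\phi$. You defer the decisive refined Abban--Zhuang computation along the flag $p\in Z\subset S$ to ``main technical work'' without carrying it out, so the proof is incomplete precisely where it matters. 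The paper closes this case by arguing that equality $\delta_p(S)=\delta(S)=1$ forced throughout the chain of inequalities is incompatible, via \cite{AbbanZhuang} and the proof of \cite[Lemma 2.1]{CheltsovDenisovaFujita}, with $Z$ being a curve contained in $S$; some argument of this kind must actually be supplied for your proof to stand.
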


\begin{proof}
It follows from \cite{MoMu81,MoMu83} and Lemma~\ref{lemma:Prokhorov} that there exists the following diagram
$$
\xymatrix@R=1em{
&X\ar@{->}[ld]_{\pi}\ar@{->}[rd]^{\phi}&&\\%
V&&\mathbb{P}^1}
$$
where $V$ is a~smooth complete intersection of two quadrics in $\mathbb{P}^5$,
the morphism $\pi$ is the blowup of a smooth quartic elliptic curve (defined over $\Bbbk$),
and $\phi$ is a morphism whose fibers are normal complete intersection of two quadrics in $\mathbb{P}^4$.

Let $p$ be a general point in $Z_{\mathbb{C}}$, and let $S$ be the fiber of $\phi$ that contains $p$.
If $S$ has at worst Du Val singularities, then it follows from the proof of \cite[Lemma 2.1]{CheltsovDenisovaFujita} that
\begin{equation}
\label{equation:2-10}
1\geqslant\delta_p(X_{\mathbb{C}})\geqslant\left\{\aligned
&\mathrm{min}\Big\{\frac{16}{11},\frac{16}{15}\delta_p(S)\Big\}\ \text{if $p\not\in E$}, \\
&\mathrm{min}\Big\{\frac{16}{11},\frac{16\delta_{p}(S)}{\delta_{p}(S)+15}\Big\}\ \text{if $p\in E$},
\endaligned
\right.
\end{equation}
where $E$ is the exceptional surface of the complexification of the blowup $\pi$.
On the other hand, if $\phi(Z)=\mathbb{P}^1$, then $S$ is smooth and
we know from \cite[Lemma 2.12]{Book} that $\delta_p(S)\geqslant\delta(S)\geqslant \frac{3}{4}$,
which contradicts \eqref{equation:2-10}.
Thus, we conclude that $\phi(Z)$ is a point in $\mathbb{P}^1$, so $Z$ is contained in $S$ and, in particular,
the surface $S$ is defined over $\Bbbk$, since $Z$ is defined over $\Bbbk$.
then the surface $S$ is not a cone, since otherwise its vertex would be defined over $\Bbbk$.
If $\delta(S)\geqslant 1$, it follows from \eqref{equation:2-10} that $p\in E$ and
$$
1\geqslant\frac{A_X(\mathbf{F})}{S_X(\mathbf{F})}\geqslant\delta_p(X_{\mathbb{C}})\geqslant\frac{16\delta_{p}(S)}{\delta_{p}(S)+15}\geqslant 1
$$
which implies that $\delta_p(S)=\delta(S)=1$.
In this case, \cite{AbbanZhuang} and the proof of \cite[Lemma 2.1]{CheltsovDenisovaFujita} give a contradiction, since $Z\subset S$.
Hence, we see that $\delta(S)<1$.

Recall that $S$ has Du Val singularities.
The list of all possible singularities that can occur on $S$ are listed in \cite{CorayTsfasman1988}.
Moreover, all possible values of $\delta(S)$ for singular surfaces are computed in \cite{Denisova-DP}.
Now, using the classification of singularities in \cite{CorayTsfasman1988} and the computations in \cite{Denisova-DP},
we see that the inequality $\delta(S)<1$ implies that $S$ has at least one singular point whose type is different from the other singular points of $S$ (if any).
Hence, this singular point must be defined over $\Bbbk$, which contradicts $X(\Bbbk)=\varnothing$.
\end{proof}

\begin{lemma}
\label{lemma:pointless-2-12}
Suppose that $X$ is contained in Family \textnumero 2.12 and $X(\Bbbk)=\varnothing$.
Then $X_{\mathbb{C}}$ is K-polystable.
\end{lemma}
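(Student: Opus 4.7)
Following the section strategy, suppose for contradiction that $X_{\mathbb{C}}$ is not K-polystable. We then obtain a prime divisor $\mathbf{F}$ over $X$, defined over $\Bbbk$, with $A_X(\mathbf{F})/S_X(\mathbf{F})\leqslant 1$ and whose center $Z$ on $X$ is a geometrically irreducible $\Bbbk$-curve (not a divisor by \cite[Theorem~3.17]{Book}, and not a point since $X(\Bbbk)=\varnothing$).

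The plan is to exploit the specific geometry of Family \textnumero 2.12. By \cite{MoMu81}, over $\mathbb{C}$ the 3-fold $X_{\mathbb{C}}$ is the blowup $\pi\colon X_{\mathbb{C}}\to\mathbb{P}^3$ of a smooth curve $\Gamma$ of degree $6$ and genus $3$ (cut out by three cubics), and it admits a second such birational contraction $\pi'\colon X_{\mathbb{C}}\to\mathbb{P}^3$ coming from the opposite extremal ray of $\overline{\mathrm{NE}}(X_{\mathbb{C}})$. As highlighted in Lemma~\ref{lemma:Prokhorov}, Family \textnumero 2.12 is one of the two families where the Galois group $\mathrm{Gal}(\mathbb{C}/\Bbbk)$ may swap these two rays, so I would split into two cases accordingly. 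If both rays are Galois-stable, the contractions descend to $\Bbbk$-morphisms $X\to U$ and $X\to U'$ with $U,U'$ being $\Bbbk$-forms of $\mathbb{P}^3$; by Lemma~\ref{lemma:nonempty} and $X(\Bbbk)=\varnothing$, both $U$ and $U'$ are nontrivial Severi--Brauer 3-folds, and the images $\pi(Z)$ and $\pi'(Z)$ are pointless $\Bbbk$-curves whose degrees are tightly constrained by Lemmas~\ref{lemma:SB} and~\ref{lemma:plane}. If instead the two rays are interchanged by Galois, the $\Bbbk$-Picard rank of $X$ drops to $1$, but the Sarkisov involution of $X_{\mathbb{C}}$ swapping $\pi$ and $\pi'$ descends to $\Bbbk$ and provides a natural $\Bbbk$-defined surface (its fixed locus, or a divisor of the form $E+E'$ where $E,E'$ are the exceptional divisors of $\pi,\pi'$).

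In each case, the next step is to apply the Abban--Zhuang machinery to a well-chosen $\Bbbk$-defined surface $S$ through a general point $p\in Z$: either a general member of $|-K_X|$ (a K3 surface over $\mathbb{C}$, on which \cite{Knutsen} together with \cite[Theorem~A]{AbbanZhuangSeshadri} gives $\delta(S,-K_X|_S)\geqslant\tfrac{4}{5}$ for very general $S$), or the distinguished surface furnished by the involution in the swapped case. After computing $S_X(S)$ via the Zariski decomposition of $-K_X-uS$ using the explicit Sarkisov link structure, Theorems~\ref{theorem:Hamid-Ziquan-Kento-1}--\ref{theorem:Hamid-Ziquan-Kento-3} applied with the curve $Z\subset S$ should yield $\delta_p(X)>1$, contradicting the assumption on $\mathbf{F}$. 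As an alternative fallback, one may mimic Lemmas~\ref{lemma:pointless-1-9}--\ref{lemma:pointless-1-10}: produce via \cite[Corollary~5.6]{AbbanZhuangSeshadri} an effective $\mathbb{Q}$-divisor $D\sim_{\mathbb{Q}}-K_X$ (or a mobile linear subsystem) with $Z\subset\mathrm{Nklt}(X,\tfrac{1}{2}D)$, then use Corti-type multiplicity inequalities \cite[Theorem~3.1]{Corti2000} together with Lemma~\ref{lemma:SB} to rule out all admissible values of $-K_X\cdot Z$.

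The main obstacle will be the case in which Galois swaps the two extremal rays: the descent of the Sarkisov involution, the identification of a useful $\Bbbk$-defined surface, and the computation of the relevant $S$-invariants all become more delicate when the $\Bbbk$-Picard rank is $1$ and the natural bifibering used elsewhere is unavailable. Careful bookkeeping of which loci on $X$ are defined over $\Bbbk$ (such as the involution-fixed K3 surface and the unique $\Bbbk$-divisor class $E+E'$) will be crucial in order to push the Abban--Zhuang estimates through uniformly in both cases.
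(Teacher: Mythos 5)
The paper does not give an internal argument here at all: its entire proof is the citation ``The required assertion is \cite[Corollary~9]{CheltsovLiMauPinardin}'', i.e.\ the whole verification for Family \textnumero 2.12 is outsourced to a separate paper devoted to space curves of degree $6$ and genus $3$. Your plan is therefore necessarily a different route, and as a route-map it correctly isolates the genuinely delicate point, namely that Family \textnumero 2.12 is one of the two families excluded from Lemma~\ref{lemma:Prokhorov} because $\mathrm{Gal}(\mathbb{C}/\Bbbk)$ may interchange the two extremal contractions $\pi,\pi'\colon X_{\mathbb{C}}\to\mathbb{P}^3$.

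However, as written the plan has concrete gaps that would stop it from compiling into a proof. First, in the swapped case you invoke ``the Sarkisov involution of $X_{\mathbb{C}}$ swapping $\pi$ and $\pi'$'' and propose to use its fixed locus: for a general member of Family \textnumero 2.12 there is no biregular automorphism of $X_{\mathbb{C}}$ interchanging the two contractions (the two sextic genus-$3$ curves being blown down are not canonically isomorphic), so the ``fixed-locus surface'' does not exist; what genuinely descends to $\Bbbk$ is only the Galois-invariant divisor class $E+E'$ and symmetric combinations of $\pi^*\mathcal{O}(1)$ and $\pi'^*\mathcal{O}(1)$. Second, the K3 fallback is not available here: the bound $\delta(S,-K_X\vert_S)\geqslant\tfrac{4}{5}$ from \cite{Knutsen} and \cite[Theorem~A]{AbbanZhuangSeshadri}, as used in Lemma~\ref{lemma:pointless-1-9}, requires a very general K3 surface with $\mathrm{Pic}(S)=\mathbb{Z}[-K_X\vert_S]$, whereas anticanonical members of a Fano 3-fold with $\rho(X_{\mathbb{C}})=2$ have Picard rank at least $2$, so that estimate does not apply as stated. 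Finally, the plan defers every actual computation (the Zariski decompositions, the $S(W^S_{\bullet,\bullet};\cdot)$ integrals, the elimination of the admissible values of $-K_X\cdot Z$); for this family that computation is precisely the content of the cited companion paper, so without it the argument is a programme rather than a proof.
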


\begin{proof}
The required assertion is \cite[Corollary~9]{CheltsovLiMauPinardin}.
\end{proof}

\begin{lemma}
\label{lemma:pointless-2-13}
Suppose that $X$ is contained in Family \textnumero 2.13 and $X(\Bbbk)=\varnothing$.
Then $X_{\mathbb{C}}$ is K-polystable.
\end{lemma}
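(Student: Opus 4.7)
The plan is to imitate the strategy of Lemmas~\ref{lemma:pointless-2-5} and \ref{lemma:pointless-2-10}. By \cite{MoMu81,MoMu83} combined with Lemma~\ref{lemma:Prokhorov}, the 3-fold $X$ fits into a commutative diagram defined over $\Bbbk$ with a birational morphism $\pi\colon X\to V$ and a del Pezzo fibration $\phi\colon X\to\mathbb{P}^{1}$, in which $V$ is a $\Bbbk$-form of a smooth quadric 3-fold in $\mathbb{P}^{4}$, the morphism $\pi$ is the blowup of a smooth curve $C\subset V$ of degree $6$ and genus $2$ (defined over $\Bbbk$), and the general fiber of $\phi$ is a smooth quartic del Pezzo surface. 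The hypothesis $X(\Bbbk)=\varnothing$ combined with Lemma~\ref{lemma:nonempty} gives $V(\Bbbk)=\varnothing$. As outlined at the start of this section, assuming for contradiction that $X_{\mathbb{C}}$ is not K-polystable yields a geometrically irreducible curve $Z\subset X$ defined over $\Bbbk$ that is the center of a prime divisor $\mathbf{F}$ over $X_{\mathbb{C}}$ computing $\delta(X_{\mathbb{C}})\leqslant 1$.

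Let $p$ be a general point of $Z_{\mathbb{C}}$, let $E$ denote the $\pi_{\mathbb{C}}$-exceptional divisor, and let $S$ be the fiber of $\phi_{\mathbb{C}}$ containing $p$. If $S$ has at worst Du Val singularities, then adapting the adjunction-type argument of \cite[Lemma~2.1]{CheltsovDenisovaFujita} to the numerical invariants of Family~\textnumero 2.13 should yield an inequality of the form
\[
1\geqslant\delta_{p}(X_{\mathbb{C}})\geqslant\min\!\Bigl\{c_{1},\,c_{2}\,\delta_{p}(S)\Bigr\}\ \text{or}\ \min\!\Bigl\{c_{1},\,\tfrac{c_{3}\,\delta_{p}(S)}{\delta_{p}(S)+c_{4}}\Bigr\},
\]
depending on whether $p\notin E$ or $p\in E$, with explicit constants coming from the fibration and the blowup data $(-K_{V}\cdot C,g(C))=(18,2)$. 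If $\phi(Z)=\mathbb{P}^{1}$, then for generic $p$ the fiber $S$ is smooth, and the known lower bound on $\delta(S)$ for smooth quartic del Pezzo surfaces (as in \cite{Book}) contradicts the estimate above. Otherwise $\phi(Z)$ is a closed point of $\mathbb{P}^{1}$, so $S$ itself is defined over $\Bbbk$ with $S(\Bbbk)=\varnothing$, and hence $S$ cannot be a cone (its vertex would be $\Bbbk$-rational); the inequality then forces $\delta(S)<1$.

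The crux of the argument is the final Galois-descent step, which I expect to be the main obstacle. Using the classification of Du Val quartic del Pezzo surfaces \cite{CorayTsfasman1988} together with Denisova's computation of $\delta$-invariants in the singular case \cite{Denisova-DP}, one must verify that any such $S$ with $\delta(S)<1$ contains at least one singular point of a Du Val type not shared by any other singular point of $S$. Such a point is automatically fixed by $\mathrm{Gal}(\mathbb{C}/\Bbbk)$, giving a $\Bbbk$-rational point of $S$ and contradicting $S(\Bbbk)=\varnothing$. The delicate part is the finite but careful case-by-case combinatorial check across all admissible singularity configurations on quartic del Pezzo surfaces.
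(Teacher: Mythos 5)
There is a genuine gap, and it is in the very first step: the geometric setup you posit does not exist for Family \textnumero 2.13. A member of this family has Picard rank $2$, so it has exactly two extremal contractions, and by Mori--Mukai these are the blowup $f\colon X\to Q$ of a smooth quadric 3-fold along a curve of genus $2$ with $-K_Q\cdot C=18$, and a \emph{conic bundle} $\pi\colon X_{\mathbb{C}}\to\mathbb{P}^2$ with quartic discriminant curve. There is no quartic del Pezzo fibration over $\mathbb{P}^1$ on this 3-fold (no boundary ray of the two-dimensional nef cone maps to a curve), so the entire strategy modeled on Families \textnumero 2.5 and \textnumero 2.10 --- restricting to a singular fiber of a del Pezzo fibration, observing that it is defined over $\Bbbk$ and pointless, and hunting for a Galois-invariant distinguished singular point via \cite{CorayTsfasman1988} and \cite{Denisova-DP} --- cannot get started. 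The quartic del Pezzo surfaces that do appear here are the preimages $S=\pi^*(\ell)$ of lines $\ell\subset\mathbb{P}^2$; these move in a net, so one can always choose $S$ smooth, and the ``singular fiber over $\Bbbk$'' dichotomy that drives your final descent argument never arises.

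The paper's actual proof runs as follows. Using Lemma~\ref{lemma:SB} one shows the conic bundle $\pi$ and its base $\mathbb{P}^2$ descend to $\Bbbk$, and that $Q$ is an honest smooth quadric in $\mathbb{P}^4$ with $Q(\Bbbk)=\varnothing$ by Lemma~\ref{lemma:nonempty}. If $\pi(Z)$ is a point, then $Z$ must be a whole (smooth) fiber of $\pi$, since otherwise $f(Z)$ would be a $\Bbbk$-line on the pointless quadric $Q$; one then takes $\ell$ a general line through $\pi(Z)$, and otherwise $\ell$ a general line, so that $S=\pi^*(\ell)$ is a smooth quartic del Pezzo surface. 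The contradiction is then obtained not by Galois descent but by a quantitative Abban--Zhuang computation on the flag $p\in A\subset S$, where $A$ is the conic-bundle fiber through a point $p\in Z\cap S$: one computes $S_X(S)=\tfrac{41}{80}$ and bounds $S(W^S_{\bullet,\bullet};F)$ using $\mathrm{ord}_F(E\vert_S)\leqslant A_S(F)$, the estimate of \cite[Lemma 23]{CheltsovFujitaKishimotoOkada} for the polarization $-K_S+(1-u)A$, and $\delta(S)=\tfrac43$ for smooth quartic del Pezzo surfaces, yielding $\delta_p(X_{\mathbb{C}})\geqslant\tfrac{80}{77}>1$. Your proposal would need to be rebuilt around the conic bundle structure to be salvageable.
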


\begin{proof}
Using \cite{MoMu81,MoMu83} and Lemma~\ref{lemma:Prokhorov}, we see that there is a morphism
$f\colon X\to Q$ such that $Q$ is a form of a smooth quadric 3-fold in $\mathbb{P}^4$,
and $f$ is the blowup of a smooth geometrically irreducible curve $C\subset Q$ such that $C$ has genus $2$ and $-K_Q\cdot C=18$.
Over $\mathbb{C}$, we have the following diagram
$$
\xymatrix@R=1em{
&X_{\mathbb{C}}\ar@{->}[ld]_{f_{\mathbb{C}}}\ar@{->}[rd]^{\pi}&&\\%
Q_{\mathbb{C}}&&\mathbb{P}^2}
$$
where $\pi$ is a conic bundle with discriminant curve a quartic in $\mathbb{P}^2$.
By Lemma~\ref{lemma:SB}, we may assume that the conic bundle $\pi$ is defined over $\Bbbk$.
Let $\ell$ be a line in $\mathbb{P}^2$ and $S=\pi^*(\ell)$. Then $f_{\mathbb{C}}(S_{\mathbb{C}})$ is cut out on the quadric $Q_{\mathbb{C}}$ by another quadric hypersurface in $\mathbb{P}^4$.
In particular, the linear system $|-K_Q-f(S)|$ gives an embedding $Q\hookrightarrow\mathbb{P}^4$, which implies that $Q$ is a smooth quadric 3-fold.
Note that $Q(\Bbbk)=\varnothing$ by Lemma~\ref{lemma:nonempty}.

We may have the following two cases for the curve $Z$:
\begin{enumerate}
\item $\pi(Z)$ is a point in $\mathbb{P}^2$;
\item $\pi(Z)$ is a curve in $\mathbb{P}^2$.
\end{enumerate}
In the first case, $Z$ is a fiber of the conic bundle $\pi$, since otherwise $f(Z)$ would be a line in $Q$,
which would contradict $Q(\Bbbk)=\varnothing$.
In this case, we let $\ell$ be a general line in $\mathbb{P}^2$ that passes through the point $\pi(Z)$.
In the second case, let $\ell$ be a general line in $\mathbb{P}^2$.
As before, let $S=\pi^*(\ell)$, which is smooth.
It follows from the adjunction formula that $S$ is a del Pezzo surface of degree $4$.

Starting from now, we work with geometrical models of $X$, $S$ and $Z$.
For simplicity, we denote them by $X$, $S$ and $Z$, respectively.
Let $p$ be a point in $Z\cap S$,
and let $A$ be the fiber of the conic bundle $\pi$ that passes through $p$.
Then $A$ is smooth. Note that $A=Z$ in the case when $\pi(Z)$ is a point.
By assumption, we have $\delta_P(X)\leqslant 1$.
We apply Abban--Zhuang method \cite{AbbanZhuang} to the flag $p\in A\subset S$ to show that $\delta_P(X)\geqslant\frac{80}{77}$,
which would imply the desired contradiction.

Let $E$ be the $f$-exceptional surface, and let $H=f^*(\mathcal{O}_{Q}(1))$.
Then $-K_X\sim 3H-E$ and $S\sim 2H-E$.
Let $u$ be a non-negative real number. Then
$$
-K_X-uS\sim_{\mathbb{R}} (3-2u)H+(u-1)E,
$$
which implies that the divisor $-K_X-uS$ is pseudoeffective if and only if $u\leqslant \frac{3}{2}$.
For $u\leqslant\frac{3}{2}$, let us denote by $P(u)$ the positive part of Zariski decomposition of the divisor $-K_X-uS$,
and let us denote by $N(u)$ its negative part. Then
$$
P(u)=\left\{\aligned
&(3-2u)H+(u-1)E\ \text{if $0\leqslant u\leqslant 1$}, \\
&(3-2u)H\ \text{if $1\leqslant u\leqslant \frac{3}{2}$},
\endaligned
\right.
$$
and
$$
N(u)=\left\{\aligned
&0\ \text{if $0\leqslant u\leqslant 1$}, \\
&(u-1)E\ \text{if $1\leqslant u\leqslant \frac{3}{2}$}.
\endaligned
\right.
$$
This gives
$$
P(u)\big\vert_{S}=\left\{\aligned
&-K_S+(1-u)A\ \text{if $0\leqslant u\leqslant 1$}, \\
&(3-2u)(-K_S)\ \text{if $1\leqslant u\leqslant \frac{3}{2}$}.
\endaligned
\right.
$$
Now, integrating $(P(u))^3$, we get $S_X(S)=\frac{41}{80}$. Then, using \cite[Theorem~3.3]{AbbanZhuang} and \cite[Corollary 1.102]{Book}, we get
$$
\delta_p(X)\geqslant \min\left\{\frac{1}{S_X(S)},\inf_{\substack{F/S\\ p\in C_S(F)}}\frac{A_S(F)}{S(W_{\bullet,\bullet}^S;F)}\right\}=
\min\left\{\frac{80}{41},\inf_{\substack{F/S\\ p\in C_S(F)}}\frac{A_S(F)}{S(W_{\bullet,\bullet}^S;F)}\right\}
$$
where the infimum is taken over all prime divisors $F$ over $S$ for which $p$ contained in the center $C_S(F)$ of the divisor $F$ on $S$.
The value $S(W_{\bullet,\bullet}^S;F)$ can be computed using \cite[Corollary~1.108]{Book} as follows:
$$
S\big(W_{\bullet,\bullet}^S; F\big)=
\frac{3}{(-K_X)^3}\int_1^{\frac{3}{2}}\big(P(u)\big\vert_{S}\big)^2(u-1)\mathrm{ord}_{F}\big(E\big\vert_{S}\big)du+
\frac{3}{(-K_X)^3}\int_{0}^{\frac{3}{2}}\int_0^\infty \mathrm{vol}\big(P(u)\big\vert_{S}-vF\big)dvdu.
$$
Recall that $(-K_X)^3=20$.
Hence, if $\delta_p(X)<\frac{80}{77}$, then there exists a prime divisor $F$ over the surface $S$ such that
\begin{equation}
\label{equation:2-13}
\frac{3}{20}\int_1^{\frac{3}{2}}\big(P(u)\big\vert_{S}\big)^2(u-1)\mathrm{ord}_{F}\big(E\big\vert_{S}\big)du+
\frac{3}{20}\int_{0}^{\frac{3}{2}}\int_0^\infty \mathrm{vol}\big(P(u)\big\vert_{S}-vF\big)dvdu>\frac{77}{80}A_S(F).
\end{equation}
Let us show that this is impossible.
First, we observe that $E\vert_{S}$ is a smooth curve.
Hence, the pair $(S,E\vert_{S})$ is log canonical. This gives
$$
\mathrm{ord}_{F}\big(E\big\vert_{S}\big)\leqslant A_{S}(F).
$$
Thus, we can estimate the first term in the left hand side of \eqref{equation:2-13} as follows:
\begin{multline*}
\frac{3}{20}\int_1^{\frac{3}{2}}\big(P(u)\big\vert_{S}\big)^2(u-1)\mathrm{ord}_{F}\big(E\big\vert_{S}\big)du
\leqslant \frac{3}{20}\int_1^{\frac{3}{2}}\big(P(u)\big\vert_{S}\big)^2(u-1)A_{S}(F)du=\\
=\frac{3A_{S}(F)}{20}\int_1^{\frac{3}{2}}(3-2u)^2(-K_S)^2(u-1)du=\frac{3A_{S}(F)}{5}\int_1^{\frac{3}{2}}(3-2u)^2(u-1)du=\frac{A_{S}(F)}{80}.
\end{multline*}
To estimate the second term in the left hand side of \eqref{equation:2-13}, let $L=-K_S+(1-u)A$.
Then $L$ is an ample divisor on $S$ when $u\in[0,1]$.
In this case, it follows from \cite[Lemma 23]{CheltsovFujitaKishimotoOkada} that
$$
\delta_{p}(S,L)\geqslant\frac{24}{u^2-10u+28},
$$
where $\delta_{p}(S,L)$ is the (local) $\delta$-invariant of the polarized pair $(S,L)$ defined in  \cite[Appendix A]{CheltsovFujitaKishimotoOkada}.
Note also that it follows from \cite[Lemma 2.12]{Book} that
$$
\delta_{p}(S,-K_S)=\delta_{p}(S)\geqslant\delta(S)=\frac{4}{3},
$$
since $S$ is a smooth del Pezzo surface of degree $4$.
Now, using these estimates, we have
\begin{multline*}
\frac{3}{20}\int_{0}^{\frac{3}{2}}\int_0^\infty\mathrm{vol}\big(P(u)\big\vert_{S}-vF\big)dvdu=\\
=\frac{3}{20}\int_{0}^{1}\int_0^\infty\mathrm{vol}\big(-K_S+(1-u)A-vF\big)dvdu+
\frac{3}{20}\int_{1}^{\frac{3}{2}}\int_0^\infty\mathrm{vol}\big((3-2u)(-K_S)-vF\big)dvdu=\\
=\frac{3}{20}\int_{0}^{1}\int_0^\infty\mathrm{vol}\big(-K_S+(1-u)A-vF\big)dvdu+
\frac{3}{20}\int_{1}^{\frac{3}{2}}(3-2u)^3\int_0^\infty\mathrm{vol}\big((-K_S)-vF\big)dvdu\leqslant\\
\leqslant\frac{3}{20}\int_{0}^{1}L^2\frac{A_S(F)}{\frac{24}{u^2-10u+28}}du+
\frac{3}{20}\int_{1}^{\frac{3}{2}}(3-2u)^3(-K_S)^2\frac{A_S(F)}{\delta(S)}du=\\
=A_S(F)\Bigg(\frac{3}{20}\int_{0}^{1}(8-4u)\frac{u^2-10u+28}{24}du+
\frac{9}{20}\int_{1}^{\frac{3}{2}}\int_0^\infty(3-2u)^3du\Bigg)=\frac{19}{20}A_S(F).
\end{multline*}
This implies that the left hand side of \eqref{equation:2-13} does not exceed $\frac{77}{80}A_S(F)$,
which is a contradiction.
\end{proof}

\begin{lemma}
\label{lemma:pointless-2-16}
Suppose that $X$ is contained in Family \textnumero 2.16 and $X(\Bbbk)=\varnothing$.
Then $X_{\mathbb{C}}$ is K-polystable.
\end{lemma}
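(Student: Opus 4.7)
The plan is to follow the template of Lemma~\ref{lemma:pointless-2-13} very closely. Using \cite{MoMu81,MoMu83} and Lemma~\ref{lemma:Prokhorov}, I first descend both extremal rays of $X$ over $\Bbbk$ to obtain a diagram
\[
\xymatrix@R=1em{
&X\ar@{->}[ld]_{f}\ar@{->}[rd]^{\pi}&\\
V&&\mathbb{P}^2
}
\]
defined over $\Bbbk$, in which $f$ is the blowup of a geometrically irreducible smooth conic $C$ lying inside a $\Bbbk$-form $V$ of the smooth intersection of two quadrics $V_4\subset\mathbb{P}^5$, and $\pi_{\mathbb{C}}$ is the conic bundle arising from projection away from the plane spanned by $C_{\mathbb{C}}$. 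Applying Lemma~\ref{lemma:SB} to the discriminant divisor of $\pi$ identifies the base with $\mathbb{P}^{2}_{\Bbbk}$; and the same lemma applied to a suitable hyperplane section of $V$ (cut out by the strict transform of a section of $H=f^{\ast}\mathcal{O}_{V}(1)$, as in the proof of Lemma~\ref{lemma:pointless-2-13}) embeds $V$ as a standard quartic del Pezzo 3-fold in $\mathbb{P}^{5}$. Since $X(\Bbbk)=\varnothing$, Lemma~\ref{lemma:nonempty} forces $V(\Bbbk)=\varnothing$ as well.

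I then assume, for contradiction, that $X_{\mathbb{C}}$ is not K-polystable. Following the general strategy laid out at the beginning of this section, \cite[Theorem~1.2]{LXZ} together with \cite[Theorem~4.4]{Zhuang2021} and \cite[Corollary~4.14]{Zhuang2021} produces a geometrically irreducible curve $Z\subset X$ defined over $\Bbbk$ and a point $p\in Z$ with $\delta_{p}(X_{\mathbb{C}})\leqslant 1$. If $\pi(Z)$ is a curve in $\mathbb{P}^{2}$, I choose $p$ general and let $\ell\subset\mathbb{P}^{2}$ be a general line through $\pi(p)$; the surface $S=\pi^{\ast}(\ell)$ is then a smooth quartic del Pezzo surface. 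If $\pi(Z)$ is a single point, then $Z$ is contained in the corresponding fiber, which is a conic in $X$; this fiber is geometrically irreducible (otherwise its node would descend to $V(\Bbbk)$), and a general line $\ell$ through $\pi(Z)$ again yields a smooth quartic del Pezzo $S=\pi^{\ast}(\ell)$. In either case, the fiber $A$ of $\pi$ through $p$ is a smooth conic in $S$ and the triple $p\in A\subset S$ forms a flag.

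Finally, the plan is to apply Theorems~\ref{theorem:Hamid-Ziquan-Kento}, \ref{theorem:Hamid-Ziquan-Kento-1} and \ref{theorem:Hamid-Ziquan-Kento-2} to this flag. With $S\sim 2H-E$ on $X$ (where $E$ is the $f$-exceptional divisor and $H^{3}=4$), the pseudo-effective threshold for $-K_{X}-uS$ equals $\tau=2$, and the Zariski decomposition of $-K_{X}-uS$ splits across the two sub-intervals $[0,1]$ and $[1,2]$ exactly as in Lemma~\ref{lemma:pointless-2-13}; I will then evaluate the integrals defining $S_{X}(S)$, $S(W_{\bullet,\bullet}^{S};A)$ and $S(W_{\bullet,\bullet,\bullet}^{S,A};p)$ explicitly. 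Combining these with $\delta(S)\geqslant 4/3$ for a smooth quartic del Pezzo surface and with the ample-model estimate of \cite[Lemma~23]{CheltsovFujitaKishimotoOkada}, I expect to produce a constant $c>1$ satisfying $\delta_{p}(X_{\mathbb{C}})\geqslant c$, contradicting the assumption $\delta_{p}(X_{\mathbb{C}})\leqslant 1$.

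The main obstacle is this last numerical step: the intersection pairings $S\cdot(-K_{X})^{2}$, $S^{2}\cdot(-K_{X})$ and $S\cdot E^{2}$ on the blowup of $V_{4}$ differ from those on the blowup of the quadric treated in Lemma~\ref{lemma:pointless-2-13}, so the thresholds on $S_{X}(S)$ and $S(W_{\bullet,\bullet}^{S};A)$ must be recomputed from scratch, and the constants that emerge must be strong enough to close the argument in both branches of the dichotomy. Should the bare flag $p\in A\subset S$ prove insufficient in the case $\pi(Z)=\{\mathrm{pt}\}$, my fallback is to refine by blowing up $S$ at $p$ and invoking Theorem~\ref{theorem:Hamid-Ziquan-Kento-3}, bounding $\delta_{p}(X_{\mathbb{C}})$ through the additional invariants $S(W_{\bullet,\bullet}^{S};E)$ and $\sup_{q\in E}S(W_{\bullet,\bullet,\bullet}^{S,E};q)$ attached to the exceptional divisor of that blowup.
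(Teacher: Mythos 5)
Your overall strategy coincides with the paper's: descend the Sarkisov link over $\Bbbk$, identify $V$ as a pointless intersection of two quadrics and the base of $\pi$ as $\mathbb{P}^2_{\Bbbk}$, and then run the Abban--Zhuang method on the flag $p\in A\subset S$ with $S=\pi^*(\ell)$ a quartic del Pezzo surface; the numerical endgame you sketch is exactly the one carried out in the paper (it yields $\delta_p(X_{\mathbb{C}})\geqslant\frac{176}{169}$). However, there is a genuine gap: you never rule out the possibility that $Z$ is contained in the $f$-exceptional divisor $E$. This matters in precisely one sub-case of your dichotomy. The normal bundle of the conic $C\subset V$ can be $\mathcal{O}_{\mathbb{P}^1}(1)\oplus\mathcal{O}_{\mathbb{P}^1}(-1)$, in which case $E\simeq\mathbb{F}_2$ and its negative section $\mathbf{s}$ satisfies $(H-E)\cdot\mathbf{s}=0$ and $-K_X\cdot\mathbf{s}=2$, i.e.\ $\mathbf{s}$ is a smooth fiber of the conic bundle $\pi$ lying inside $E$. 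If $Z=\mathbf{s}$, then for every line $\ell$ through $\pi(Z)$ the surface $S=\pi^*(\ell)$ contains $Z$ together with two fibers $\mathbf{l}_1,\mathbf{l}_2$ of $E\to C$ on which $-K_S=H\vert_S$ is trivial; so $S$ is only a \emph{weak} del Pezzo surface of degree $4$ with $(-2)$-curves, not a del Pezzo surface. Your claim that ``$S$ is then a smooth quartic del Pezzo'' fails there, and with it the two numerical inputs you rely on: $\delta(S)\geqslant\frac{4}{3}$ and the ample-polarization estimate of \cite[Lemma~23]{CheltsovFujitaKishimotoOkada} for $-K_S+(1-u)A$. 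Your proposed fallback (refining the flag by blowing up $p$ and invoking Theorem~\ref{theorem:Hamid-Ziquan-Kento-3}) does not repair this, since the problem is the loss of the del Pezzo structure on $S$, not the choice of flag on it.

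The paper closes this hole \emph{before} setting up the flag on $S$, by a separate Abban--Zhuang computation on $E$ itself: assuming $Z\subset E$ (so $f(Z)=C$, since $f(Z)$ cannot be a $\Bbbk$-point of the pointless $V$), one computes $S_X(E)=\frac{23}{44}$ and bounds $S(W^E_{\bullet,\bullet};Z)\leqslant S(W^E_{\bullet,\bullet};\mathbf{s})$, which equals $\frac{67}{88}$ when $E\simeq\mathbb{P}^1\times\mathbb{P}^1$ and $\frac{41}{44}$ when $E\simeq\mathbb{F}_2$; both are $<1$, contradicting Theorem~\ref{theorem:Hamid-Ziquan-Kento-1}. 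Only after excluding $Z\subset E$ is $S$ guaranteed to be a genuine smooth del Pezzo surface of degree $4$ in both branches of your dichotomy. You would need to add this preliminary step (or some equivalent treatment of the case $Z=\mathbf{s}\subset E$) for the argument to be complete.
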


\begin{proof}
Using \cite{MoMu81,MoMu83} and Lemma~\ref{lemma:Prokhorov},
we see that there is a morphism $f\colon X\to V$ such that $V$ is a form of a smooth complete intersection of two quadrics in $\mathbb{P}^5$,
and $f$ is the blowup of a smooth geometrically rational curve $C\subset V$ with $-K_V\cdot C=4$.
Over $\mathbb{C}$, the curve $C_{\mathbb{C}}$ is a smooth conic in $V_{\mathbb{C}}\subset \mathbb{P}^5$,
and we have the following commutative diagram:
$$
\xymatrix@R=1em{
&X_{\mathbb{C}}\ar@{->}[ld]_{f_{\mathbb{C}}}\ar@{->}[rd]^{\pi}&&\\%
V_{\mathbb{C}}\ar@{-->}[rr]&&\mathbb{P}^2}
$$
where $\pi$ is a conic bundle whose discriminant curve is a (possibly singular) reduced quartic curve in $\mathbb{P}^2$,
and the dashed arrow is induced by the linear projection from the plane in $\mathbb{P}^5$ that contains $C_{\mathbb{C}}$.
Now, arguing exactly as in the proof of Lemma~\ref{lemma:pointless-2-13}, we see that the conic bundle $\pi$ is defined over the field~$\Bbbk$,
and $V$ is a (pointless) complete intersection of two quadrics in $\mathbb{P}^5$.

Arguing as in the proof of Lemma~\ref{lemma:pointless-2-13},
we see that either $Z$ is a smooth fiber of the conic bundle $\pi$, or $\pi(Z)$ is a curve in $\mathbb{P}^2$.
Moreover, $f(Z)$ is a curve in $V$, since otherwise $f(Z)$ would be a $\Bbbk$-point,
but $V(\Bbbk)=\varnothing$ by Lemma~\ref{lemma:nonempty}.

Starting from now, we will work exclusively with geometrical models of $X$ and $Z$,
which (for simplicity) we will denote by $X$ and $Z$, respectively.
Let $E$ be the $f$-exceptional surface, and let $H=f^*(\mathcal{O}_{V}(1))$.
Then $-K_X\sim 2H-E$, the conic bundle $\pi$ is given by the linear system $|H-E|$,
the divisors $E$ and $H-E$ generate the cone of effective divisors of the 3-fold $X$,
the nef cone of the 3-fold $X$ is generated by the divisors $H$ and $H-E$,
and the Mori cone $\overline{\mathrm{NE}}(X)$ is generated by fibers of the natural projection $E\to C$
and the fibers of the conic bundle $\pi$.
We claim that $Z\not\subset E$.
Indeed, suppose that this is not the case and $Z\subset E$. Then $f(Z)=C$, since $f(Z)$ is not a point.
Let us seek for a contradiction using Abban--Zhuang method \cite{AbbanZhuang}.
Let $u$ be a non-negative real number. Then  $-K_X-uE$ is pseudoeffective if and only if the divisor $-K_X-uE$ is nef if and only if $u\leqslant 1$, because
$$
-K_X-uE\sim_{\mathbb{R}} 2H-(1+u)E.
$$
Note that
$$
\big(-K_X-uE\big)^3=-E^3u^3+(6H\cdot E^2-3E^3)u^2+(12H\cdot E^2-12E\cdot H^2-3E^3)u+8H^3-12E\cdot H^2-6H\cdot E^2-E^3.
$$
This gives $(-K_X-uE)^3=2u^3-6u^2-18u+22$, because $H^3=4$, $E\cdot H^2=0$, $H\cdot E^2=-2$ and $E^3=-\mathrm{c}_1(\mathcal{N}_{C/V})=-2$.
Now, integrating $(-K_X-uE)^3$, we get $S_X(E)=\frac{23}{44}$.
Then, it follows from \cite[Corollary 1.110]{Book} that
$$
1\geqslant\frac{A_X(\mathbf{F})}{S_X(\mathbf{F})}\geqslant \min\left\{\frac{1}{S_X(E)},\frac{1}{S(W_{\bullet,\bullet}^E;Z)}\right\}=\min\left\{\frac{44}{23},\frac{1}{S(W_{\bullet,\bullet}^E;Z)}\right\}
$$
where
$$
S\big(W_{\bullet,\bullet}^E; Z\big)=\frac{3}{22}\int_{0}^{1}\int_0^\infty \mathrm{vol}\big((-K_X-uE)\big\vert_{E}-vZ\big)dvdu.
$$
Thus, we conclude that $S(W_{\bullet,\bullet}^E;Z)\geqslant 1$. Let us show that $S(W_{\bullet,\bullet}^E;Z)<1$.
First, we observe that either $E\simeq\mathbb{P}^1\times\mathbb{P}^1$ or $E\simeq\mathbb{F}_2$.
If $E\cong\mathbb{P}^1\times\mathbb{P}^1$, we let $\mathbf{s}$ be a section of the natural projection $E\to C$ with $\mathbf{s}^2=0$.
Similarly, if $E\cong\mathbb{F}_2$, we let $\mathbf{s}$ be the section of the projection $E\to C$ with $\mathbf{s}^2=-2$.
In both cases, we let $\mathbf{l}$ be a fiber of the natural projection $E\to C_2$. Then
\begin{align*}
H\big\vert_{E}&\sim 2\mathbf{l},\\
-E\big\vert_{E}&\sim\mathbf{s}+a\mathbf{l}.
\end{align*}
Note that $-2=E^3=(\mathbf{s}+a\mathbf{l})^2=\mathbf{s}^2+2a$.
Thus, if $E\cong\mathbb{P}^1\times\mathbb{P}^1$, then $a=-1$, which gives
$$
(-K_X-uE)\big\vert_{E}\sim_{\mathbb{R}}(1+u)\mathbf{s}+(3-u)\mathbf{l}.
$$
Likewise, if $E\cong\mathbb{F}_2$, then $a=0$, which gives
$$
(-K_X-uE)\big\vert_{E}\sim_{\mathbb{R}}(1+u)\mathbf{s}+4\mathbf{l}.
$$
Observe also that $|Z-\mathbf{s}|\ne\varnothing$, because $f(Z)=C$. This implies that
$$
S\big(W^E_{\bullet,\bullet};Z\big)\leqslant S\big(W^E_{\bullet,\bullet};\mathbf{s}\big)=\frac{3}{22}\int_0^1\int_0^\infty \mathrm{vol}\big((-K_X-uE)\big\vert_{E}-v\mathbf{s}\big)dvdu.
$$
On the other hand, if $E\cong\mathbb{P}^1\times\mathbb{P}^1$, then
\begin{multline*}
\frac{3}{22}\int_0^1\int_0^\infty \mathrm{vol}\big((-K_X-uE)\big\vert_{E}-v\mathbf{s}\big)dvdu=\frac{3}{22}\int_0^1\int_0^{1+u}\big((1+u-v)\mathbf{s}+(3-u)\mathbf{l}\big)^2dvdu=\\
=\frac{3}{22}\int_0^1\int_0^{1+u}2(3-u)(1+u-v)dvdu=\frac{67}{88}.
\end{multline*}
Similarly, if $E\cong\mathbb{F}_2$, then
\begin{multline*}
\frac{3}{22}\int_0^1\int_0^\infty \mathrm{vol}\big((-K_X-uE)\big\vert_{E}-v\mathbf{s}\big)dvdu=\frac{3}{22}\int_0^1\int_0^{1+u}\big((1+u-v)\mathbf{s}+4\mathbf{l}\big)^2dvdu=\\
\frac{3}{22}\int_0^1\int_0^{1+u}2(3-u+v)(1+u-v)dvdu=\frac{41}{44}.
\end{multline*}
This shows that $S(W_{\bullet,\bullet}^E;Z)<1$, which contradicts the inequality $S(W_{\bullet,\bullet}^E;Z)\geqslant 1$ obtained earlier.
Hence, we conclude that the curve $Z$ is not contained in the $f$-exceptional divisor $E$.

Now, we let $S$ be a general surface in the linear system $|\pi^*(\mathcal{O}_{\mathbb{P}^2}(1))|$ such that $S\cap Z$ is not empty.
Fix a point $p\in Z\cap S$, and let $A$ be the fiber of the conic bundle $\pi$ that passes through $p$.
Then $\delta_p(X)\leqslant 1$, and the curve $A$ is smooth.
One the other hand, arguing exactly as in the proof of Lemma~\ref{lemma:pointless-2-13},
one can show that $\delta_p(X)\geqslant\frac{176}{169}$, which gives us the desired contradiction.
For convenience of the reader, let us present the details here.
As above, $E$ stands for the $f$-exceptional surface, $H=f^*(\mathcal{O}_{V}(1))$, and $u$ is a non-negative real number.
Then $-K_X-uS\sim_{\mathbb{R}} (2-u)H+(u-1)E$,
so the divisor $-K_X-uS$ is pseudoeffective $\iff$ $u\leqslant 2$.
For $u\in[0,2]$, let $P(u)$ be the positive part of the Zariski decomposition of the divisor $-K_X-uS$,
and let $N(u)$ be its negative part. Then
$$
P(u)=\left\{\aligned
&(2-u)H+(u-1)E\ \text{if $0\leqslant u\leqslant 1$}, \\
&(2-u)H\ \text{if $1\leqslant u\leqslant 2$},
\endaligned
\right.
$$
and
$$
N(u)=\left\{\aligned
&0\ \text{if $0\leqslant u\leqslant 1$}, \\
&(u-1)E\ \text{if $1\leqslant u\leqslant 2$}.
\endaligned
\right.
$$
Now, integrating $(P(u))^3$, we get $S_X(S)=\frac{13}{22}$,
so it follows from \cite[Theorem~3.3]{AbbanZhuang} and \cite[Corollary 1.102]{Book} that
$$
\delta_p(X)\geqslant \min\left\{\frac{1}{S_X(S)},\inf_{\substack{F/S\\ p\in C_S(F)}}\frac{A_S(F)}{S(W_{\bullet,\bullet}^S;F)}\right\}=
\min\left\{\frac{22}{13},\inf_{\substack{F/S\\ p\in C_S(F)}}\frac{A_S(F)}{S(W_{\bullet,\bullet}^S;F)}\right\}
$$
where the infimum is taken by all prime divisors $F$ over the surface $S$ such that $p\in C_S(F)$, and
$$
S\big(W_{\bullet,\bullet}^S; F\big)=
\frac{3}{22}\int_1^{2}\big(P(u)\big\vert_{S}\big)^2(u-1)\mathrm{ord}_{F}\big(E\big\vert_{S}\big)du+
\frac{3}{22}\int_{0}^{2}\int_0^\infty \mathrm{vol}\big(P(u)\big\vert_{S}-vF\big)dvdu.
$$
So, since $\delta_p(X)\leqslant 1<\frac{176}{169}$, we see  that there exists a prime divisor $F$ over $S$ such that
\begin{equation}
\label{equation:2-16}
S\big(W_{\bullet,\bullet}^S; F\big)>\frac{169}{176}A_S(F).
\end{equation}
Let us show that this inequality is impossible.
Observe that the surface $S$ is smooth by construction.
Moreover, it follows from the adjunction formula that  $-K_S\sim H\vert_{S}$.
In particular, the divisor $S$ is nef and big, $S$ is a smooth weak del Pezzo surface of degree $(-K_S)^2=4$.
However, the divisor $-K_S$ may not be ample.
Indeed, if $Z=A$ and $Z$ is contained in the $f$-exceptional divisor $E$,
then $E\simeq\mathbb{F}_2$, $Z$ is the $(-2)$-curve in $E$, and $S\vert_{E}=Z+\mathbf{l}_1+\mathbf{l}_2$
for two distinct fibers $\mathbf{l}_1$ and $\mathbf{l}_2$ of the natural projection $E\to C$.
In this case, the divisor $-K_S$ intersects both curves $\mathbf{l}_1$ and $\mathbf{l}_2$ trivially,
and these are the only (irreducible) curves in $S$ that have trivial intersection with the anticanonical divisor $-K_S$.
However, this is the only case when the divisor $-K_S$ is not ample.
Thus, since we already proved that $Z\not\subset E$, we see that $S$ is a smooth del Pezzo surface of degree $4$.

We prefer to think of $S$ as of a complete intersection of two quadrics in $\mathbb{P}^4$.
Then $E\vert_{S}$ is a smooth conic, and $A$ is also a smooth conic in $S$.
These two conics are different, since $E\vert_S$ is not contracted by $\pi$.
Moreover, we have
$$
P(u)\big\vert_{S}=\left\{\aligned
&-K_S+(1-u)A\ \text{if $0\leqslant u\leqslant 1$}, \\
&(2-u)(-K_S)\ \text{if $1\leqslant u\leqslant 2$}.
\endaligned
\right.
$$
This gives
\begin{multline*}
S\big(W_{\bullet,\bullet}^S; F\big)=\frac{3}{22}\int_1^{2}\big(P(u)\big\vert_{S}\big)^2(u-1)\mathrm{ord}_{F}\big(E\big\vert_{S}\big)du+\frac{3}{22}\int_{0}^{1}\int_0^\infty\mathrm{vol}\big(-K_S+(1-u)A-vF\big)dvdu+\\
+\frac{3}{22}\int_{1}^{2}\int_0^\infty\mathrm{vol}\big((2-u)(-K_S)-vF\big)dvdu\leqslant \frac{3}{22}\int_1^{2}\big(P(u)\big\vert_{S}\big)^2(u-1)A_{S}(F)du+\\
+\frac{3}{22}\int_{0}^{1}\int_0^\infty\mathrm{vol}\big(-K_S+(1-u)A-vF\big)dvdu+\frac{3}{22}\int_{1}^{2}\int_0^\infty\mathrm{vol}\big((2-u)(-K_S)-vF\big)dvdu=\\
=\frac{3A_{S}(F)}{22}\int_1^{2}(2-u)^2(-K_S)^2(u-1)du+\frac{3}{22}\int_{0}^{1}\int_0^\infty\mathrm{vol}\big(-K_S+(1-u)A-vF\big)dvdu+\\
+\frac{3}{22}\int_{1}^{2}(2-u)^3\int_0^\infty\mathrm{vol}\big(-K_S-vF\big)dvdu=\frac{6A_{S}(F)}{11}\int_1^{2}(2-u)^2(u-1)du+\\
+\frac{3}{22}\int_{0}^{1}\int_0^\infty\mathrm{vol}\big(-K_S+(1-u)A-vF\big)dvdu+\frac{3}{22}\int_{1}^{2}(2-u)^3\int_0^\infty\mathrm{vol}\big(-K_S-vF\big)dvdu\leqslant\\
\leqslant\frac{A_{S}(F)}{22}+\frac{3}{22}\int_{0}^{1}\int_0^\infty\mathrm{vol}\big(-K_S+(1-u)A-vF\big)dvdu+\frac{3}{22}\int_{1}^{2}(2-u)^3(-K_S)^2\frac{A_{S}(F)}{\delta(S)}du=\\
=\frac{A_{S}(F)}{22}+\frac{3}{22}\int_{0}^{1}\int_0^\infty\mathrm{vol}\big(-K_S+(1-u)A-vF\big)dvdu+\frac{9A_{S}(F)}{22}\int_{1}^{2}(2-u)^3du=\\
=\frac{13A_{S}(F)}{88}+\frac{3}{22}\int_{0}^{1}\int_0^\infty\mathrm{vol}\big(-K_S+(1-u)A-vF\big)dvdu,
\end{multline*}
because $\delta(S)=\frac{4}{3}$ by \cite[Lemma 2.12]{Book},
and $\mathrm{ord}_{F}(E\vert_{S})\leqslant A_{S}(F)$,
since $(S,E\vert_{S})$ has log canonical singularities.
Moreover, if $u\in[0,1]$, then it follows from \cite[Lemma 23]{CheltsovFujitaKishimotoOkada} that
$$
\frac{1}{(-K_S+(1-u)A)^2}\int_0^\infty\mathrm{vol}\big(-K_S+(1-u)A-vF\big)\leqslant A_{S}(F)\frac{u^2-10u+28}{24},
$$
where $(-K_S+(1-u)A)^2=8-4u$. Thus, we have
$$
S\big(W_{\bullet,\bullet}^S; F\big)\leqslant\frac{13A_{S}(F)}{88}+\frac{3A_{S}(F)}{22}\int_{0}^{1}\frac{(u^2-10u+28)(8-4u)}{24}du=\frac{169}{176}A_{S}(F),
$$
which contradicts \eqref{equation:2-16}. This completes the proof of the lemma.
\end{proof}

\begin{lemma}
\label{lemma:pointless-2-19}
Suppose that $X$ is contained in Family \textnumero 2.19 and $X(\Bbbk)=\varnothing$.
Then $X_{\mathbb{C}}$ is K-polystable.
\end{lemma}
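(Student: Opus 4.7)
I would follow closely the argument of Lemma~\ref{lemma:pointless-2-16}. Suppose for contradiction that $X_{\mathbb{C}}$ is not K-polystable. By \cite{Fujita2019,Li2017} we have $\delta(X_{\mathbb{C}})\leqslant 1$, and by \cite{LXZ} together with \cite[Theorem~4.4]{Zhuang2021} or \cite[Corollary~4.14]{Zhuang2021} there exists a prime divisor $\mathbf{F}$ over $X$, defined over $\Bbbk$, with $\delta(X_{\mathbb{C}})=A_X(\mathbf{F})/S_X(\mathbf{F})$. Its centre $Z$ is neither a surface, by \cite[Theorem~3.17]{Book}, nor a point, since $X(\Bbbk)=\varnothing$; hence $Z$ is a geometrically irreducible curve defined over $\Bbbk$. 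By \cite{MoMu81,MoMu83} and Lemma~\ref{lemma:Prokhorov} both extremal contractions of $X_{\mathbb{C}}$ descend to $\Bbbk$: a blowup $f\colon X\to V$ along a smooth curve $C$, with $V$ a $\Bbbk$-form of the smooth intersection of two quadrics in $\mathbb{P}^5$, and a $\mathbb{P}^1$-fibration $\pi\colon X\to W$ whose geometric model is a conic bundle over $\mathbb{P}^2$. As in Lemma~\ref{lemma:pointless-2-16}, pushing down a hyperplane pull-back $S=\pi^{*}(\ell)$ to $V$ gives a divisor cut out by a quadric, so Lemma~\ref{lemma:SB} applied successively to $W$ and to $V$ forces $W\simeq\mathbb{P}^2$ and identifies $V$ with an actual (pointless) smooth complete intersection of two quadrics in $\mathbb{P}^5$; Lemma~\ref{lemma:nonempty} then rules out $f(Z)$ being a point.

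I would then branch on whether $Z$ lies in the $f$-exceptional divisor $E$. If $Z\subset E$, I apply Theorem~\ref{theorem:Hamid-Ziquan-Kento-1} with the flag $Z\subset E$. Writing $-K_X\sim 2H-E$ with $H=f^{*}\mathcal{O}_V(1)$, and invoking the normal bundle description of \cite{KuznetsovProkhorovShramov} to reduce $E$ to the two possibilities $\mathbb{P}^1\times\mathbb{P}^1$ and $\mathbb{F}_2$, I would compute $S_X(E)$ explicitly from the Zariski decomposition of $-K_X-uE$ and bound $S(W^E_{\bullet,\bullet};Z)\leqslant S(W^E_{\bullet,\bullet};\mathbf{s})$, where $\mathbf{s}$ is a minimal section of $E\to C$, to force $A_X(\mathbf{F})/S_X(\mathbf{F})>1$ in each sub-case, a contradiction.

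In the remaining case $Z\not\subset E$, I would choose a line $\ell\subset\mathbb{P}^2$ (through $\pi(Z)$ when $\pi(Z)$ is a point, generic otherwise) so that $S=\pi^{*}(\ell)$ is smooth and, by the preceding case, a genuine del Pezzo surface rather than merely weak. Picking $p\in Z\cap S$ and letting $A$ be the fibre of $\pi$ through $p$, I apply Theorem~\ref{theorem:Hamid-Ziquan-Kento} to the flag $p\in A\subset S$. Taking the Zariski decomposition of $-K_X-uS$ on its pseudoeffective range yields $S_X(S)$ explicitly, and for every prime divisor $F$ over $S$ centred at $p$ the bound on $S(W^S_{\bullet,\bullet};F)$ splits into an $\mathrm{ord}_F(E|_S)$-term (controlled by log canonicity of $(S,E|_S)$) plus an integrated volume, estimated via \cite[Lemma~23]{CheltsovFujitaKishimotoOkada} on the local $\delta$-invariant of $(S,-K_S+(1-u)A)$ on the range where $P(u)|_S$ still has an $A$-component, combined with the value of $\delta(S)$ from \cite[Lemma~2.12]{Book} on the complementary range. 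A clean bound $S(W^S_{\bullet,\bullet};F)<A_S(F)$ will then contradict $\delta_p(X)\leqslant 1$.

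The main obstacle I anticipate is the same delicate numerical bookkeeping as in Lemma~\ref{lemma:pointless-2-16}: the combined upper bound on $S(W^S_{\bullet,\bullet};F)$ has to come out strictly below $A_S(F)$ by a positive margin, and degenerate configurations of $E|_S$ (arising when $\ell$ meets the discriminant of $\pi$ non-transversally, or meets $f(E)$ in an unexpected way) must be ruled out so that $(S,E|_S)$ stays log canonical. Genericity of $\ell$, made available once the $Z\subset E$ branch has been excluded, should suffice.
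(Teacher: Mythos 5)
Your proposal is built on a misidentification of the geometry of Family \textnumero 2.19, and this is a genuine gap rather than a presentational issue. You assume, by analogy with Family \textnumero 2.16, that the second extremal contraction of $X_{\mathbb{C}}$ is a conic bundle over $\mathbb{P}^2$, and your entire argument (the surface $S=\pi^{*}(\ell)$, the fibre $A$ through $p$, the flag $p\in A\subset S$, the appeal to \cite[Lemma~23]{CheltsovFujitaKishimotoOkada}) hangs on that structure. But a member of Family \textnumero 2.19 has \emph{two divisorial} extremal contractions and no fibration structure at all: it is simultaneously the blowup of the complete intersection of two quadrics $V\subset\mathbb{P}^5$ along a line and the blowup of $\mathbb{P}^3$ along a smooth curve of degree $5$ and genus $2$ (this is the diagram at the start of the paper's proof, with $U$ a pointless form of $\mathbb{P}^3$ and $V$ a pointless form of the $(2,2)$ intersection). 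There is no base $W$ to which Lemma~\ref{lemma:SB} could be applied, and no fibres of a conic bundle to build a flag from, so the proof as proposed cannot get off the ground.

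The paper's actual argument is necessarily quite different and exploits the $\mathbb{P}^3$-side of the picture. It first produces (via arithmetic versions of \cite[Lemmas~1.42 and 1.45]{Book}) a divisor $D\sim_{\mathbb{Q}}-K_X$ with $Z_{\mathbb{C}}$ in the non-klt locus of $(X_{\mathbb{C}},\lambda D_{\mathbb{C}})$ for $\lambda<\tfrac34$; shows the only surface that could appear in that locus is the proper transform $Q$ of the unique quadric $\mathscr{S}\subset U$ containing the genus-$2$ curve (planes are excluded by Lemma~\ref{lemma:plane} because $U\not\simeq\mathbb{P}^3$); rules out $Z_{\mathbb{C}}\subset Q_{\mathbb{C}}$ by an Abban--Zhuang computation on $Q_{\mathbb{C}}\simeq\mathbb{P}^1\times\mathbb{P}^1$; concludes that $\pi(Z)_{\mathbb{C}}$ is a line in $U_{\mathbb{C}}\simeq\mathbb{P}^3$ meeting $\mathscr{S}_{\mathbb{C}}$ in two points swapped by Galois; and finally runs the flag $\overline{Z}_{\mathbb{C}}\subset S$ for $S$ the proper transform of a general plane through that line (a smooth quartic del Pezzo surface), obtaining $S(W^{S}_{\bullet,\bullet};Z_{\mathbb{C}})=\tfrac{119}{208}$ or $\tfrac{183}{208}$ according to whether the line misses or meets the genus-$2$ curve --- both below $1$, giving the contradiction. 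If you want to repair your proof, you would need to start from this correct description of the two birational contractions and the pointlessness of both $U$ and $V$, rather than from the conic-bundle template of Lemma~\ref{lemma:pointless-2-16}.
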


\begin{proof}
Using \cite{MoMu81,MoMu83} and Lemmas~\ref{lemma:Prokhorov} and \ref{lemma:nonempty}, we see that there exists the following diagram:
$$
\xymatrix@R=1em{
&X\ar@{->}[ld]_{\pi}\ar@{->}[rd]^{\phi}&\\%
U&&V}
$$
where $U$ is a pointless form of $\mathbb{P}^3$, $V$ is a pointless form of a complete intersection of two quadrics in $\mathbb{P}^5$,
$\pi$ is the blowup of a smooth geometrically irreducible curve $\mathscr{C}$ of genus $2$ with $-K_U\cdot \mathscr{C}=40$,
and $\phi$ is the blowup of a smooth geometrically rational curve $\mathscr{L}$ such that $-K_V\cdot\mathscr{L}=2$.
Moreover, the curve $\mathscr{C}$ is contained in a unique smooth surface $\mathscr{S}\subset U$ with $-K_U\sim 2\mathscr{S}$.
In the following, we will denote by $E$ the $\pi$-exceptional surfaces,
and we will denote by $Q$ the proper transform of the surface $\mathscr{S}$ on the 3-fold $X$.
Over $\mathbb{C}$, we have $U_{\mathbb{C}}\simeq\mathbb{P}^3$, the surface $\mathscr{S}_{\mathbb{C}}$ is a smooth quadric surface,
the curve $\mathscr{C}_{\mathbb{C}}$ is a divisor of degree $(3,2)$ in $\mathscr{S}_{\mathbb{C}}\simeq\mathbb{P}^1\times\mathbb{P}^1$,
and the curve $\mathscr{L}_{\mathbb{C}}$ is a line in $V_{\mathbb{C}}$.

Using an arithmetic analogues of \cite[Lemma 1.42]{Book} and \cite[Lemma 1.45]{Book},
we see that there is an~effective $\mathbb{Q}$-divisor $D$ on the 3-fold $X$ defined over $\Bbbk$
such that $D\sim_{\mathbb{Q}}-K_X$ and $Z_{\mathbb{C}}$ is contained in the locus $\mathrm{Nklt}(X_{\mathbb{C}},\lambda D_{\mathbb{C}})$
for some positive rational number $\lambda<\frac{3}{4}$.
Moreover, if $R$ is an irreducible (but possibly geometrically reducible) surface in $X$ such that
$R_{\mathbb{C}}$ is contained in the locus $\mathrm{Nklt}(X_{\mathbb{C}},\lambda D_{\mathbb{C}})$,
then it follows from the proof of \cite[Lemma 4.43]{Book} that either $R_{\mathbb{C}}=Q_{\mathbb{C}}$ or $\pi_{\mathbb{C}}(R_{\mathbb{C}})$ is a plane in $U_{\mathbb{C}}\simeq\mathbb{P}^3$.
Since $U\not\simeq\mathbb{P}^3$, the latter possibility is excluded by Lemma~\ref{lemma:plane}.
Thus, the surface $Q_{\mathbb{C}}$ is the only surface that can (a priori) be contained in the locus $\mathrm{Nklt}(X_{\mathbb{C}},\lambda D_{\mathbb{C}})$.

We claim that $Z_{\mathbb{C}}\not\subset Q_{\mathbb{C}}$.
Indeed, it follows from the proof of \cite[Lemma~4.41]{Book} that $Z_{\mathbb{C}}\ne E_{\mathbb{C}}\cap Q_{\mathbb{C}}$.
Moreover, after a very minor modification, the proof of \cite[Lemma~4.41]{Book} also gives $Z\not\subset Q$.
To see this, suppose that $Z_{\mathbb{C}}\subset Q_{\mathbb{C}}$.
Let $H$ be a~hyperplane in $\mathbb{P}^3_{\mathbb{C}}$, and let $u$ be a~non-negative real number.
Then the divisor $-K_{X_{\mathbb{C}}}-uQ_{\mathbb{C}}$ is nef for $u\in[0,1]$, and it is not pseudo-effective for $u>2$.
Moreover, if $u\in[1,2]$, then the positive part of the Zariski decosmposition of the divisor $-K_{X_{\mathbb{C}}}-uQ_{\mathbb{C}}$
is $(4-2u)\pi_{\mathbb{C}}^*(H)$, and its negative part is $(u-1)E_{\mathbb{C}}$.
Furthermore, we have $S_{X_{\mathbb{C}}}(Q_{\mathbb{C}})<1$ by \cite[Theorem~3.17]{Book}.
Since we know that $Z_{\mathbb{C}}\ne E_{\mathbb{C}}\cap Q_{\mathbb{C}}$,
it follows from \cite[Corollary 1.110]{Book} that
$$
\int_{0}^{1}\int_0^\infty \mathrm{vol}\Big(\big((4-2u)\pi_{\mathbb{C}}^*(H)+(u-1)E_{\mathbb{C}}\big)\big\vert_{Q_{\mathbb{C}}}-vZ\Big)dvdu+
\int_{1}^{2}\int_0^\infty \mathrm{vol}\big((4-2u)\pi_{\mathbb{C}}^*(H)\big\vert_{Q_{\mathbb{C}}}-vZ\big)dvdu\geqslant \frac{26}{3}.
$$
Let $\mathbf{l}_1$ be a curve in  $Q_{\mathbb{C}}\simeq\mathscr{S}_{\mathbb{C}}\simeq\mathbb{P}^1\times\mathbb{P}^1$ of degree $(0,1)$,
and let $\mathbf{l}_2$ be a curve in $Q_{\mathbb{C}}$ of degree $(1,0)$.
Then either $|Z_{\mathbb{C}}-\mathbf{l}_1|\ne\varnothing$ or $|Z_{\mathbb{C}}-\mathbf{l}_2|\ne\varnothing$ (or both).
In the former case, we get a contradiction:
\begin{multline*}
\int_{0}^{1}\int_0^\infty \mathrm{vol}\Big(\big((4-2u)\pi_{\mathbb{C}}^*(H)+(u-1)E_{\mathbb{C}}\big)\big\vert_{Q_{\mathbb{C}}}-vZ\Big)dvdu+
\int_{1}^{2}\int_0^\infty \mathrm{vol}\big((4-2u)\pi_{\mathbb{C}}^*(H)\big\vert_{Q_{\mathbb{C}}}-vZ\big)dvdu\leqslant\\
\leqslant\int_{0}^{1}\int_0^\infty \mathrm{vol}\Big(\big((4-2u)\pi_{\mathbb{C}}^*(H)+(u-1)E_{\mathbb{C}}\big)\big\vert_{Q_{\mathbb{C}}}-v\mathbf{l}_1\Big)dvdu+
\int_{1}^{2}\int_0^\infty \mathrm{vol}\big((4-2u)\pi_{\mathbb{C}}^*(H)\big\vert_{Q_{\mathbb{C}}}-v\mathbf{l}_1\big)dvdu=\\
=\int_{0}^{1}\int_{0}^{u+1}4(u+1-v)dvdu+\int_{1}^{2}\int_{0}^{4-2u}2(4-2u-v)(4-2u)dvdu=\frac{20}{3},
\end{multline*}
because $\big((4-2u)\pi_{\mathbb{C}}^*(H)+(u-1)E_{\mathbb{C}}\big)\vert_{Q_{\mathbb{C}}}$ is an $\mathbb{R}$-divisor of degree $(u+1,2)$ on $Q_{\mathbb{C}}$,
and $(4-2u)\pi_{\mathbb{C}}^*(H)\vert_{Q_{\mathbb{C}}}$ is an $\mathbb{R}$-divisor of degree $(4-2u,4-2u)$.
Similarly, if $|Z_{\mathbb{C}}-\mathbf{l}_2|\ne\varnothing$, we also a contradiction:
\begin{multline*}
\int_{0}^{1}\int_0^\infty \mathrm{vol}\Big(\big((4-2u)\pi_{\mathbb{C}}^*(H)+(u-1)E_{\mathbb{C}}\big)\big\vert_{Q_{\mathbb{C}}}-vZ\Big)dvdu+
\int_{1}^{2}\int_0^\infty \mathrm{vol}\big((4-2u)\pi_{\mathbb{C}}^*(H)\big\vert_{Q_{\mathbb{C}}}-vZ\big)dvdu\leqslant\\
\leqslant\int_{0}^{1}\int_0^\infty \mathrm{vol}\Big(\big((4-2u)\pi_{\mathbb{C}}^*(H)+(u-1)E_{\mathbb{C}}\big)\big\vert_{Q_{\mathbb{C}}}-v\mathbf{l}_2\Big)dvdu+
\int_{1}^{2}\int_0^\infty \mathrm{vol}\big((4-2u)\pi_{\mathbb{C}}^*(H)\big\vert_{Q_{\mathbb{C}}}-v\mathbf{l}_2\big)dvdu=\\
=\int_{0}^{1}\int_{0}^{2}2(u+1)(2-v)dvdu+\int_{1}^{2}\int_{0}^{4-2u}2(4-2u)(4-2u-v)dvdu=\frac{24}{3}.
\end{multline*}
The obtained contradictions show that $Z_{\mathbb{C}}\not\subset Q_{\mathbb{C}}$.

Since $Z_{\mathbb{C}}\not\subset Q_{\mathbb{C}}$, $Z_{\mathbb{C}}\subset\mathrm{Nklt}(X_{\mathbb{C}},\lambda D_{\mathbb{C}})$,
and the locus $\mathrm{Nklt}(X_{\mathbb{C}},\lambda D_{\mathbb{C}})$ does not contain surfaces except possibly for $Q_{\mathbb{C}}$,
it follows from the proof of \cite[Lemma 4.45]{Book} that the curve $Z$ is geometrically rational.
In particular, we see that $Z_{\mathbb{C}}\not\subset E_{\mathbb{C}}$, because the only rational curves in $E_{\mathbb{C}}$ are fibers of the natural projection
$E_{\mathbb{C}}\to\mathscr{C}_{\mathbb{C}}$, and $Z_{\mathbb{C}}$ could not be one of them, since $\mathscr{C}(\Bbbk)=\varnothing$.
Thus, we conclude that $\pi(Z)$ is a~geometrically rational rational curve in $U$ that is not contained in the surface $\mathscr{S}$.
Set $\overline{Z}=\pi(Z)$. Then it follows from the proof of \cite[Lemma 4.48]{Book} that $\overline{Z}_{\mathbb{C}}$ is a line $U_{\mathbb{C}}\simeq\mathbb{P}^3$.

Since $\overline{Z}_{\mathbb{C}}\not\subset \mathscr{S}_{\mathbb{C}}$,
the line $\overline{Z}_{\mathbb{C}}$ transversally intersects the quadric $\mathscr{S}_{\mathbb{C}}$ in two distinct points,
since otherwise the intersection $\overline{Z}_{\mathbb{C}}\cap \mathscr{S}_{\mathbb{C}}$ would consist of a single point defined over $\Bbbk$.
This implies that either the curves  $\overline{Z}_{\mathbb{C}}$ and $\mathscr{C}_{\mathbb{C}}$ are disjoint,
or they meet transversally in exactly two points.
This and Bertini theorem imply that a general plane in $U_{\mathbb{C}}\simeq\mathbb{P}^3$ that contains the line $\overline{Z}_{\mathbb{C}}$
intersects the curve $\mathscr{C}_{\mathbb{C}}$ by five distinct points in linearly general position,
because every trisecant of the curve $\mathscr{C}_{\mathbb{C}}\subset\mathbb{P}^3$ is contained in the quadric surface $\mathscr{S}_{\mathbb{C}}\simeq\mathbb{P}^1\times\mathbb{P}^1$.

Let $H$ be a~sufficiently general plane in $U_{\mathbb{C}}\simeq\mathbb{P}^3$ that contains the line  $\overline{Z}_{\mathbb{C}}$,
and let $S$ be its proper transform on $X_{\mathbb{C}}$.
Then $S$ is a smooth del Pezzo surface of degree $4$.
Let us apply Abban--Zhuang method to the flag $\overline{Z}_{\mathbb{C}}\subset S$.
As above, let $u$ be a~non-negative real number. Then
$$
-K_{X_{\mathbb{C}}}-uS\sim_{\mathbb{R}} (2-u)\pi_{\mathbb{C}}^*(H)+Q_{\mathbb{C}}.
$$
This implies that the divisor $-K_{X_{\mathbb{C}}}-uS$ is nef for every $u\in[0,1]$, and it is not pseudo-effective for $u>2$.
For~$u\in[0,2]$, let $P(u)$ be the positive part of the Zariski decomposition of the divisor $-K_{X_{\mathbb{C}}}-uS$,
and let $N(u)$ be the negative part of the Zariski decomposition of the divisor $-K_{X_{\mathbb{C}}}-uS$. Then
$$
P(u)=
\left\{\aligned
&(4-u)\pi_{\mathbb{C}}^*(H)-E_{\mathbb{C}}\ \text{if $0\leqslant u\leqslant 1$}, \\
&(2-u)\big(3\pi^*(H)-E_{\mathbb{C}}\big)\ \text{if $1\leqslant u\leqslant 2$},
\endaligned
\right.
$$
and
$$
N(u)=
\left\{\aligned
&0\ \text{if $0\leqslant u\leqslant 1$}, \\
&(1-u)Q_{\mathbb{C}}\ \text{if $1\leqslant u\leqslant 2$}.
\endaligned
\right.
$$
In particular, we see that the curve $Z_{\mathbb{C}}$ is not contained in the support of the divisor $N(u)$ for $u\in[0,2]$.
Moreover, we have $S_X(S)<1$ by \cite[Theorem~3.17]{Book},
so \cite[Corollary 1.110]{Book} gives $S(W_{\bullet,\bullet}^S; Z_{\mathbb{C}})\geqslant 1$, where
$$
S\big(W_{\bullet,\bullet}^S; Z_{\mathbb{C}}\big)=\frac{3}{26}\int_{0}^{2}\int_0^\infty \mathrm{vol}\big(P(u)\big\vert_{S}-vZ_{\mathbb{C}}\big)dvdu.
$$
Let us compute $S(W_{\bullet,\bullet}^S;Z_{\mathbb{C}})$.
In the case when $\overline{Z}_{\mathbb{C}}\cap\mathscr{C}_{\mathbb{C}}=\varnothing$, this is done in the proof of \cite[Lemma~4.49]{Book},
but we present the computations here for consistency.
Let $\varpi\colon S\to H$ be the birational morphism induced by $\pi_{\mathbb{C}}$.
Then $\varpi$ contracts $5$ disjoint smooth curves $\mathbf{e}_1,\mathbf{e}_2,\mathbf{e}_3,\mathbf{e}_4,\mathbf{e}_5$ such that $E_{\mathbb{C}}\vert_{S}=\mathbf{e}_1+\mathbf{e}_2+\mathbf{e}_3+\mathbf{e}_4+\mathbf{e}_5$.
Let $\ell$ be the proper transform of a~general line in the plane $H$ on the surface $S$, and let  $\mathcal{C}=Q_{\mathbb{C}}\vert_{S}$.
Then $\mathcal{C}^2=-1$ and $\mathcal{C}\sim 2\ell-\mathbf{e}_1-\mathbf{e}_2-\mathbf{e}_3-\mathbf{e}_4-\mathbf{e}_5$.
Note that $\varpi(\mathcal{C})$ is the smooth conic $H\cap\mathscr{S}_{\mathbb{C}}$.

Suppose that $\overline{Z}_{\mathbb{C}}\cap\mathscr{C}_{\mathbb{C}}=\varnothing$.
Then $Z_{\mathbb{C}}\sim\ell$. Thus, if $u\in[0,1]$ and $v\in\mathbb{R}_{\geqslant 0}$, then
$$
P(u)\vert_{S}-vZ_{\mathbb{C}}\sim_{\mathbb{R}} (4-u-v)\ell-\sum_{i=1}^{5}\mathbf{e}_i\sim_{\mathbb{R}} (2-u-v)\ell+\mathcal{C},
$$
which implies that $P(u)\big\vert_{S}-vZ_{\mathbb{C}}$ is not pseudo-effective for $v>2-u$,
and  $P(u)\big\vert_{S}-vZ_{\mathbb{C}}$ is nef for $v\leqslant\frac{3-2u}{2}$.
Moreover, if $\frac{3-2u}{2}\leqslant v\leqslant 2-u$, then the Zariski decomposition of the divisor $P(u)\big\vert_{S}-vZ_{\mathbb{C}}$ is
$$
\underbrace{(2-u-v)\big(5\ell-2\mathbf{e}_1-2\mathbf{e}_2-2\mathbf{e}_3-2\mathbf{e}_4-2\mathbf{e}_5\big)}_{\text{positive part}}+\underbrace{(2u+2v-3)\mathcal{C}}_{\text{negative part}}.
$$
Thus, if $u\in[0,1]$, then
$$
\mathrm{vol}\big(P(u)\big\vert_{S}-vZ_{\mathbb{C}}\big)=
\left\{\aligned
&(4-u-v)^2-5\ \text{if $0\leqslant v\leqslant\frac{3-2u}{2}$}, \\
&5(2-u-v)^2\ \text{if $\frac{3-2u}{2}\leqslant v\leqslant 2-u$}.
\endaligned
\right.
$$
Similarly, if $u\in[1,2]$ and $v\in\mathbb{R}_{\geqslant 0}$, then
$$
P(u)\vert_{S}-vZ_{\mathbb{C}}\sim_{\mathbb{R}} (6-3u-v)\ell-(2-u)\sum_{i=1}^{5}\mathbf{e}_i\sim_{\mathbb{R}} (2-u-v)\ell+(2-u)\mathcal{C},
$$
which implies that  $P(u)\big\vert_{S}-vZ_{\mathbb{C}}$ is not pseudo-effective for $v>2-u$,
and $P(u)\big\vert_{S}-vZ_{\mathbb{C}}$ is nef for $v\leqslant\frac{2-u}{2}$.
Moreover, if $\frac{2-u}{2}\leqslant v\leqslant 2-u$, then the Zariski decomposition of the divisor $P(u)\big\vert_{S}-vZ_{\mathbb{C}}$~is
$$
\underbrace{(2-u-v)\big(5\ell-2\mathbf{e}_1-2\mathbf{e}_2-2\mathbf{e}_3-2\mathbf{e}_4-2\mathbf{e}_5\big)}_{\text{positive part}}+\underbrace{(2u+v-3)\mathcal{C}}_{\text{negative part}}.
$$
Therefore, if $u\in[1,2]$, then
$$
\mathrm{vol}\big(P(u)\big\vert_{S}-vZ_{\mathbb{C}}\big)=
\left\{\aligned
&(6-3u-v)^2-5(2-u)^2\ \text{if $0\leqslant v\leqslant\frac{2-u}{2}$}, \\
&5(2-u-v)^2\ \text{if $\frac{2-u}{2}\leqslant v\leqslant 2-u$}.
\endaligned
\right.
$$
Thus, we have
\begin{multline*}
S\big(W_{\bullet,\bullet}^{S};Z_{\mathbb{C}}\big)=\frac{3}{26}\int_{0}^{1}\int_{0}^{\frac{3-2u}{2}}((4-u-v)^2-5)dudv+\frac{1}{10}\int_{0}^{1}\int_{\frac{3-2u}{2}}^{2-u}5(2-u-v)^2dudv+\\
+\frac{3}{26}\int_{1}^{2}\int_{0}^{\frac{2-u}{2}}\Big((6-3u-v)^2-5(2-u)^2\Big)dudv+\frac{1}{10}\int_{1}^{2}\int_{\frac{2-u}{2}}^{2-u}5(2-u-v)^2dudv=\frac{119}{208}<1.
\end{multline*}
This shows that $\overline{Z}_{\mathbb{C}}\cap\mathscr{C}_{\mathbb{C}}\ne\varnothing$.
Hence, the intersection $\overline{Z}_{\mathbb{C}}\cap\mathscr{C}_{\mathbb{C}}$ consists of two points
among $\varpi(\mathbf{e}_1)$, $\varpi(\mathbf{e}_2)$, $\varpi(\mathbf{e}_3)$, $\varpi(\mathbf{e}_4)$, $\varpi(\mathbf{e}_5)$.
Without loss of generality, we may assume that $\overline{Z}_{\mathbb{C}}\cap\mathscr{C}_{\mathbb{C}}=\varpi(\mathbf{e}_1)\cup \varpi(\mathbf{e}_2)$.
Then we have $Z_{\mathbb{C}}\sim\ell-\mathbf{e}_1-\mathbf{e}_2$. Thus, if $u\in[0,1]$ and $v\in\mathbb{R}_{\geqslant 0}$, then
$$
P(u)\vert_{S}-vZ_{\mathbb{C}}\sim_{\mathbb{R}} (4-u-v)\ell-(1-v)\mathbf{e}_1-(1-v)\mathbf{e}_2-\mathbf{e}_3-\mathbf{e}_4-\mathbf{e}_5.
$$
This implies that $P(u)\big\vert_{S}-vZ_{\mathbb{C}}$ is pseudo-effective if and only if $v\leqslant\frac{5-2u}{2}$.
Moreover, this divisor is nef for $v\leqslant 1$.
Furthermore, if $1\leqslant v\leqslant 2-u$, then the Zariski decomposition of the divisor $P(u)\big\vert_{S}-vZ_{\mathbb{C}}$ is
$$
\underbrace{(4-u-v)\ell-\mathbf{e}_3-\mathbf{e}_4-\mathbf{e}_5\big)}_{\text{positive part}}+\underbrace{(v-1)\big(\mathbf{e}_1+\mathbf{e}_2\big)}_{\text{negative part}}.
$$
Finally, if $2-u\leqslant v\leqslant \frac{5-2u}{2}$, then the Zariski decomposition of the divisor $P(u)\big\vert_{S}-vZ_{\mathbb{C}}$ is
$$
\underbrace{(5-2u-2v)\big(2\ell-\mathbf{e}_3-\mathbf{e}_4-\mathbf{e}_5\big)}_{\text{positive part}}+\underbrace{(v-1)\big(\mathbf{e}_1+\mathbf{e}_2\big)+(v+u-2)\big(L_{34}+L_{35}+L_{45}\big)}_{\text{negative part}},
$$
where $L_{34}$, $L_{35}$, $L_{45}$ are $(-1)$-curves such that $L_{34}\sim\ell-\mathbf{e}_3-\mathbf{e}_4$, $L_{35}\sim\ell-\mathbf{e}_3-\mathbf{e}_5$, $L_{45}\sim\ell-\mathbf{e}_4-\mathbf{e}_5$.
Thus, if $u\in[0,1]$, then
$$
\mathrm{vol}\big(P(u)\big\vert_{S}-vZ_{\mathbb{C}}\big)=
\left\{\aligned
&u^2+2uv-v^2-8u-4v+11\ \text{if $0\leqslant v\leqslant 1$}, \\
&u^2+2uv+v^2-8u-8v+13\ \text{if $1\leqslant v\leqslant 2-u$}, \\
&(5-2u-2v)^2\ \text{if $2-u\leqslant v\leqslant \frac{5-2u}{2}$}.
\endaligned
\right.
$$
Similarly, if $u\in[1,2]$ and $v\in\mathbb{R}_{\geqslant 0}$, then
$$
P(u)\vert_{S}-vZ_{\mathbb{C}}\sim_{\mathbb{R}} (6-3u-v)\ell-(2-u-v)\big(\mathbf{e}_1+\mathbf{e}_2\big)-(2-u)\big(\mathbf{e}_3+\mathbf{e}_4+\mathbf{e}_5\big).
$$
This implies that  $P(u)\big\vert_{S}-vZ_{\mathbb{C}}$ is pseudo-effective if and only if $v\leqslant\frac{6-3u}{2}$,
and this divisor is nef if and only if $v\leqslant 2-u$.
Furthermore, if $2-u\leqslant v\leqslant \frac{6-3u}{2}$, then the Zariski decomposition of the divisor $P(u)\big\vert_{S}-vZ_{\mathbb{C}}$~is
$$
\underbrace{(6-3u-2v)\big(2\ell-\mathbf{e}_3-\mathbf{e}_4-\mathbf{e}_5\big)}_{\text{positive part}}+\underbrace{(v+u-2)\big(\mathbf{e}_1+\mathbf{e}_2+L_{34}+L_{35}+L_{45}\big)}_{\text{negative part}}.
$$
Therefore, if $u\in[1,2]$, then
$$
\mathrm{vol}\big(P(u)\big\vert_{S}-vZ_{\mathbb{C}}\big)=
\left\{\aligned
&4u^2+2uv-v^2-16u-4v+16\ \text{if $0\leqslant v\leqslant 2-u$}, \\
&(6-3u-2v)^2\ \text{if $2-u\leqslant v\leqslant \frac{6-3u}{2}$}.
\endaligned
\right.
$$
Now, integrating, we obtain $S(W_{\bullet,\bullet}^{S};Z_{\mathbb{C}})=\frac{183}{208}$, which contradicts the inequality $S(W_{\bullet,\bullet}^{S};Z_{\mathbb{C}})\geqslant 1$ obtained earlier.
This completes the proof.
\end{proof}

\begin{lemma}
\label{lemma:pointless-2-21}
Suppose that $X$ is contained in Family \textnumero 2.21 and $X(\Bbbk)=\varnothing$.
Then $X_{\mathbb{C}}$ is K-polystable.
\end{lemma}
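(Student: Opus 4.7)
The plan is to follow the general scheme used throughout this section. Suppose, for a contradiction, that $X_{\mathbb{C}}$ is not K-polystable. By the valuative criterion \cite{Fujita2019,Li2017} together with \cite[Theorem 1.2]{LXZ} and the descent results \cite[Theorem 4.4]{Zhuang2021}, \cite[Corollary 4.14]{Zhuang2021}, there exists a prime divisor $\mathbf{F}$ over $X$, defined over $\Bbbk$, with
$\delta(X_{\mathbb{C}}) = A_X(\mathbf{F})/S_X(\mathbf{F}) \leqslant 1$.
The center $Z$ of $\mathbf{F}$ on $X$ cannot be a surface by \cite[Theorem 3.17]{Book} and cannot be a closed point since $X(\Bbbk) = \varnothing$; hence $Z$ is a geometrically irreducible curve defined over $\Bbbk$.

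Family \textnumero 2.21 is the second family excluded from Lemma~\ref{lemma:Prokhorov}, meaning that the two extremal rays of $\mathrm{NE}(X_{\mathbb{C}})$ may be interchanged by $\mathrm{Gal}(\mathbb{C}/\Bbbk)$. The associated birational contractions $\pi_i \colon X_{\mathbb{C}} \to V_i$ ($i=1,2$) are of the same geometric type, namely blowups of a smooth quadric $3$-fold along a rational quartic curve. If the two rays happen to be individually Galois-invariant, each $\pi_i$ descends to $\Bbbk$ and we conclude exactly as in Lemmas~\ref{lemma:pointless-2-13} and~\ref{lemma:pointless-2-16}: Corollary~\ref{corollary:ICMS} identifies the $\Bbbk$-form of the quadric with a smooth quadric in $\mathbb{P}^4$, Lemma~\ref{lemma:nonempty} makes it pointless, and the Abban--Zhuang method applied on a flag $p \in A \subset S$ (with $S$ a hyperplane pull-back and $A$ a suitable auxiliary curve through a chosen point $p \in Z$) forces $\delta_p(X_{\mathbb{C}}) > 1$ via direct Zariski-decomposition estimates.

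The genuinely new case is when the two extremal contractions are Galois-conjugate, so only the sum $E := E_1 + E_2$ of their exceptional divisors descends to $\Bbbk$ and neither $\pi_i$ is defined over $\Bbbk$. The plan is to work intrinsically on $X$ using the Galois-invariant linear system $|-K_X|$. A very general $S \in |-K_X|$ is a smooth K3 surface whose Picard group is generated by $-K_X\vert_S$ (\cite{Knutsen}), and \cite[Theorem A]{AbbanZhuangSeshadri} yields a lower bound on $\delta(S,-K_X\vert_S)$ exceeding $3/4$. Consequently \cite[Corollary 5.6]{AbbanZhuangSeshadri} applies, and --- exactly as in Lemma~\ref{lemma:pointless-1-9} --- produces either an effective $D \sim_{\mathbb{Q}} -K_X$ with $(X,\tfrac12 D)$ not log canonical along $Z$, or a mobile subsystem $\mathcal{M} \subset |-nK_X|$ with analogous non-klt singularities. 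The mobile alternative is eliminated by Corti's inequality \cite[Theorem 3.1]{Corti2000} together with the very ampleness of $-K_X$ and the assumption $X(\Bbbk) = \varnothing$. In the divisorial alternative, tie-breaking combined with Kawamata's subadjunction \cite{Kawamata1997,Kawamata1998} forces $Z$ to be a smooth geometrically rational curve of small anticanonical degree; the finitely many remaining numerical types are then excluded by Lemma~\ref{lemma:SB} (ruling out $-K_X\cdot Z$ of appropriate parity under $C(\Bbbk)=\varnothing$) and a Sarkisov-link computation parallel to the one in the proof of Lemma~\ref{lemma:pointless-1-9}, which contradicts the multiplicity bound $\mathrm{mult}_Z(D) > 2$.

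The principal difficulty will lie in this last, Galois-conjugate, case. Since neither extremal contraction is available over $\Bbbk$, the Sarkisov-link multiplicity calculation has to be set up on the geometric model $X_{\mathbb{C}}$ and then exploited in a $\mathrm{Gal}(\mathbb{C}/\Bbbk)$-equivariant manner to constrain $Z$ on $X$ itself. Verifying that Knutsen's bound on $\delta(S,-K_X\vert_S)$ is large enough for the numerical invariants of Family \textnumero 2.21, and pinning down which small anticanonical degrees of a geometrically rational $Z$ survive descent, will be the bookkeeping-heavy but essentially routine part of the argument.
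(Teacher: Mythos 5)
There is a genuine gap, and it is structural rather than a matter of bookkeeping. Family \textnumero 2.21 contains smooth complex Fano 3-folds that are \emph{not} K-polystable (the paper records this immediately after the lemma), so no argument that ends with ``the Abban--Zhuang flag estimates force $\delta_p(X_{\mathbb{C}})>1$'' can be correct for an arbitrary member of the family: for some members such a $p$ with $\delta_p\leqslant 1$ genuinely exists. Any proof must therefore use a classification of \emph{where} the destabilizing centers can sit, and this is exactly what the paper imports from \cite{Malbon}: by the two Technical Theorems there, the center $Z_{\mathbb{C}}$ must lie in $E\cup E^\prime$ and must be contracted to a point by one of the two blowdowns $\pi,\pi^\prime\colon X_{\mathbb{C}}\to Q$, hence is a fiber of one projection $E\to C_4$ and is \emph{not} contained in the other exceptional divisor. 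Since $Z$ is defined over $\Bbbk$, this asymmetry prevents $\mathrm{Gal}(\mathbb{C}/\Bbbk)$ from swapping $E$ and $E^\prime$, so both contractions descend, and the one contracting $Z$ sends it to a $\Bbbk$-point of a quadric form, contradicting $X(\Bbbk)=\varnothing$ via Lemma~\ref{lemma:nonempty}. Your proposal never engages with this dichotomy; in your ``rays individually invariant'' case you assert a $\delta_p>1$ computation ``exactly as in Lemmas~\ref{lemma:pointless-2-13} and~\ref{lemma:pointless-2-16}'' that cannot hold uniformly, and you give no mechanism for extracting a $\Bbbk$-point from the destabilizing data.

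Your fallback for the ``Galois-conjugate'' case is also unfounded. The route through Knutsen's Seshadri bound and \cite[Corollary 5.6]{AbbanZhuangSeshadri}, as used in Lemma~\ref{lemma:pointless-1-9}, is a Picard-rank-one technique: it requires a very general $S\in|-K_X|$ to be a K3 surface with $\mathrm{Pic}(S)=\mathbb{Z}[-K_X\vert_S]$, and the subsequent contradiction in Lemma~\ref{lemma:pointless-1-9} uses $\mathrm{Pic}(X)=\mathbb{Z}[-K_X]$ to rule out surfaces in the non-klt locus and to control the effective cone of the blowup along the conic. For a member of Family \textnumero 2.21 one has $\rho(X_{\mathbb{C}})=2$, the anticanonical K3 sections carry at least two independent classes (the restrictions of the two quadric polarizations), and none of these inputs is available. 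So the ``genuinely new case'' you flag is not merely heavy on bookkeeping --- the tools you propose for it do not apply, and the actual proof needs the family-specific results of \cite{Malbon} (or an equivalent analysis) in their place.
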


\begin{proof}
The required assertion follows from \cite{Malbon}.
Indeed, over $\mathbb{C}$, we have the following diagram:
$$
\xymatrix@R=1em{
&X_{\mathbb{C}}\ar@{->}[dl]_{\pi}\ar@{->}[dr]^{\pi^\prime}&\\%
Q&&Q&}
$$
where $Q$ is a smooth quadric 3-fold in $\mathbb{P}^4_{\mathbb{C}}$,
and the morphisms $\pi$ and $\pi^\prime$ are blowups of smooth twisted quartic curves.
Denote by $E$ and $E^\prime$ the exceptional divisors of the blowups $\pi$ and $\pi^\prime$, respectively.
Then it follows from \cite[Technical~Theorem 1]{Malbon} that $\delta_p(X_{\mathbb{C}}\big)>1$
for every point $p\in X_{\mathbb{C}}$ with $p\not\in E\cup E^\prime$.
It follows from \cite[Technical~Theorem~1]{Malbon} that $Z_{\mathbb{C}}\subset E\cup E^\prime$,
and it follows from \cite[Technical~Theorem~2]{Malbon} that $\pi(Z_{\mathbb{C}})$ or $\pi^\prime(Z_{\mathbb{C}})$ is a point.
Hence, without loss of generality, we may assume that $\pi(Z_{\mathbb{C}})$ is a point, so that $Z_{\mathbb{C}}$ is a fiber of the natural projection $E\to C_4$,
where $C_4$ is a twisted quartic curve in $Q$ blown up by $\pi$.
Then $\pi^\prime(Z_{\mathbb{C}})$ is a line, which implies that~$Z_{\mathbb{C}}\not\subset E^\prime$.
Since $Z$ is defined over $\Bbbk$, we see that the divisors $E$ and $E^\prime$ are also defined over $\Bbbk$.
Then $\pi$ induces a birational morphism $X\to\mathscr{Q}$ defined over $\Bbbk$,
where $\mathscr{Q}$ is a form of a smooth quadric 3-fold.
This morphism contracts $Z$ to a point in $\mathscr{Q}$,
so $\mathscr{Q}(\Bbbk)\ne\varnothing$, which gives $X(\Bbbk)\ne\varnothing$ by Lemma~\ref{lemma:nonempty}, which is a contradiction.
\end{proof}

Note that Family \textnumero 2.21 contains smooth Fano 3-folds that are not K-polystable.

\begin{lemma}
\label{lemma:pointless-2-24}
Suppose that $X$ is contained in Family \textnumero 2.24 and $X(\Bbbk)=\varnothing$.
Then $X_{\mathbb{C}}$ is K-polystable.
\end{lemma}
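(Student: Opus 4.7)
I will follow the strategy outlined at the start of Section~\ref{section:pointless-3-folds}: assuming $X_{\mathbb{C}}$ is not K-polystable, one obtains a geometrically irreducible curve $Z\subset X$ defined over $\Bbbk$ that is the centre of a divisor $\mathbf{F}$ over $X_{\mathbb{C}}$ computing $\delta(X_{\mathbb{C}})\leqslant 1$, and the task is to derive a contradiction.  First, I would use \cite{MoMu81} and Lemma~\ref{lemma:Prokhorov} to set up the structure: the two Mori contractions of the complex $(1,2)$-divisor $X_{\mathbb{C}}\subset\mathbb{P}^2\times\mathbb{P}^2$, namely the conic bundle $\pi_1\colon X_{\mathbb{C}}\to\mathbb{P}^2$ with cubic discriminant and the $\mathbb{P}^1$-bundle $\pi_2\colon X_{\mathbb{C}}\to\mathbb{P}^2$, are geometrically distinguishable, so descend to $\Bbbk$-morphisms $\pi_i\colon X\to U_i$ onto Severi--Brauer surfaces $U_i$.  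If $U_2(\Bbbk)\ne\varnothing$ then $U_2\simeq\mathbb{P}^2$ by Lemma~\ref{lemma:SB}, and any $u\in U_2(\Bbbk)$ yields a fibre $\pi_2^{-1}(u)$ whose $\pi_1$-image in $U_1$ is a $\Bbbk$-curve of geometric degree~$1$, which by Lemma~\ref{lemma:SB} forces $U_1\simeq\mathbb{P}^2$ and hence $X(\Bbbk)\ne\varnothing$, a contradiction; therefore $U_2$ is pointless.

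With $H_i=\pi_i^*(\mathcal{O}(1))$, so that $-K_X\sim 2H_1+H_2$ and $(-K_X)^3=30$, I would next constrain the destabilising curve $Z$.  Every $\Bbbk$-curve in the pointless Severi--Brauer surface $U_2$ lies in a class of geometric degree divisible by $3$, so $H_2\cdot Z\geqslant 3$.  Moreover, if $\pi_1(Z)$ were a point it would be a $\Bbbk$-point of $U_1$ and $Z$ would be an irreducible component of a (possibly degenerate) geometric conic fibre of $\pi_1$; inspecting the possibilities (smooth conic, pair of Galois-conjugate or $\Bbbk$-rational lines, double line) and using Lemma~\ref{lemma:SB} once more, one checks that $\pi_2(Z)$ would then have geometric degree at most $2$, violating the lower bound just proved.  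So $\pi_1(Z)$ is an irreducible curve, $H_1\cdot Z\geqslant 1$, and $-K_X\cdot Z\geqslant 5$.

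For the K-stability computation, I would pass to $\mathbb{C}$ and take a general line $\ell_1\subset U_{1,\mathbb{C}}\simeq\mathbb{P}^2$ meeting $\pi_1(Z)_{\mathbb{C}}$, setting $S=\pi_1^{-1}(\ell_1)$.  Viewing $S\subset\mathbb{P}^1\times\mathbb{P}^2$ as a bidegree $(1,2)$-divisor, the second projection identifies $S$ with the blow-up of $\mathbb{P}^2$ at four points in general position, so $S$ is a smooth quintic del Pezzo surface with $\delta(S)=\tfrac{15}{13}$.  Computing the Zariski decomposition of $-K_X-uS$ in the $(H_1,H_2)$-basis yields an explicit $S_X(S)<1$, and then applying Theorem~\ref{theorem:Hamid-Ziquan-Kento-2} with the flag $(p\in C\subset S)$ at a point $p\in Z_{\mathbb{C}}\cap S$ should give
\[
\frac{A_{X_{\mathbb{C}}}(\mathbf{F})}{S_{X_{\mathbb{C}}}(\mathbf{F})}\geqslant\min\left\{\frac{1}{S_X(S)},\frac{1}{S(W^S_{\bullet,\bullet};C)},\frac{1}{S(W^{S,C}_{\bullet,\bullet,\bullet};p)}\right\}>1,
\]
contradicting $A_{X_{\mathbb{C}}}(\mathbf{F})/S_{X_{\mathbb{C}}}(\mathbf{F})\leqslant 1$.

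The main obstacle is the uniform Abban--Zhuang bound at the last step: one must carry through the Zariski decomposition of $-K_X-uS$ across the entire pseudo-effective interval, and then bound $S(W^S_{\bullet,\bullet};C)$ and $S(W^{S,C}_{\bullet,\bullet,\bullet};p)$ for every $(-1)$-curve and ruling of the quintic del Pezzo $S$ passing through a geometric point of $Z\cap S$.  This casework is in the spirit of Lemmas~\ref{lemma:pointless-2-13} and~\ref{lemma:pointless-2-16}, and exploiting the strong lower bound $H_2\cdot Z\geqslant 3$ to exclude problematic geometries for $Z\cap S$ will be the key; should the straightforward flag fail on special curves, I would instead switch to an auxiliary flag $(p\in E\subset \widetilde{S})$ based at the blow-up of $S$ at $p$ and invoke Theorem~\ref{theorem:Hamid-Ziquan-Kento-3}.
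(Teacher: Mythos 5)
Your arithmetic setup is sound (descent of the two extremal contractions, pointlessness of the base surfaces, and the resulting bounds $H_1\cdot Z\geqslant 1$ and $H_2\cdot Z\geqslant 3$ for any $\Bbbk$-rational destabilising curve $Z$), but the proof has a genuine gap exactly where you flag it: the uniform Abban--Zhuang estimate is not carried out, and there is a structural reason to doubt it can work in the form proposed. Family \textnumero 2.24 contains smooth members that are \emph{not} K-polystable, so no bound of the shape ``$\delta_p(X_{\mathbb{C}})>1$ for every $p$ on every curve $Z$ with $H_1\cdot Z\geqslant 1$ and $H_2\cdot Z\geqslant 3$'' can hold across the whole family unless the destabilising centres of the non-polystable members genuinely violate those numerical constraints --- and you never identify those centres or check this. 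Your constraints see only intersection numbers of $Z$ and are blind to the feature that actually separates the polystable members from the rest, namely the structure of the discriminant cubic $\mathscr{C}$ of the conic bundle. Relatedly, a general line through a point of $\pi_1(Z)$ gives a smooth quintic del Pezzo $S$ with $\delta(S)=\tfrac{15}{13}$ only when $\pi_1(Z)$ sits in general position relative to $\mathscr{C}$; in the dangerous configurations the surface $S$ degenerates and that input is unavailable.

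The paper's proof avoids every $\delta$-computation by quoting the classification from \cite[\S 4.7]{Book} and \cite[Lemma~4.70]{Book}: $X_{\mathbb{C}}$ fails to be K-polystable if and only if the discriminant cubic is either irreducible with a unique singular point or a union of a line and a conic. In the first case the singular point is Galois-invariant and hence a $\Bbbk$-point of the conic-bundle base $V$; in the second the line is a degree-one divisor on $V$; either way Lemma~\ref{lemma:SB} forces $V\simeq\mathbb{P}^2$, and then a general fibre of the conic bundle maps to a conic in the other factor $U$, giving $U\simeq\mathbb{P}^2$ and finally a $\Bbbk$-point on $X$, contradicting $X(\Bbbk)=\varnothing$. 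If you want to salvage your route, the missing ingredient is precisely this dichotomy for the discriminant curve: you would have to either import it or determine the destabilising data of the non-polystable members explicitly before any local estimate can close the argument.
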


\begin{proof}
Recall that the geometric model of $X$ is a divisor of degree $(1,2)$ in $\mathbb{P}^2\times\mathbb{P}^2$.
Let $\mathrm{pr}_1\colon X_{\mathbb{C}}\to \mathbb{P}^2$ and $\mathrm{pr}_2\colon X_{\mathbb{C}}\to \mathbb{P}^2$ be the projections to the first and the second factors, respectively.
Then the morphism $\mathrm{pr}_1$ is a~conic bundle, and $\mathrm{pr}_2$ is a~$\mathbb{P}^1$-bundle.
Let $\mathscr{C}$ be the discriminant curve of the conic bundle $\mathrm{pr}_1$. Then  $\mathscr{C}$ is a~reduced cubic curve.
Moreover, since $X_{\mathbb{C}}$ is smooth, the curve  $\mathscr{C}$ is either smooth or nodal.
Furthermore, it has been been shown in \cite[\S~4.7]{Book} that
we can choose coordinates $([x:y:z],[u:v:w])$ on $\mathbb{P}^2\times\mathbb{P}^2$ such that
either $X_{\mathbb{C}}$ is given
\begin{equation}
\label{equation:2-24-stable}
\big(\mu vw+u^2\big)x+\big(\mu uw+v^2\big)y+\big(\mu uv+w^2\big)z=0
\end{equation}
for some $\mu\in \mathbb{C}$ such that $\mu^3\ne -1$, or $X_{\mathbb{C}}$ is given by
\begin{equation}
\label{equation:2-24-unstable-finite-Aut}
\big(vw+u^2\big)x+\big(uw+v^2\big)y+w^2z=0,
\end{equation}
or $X_{\mathbb{C}}$ is given by
\begin{equation}
\label{equation:2-24-unstable-infinite-Aut}
\big(vw+u^2\big)x+v^2y+w^2z=0.
\end{equation}
Moreover, it follows from \cite[Lemma~4.70]{Book} that $X_{\mathbb{C}}$ is K-polystable
if and only if $X_{\mathbb{C}}$ can be given by \eqref{equation:2-24-stable} for some $\mu\in \mathbb{C}$ such that $\mu^3\ne -1$.
In this case, the curve $\mathcal{C}$ is either smooth or a union of three lines that do not share a common point.
On the other hand, if $X_{\mathbb{C}}$ is given by \eqref{equation:2-24-unstable-finite-Aut}, then $\mathcal{C}$ is an irreducible cubic curve with $1$ singular point.
Similarly, if $X_{\mathbb{C}}$ is given by \eqref{equation:2-24-unstable-infinite-Aut}, then $\mathcal{C}$ is a union of a line and a smooth conic that meet in two points.

Over $\Bbbk$, the 3-fold $X$ is a divisor in $V\times U$ where $V$ and $U$ are $\Bbbk$-forms of $\mathbb{P}^2$.
We may assume that the natural projection $X\to V$ is a conic bundle with discriminant curve $\mathscr{C}$ such that its geometric model is isomorphic to $\mathcal{C}$.
We claim that $X(\Bbbk)\ne\varnothing$ if $V\simeq\mathbb{P}^2$.
Indeed, if $V\simeq\mathbb{P}^2$, let $P$ be a general $\Bbbk$-point in $V$, let $F$ be the fiber of the conic bundle $X\to V$ over $P$,
and let $C$ be its image in $U$ via the natural projection $X\to U$.
Then $C_{\mathbb{C}}$ is a conic in $U_{\mathbb{C}}\simeq\mathbb{P}^2$, so that $U\simeq\mathbb{P}^2$ by Lemma~\ref{lemma:SB}.
Now, intersecting a general fiber of the projection $X\to U$ with a pull back of a general line in $V\simeq\mathbb{P}^2$,
we obtain a $\Bbbk$-point in $X$. Thus, if $V\simeq\mathbb{P}^2$, then $X(\Bbbk)\ne\varnothing$.

Now, we are ready to prove the requires assertion.
Suppose that $X$ is not K-polystable.
Then either $X_{\mathbb{C}}$ is given by \eqref{equation:2-24-unstable-finite-Aut}, or $X_{\mathbb{C}}$ is given by \eqref{equation:2-24-unstable-infinite-Aut}.
In the former case, $\mathrm{Sing}(\mathscr{C}_{\mathbb{C}})$ consists of one point, which should be defined over $\Bbbk$, so that $V\simeq\mathbb{P}^2$.
In the latter case, we have $V\simeq\mathbb{P}^2$ by Lemma~\ref{lemma:SB}.
Thus, we see that $V\simeq\mathbb{P}^2$ in both cases, which implies $X(\Bbbk)\ne\varnothing$ as we proved earlier.
\end{proof}

\begin{lemma}
\label{lemma:pointless-3-2}
Suppose that $X$ is contained in Family \textnumero 3.2 and $X(\Bbbk)=\varnothing$.
Then $X_{\mathbb{C}}$ is K-polystable.
\end{lemma}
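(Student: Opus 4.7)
The strategy follows the template established earlier in this section. The first step is to descend the Mori--Mukai structure of $X_{\mathbb{C}}$ to $\Bbbk$: using \cite{MoMu81,MoMu83} together with Lemma~\ref{lemma:Prokhorov} and Lemma~\ref{lemma:nonempty}, I would argue that the extremal contractions of $X_{\mathbb{C}}$ (two divisorial ones landing in smooth Fano 3-folds of lower Picard rank, together with the conic/$\mathbb{P}^1$-bundle structure) cannot be permuted by $\mathrm{Gal}(\mathbb{C}/\Bbbk)$, because the three rays of $\overline{\mathrm{NE}}(X_{\mathbb{C}})$ have pairwise distinct numerical characters. Each of the contractions therefore descends to $\Bbbk$, pinning down the geometry of $X$ as the blowup of an explicit pointless $\Bbbk$-form of the target appearing in the Mori--Mukai description, with exceptional divisor $E$ defined over $\Bbbk$.

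Assuming for contradiction that $X_{\mathbb{C}}$ is not K-polystable, the strategy stated at the beginning of the section produces a prime divisor $\mathbf{F}$ over $X$ defined over $\Bbbk$ with $A_X(\mathbf{F})/S_X(\mathbf{F})\leqslant 1$, whose center $Z$ is neither a surface (by \cite[Theorem~3.17]{Book}) nor a point (since $X(\Bbbk)=\varnothing$). Hence $Z$ is a geometrically irreducible curve defined over $\Bbbk$. The next step is to rule out $Z\subset E$: if $E$ is a $\mathbb{P}^1$-bundle over a geometrically irreducible curve $\mathscr{C}$, then either $\pi(Z)$ is a point of $\mathscr{C}$ defined over $\Bbbk$ (contradicting $X(\Bbbk)=\varnothing$ via Lemma~\ref{lemma:nonempty}) or $\pi(Z)=\mathscr{C}$. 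In the second case I would copy the Zariski decomposition computation on $E$ carried out in the proof of Lemma~\ref{lemma:pointless-2-16}, estimating $S(W_{\bullet,\bullet}^E;Z)$ in terms of a section of $E\to\mathscr{C}$, and show that the resulting value stays strictly below $1$, contradicting $A_X(\mathbf{F})/S_X(\mathbf{F})\leqslant 1$.

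The case that remains is $Z\not\subset E$. Here I would fix a general point $p\in Z$, take $S$ to be the strict transform of a general hyperplane section pulled back from the base of one of the descended bundle structures (arranged to contain $p$ and to be smooth at $p$), and apply Abban--Zhuang as in Lemmas~\ref{lemma:pointless-2-13} and \ref{lemma:pointless-2-16}. Explicitly, I compute the Zariski decomposition of $-K_X-uS$ from the numerical data $H^3$, $H^2\cdot E$, $H\cdot E^2$, $E^3$ of the blowup, restrict to $S$, and use Theorem~\ref{theorem:Hamid-Ziquan-Kento} with the flag $p\in S$. Combining $\mathrm{ord}_F(E\vert_S)\leqslant A_S(F)$ (from log canonicity of $(S,E\vert_S)$) with the bound $\delta_p(S,L)\geqslant 24/(u^2-10u+28)$ of \cite[Lemma~23]{CheltsovFujitaKishimotoOkada} on the ample models $L=-K_S+(1-u)A$ that appear, together with $\delta(S)\geqslant\tfrac{4}{3}$ for the generic smooth del Pezzo fibre of degree $4$ that arises, the aim is to show $S(W_{\bullet,\bullet}^S;F)<A_S(F)$ for every prime $F$ over $S$ with $p\in C_S(F)$, giving $\delta_p(X_{\mathbb{C}})>1$ and hence the required contradiction.

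The main obstacle will be the numerical bookkeeping in this last paragraph: if a first choice of $S$ gives $S_X(S)\geqslant 1$ or if the top-term integral in $S(W_{\bullet,\bullet}^S;F)$ refuses to fall below $A_S(F)$ uniformly in $F$, I would refine the flag to $(S,C,p)$ with $C$ either a fibre of the descended conic/$\mathbb{P}^1$-bundle through $p$ or a distinguished $(-1)$-curve of $S$, and invoke Theorem~\ref{theorem:Hamid-Ziquan-Kento-2} or Theorem~\ref{theorem:Hamid-Ziquan-Kento-3} instead. The explicit control of the $\Bbbk$-structure on $X$ obtained in the first paragraph should rule out pathological placements of $Z$ (in particular it forces $-K_X\cdot Z$ to take values incompatible with Lemma~\ref{lemma:SB} whenever $Z$ is geometrically rational of small degree), and this rigidity is what I expect to make the flag-choice work uniformly.
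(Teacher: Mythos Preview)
Your proposal rests on a misidentification of the geometry of Family \textnumero 3.2. This family is \emph{not} a blowup of a lower-rank Fano along a curve with smooth del Pezzo fibres of degree $4$ as hyperplane sections; rather, $X_{\mathbb{C}}$ is a divisor in a $\mathbb{P}^2$-scroll over $\mathbb{P}^1\times\mathbb{P}^1$, carrying a conic bundle $\omega\colon X_{\mathbb{C}}\to\mathbb{P}^1\times\mathbb{P}^1$ and two del Pezzo fibrations $\phi_1,\phi_2$ over $\mathbb{P}^1$ whose general fibres are smooth \emph{cubic} surfaces and smooth del Pezzo surfaces of degree~$6$, respectively. In addition $X_{\mathbb{C}}$ contains a distinguished surface $S\cong\mathbb{P}^1\times\mathbb{P}^1$ with normal bundle $\mathcal{O}(-1,-1)$, contracted by a small morphism $\alpha$ to an isolated ordinary double point; the two divisorial contractions $\beta_1,\beta_2$ contract $S$ to curves. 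There is no exceptional divisor $E$ of the kind you describe, the bound $\delta(S)\geqslant\frac{4}{3}$ for degree-$4$ del Pezzo surfaces is irrelevant, and the estimate $\delta_p(S,L)\geqslant 24/(u^2-10u+28)$ from \cite[Lemma~23]{CheltsovFujitaKishimotoOkada} is tailored to a polarised degree-$4$ del Pezzo and does not apply. Consequently the Abban--Zhuang template you copy from Lemmas~\ref{lemma:pointless-2-13} and~\ref{lemma:pointless-2-16} cannot be run here without a completely different set of inputs.

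The paper's route is quite different and does not attempt a fresh flag computation. It invokes \cite[Lemmas~6.1 and~6.2]{BelousovLoginov2024}, which give $\delta_p(X_{\mathbb{C}})>1$ for every $p$ in the contractible surface $S$ and for every $p$ in any \emph{smooth} fibre of the cubic-surface fibration $\phi_1$. These force $Z_{\mathbb{C}}\cap S=\varnothing$ and $Z_{\mathbb{C}}\subset\mathcal{F}$ for a \emph{singular} fibre $\mathcal{F}$ of $\phi_1$; since $Z$ is defined over $\Bbbk$, so is $\mathcal{F}$, and $X(\Bbbk)=\varnothing$ then rules out $\mathcal{F}$ being a cone, so $\mathcal{F}$ is a Du Val cubic. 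A further $\delta_p$ estimate along smooth conic fibres forces $Z_{\mathbb{C}}$ to be an irreducible component of a fibre of the conic bundle $\omega$; but every such component meets $S$, contradicting $Z_{\mathbb{C}}\cap S=\varnothing$. The essential ingredients you are missing are the contractible surface $S$ and the two cited Belousov--Loginov lemmas, without which no amount of flag-refinement will close the argument.
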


\begin{proof}
The required assertion follows from \cite[Lemma~6.1]{BelousovLoginov2024} and \cite[Lemma~6.2]{BelousovLoginov2024}.
To show this, let us first describe the geometry of the geometric model of the 3-fold $X$.
There are several ways to do this. For~instance, following \cite{MoMu81,MoMu83}, we let
$$
U=\mathbb{P}\Big(\mathcal{O}_{\mathbb{P}^1\times\mathbb{P}^1}\oplus\mathcal{O}_{\mathbb{P}^1\times\mathbb{P}^1}\big(-1,-1\big)\oplus\mathcal{O}_{\mathbb{P}^1\times\mathbb{P}^1}\big(-1,-1\big)\Big),
$$
let $\pi\colon U\to\mathbb{P}^1\times\mathbb{P}^1$ be the natural projection, and let $L$ be a tautological line bundle on the scroll $U$.
Then $X_{\mathbb{C}}$ can be described as a divisor in $|2L+\pi^*(\mathcal{O}_{\mathbb{P}^1\times\mathbb{P}^1}(2,3))|$.
Let $\omega\colon X\to\mathbb{P}^1\times\mathbb{P}^1$ be the restriction of the projection $\pi$ to the 3-fold $X$,
let $\pi_1\colon \mathbb{P}^1\times\mathbb{P}^1\to\mathbb{P}^1$ and $\pi_2\colon \mathbb{P}^1\times\mathbb{P}^1\to\mathbb{P}^1$
be projections to the first and the second factors, respectively.
Set $\phi_1=\pi_1\circ\omega$ and $\phi_2=\pi_2\circ\omega$.
Then a general fiber of the morphism $\phi_1$ is a smooth cubic surface,
a general fiber of the morphism $\phi_2$ is a smooth del Pezzo surface of degree $6$,
and $\omega$ is a standard conic bundle.
On the other hand, following \cite[\S~11]{CheltsovPrzyjalkowskiShramov}, we let
$$
R=\mathbb{P}\Big(\mathcal{O}_{\mathbb{P}^1}(2)\oplus\mathcal{O}_{\mathbb{P}^1}(2)\oplus\mathcal{O}_{\mathbb{P}^1}(1)\oplus\mathcal{O}_{\mathbb{P}^1}(1)\Big),
$$
let $M$ be the tautological line bundle on the scroll $R$,
and let $F$ be a fiber of the natural projection~$R\to\mathbb{P}^1$.
Then $X_{\mathbb{C}}$ can also be described as a divisor in the linear system~$|3M-4F|$.
In the notation of \cite[\S2]{Reid}, we have $R=\mathbb{F}(2,2,1,1)$, and $X$ is given by the following equation:
\begin{multline*}
\alpha^1_2(t_1,t_2)x_1^3+\alpha^2_2(t_1,t_2)x_1^2x_2+\alpha^1_1(t_1,t_2)x_1^2x_3+\alpha^2_1(t_1,t_2)x_1^2x_4+\alpha^3_2(t_1,t_2)x_1x_2^2+\alpha^3_1(t_1,t_2)x_1x_2x_3+\\
+\alpha^4_1(t_1,t_2)x_1x_2x_4+\alpha^1_0(t_1,t_2)x_1x_3^2+\alpha^2_0(t_1,t_2)x_1x_3x_4+\alpha^3_0(t_1,t_2)x_1x_4^2+\alpha^4_2(t_1,t_2)x_2^3+\\
+\alpha^5_1(t_1,t_2)x_2^2x_3+\alpha^6_1(t_1,t_2)x_2^2x_4+\alpha^4_0(t_1,t_2)x_2x_3^2+\alpha^5_0(t_1,t_2)x_2x_3x_4+\alpha^6_0(t_1,t_2)x_2x_4^2=0,
\end{multline*}
where each $\alpha^i_d(t_1,t_2)$ is a polynomial of degree $d$.
Let $S$ be the subscroll in $R$ given by $x_1=x_2=0$. Then~$S\cong\mathbb{P}^1\times\mathbb{P}^1$, and $S$ is contained in $X$.
Furthermore, the normal bundle of $S$ in~$X$ is $\mathcal{O}_{\mathbb{P}^1\times\mathbb{P}^1}(-1,-1)$.
This implies the existence of the following commutative diagram:
\begin{equation}
\label{equation:3-2}
\xymatrix@R=1em{
 &&V&& \\
U_{1}\ar@{->}[dd]_{\psi_{1}}\ar@{->}[rru]^{\gamma_{1}}&&&&U_{2}\ar@{->}[dd]^{\psi_{2}}\ar@{->}[llu]_{\gamma_{2}} \\
&&X_{\mathbb{C}}\ar@{->}[dd]_{\omega}\ar@{->}[lld]_{\phi_1}\ar@{->}[rrd]^{\phi_2}\ar@{->}[uu]_{\alpha}\ar@{->}[ull]_{\beta_{1}}\ar@{->}[urr]^{\beta_{2}}&& \\
\mathbb{P}^1 &&&& \mathbb{P}^1\\
&&\mathbb{P}^1\times\mathbb{P}^1\ar@{->}[llu]^{\pi_1}\ar@{->}[rru]_{\pi_2}&&}%
\end{equation}
where $U_1$ and $U_2$ are smooth 3-folds, the morphisms $\beta_1$ and $\beta_2$ are contractions of the surface $S$ to curves in
these 3-folds, the morphism $\alpha$ is a contraction of the surface $S$ to an isolated ordinary double point of the 3-fold $V$,
the morphism $\psi_1$ is a fibration into del Pezzo surfaces of degree $4$,
and $\psi_2$ is a fibration into quadric surfaces.
This commutative diagram is well known to experts, see the proof of \cite[Theorem~2.3]{TakeuchiNew},
the proof of~\mbox{\cite[Proposition~3.8]{JaPeRa07}}, and the proof of~\mbox{\cite[Lemma~8.2]{CheltsovShramovUMN}}.
Note that $V$ is a Fano 3-fold such that $V$ has non-$\mathbb{Q}$-factorial singularities, $-K_{V}^{3}=16$ and $\mathrm{Pic}(C)=\mathbb{Z}[-K_{V}]$,
and the morphisms $\gamma_1$ and $\gamma_2$ are its two (distinct) small resolutions.

Now, we are ready to prove that $X_{\mathbb{C}}$ is K-polystable. For every point $p\in Z_{\mathbb{C}}$, we have $\delta_p(X_{\mathbb{C}})\leqslant 1$ by assumption that $X_\mathbb{C}$ is not K-polystable and that $p\in Z_{\mathbb{C}}$.
On the other hand, it follows from \cite[Lemma~6.1]{BelousovLoginov2024} that $\delta_p(X_{\mathbb{C}})>1$ for every point $p$
in the $\alpha$-exceptional surface $S$.
Thus, we conclude that $Z_{\mathbb{C}}\cap S=\varnothing$.
Similarly, if $\mathscr{F}$ is a smooth fiber of the fibration into cubic surfaces $\phi_1$,
then it follows from \cite[Lemma~6.2]{BelousovLoginov2024} that $\delta_p(X_{\mathbb{C}})>1$ for every point $p\in \mathscr{F}$,
which implies that $Z_{\mathbb{C}}\cap \mathscr{F}=\varnothing$.
This shows that $\phi_1(Z_{\mathbb{C}})$ is a point in $\mathbb{P}^1$,
and the fiber of $\phi_1$ over this point is singular.

Let $\mathcal{F}$ be the fiber of $\phi_1$ that contains $Z_{\mathbb{C}}$.
Then $\mathcal{F}$ is singular. Then $\mathcal{F}$ is a normal cubic surface by \cite[Lemma~3.2]{BelousovLoginov2024}.
On the other hand, the fiber $\mathcal{F}$ is defined over $\Bbbk$, which implies, in particular, that $\mathcal{F}$ is not a cone.
Hence, we conclude that $\mathcal{F}$ is a singular cubic surface that has Du Val singularities.

Let $\mathcal{C}$ be a general fiber of the induced conic bundle $\phi_2\vert_{\mathcal{F}}\colon \mathcal{F}\to\mathbb{P}^1$.
Then $\mathcal{C}$ is smooth, since $\mathcal{F}$ is normal.
Moreover, if $\mathcal{C}\cap Z_{\mathbb{C}}\ne\varnothing$, then $\delta_p(X_{\mathbb{C}})>1$ for every point $p\in \mathcal{C}\cap Z_{\mathbb{C}}$,
which is impossible, since $\delta_p(X_{\mathbb{C}})\leqslant 1$ for every point $p\in Z_{\mathbb{C}}$.
Thus, we conclude that $\mathcal{C}\cap Z_{\mathbb{C}}=\varnothing$.
This means that $Z_{\mathbb{C}}$ is an irreducible component of a fiber of the conic bundle $\omega$.
But $S$ intersects every irreducible component of every fiber of the conic bundle $\omega$,
which is a contradiction, since we already showed that $Z_{\mathbb{C}}\cap S=\varnothing$.
\end{proof}

\begin{lemma}
\label{lemma:pointless-3-5}
Suppose that $X$ is contained in Family \textnumero 3.5 and $X(\Bbbk)=\varnothing$.
Then $X_{\mathbb{C}}$ is K-polystable.
\end{lemma}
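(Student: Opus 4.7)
The plan follows the overall strategy introduced for this section. Assume towards a contradiction that $X_{\mathbb{C}}$ is not K-polystable, so by the valuative criterion \cite{Fujita2019,Li2017}, \cite[Theorem 1.2]{LXZ} and \cite[Corollary 4.14]{Zhuang2021} there is a prime divisor $\mathbf{F}$ over $X$ defined over $\Bbbk$ with $A_X(\mathbf{F})/S_X(\mathbf{F})\leqslant 1$. Its center $Z\subset X$ is neither a surface (by \cite[Theorem 3.17]{Book}) nor a point (since $X(\Bbbk)=\varnothing$), so $Z$ is a geometrically irreducible $\Bbbk$-curve; the task is to rule this out.

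The first step is to make the geometry of Family \textnumero\,3.5 explicit and descend it to $\Bbbk$. By \cite{MoMu81,MoMu83}, $X_{\mathbb{C}}$ admits a divisorial contraction $\phi\colon X_{\mathbb{C}}\to \mathbb{P}^1\times\mathbb{P}^2$ that is the blowup of a smooth geometrically irreducible curve $\Gamma$ of bidegree $(5,2)$ and genus $2$, the other two extremal contractions being the compositions of $\phi$ with the two projections of $\mathbb{P}^1\times\mathbb{P}^2$. These three contractions have mutually distinct types, so the extremal rays of $\overline{\mathrm{NE}}(X_{\mathbb{C}})$ are $\mathrm{Gal}(\mathbb{C}/\Bbbk)$-invariant and by Lemma~\ref{lemma:Prokhorov} all three descend to $\Bbbk$. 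Thus $X$ is the blowup of a product $C_2\times U$ along $\Gamma$, where $C_2$ is a (possibly pointless) $\Bbbk$-conic and $U$ is a $\Bbbk$-form of $\mathbb{P}^2$. Since $\mathrm{pr}_2(\Gamma)\subset U$ is a twisted line, Lemma~\ref{lemma:SB} forces $U\cong\mathbb{P}^2$; and $X(\Bbbk)=\varnothing$ combined with Lemma~\ref{lemma:nonempty} forces $C_2(\Bbbk)=\varnothing$, since otherwise $X$ would $\Bbbk$-dominate a fiber of $\phi_1\colon X\to C_2$, which is a smooth del Pezzo surface of degree $4$.

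Next I would localize $Z$ using the two induced morphisms $\phi_1\colon X\to C_2$ and the conic bundle $\phi_2\colon X\to\mathbb{P}^2$. Because $C_2(\Bbbk)=\varnothing$, whenever $Z$ dominates $C_2$ under $\phi_1$ we may estimate $Z$ horizontally; and ruling out $Z$ contained in the $\phi$-exceptional divisor $E\to\Gamma$ reduces to an Abban--Zhuang estimate along $E$, analogous to the first half of the proof of Lemma~\ref{lemma:pointless-2-16}, combined with $\Gamma(\Bbbk)=\varnothing$. Similarly, if $\phi_2(Z)$ is a point then $\phi_2^{-1}(\phi_2(Z))$ is a $\Bbbk$-conic fiber and a suitable section would produce a $\Bbbk$-point on $X$; this case is thus ruled out.

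The remaining (and main) case is $Z\not\subset E$ with $\phi_2(Z)$ a curve. Fix a closed point $p\in Z\setminus E$, pick a sufficiently general $\Bbbk$-line $L\subset\mathbb{P}^2$ through $\phi_2(p)$, and let $S\subset X$ be the proper transform of $\mathbb{P}^1\times L$ under $\phi$: then $S$ is a smooth geometrically irreducible del Pezzo surface of degree $6$, fibred in conics over $L\cong\mathbb{P}^1$, and $p\in S$. Applying Theorem~\ref{theorem:Hamid-Ziquan-Kento-1} to the flag $Z\subset S$ (and, if necessary, Theorem~\ref{theorem:Hamid-Ziquan-Kento-2} to the auxiliary flag $p\in A\subset S$, where $A$ is the conic fiber of $\phi_2|_S$ through $p$), the goal is to establish
\[
S_X(S)<1,\qquad S\big(W^S_{\bullet,\bullet};Z\big)<1,\qquad S\big(W^{S,A}_{\bullet,\bullet,\bullet};p\big)<1,
\]
contradicting $A_X(\mathbf{F})/S_X(\mathbf{F})\leqslant 1$. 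The hard part is the Zariski decomposition of $-K_X-uS$ on $[0,\tau]$: writing $-K_X$ and $S$ in the basis $(\phi^*\mathrm{pr}_1^*\mathrm{pt},\phi^*\mathrm{pr}_2^*H,E)$, one has to track the break points where the positive part crosses the extremal faces of the nef cone. Once $P(u)$ is in hand, $P(u)|_S$ is an explicit $\mathbb{R}$-divisor on the smooth sextic del Pezzo $S$, and the required integral bounds follow the blueprint of Lemmas~\ref{lemma:pointless-2-13} and~\ref{lemma:pointless-2-16}, using \cite[Lemma~23]{CheltsovFujitaKishimotoOkada}, the $\delta$-invariant estimate for smooth del Pezzo surfaces of degree $6$ from \cite[Lemma~2.12]{Book}, and the ampleness of $-K_S$. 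The bulk of the remaining work is the (routine but lengthy) bookkeeping of these integrals.
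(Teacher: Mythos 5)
Your proposal sets up the right framework (valuative criterion, $Z$ a geometrically irreducible $\Bbbk$-curve, descent of the contractions of $X_{\mathbb{C}}\to\mathbb{P}^1\times\mathbb{P}^2$), but the main case contains a genuine gap. You propose to conclude by a purely geometric Abban--Zhuang estimate on the degree-$6$ del Pezzo surface $S$ (the proper transform of $\mathbb{P}^1\times L$), showing $S_X(S)<1$, $S(W^S_{\bullet,\bullet};Z)<1$, $S(W^{S,A}_{\bullet,\bullet,\bullet};p)<1$ for every curve $Z\not\subset E$ dominating a curve in $\mathbb{P}^2$. This cannot work unconditionally: Family \textnumero 3.5 contains complex members that are \emph{not} K-polystable, and their destabilizing curves sit in singular fibers of the quartic del Pezzo fibration $\phi_1$ and do map to curves under the conic bundle $\phi_2$ (a fiber of $\phi_1$ is a blowup of $\mathbb{P}^2$, so only finitely many curves in it are $\phi_2$-vertical). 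Hence your three inequalities must fail for some admissible configurations, and your outline never identifies where the hypothesis $X(\Bbbk)=\varnothing$ enters to exclude exactly those. The paper's proof is organized around $\phi_1$ rather than $\phi_2$: quoting \cite[Lemmas 5.68 and 5.69]{Book} it shows $\delta_p(X_{\mathbb{C}})>1$ for $p$ in the $(-1,-1)$-surface $\widetilde{S}$ (which your outline omits entirely) and for $p$ in a smooth fiber of $\phi_1$, forcing $Z_{\mathbb{C}}$ into a singular quartic del Pezzo fiber $F$ defined over $\Bbbk$; then \cite{Denisova2024} excludes the case where $F$ has only ordinary double points, and the classification in \cite{CorayTsfasman1988} shows the worse singular point of $F$ is unique, hence a $\Bbbk$-point --- that is the arithmetic contradiction, and it has no counterpart in your plan.

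There are also smaller inaccuracies: the blown-up curve is a smooth \emph{rational} curve (it is a $(5,1)$-curve on $\mathbb{P}^1\times\mathbb{P}^1$, of genus $0$, not genus $2$), its image in the second factor is a conic rather than a twisted line (Lemma~\ref{lemma:SB} still applies since $\gcd(2,3)=1$); and your deduction that $C_2(\Bbbk)=\varnothing$ misapplies Lemma~\ref{lemma:nonempty} --- a $\Bbbk$-point on the base does not produce one on $X$, and in any case this claim is not needed, since the fiber of $\phi_1$ containing $Z$ is defined over $\Bbbk$ whether or not the base has a rational point. Your handling of the case where $\phi_2(Z)$ is a point is also incomplete: if $Z$ is a smooth conic fiber it need not have a $\Bbbk$-point, so that subcase is not ruled out by the argument you sketch.
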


\begin{proof}
The required assertion follows from \cite[\S~5.14]{Book} and \cite{Denisova2024}.
To explain this in details, let us describe the construction and geometry of the geometric model of the 3-fold $X$.
So, for a while, we work over $\mathbb{C}$.

Set $S=\mathbb{P}^1\times\mathbb{P}^1$.
Then $S$ contains a~smooth rational curve $C\subset S$ of degree $(5,1)$ such that $X_{\mathbb{C}}$ can be constructed from $C$ as follows.
Consider the embedding $S\hookrightarrow\mathbb{P}^1\times\mathbb{P}^2$ given by
$$
\big([u:v],[x:y]\big)\mapsto\big([u:v],[x^2:xy:y^2]\big),
$$
and identify $S$ and $C$ with their images in $\mathbb{P}^1\times\mathbb{P}^2$ using this embedding.
Then there exists a birational morphism $\pi\colon X_{\mathbb{C}}\to\mathbb{P}^1\times\mathbb{P}^2$ that blows up $C$.

To describe geometry of the 3-fold $X_{\mathbb{C}}$, let $\mathrm{pr}_1\colon\mathbb{P}^1\times\mathbb{P}^2\to\mathbb{P}^1$ and $\mathrm{pr}_2\colon\mathbb{P}^1\times\mathbb{P}^2\to\mathbb{P}^2$
be the projections to the first and the second factors, respectively.
Then $\mathrm{pr}_2(S)$ is a~smooth conic in $\mathbb{P}^2$.
Let $\widetilde{S}$ be the proper transform on $X_{\mathbb{C}}$ of the surface $S$, let $E$ be the $\pi$-exceptional surface,
and let $H_2=(\mathrm{pr}_2\circ\pi)^*(\mathcal{O}_{\mathbb{P}^2}(1))$.
Then $\widetilde{S}\sim 2H_2-E$.
Set $H_1=(\mathrm{pr}_1\circ\pi)^*(\mathcal{O}_{\mathbb{P}^1}(1))$. Then
$$
-K_{X_{\mathbb{C}}}\sim_{\mathbb{Q}}2H_1+\frac{3}{2}\widetilde{S}+\frac{1}{2}E.
$$
Note that $\widetilde{S}\cong\mathbb{P}^1\times\mathbb{P}^1$ and $\widetilde{S}\vert_{\widetilde{S}}$ is a~line bundle  of degree $(-1,-1)$.
Therefore, there exists a~birational morphism $\varpi\colon X_{\mathbb{C}}\to Y$
such that $Y$ is a~singular Fano 3-fold that has one isolated ordinary double point,
the morphism $\varpi$  contracts $\widetilde{S}$ to the singular point of the 3-fold $Y$, and $-K_Y^3=22$.
Using this, we obtain the following commutative diagram:
$$
\xymatrix@R=1em{
&&\mathbb{P}^1\times\mathbb{P}^2\ar[dll]_{\mathrm{pr}_1}\ar[drr]^{\mathrm{pr}_2}&&\\
\mathbb{P}^1&&&& \mathbb{P}^2\\
&&X_{\mathbb{C}}\ar[ull]_{\phi_1}\ar[urr]^{\phi_2}\ar[uu]_{\pi}\ar[dd]_{\varpi}\ar[dll]_{\sigma_1}\ar[drr]^{\sigma_2}&&\\
V\ar[drr]_{\psi_1}\ar[uu]^{\varphi_1}&&&&U\ar[dll]^{\psi_2}\ar[uu]_{\varphi_2}\\
&&Y&&}
$$
where $V$ and $U$ are smooth weak Fano 3-folds, $\phi_1$ is a~fibration into quartic del Pezzo surfaces,
$\phi_2$ is a~conic bundle, $\sigma_1$ and $\sigma_2$ are birational contractions of the surface $\widetilde{S}$ to smooth rational curves,
$\psi_1$ and $\psi_2$ are small resolutions of the 3-fold $Y$, $\phi_1$ is a~fibration into quintic del Pezzo surfaces,
and $\phi_2$ is a~$\mathbb{P}^1$-bundle.

Now, we are ready to prove that $X_{\mathbb{C}}$ is K-polystable.
Let $p$ be a general point in $Z_{\mathbb{C}}$, and let $F$ be the fiber of the del Pezzo fibration $\phi_1$ that contains $p$.
Then $F$ is a quartic del Pezzo surface with Du Val singularities, and $\delta_p\big(X_{\mathbb{C}}\big)\leqslant 1$
for every point $p\in Z_{\mathbb{C}}$. On the other hand, if $p\in\widetilde{S}$, then $\delta_{p}(X_{\mathbb{C}})>1$ by \cite[Lemma 5.68]{Book}.
Moreover, if $F$ is smooth, then it follows from  \cite[Lemma 5.69]{Book} that $\delta_{p}(X_{\mathbb{C}})>1$.
Thus, we conclude that $p\not\in\widetilde{S}$, and the surface $F$ is singular.
The latter implies that $Z_{\mathbb{C}}\subset F$, since we assume that $p$ is a general point in $Z_{\mathbb{C}}$.
In particular, we see that $F$ is defined over $\Bbbk$, because $Z$ is defined over $\Bbbk$.
On the other hand, if $F$ has only ordinary double points,
then it follows from the proof of \cite[Main Theorem]{Denisova2024} that $\delta_{p}(X_{\mathbb{C}})>1$.
Hence, we conclude that $F$ has a singular point that is not an ordinary double points.
Now, using classification of singular quartic del Pezzo surfaces with Du Val singularities \cite{CorayTsfasman1988},
we see that $F$ has a unique such singular point of $F$ is unique, so it must be defined over $\Bbbk$,
which contradicts the assumption $X(\Bbbk)=\varnothing$.
\end{proof}

\begin{lemma}
\label{lemma:pointless-3-6}
Suppose that $X$ is contained in Family \textnumero 3.6 and $X(\Bbbk)=\varnothing$.
Then $X_{\mathbb{C}}$ is K-polystable.
\end{lemma}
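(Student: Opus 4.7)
The plan is to follow the general strategy outlined at the beginning of this section. Assume $X_{\mathbb{C}}$ is not K-polystable, so $\delta(X_{\mathbb{C}})\leqslant 1$. By \cite[Theorem 1.2]{LXZ} together with \cite[Theorem 4.4]{Zhuang2021} and \cite[Corollary 4.14]{Zhuang2021}, there exists a prime divisor $\mathbf{F}$ over $X_{\mathbb{C}}$, defined over~$\Bbbk$, that computes~$\delta$. Its center $Z\subset X$ is neither a surface (by \cite[Theorem~3.17]{Book}) nor a closed point (since $X(\Bbbk)=\varnothing$), so $Z$ is a geometrically irreducible curve defined over~$\Bbbk$, and the goal is to contradict its existence.

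By \cite{MoMu81}, $X_{\mathbb{C}}$ is the blowup of $\mathbb{P}^3$ along a disjoint union of a line $L_0$ and a smooth quartic elliptic curve $C_0$. The pencil of quadric surfaces through $C_0$ induces a fibration $\phi\colon X_{\mathbb{C}}\to\mathbb{P}^1$ whose general fiber is a smooth quadric with the two points of $L_0\cap Q$ blown up, that is, a smooth del Pezzo surface of degree~$6$. Since the three extremal rays of $\overline{\mathrm{NE}}(X_{\mathbb{C}})$ (the two exceptional rulings and the fiber class of $\phi$) can be distinguished numerically, Lemma~\ref{lemma:Prokhorov} ensures that $\phi$ and both divisorial contractions descend to morphisms defined over~$\Bbbk$.

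The central step, modeled on Lemma~\ref{lemma:pointless-3-5}, is to show that $\delta_p(X_{\mathbb{C}})>1$ for every point $p$ lying off the two exceptional divisors and off the singular fibers of~$\phi$. For such~$p$ I would apply Theorem~\ref{theorem:Hamid-Ziquan-Kento} to the flag $F\subset X_{\mathbb{C}}$, where $F$ is the smooth degree-$6$ del Pezzo fiber of $\phi$ through~$p$, using the known K-stability of such surfaces; on each exceptional divisor the analogous estimate will follow from a direct flag computation analogous to the one carried out in Lemma~\ref{lemma:pointless-2-16}. Consequently, $Z$ must lie inside a singular fiber $\mathcal{F}$ of~$\phi$; since both $\phi$ and $Z$ are defined over~$\Bbbk$, so is $\mathcal{F}$, and in particular $\mathcal{F}$ is not a cone.

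To close, I would invoke the classification of weak del Pezzo surfaces of degree~$6$ arising as singular quadrics with two marked points blown up; combined with the $\delta$-estimates of \cite{Denisova-DP,Denisova2024} this should force $\mathcal{F}$ to carry a Du Val singularity of a distinguished type, which must be unique on~$\mathcal{F}$ and hence defined over~$\Bbbk$, contradicting $X(\Bbbk)=\varnothing$. The hardest part will be the Abban--Zhuang estimate on the exceptional divisor over~$L_0$ and a careful treatment of those reducible fibers of~$\phi$ (lying over the finitely many points of $\mathbb{P}^1$ where the quadric $Q$ degenerates) that may only be defined over~$\Bbbk$ after taking a Galois orbit.
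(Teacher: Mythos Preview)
Your overall strategy is sound, but you have chosen the wrong fibration, and this choice is not cosmetic. The paper runs the argument through the \emph{quintic} del Pezzo fibration $\sigma$ given by the pencil $|H-R|$ of planes through the line $L_0$, not through the sextic fibration $\phi$ given by quadrics through $C_0$. The reason is that the hard step---showing $\delta_p\big(S,W^S_{\bullet,\bullet}\big)>1$ for every point $p$ on a smooth fibre $S$---is already established for the quintic fibration of this family in \cite{Cheltsov2024}; the paper simply computes $S_X(S)=\frac{67}{88}$ and quotes that result. Your references do not supply the analogue for the sextic fibration: \cite{Denisova2024} treats Family \textnumero 3.5 (degree~$20$), not \textnumero 3.6, and \cite{Denisova-DP} gives only the surface invariants $\delta_p(S)$, which is not the same as the threefold quantity $\delta_p\big(S,W^S_{\bullet,\bullet}\big)$. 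Knowing that smooth sextic del Pezzos are K-polystable is not enough; with two exceptional divisors $E$ and $R$ present, the Zariski decomposition of $-K_X-u\,(2H-E)$ is more involved than in the quintic case, and you would have to carry out this computation from scratch.

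The endgame is also different. Once $Z$ is forced into a singular fibre, the paper uses the classification of Du Val quintic del Pezzos: either there is a unique ``worst'' singular point (giving a $\Bbbk$-point immediately), or the surface has exactly two $A_1$ points. In the $2A_1$ case the paper produces a $\Bbbk$-point by a neat pencil argument: the unique line $\ell$ through both nodes satisfies $\ell^2=0$, and the pencil $|2\ell|$ has, besides $2\ell$, exactly one other singular member $\ell_1+\ell_2$, whose node $\ell_1\cap\ell_2$ is then defined over~$\Bbbk$. Your proposed classification of singular degree-$6$ fibres would need a comparable device; the analysis in the proof of Lemma~\ref{lemma:pointless-3-7} shows this is feasible for sextic del Pezzos with $2A_1$, but it is a different argument from the one you sketch. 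In short, switching to the quintic fibration and citing \cite{Cheltsov2024} closes the gap; staying with the sextic fibration would require substantial new computation that your proposal does not provide.
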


\begin{proof}
The required assertion follows from \cite{Cheltsov2024}.
Indeed, it follows from \cite{MoMu81,MoMu83} that $X_{\mathbb{C}}$ can be obtained by blowing up $\mathbb{P}^3$ along
a disjoint union of a line and a smooth quartic elliptic curve.
Let $L$ be this line in $\mathbb{P}^3$, let $C_4$ be this smooth quartic elliptic curve in $\mathbb{P}^3$ such that $L\cap C_4=\varnothing$,
and let~$\pi\colon X\to\mathbb{P}^3$ be the blowup of these two  curves.
Then we can choose coordinates $x_0$, $x_1$, $x_2$, $x_3$ on $\mathbb{P}^3$ such that
$$
C_4=\big\{x_0^2+x_1^2+\lambda(x_2^2+x_3^2)=0,\lambda(x_0^2-x_1^2)+x_2^2-x_3^2=0\big\}\subset\mathbb{P}^3
$$
for some complex number $\lambda\not\in\{0,\pm 1,\pm i\}$, and
$$
L=\big\{a_0x_0+a_1x_1+a_2x_2=0,b_1x_1+b_2x_2+b_3x_3=0\big\}\subset\mathbb{P}^3
$$
for some $[a_0:a_1:a_2]$ and $[b_1:b_2:b_3]$ in $\mathbb{P}^2$.
Then we have the following commutative diagram:
$$
\xymatrix@R=1em{
&&\mathbb{P}^1\times\mathbb{P}^1\ar@/^1pc/@{->}[drr]^{\mathrm{pr}_2}\ar@/_1pc/@{->}[dll]_{\mathrm{pr}_1}&&\\%
\mathbb{P}^1&&X_{\mathbb{C}}\ar@{->}[d]_{\pi}\ar@{->}[u]_{\eta}\ar@{->}[rr]^{\phi}\ar@{->}[ll]_{\sigma}&&\mathbb{P}^1\\%
&&\mathbb{P}^3\ar@{-->}[urr]_{\varphi}\ar@{-->}[ull]^{\varsigma}&&}
$$
where $\varsigma$ is given by $[x_0:x_1:x_2:x_3]\mapsto[a_0x_0+a_1x_1+a_2x_2:b_1x_1+b_2x_2+b_3x_3]$,
the map $\varphi$ is given~by
$$
[x_0:x_1:x_2:x_3]\mapsto\big[x_0^2+x_1^2+\lambda(x_2^2+x_3^2):\lambda(x_0^2-x_1^2)+x_2^2-x_3^2\big],
$$
the map $\sigma$ is a fibration into quintic del Pezzo surfaces,
$\phi$ is a fibration into sextic del Pezzo surfaces,
the map $\eta$ is a conic bundle, $\mathrm{pr}_1$ and $\mathrm{pr}_2$ are projections to the first and the second factors, respectively.

Suppose that $X_{\mathbb{C}}$ is not K-polystable.
Then, arguing as in the proof of Lemma~\ref{lemma:pointless-3-5}, we see that $X$ contains a~geometrically irreducible curve $Z$ defined over $\Bbbk$
such that $\delta_{p}(X_{\mathbb{C}})\leqslant 1$ for every point $p\in Z_{\mathbb{C}}$.
Let us show that this leads to a contradiction.

Set $H=\pi^*(\mathcal{O}_{\mathbb{P}^3}(1))$. Let $E$ and $R$ be the exceptional surfaces of the blowup $\pi$ such that $\pi(E)=C_4$ and $\pi(R)=L$.
Then the quintic del Pezzo fibration $\sigma$ is given by the pencil $|H-R|$, the sextic del Pezzo fibration $\varphi$ is given by the pencil $|2H-E|$, the conic bundle $\eta$ is given by $|3H-E-R|$.
Note that $\mathrm{Eff}(X)=\langle E, R, H-R, 2H-E\rangle$,
and the Mori cone $\overline{\mathrm{NE}(X)}$ is generated by the classes of curves contracted by the blow up $\pi\colon X\to\mathbb{P}^3$ and the conic bundle $\eta\colon X\to\mathbb{P}^1\times\mathbb{P}^1$.

Fix a general point $p\in Z_{\mathbb{C}}$.
Let $S$ be the surface in the pencil $|H-R|$ that contains $p$, and let $u$ be a non-negative real number.
Then $S$ is a del Pezzo surface with at most Du Val singularities, and the divisor $-K_X-uS$ is pseudo-effective if and only if $u\leqslant 2$.
For~$u\in[0,2]$, let $P(u)$ be the positive part of the Zariski decomposition of the divisor $-K_X-uS$,
and let $N(u)$ be its negative part. Then
$$
P(u)\sim_{\mathbb{R}} \left\{\aligned
&(4-u)H-E+(u-1)R \ \text{ if } 0\leqslant u\leqslant 1, \\
&(4-u)H-E\ \text{ if } 1\leqslant u\leqslant 2,
\endaligned
\right.
$$
and
$$
N(u)= \left\{\aligned
&0\ \text{ if } 0\leqslant u\leqslant 1, \\
&(u-1)R\ \text{ if } 1\leqslant u\leqslant 2,
\endaligned
\right.
$$
which gives $S_{X}(S)=\frac{1}{22}\int\limits_{0}^{2}P(u)^3du=\frac{67}{88}$.
Now, for every prime divisor $F$ over the surface $S$, we set
$$
S\big(W^S_{\bullet,\bullet};F\big)=\frac{3}{(-K_X)^3}\int\limits_0^{\tau}\mathrm{ord}_F\big(N(u)\vert_{S}\big)\big(P(u)\vert_{S}\big)^2du+\frac{3}{(-K_X)^3}\int\limits_0^\tau\int\limits_0^{\infty}\mathrm{vol}\big(P(u)\big\vert_{S}-vF\big)dvdu,
$$
where $(-K_{X})^3=22$. Then, following \cite{AbbanZhuang,Book}, we let
$$
\delta_p\big(S,W^S_{\bullet,\bullet}\big)=\inf_{\substack{F/S\\p\in C_S(F)}}\frac{A_S(F)}{S\big(W^S_{\bullet,\bullet};F\big)},
$$
where the infimum is taken by all prime divisors over the surface $S$ whose center on $S$ contains $p$.
Then it follows from \cite{AbbanZhuang,Book} that
$$
1\geqslant\delta_{p}\big(X_{\mathbb{C}}\big)\geqslant\min\Bigg\{\frac{1}{S_X(S)},\delta_p\big(S,W^S_{\bullet,\bullet}\big)\Bigg\}.
$$
Therefore, since $S_{X}(S)<1$, we conclude that $\delta_p(S,W^S_{\bullet,\bullet})\leqslant 1$.

If $\sigma(Z_{\mathbb{C}})=\mathbb{P}^1$, then $S$ is a general fiber of the del Pezzo fibration $\sigma$, which implies, in particular, that the surface $S$ is smooth.
In this case, we know from \cite{Cheltsov2024} that  $\delta_p(S,W^S_{\bullet,\bullet})>1$, which is a contradiction.
Thus, we conclude that the surface $S$ is singular, and $\sigma(Z_{\mathbb{C}})$ is a point in $\mathbb{P}^1$. This means that $S$ is the unique fiber of the the del Pezzo fibration that contains the curve $Z_{\mathbb{C}}$.
Hence, since $Z$ is defined over $\Bbbk$, the surface $S$ must also be defined over $\Bbbk$.
Now, using classification of singular quintic del Pezzo surfaces with Du Val singularities \cite{CorayTsfasman1988},
we see that the worst singular point of the surface $S$ is unique unless $S$ has exactly two ordinary double points.
Thus, we conclude that $S$ has exactly two ordinary double points,
because otherwise its worst singular point would be defined over $\Bbbk$, which is impossible as $X(\Bbbk)=\varnothing$.

Recall that the divisor $-K_{S}$ is very ample, so we can identify $S$ with its anticanonical image in $\mathbb{P}^5$.
Then it follows from \cite{CorayTsfasman1988} that $S$ contains a unique line that passes through two singular points of the surface $S$.
Denote this line by $\ell$. Note that the line $\ell$ is also defined over $\Bbbk$.
Moreover, we have $\ell^2=0$ and the linear system $|2\ell|$ is a pencil that is free from base points.
Observe that the pencil $|2\ell|$ contains exactly two singular curves: the curve $2\ell$
and another curve, let us denote it by $C$, that is a union of two lines $\ell_1$ and $\ell_2$ such that the intersection $\ell_1\cap\ell_2$ consists of a single point.
This shows that the singular curve $C=\ell_1+\ell_2$ is defined over $\Bbbk$ and, therefore, the point $\ell_1\cap\ell_2$ is also defined over $\Bbbk$,
which contradicts our assumption $X(\Bbbk)=\varnothing$.
\end{proof}

\begin{lemma}
\label{lemma:pointless-3-7}
Suppose that $X$ is contained in Family \textnumero 3.7 and $X(\Bbbk)=\varnothing$.
Then $X_{\mathbb{C}}$ is K-polystable.
\end{lemma}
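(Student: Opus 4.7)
The plan is to follow the template established in the proofs of Lemmas~\ref{lemma:pointless-3-5} and \ref{lemma:pointless-3-6}. As outlined at the beginning of this section, once we assume $X_{\mathbb{C}}$ is not K-polystable we obtain a geometrically irreducible curve $Z\subset X$, defined over $\Bbbk$, such that $\delta_p(X_{\mathbb{C}})\leqslant 1$ for every point $p\in Z_{\mathbb{C}}$. Our goal will be to contradict the assumption $X(\Bbbk)=\varnothing$ by producing a distinguished subvariety of $X$ of dimension $0$ that is forced to be defined over $\Bbbk$.

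First I would describe the geometry of the geometric model using \cite{MoMu81,MoMu83} and Lemma~\ref{lemma:Prokhorov}. The extremal contractions of $X_{\mathbb{C}}$ in Family \textnumero 3.7 give rise to a del Pezzo fibration $\phi\colon X_{\mathbb{C}}\to\mathbb{P}^1$ together with a conic bundle structure and a divisorial contraction with exceptional divisor~$E$. Since $X_{\mathbb{C}}$ does not belong to Families \textnumero 2.12 or \textnumero 2.21, Lemma~\ref{lemma:Prokhorov} allows us to descend each of these maps to $\Bbbk$. In particular the del Pezzo fibration $\phi$, and hence every fiber over a $\Bbbk$-point of $\mathbb{P}^1$, is defined over $\Bbbk$.

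Next I would exploit the known $\delta$-invariant estimates for the fibers of $\phi$. Applying the Abban--Zhuang method to the flag $p\in F\subset X_{\mathbb{C}}$, where $F$ is the fiber of $\phi$ through $p$, one reduces (as in \cite[Lemma~5.68--5.69]{Book} and its analogues in the proofs of Lemmas~\ref{lemma:pointless-3-5} and \ref{lemma:pointless-3-6}) to the inequality
\[
1\geqslant \delta_p(X_{\mathbb{C}})\geqslant\min\Bigg\{\frac{1}{S_{X}(F)},\delta_p\big(F,W^F_{\bullet,\bullet}\big)\Bigg\}.
\]
Since $S_{X}(F)<1$ by \cite[Theorem~3.17]{Book}, we obtain $\delta_p(F,W^F_{\bullet,\bullet})\leqslant 1$. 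Using the K-stability results for smooth del Pezzo fibers in the relevant family (as in \cite{Book,Cheltsov2024,Denisova2024}) together with a Zariski-decomposition analysis on $E$ modelled on the proof of Lemma~\ref{lemma:pointless-2-16}, I would rule out both the case $\phi(Z_{\mathbb{C}})=\mathbb{P}^1$ (which would force a smooth fiber through $p$) and the case $Z_{\mathbb{C}}\subset E$. Hence $\phi(Z_{\mathbb{C}})$ is a single point, $Z_{\mathbb{C}}$ lies in one fiber $\mathcal{F}$ of $\phi$, and $\mathcal{F}$ is defined over $\Bbbk$ because $Z$ is.

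Finally, the fiber $\mathcal{F}$ is a del Pezzo surface with at worst Du Val singularities: it cannot be a cone since its vertex would be a $\Bbbk$-rational point. Invoking the classification of singular Du Val del Pezzo surfaces \cite{CorayTsfasman1988} together with the $\delta$-computations in \cite{Denisova-DP,Cheltsov2024,Denisova2024}, the inequality $\delta_p(F,W^F_{\bullet,\bullet})\leqslant 1$ will pin $\mathcal{F}$ to one of finitely many singular types, each of which carries either a unique worst singular point or, as in the proof of Lemma~\ref{lemma:pointless-3-6}, a unique singular anticanonical curve whose node must be defined over $\Bbbk$. Either way, we produce a $\Bbbk$-point of $X$, contradicting $X(\Bbbk)=\varnothing$. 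The main obstacle will be the Abban--Zhuang bookkeeping in the second step and obtaining sharp enough estimates across every possible Du Val configuration of $\mathcal{F}$ to force uniqueness of the distinguished point; this is the step that will use the numerical invariants specific to Family \textnumero 3.7.
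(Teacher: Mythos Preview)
Your outline follows the same template as the paper, but several details specific to Family \textnumero 3.7 are off and one of them is the crux of the argument. First, the fibers of $\phi$ are sextic del Pezzo surfaces, not quintic; the singular classification you invoke from Lemma~\ref{lemma:pointless-3-6} is for degree~$5$ and does not transfer. In degree~$6$ the only Galois-safe possibility is the $2\mathbb{A}_1$ surface, and the paper finishes that case not via a ``unique singular anticanonical curve'' but by exhibiting the unique curve $\ell$ with $\ell^2=-\tfrac12$ passing through exactly one of the two nodes, which therefore gives a $\Bbbk$-point. Second, Lemma~\ref{lemma:Prokhorov} is stated only for Picard rank~$2$; for rank~$3$ one uses \cite{Matsuki} to descend the extremal contractions, and the base of $\phi$ over $\Bbbk$ is a conic $C$, a priori pointless.

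The more substantive gap is in the dominant case $\phi(Z)=C$. For a smooth sextic del Pezzo fiber $S$ one has $\delta_p(S)=1$ whenever $p$ lies on a $(-1)$-curve, so the Abban--Zhuang estimate through $S$ only yields $\delta_p(X_{\mathbb{C}})\geqslant 1$ at such points, not a strict inequality. The paper does not deduce a contradiction from the fiber alone; instead it observes that the exceptional divisor satisfies $E\simeq\mathscr{C}\times C$ with $\mathscr{C}$ an \emph{elliptic} curve, so $E\vert_S$ is elliptic and cannot coincide with a rational $(-1)$-curve. This forces $Z_{\mathbb{C}}\subset E_{\mathbb{C}}$ and $\pi(Z)=\mathscr{C}$ (the alternative $\pi(Z)$ a point would already give a $\Bbbk$-point of $W$). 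The subsequent computation on $E$ is then done on a ruled surface over an elliptic curve, not on $\mathbb{P}^1\times\mathbb{P}^1$ or $\mathbb{F}_2$ as in Lemma~\ref{lemma:pointless-2-16}; the paper gets $S_X(E)=\tfrac{3}{8}$ and $S(W^{E}_{\bullet,\bullet};\mathbf{s})=\tfrac{11}{16}<1$, where $\mathbf{s}$ is a section of $E\to C$. Your sketch points at the right ingredients but misses the elliptic nature of $E$, which is precisely what makes both the ``$Z$ is a $(-1)$-curve'' obstruction and the $E$-flag estimate go through.
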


\begin{proof}
It follows from \cite{MoMu81,MoMu83,Matsuki} that there exists the following diagram
$$
\xymatrix@R=1em{
&X\ar@{->}[ld]_{\pi}\ar@{->}[rd]^{\phi}&&\\%
W&&C}
$$
where $W$ is form of the smooth divisor of degree $(1,1)$ in $\mathbb{P}^2\times\mathbb{P}^2$,
$C$ is a smooth conic in $\mathbb{P}^2$ that is defined over $\Bbbk$,
the morphism $\pi$ is the blowup of a smooth elliptic curve $\mathscr{C}$ that is defined over $\Bbbk$,
and $\phi$ is a morphism such that a general fiber of $\phi_{\mathbb{C}}$ is a smooth del Pezzo surface of degree $6$.

Let $E$ be the $\pi$-exceptional surface, let $p$ be a point in $Z_{\mathbb{C}}$, and let $S$ be the fiber of $\phi_{\mathbb{C}}$ that contains $p$.
Then $E\simeq\mathscr{C}\times C$, the surface $S$ has isolated singularities, and $\delta_p(X_{\mathbb{C}})\leqslant 1$.
Moreover, if $S$ is Du Val,
then it follows from the proof of \cite[Lemma 2.1]{CheltsovDenisovaFujita} that
\begin{equation}
\label{equation:3-7a}
\delta_p(X_{\mathbb{C}})\geqslant\left\{\aligned
&\mathrm{min}\Big\{\frac{16}{11},\frac{16}{15}\delta_p(S)\Big\}\ \text{if $p\not\in E_{\mathbb{C}}$}, \\
&\mathrm{min}\Big\{\frac{16}{11},\frac{16\delta_{p}(S)}{\delta_{p}(S)+15}\Big\}\ \text{if $p\in E_{\mathbb{C}}$}.
\endaligned
\right.
\end{equation}
Furthermore, if $S$ is smooth, then it follows from \cite{Denisova-DP} that
\begin{equation}
\label{equation:3-7b}
\delta_p(S)=\left\{\aligned
&1\ \text{if $p$ is contained in a $(-1)$-curve in $S$}, \\
&\frac{6}{5}\ \text{otherwise}.
\endaligned
\right.
\end{equation}

Suppose that $\phi(Z_{\mathbb{C}})$ is a point in $C_{\mathbb{C}}\simeq\mathbb{P}^1$.
Then $Z_{\mathbb{C}}\subset S$, which implies that $C\simeq\mathbb{P}^1$ and $S$ is defined over $\Bbbk$.
If $S$ is smooth, \eqref{equation:3-7a} and \eqref{equation:3-7b} implies that $p\in E_{\mathbb{C}}$ and $p$ is contained in a $(-1)$-curve in $S$.
Keeping in mind that $p$ is any point in $Z_{\mathbb{C}}$, we conclude that $Z_{\mathbb{C}}\subset E_{\mathbb{C}}\vert_{S}$ and $Z_{\mathbb{C}}$ is a $(-1)$-curve in $S$,
which is impossible, since $E_{\mathbb{C}}\vert_{S}$ is a smooth elliptic curve isomorphic to $\mathscr{C}_{\mathbb{C}}$.
Thus, $S$ is singular.

Recall from \cite{Kojima2020} that $S$ can have at most one non-Du Val singular point.
Hence, since $S$ is defined over $\Bbbk$ and $S$ does contain $\Bbbk$-points,
we conclude that $S$ has Du Val singularities, and it has at least $2$ singular points such that non of them is defined over $\Bbbk$.
Now, using \cite[Proposition~8.3]{CorayTsfasman1988}, we see that $S$ has exactly two isolated ordinary double points.
Moreover, it follows from \cite[Proposition~8.3]{CorayTsfasman1988} that $S$ contains a unique smooth geometrically rational curve $\ell$ such that $\ell^2=-\frac{1}{2}$,
and this curve contains exactly one singular point of $S$. This implies that $\ell$ and this singular point are both defined over $\Bbbk$,
which contradicts our assumption $X(\Bbbk)=\varnothing$.
Thus, we see that $\phi(Z_{\mathbb{C}})$ is not a point in $C_{\mathbb{C}}\simeq\mathbb{P}^1$, so $\phi(Z)=C$.

From now one, we assume that $p$ is a general point of the curve $Z_{\mathbb{C}}$.
Then the surface $S$ is smooth.
As above, using \eqref{equation:3-7a} and \eqref{equation:3-7b}, we see that $p\in E_{\mathbb{C}}$, which immediately gives $Z_{\mathbb{C}}\subset E_{\mathbb{C}}$.
Then $\pi(Z)=\mathscr{C}$.
Indeed, if $\pi(Z_{\mathbb{C}})$ is a point in $\mathscr{C}_{\mathbb{C}}$, this point would be defined over $\Bbbk$,
which would imply $W(\Bbbk)\ne\varnothing$, but $W(\Bbbk)=\varnothing$ by Lemma~\ref{lemma:nonempty}.
Hence, we see that $\pi(Z)=\mathscr{C}$. This easily leads to a contradiction.

Indeed, let $H$ be a divisor in $\mathrm{Pic}(W_{\mathbb{C}})$ such that $-K_{W_{\mathbb{C}}}\sim 2H$,
and let $u$ be a~non-negative real number. Then $-K_{X_{\mathbb{C}}}-uE_{\mathbb{C}}=\pi^*(2H)-(1+u)E_{\mathbb{C}}$,
which implies that $-K_{X_{\mathbb{C}}}-uE_{\mathbb{C}}$ is pseudoeffective if and only if  $u\leqslant 1$,
and for every $u\in[0,1]$, the divisor $-K_{X_{\mathbb{C}}}-uE_{\mathbb{C}}$ is nef.
This gives
$$
S_{X_{\mathbb{C}}}(E_{\mathbb{C}})=\frac{1}{24}\int_{0}^{1}\big(-K_{X_{\mathbb{C}}}-uE_{\mathbb{C}}\big)^3du=\frac{1}{24}\int_{0}^{1}6(2u^3-6u+4)du=\frac{3}{8}.
$$
Thus, it follows from \cite[Corollary 1.110]{Book} that $S(W_{\bullet,\bullet}^{E_{\mathbb{C}}}; Z_{\mathbb{C}})\geqslant 1$, where
$$
S\big(W_{\bullet,\bullet}^{E_{\mathbb{C}}}; Z_{\mathbb{C}}\big)=\frac{3}{24}\int_{0}^{1}\int_0^\infty \mathrm{vol}\Big(\big(-K_{X_{\mathbb{C}}}-uE_{\mathbb{C}}\big)\big\vert_{S}-vZ_{\mathbb{C}}\Big)dvdu.
$$
Recall that $E_{\mathbb{C}}\cong\mathscr{C}_{\mathbb{C}}\times C_{\mathbb{C}}\simeq\mathscr{C}_{\mathbb{C}}\times\mathbb{P}^1$.
Let $\mathbf{s}$ be a~fiber of the projection $\phi_{\mathbb{C}}\vert_{E_{\mathbb{C}}}\colon E_{\mathbb{C}}\to C_{\mathbb{C}}$,
and let $\mathbf{f}$ be a~fiber of the projection $\pi_{\mathbb{C}}\vert_{E_{\mathbb{C}}}\colon E_{\mathbb{C}}\to\mathscr{C}_{\mathbb{C}}$.
Then $Z_{\mathbb{C}}\equiv a\mathbf{s}+b\mathbf{f}$
for some non-negative integers $a$ and $b$. Moreover, we have $a\geqslant 1$, because $\pi(Z_{\mathbb{C}})=\mathscr{C}_{\mathbb{C}}$.

$$
1\leqslant S\big(W_{\bullet,\bullet}^{E_{\mathbb{C}}}; Z_{\mathbb{C}}\big)=S\big(W_{\bullet,\bullet}^{E_{\mathbb{C}}}; a\mathbf{s}+b\mathbf{f}\big)\leqslant S\big(W_{\bullet,\bullet}^{E_{\mathbb{C}}}; \mathbf{s}\big),
$$
where
\begin{align*}
S\big(W_{\bullet,\bullet}^{E_{\mathbb{C}}}; a\mathbf{s}+b\mathbf{f}\big)&=\frac{3}{24}\int_{0}^{1}\int_0^\infty \mathrm{vol}\Big(\big(-K_{X_{\mathbb{C}}}-uE_{\mathbb{C}}\big)\big\vert_{S}-v\big(a\mathbf{s}+b\mathbf{f}\big)\Big)dvdu,\\
S\big(W_{\bullet,\bullet}^{E_{\mathbb{C}}}; \mathbf{s}\big)&=\frac{3}{24}\int_{0}^{1}\int_0^\infty \mathrm{vol}\Big(\big(-K_{X_{\mathbb{C}}}-uE_{\mathbb{C}}\big)\big\vert_{S}-v\mathbf{s}\Big)dvdu.
\end{align*}
Hence, we see that $S(W_{\bullet,\bullet}^{E_{\mathbb{C}}}; \mathbf{s})\geqslant 1$.
But $S(W_{\bullet,\bullet}^{E_{\mathbb{C}}}; \mathbf{s})$ is easy to compute.
Namely, if $v\in\mathbb{R}_{\geqslant 0}$, then
$$
\big(-K_{X_{\mathbb{C}}}-uE_{\mathbb{C}}\big)\big\vert_{E_{\mathbb{C}}}-v\mathbf{s}\equiv (1+u-v)\mathbf{s}+6(1-u)\mathbf{f},
$$
which gives
$$
1\leqslant S\left(W_{\bullet,\bullet}^{E_{\mathbb{C}}};\mathbf{s}\right)=\frac{3}{24}\int_{0}^{1}\int_{0}^{1+u}\big((1+u-v)\mathbf{s}+6(1-u)\mathbf{f}\big)^2dvdu=\frac{3}{24} \int_{0}^{1} \int_{0}^{1+u} 12(1-u)(1+u-v)dvdu=\frac{11}{16}.
$$
The latter is absurd.
\end{proof}

\begin{lemma}
\label{lemma:pointless-3-10}
Suppose that $X$ is contained in Family \textnumero 3.10 and $X(\Bbbk)=\varnothing$.
Then $X_{\mathbb{C}}$ is K-polystable.
\end{lemma}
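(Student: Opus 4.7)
I would follow the template of Lemmas~\ref{lemma:pointless-3-5}, \ref{lemma:pointless-3-6} and especially \ref{lemma:pointless-3-7}. By \cite{MoMu81,MoMu83}, the geometric model $X_{\mathbb{C}}$ is obtained as the blowup $\pi_{\mathbb{C}}\colon X_{\mathbb{C}}\to Q$ of a smooth quadric 3-fold $Q\subset\mathbb{P}^4_{\mathbb{C}}$ along a disjoint union of two smooth conics $C_1\sqcup C_2$. The Mori cone of $X_{\mathbb{C}}$ has three extremal rays, giving in addition two further extremal contractions that yield a del Pezzo fibration $\phi\colon X_{\mathbb{C}}\to\mathbb{P}^1$ whose general fiber is a smooth quartic del Pezzo surface (arising from the pencil of hyperplane sections of $Q$ containing $C_1\cup C_2$), together with a Sarkisov link to another smooth weak Fano 3-fold. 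First I would spell out this commutative diagram and the relations among $\pi$, $\phi$ and the exceptional divisor $E=E_1\sqcup E_2$.

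\textbf{Descent and geometry over $\Bbbk$.} Next, using Lemma~\ref{lemma:Prokhorov}, I would descend each of these extremal contractions to $\Bbbk$: the map $\phi$ descends since the corresponding extremal ray is Galois-invariant (being distinguished, for instance, by its discriminant), and the morphism $\pi$ then descends to a blowup $\pi\colon X\to\mathscr{Q}$ of a $\Bbbk$-form $\mathscr{Q}$ of $Q$ along a $\Bbbk$-subscheme whose geometric points form $C_1\sqcup C_2$. A general fiber of $\phi$ (a quartic del Pezzo surface) provides a $\Bbbk$-rational divisor on $\mathscr{Q}$ of odd degree $4$ relative to a hyperplane class, so Lemma~\ref{lemma:SB} (or a direct linear-system argument) forces $\mathscr{Q}$ to be a smooth quadric in $\mathbb{P}^4_{\Bbbk}$. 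By Lemma~\ref{lemma:nonempty}, $\mathscr{Q}(\Bbbk)=\varnothing$.

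\textbf{Producing the problematic curve $Z$.} Assuming $X_{\mathbb{C}}$ is not K-polystable, the strategy box yields a geometrically irreducible curve $Z\subset X$ defined over $\Bbbk$ with $\delta_p(X_{\mathbb{C}})\leqslant 1$ for every $p\in Z_{\mathbb{C}}$. I would then apply the Abban--Zhuang method along two flags. First, for a general smooth fiber $S$ of $\phi$ and a general point $p\in S$, I would compute $S_X(S)$ and invoke either \cite[Lemma~2.1]{CheltsovDenisovaFujita} or \cite[Lemma~2.12]{Book} to conclude $\delta_p(X_{\mathbb{C}})>1$ whenever $p$ lies on a smooth fiber of $\phi$ and $p\notin E$. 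Second, for the flag $p\in E_i\subset X$, I would compute $S_X(E_i)$ by Zariski decomposition and bound $S(W^{E_i}_{\bullet,\bullet};Z)$ as in the last step of Lemma~\ref{lemma:pointless-3-7} to rule out $Z\subset E$. These two estimates together force $\phi(Z)$ to be a single point, so $Z_{\mathbb{C}}\subset S$ for a uniquely determined singular fiber $S$ of $\phi$, which is then automatically defined over $\Bbbk$.

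\textbf{Conclusion via singularity analysis.} The surface $S$ is a normal quartic del Pezzo surface with Du Val singularities and no $\Bbbk$-point (in particular not a cone). By the classification of \cite{CorayTsfasman1988} combined with the local $\delta$-invariant computations of \cite{Denisova-DP} and \cite{Denisova2024}, either the worst geometric singular point of $S$ is unique, forcing it to be a $\Bbbk$-point of $X$, or $S$ has exactly two ordinary double points. In the latter subcase, as in the end of Lemma~\ref{lemma:pointless-3-6}, the pencil $|2\ell|$ cut out by the unique line $\ell$ through both nodes contains a unique singular reducible member $\ell_1+\ell_2$ defined over $\Bbbk$, whose node $\ell_1\cap\ell_2$ is a $\Bbbk$-point. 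Either possibility contradicts $X(\Bbbk)=\varnothing$.

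\textbf{Expected main obstacle.} The sharpest step is the Abban--Zhuang bookkeeping that rules out $Z\subset E$: because $E$ is disconnected, one has to run the Zariski decomposition separately on each component $E_i\simeq\mathbb{P}^1\times\mathbb{P}^1$, track the contribution of the other exceptional component to the negative part as $u$ crosses the nef threshold, and verify numerically that $S(W^{E_i}_{\bullet,\bullet};Z)<1$ for every admissible class $Z\equiv a\mathbf{s}+b\mathbf{f}$. This is analogous to, but slightly more delicate than, the corresponding computation in Lemma~\ref{lemma:pointless-3-7}.
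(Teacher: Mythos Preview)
Your proposal has a genuine geometric gap at the very first step. For Family \textnumero 3.10 the two disjoint conics $C_1,C_2\subset Q_{\mathbb{C}}\subset\mathbb{P}^4$ span complementary planes meeting in a single point (in the paper's coordinates $\{x=y=0\}$ and $\{z=t=0\}$ meet only at $[0{:}0{:}0{:}0{:}1]$), so together they span all of $\mathbb{P}^4$. There is therefore \emph{no} hyperplane containing $C_1\cup C_2$, hence no pencil of such hyperplanes, and the quartic del~Pezzo fibration $\phi$ on which your whole argument is built does not exist. The actual del~Pezzo fibrations on $X_{\mathbb{C}}$ are the two \emph{sextic} ones $\gamma_1,\gamma_2$ coming from the pencils $|H-E_1|$ and $|H-E_2|$, and these are exchanged when Galois swaps $C_1$ and $C_2$; so neither descends to $\Bbbk$ individually, and your descent claim (``being distinguished by its discriminant'') fails too. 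Consequently the later steps---running Abban--Zhuang on fibres of $\phi$, forcing $Z$ into a singular quartic del~Pezzo fibre, and the \cite{CorayTsfasman1988}/\cite{Denisova-DP} endgame---never get off the ground.

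The paper takes an entirely different route, bypassing the Abban--Zhuang template altogether. It uses the \emph{conic bundle} $\eta\colon X\to S$ (with $S$ a $\Bbbk$-form of $\mathbb{P}^1\times\mathbb{P}^1$), which does descend, and invokes the explicit classification from \cite[\S5.17]{Book}: up to coordinates $Q_{\mathbb{C}}$ falls into cases (A), (B), (C), and $X_{\mathbb{C}}$ fails K-polystability only in cases (B) and (C). In each of those two cases the discriminant curve $\mathscr{C}\subset S$ has special components or a unique node, and tracking the corresponding distinguished singular fibres of $\eta$ back to $Q$ produces either a $\Bbbk$-rational singular point of a hyperplane section, or a $\Bbbk$-rational line in $Q$, contradicting $Q(\Bbbk)=\varnothing$. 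So for this family the right invariant structure to analyse is the conic-bundle discriminant, not a del~Pezzo fibration.
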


\begin{proof}
It follows from \cite{Matsuki, MoMu81,MoMu83} that there exists the following diagram
$$
\xymatrix@R=1em{
&X\ar@{->}[ld]_{\pi}\ar@{->}[rd]^{\eta}&&\\%
Q&&S}
$$
where $Q$ is form of a smooth quadric 3-fold,
$S$ is a form of a smooth quadric surface,
$\pi$ is the blowup of a geometrically reducible curve $C$ such that $C_{\mathbb{C}}=C_1+C_2$,
where $C_1$ and $C_2$ are disjoint conics in the quadric 3-fold $Q_{\mathbb{C}}\subset\mathbb{P}^4$,
and $\eta$ is a conic bundle.
Let $\mathscr{C}$ be the discriminant curve of the conic bundle $\eta$.
Then $\mathscr{C}_{\mathbb{C}}$ is a reduced curve in $S_{\mathbb{C}}\simeq\mathbb{P}^1\times\mathbb{P}^1$ that has degree $(2,2)$.

Over $\mathbb{C}$, we can choose coordinates $[x:y:z:t:w]$ on $\mathbb{P}^4$
such that
\begin{align*}
C_1&=\{x=0,y=0,w^2+zt=0\},\\
C_2&=\{z=0,t=0,w^2+xy=0\},
\end{align*}
and one of the following three cases hold:
\begin{equation}
\label{equation:beth}\tag{A}
Q_{\mathbb{C}}=\big\{w^2+xy+zt+a(xt+yz)+b(xz+yt)=0\big\},
\end{equation}
where $(a,b)\in\mathbb{C}^2$ such that $a\pm b\ne \pm 1$, or
\begin{equation}
\label{equation:gimel}\tag{B}
Q_{\mathbb{C}}=\big\{w^2+xy+zt+a(xt+yz)+xz=0\big\},
\end{equation}
where $a\in\mathbb{C}$ such that $a\ne\pm 1$, or
\begin{equation}
\label{equation:daleth}\tag{C}
Q_{\mathbb{C}}=\big\{w^2+xy+zt+xt+xz=0\big\}.
\end{equation}
Moreover, it follows from \cite[\S 5.17]{Book} that $X_{C}$ is K-polystable if and only if we are in case (C).

Over $\mathbb{C}$, we have the following commutative diagram:
$$
\xymatrix{
&&&&X_{\mathbb{C}}\ar@/_1pc/@{->}[dllll]_{\alpha_2}\ar@{->}[dll]_{\gamma_1}\ar@{->}[d]_{\eta_{\mathbb{C}}}\ar@/^1pc/@{->}[drrrr]^{\alpha_1}\ar@{->}[drr]^{\gamma_2}&&&&\\
Y_1\ar@{->}[rr]^{\beta_1}\ar@/_1pc/@{->}[drrrr]_{\pi_1}&&\mathbb{P}^1&&S_{\mathbb{C}}\ar@{->}[ll]_{\mathrm{pr}_1}\ar@{->}[rr]^{\mathrm{pr}_2}&&\mathbb{P}^1&& Y_2\ar@{->}[ll]_{\beta_2}\ar@/^1pc/@{->}[dllll]^{\pi_2}\\%
&&&&Q_{\mathbb{C}}\ar@{-->}[ull]^{\delta_1}\ar@{-->}[urr]_{\delta_2}\ar@{-->}[u]&&&&}
$$
where $\delta_1$ is the map given by $[x:y:z:t:w]\mapsto[x:y]$,
the map $\delta_2$ is given by $[x:y:z:t:w]\mapsto[z:t]$,
the maps $\pi_1$ and $\pi_2$ are blowups of the quadric $Q_{\mathbb{C}}$ along the smooth conics $C_1$ and $C_2$, respectively,
$\alpha_1$~and $\alpha_2$ are blowups of the proper transforms of~these conics, \mbox{respectively},
$\beta_1$ and $\beta_2$~are fibrations into quadric surfaces,
$\gamma_1$ and $\gamma_2$ are fibrations into sextic del Pezzo surfaces,
$\mathrm{pr}_1$ and $\mathrm{pr}_2$ are natural projections of $S_{\mathbb{C}}\simeq\mathbb{P}^1_{x,y}\times\mathbb{P}^1_{z,t}$ to its factors,
and the map $Q_{\mathbb{C}}\dasharrow S_{\mathbb{C}}$ is given by
$$
[x:y:z:t:w]\mapsto([x:y],[z:t]).
$$
Here and below, we identified $S_{\mathbb{C}}=\mathbb{P}^1_{x,y}\times\mathbb{P}^1_{z,t}$ with coordinates $([x:y],[z:t])$.

Over $\mathbb{C}$, the equation of the curve $\mathscr{C}_{\mathbb{C}}$ can be computed as follows.
If we are in case (A), it is given by
$$
a^2\big(x^2t^2+y^2z^2\big)+2ab\big(xyz^2+xyt^2+ztx^2+zty^2\big)+b^2\big(x^2z^2+y^2t^2\big)+2\big(a^2+b^2-2\big)yzxt=0.
$$
If we are in case (B), the curve $\mathscr{C}_{\mathbb{C}}$ is given in $S_{\mathbb{C}}=\mathbb{P}^1_{x,y}\times\mathbb{P}^1_{z,t}$ by the following equation:
$$
a^2t^2x^2+(2a^2-4)xyzt+2atzx^2+a^2y^2z^2+2ayz^2x+z^2x^2=0.
$$
If $a\ne 0$, the curve $\mathscr{C}_{\mathbb{C}}$ is irreducible that has a node at $([0:1],[0:1])$.
If $a=0$, then
$$
\mathscr{C}_{\mathbb{C}}=\{zx(zx-4yt)=0\}\subset\mathbb{P}^1_{x,y}\times\mathbb{P}^1_{z,t},
$$
so $\mathscr{C}_{\mathbb{C}}$ is a~union of curves of degrees $(0,1)$, $(1,0)$, $(1,1)$,
and
$$
\mathrm{Sing}\big(\mathscr{C}_{\mathbb{C}}\big)=\big\{([0:1],[0,1]), ([1:0],[0:1]), ([0:1],[1:0])\big\}.
$$
Finally, if we are in case (C), the curve $\mathscr{C}_{\mathbb{C}}$ is given by $x(t^2x+2txz-4tyz+xz^2)=0$,
so $\mathscr{C}_{\mathbb{C}}$ splits as a~union of a~curve of degree $(0,1)$ and a~smooth curve of degree $(2,1)$.

Now, we are ready to prove the assertion of the lemma.
Suppose the Fano 3-fold $X_{\mathbb{C}}$ is not K-polystable.
Then either we are in case (B), or we are in case (C).
Let us show that $X$ has a $\Bbbk$-point in both cases. We~will do this geometrically.

First, we consider case (C). In this case, we have $\mathscr{C}=L+Z$,
where $L$ and $Z$ are smooth geometrically rational curves in $S$ such that both of them are defined over $\Bbbk$,
$L_{\mathbb{C}}=\{x=0\}\subset S_{\mathbb{C}}\simeq\mathbb{P}^1\times\mathbb{P}^1$ and $Z_{\mathbb{C}}=\{t^2x+2txz-4tyz+xz^2=0\}$.
Set $\mathscr{H}=\pi_*(\eta^*(L))$.
Then $-K_Q\sim 2\mathscr{H}$, and the linear system $|\mathscr{H}|$ gives an embedding $Q\hookrightarrow\mathbb{P}^4$,
which implies that $Q$ is a pointless quadric 3-fold in $\mathbb{P}^4$.
Observe also that the surface $\mathscr{H}_{\mathbb{C}}$ is cut out on $Q_{\mathbb{C}}$ by the hyperplane $\{x=0\}$,
which implies that $\mathscr{H}_{\mathbb{C}}$ is a quadric cone with one singular point.
This shows that $\mathrm{Sing}(\mathscr{H}_{\mathbb{C}})$ is defined over $\Bbbk$ and, in particular, $Q(\Bbbk)\ne\varnothing$,
so that $X(\Bbbk)\ne\varnothing$ by Lemma~\ref{lemma:nonempty}, which contradicts to our assumption.

Thus, we are in case (B). Suppose that $a=0$.
Then $\mathscr{C}$ is reducible. Namely, we have $\mathscr{C}=\Delta+\Delta^\prime$
where $\Delta$ is a geometrically irreducible curve such that $\Delta_{\mathbb{C}}=\{zx-4yt=0\}\subset S_{\mathbb{C}}\simeq\mathbb{P}^1\times\mathbb{P}^1$,
$\Delta^\prime$ is a geometrically reducible curve such that $\Delta^\prime_{\mathbb{C}}=L_1+L_2$ for $L_1=\{z=0\}$ and $L_2=\{x=0\}$.
This implies that $S$ is a quadric surface in $\mathbb{P}^3$,
and the curves $\Delta$ and $\Delta^\prime$ are its hyperplane sections.
Moreover, the conic bundle $\eta_{\mathbb{C}}$ has exactly three non-reduced fibers: the fibers over the singular points of the curve~$\mathscr{S}_{\mathbb{C}}$.
These are the fibers over the points $L_1\cap\Delta$, $L_2\cap\Delta$, $L_1\cap L_2$.
Denote them by $F_1$, $F_2$, $F_3$, respectively.
Then $F_1=2\ell_1$, $F_2=2\ell_2$, $F_3=2\ell_3$, where $\ell_1$, $\ell_2$, $\ell_3$ are irreducible smooth curves.
Moreover, on $Q_{\mathbb{C}}\subset\mathbb{P}^4$, the curves $\pi_{\mathbb{C}}(\ell_1)$, $\pi_{\mathbb{C}}(\ell_2)$, $\pi_{\mathbb{C}}(\ell_3)$ are lines
that can be described as follows:
\begin{align*}
\pi_{\mathbb{C}}(\ell_1)&=\{x=0,t=0,w=0\},\\
\pi_{\mathbb{C}}(\ell_2)&=\{y=0,z=0,w=0\},\\
\pi_{\mathbb{C}}(\ell_3)&=\{x=0,z=0,w=0\}.
\end{align*}
Note that the hyperplane section $\{w=0\}\cap Q_{\mathbb{C}}$ is the unique hyperplane section of $Q_{\mathbb{C}}$ that contains all these three lines.
Thus, this hyperplane section is defined over $\Bbbk$, since the one-cycles $\ell_1+\ell_2$ and $\ell_3$ are defined over $\Bbbk$.
Therefore, as above, we see that $Q$ is a smooth quadric 3-fold in $\mathbb{P}^3$, and this quadric 3-fold that contains a line that is defined over $\Bbbk$ --- the image of the curve $\ell_3$.
This shows that $Q(\Bbbk)\ne\varnothing$, so $X(\Bbbk)\ne\varnothing$ by Lemma~\ref{lemma:nonempty}, which contradicts our assumption.

Hence, we see that $a\ne 0$.
Then $\mathscr{C}_{\mathbb{C}}$ is geometrically irreducible, and it has one singular point.
Moreover, the restrictions $\mathrm{pr}_1\vert_{\mathscr{C}_{\mathbb{C}}}\colon \mathscr{C}_{\mathbb{C}}\to\mathbb{P}^1$
and $\mathrm{pr}_2\vert_{\mathscr{C}_{\mathbb{C}}}\colon \mathscr{C}_{\mathbb{C}}\to\mathbb{P}^1$ are double covers such that each of them is ramified in
two points away from the singular point of the curve $\mathscr{C}_{\mathbb{C}}$.
These these ramification points are
$$
\big([-a:1],[1:0]\big), \big([a-a^3:1],[1-a^2:a]\big), \big([1:0],[-a:1]\big), \big([1-a^2:a],[a-a^3:1]\big).
$$
Note that the union of these four points is a zero-cycle in $\mathscr{C}$ that is defined over $\Bbbk$.
Moreover, one can check that the fibers of the conic bundle $\eta_{\mathbb{C}}$ over these four points are reducible reduced conics,
and the images of their singular points in $Q_{\mathbb{C}}$ via $\pi_{\mathbb{C}}$ are the following four points:
$$
[0:0:1:0:0], [a-a^3:1:a^2-1:-a:0], [1:0:0:0:0], [a^2-1:-a:a-a^3:1:0].
$$
Again, the union of these four points is a zero-cycle in $Q$ that is defined over $\Bbbk$.
One can check that these four points in $\mathbb{P}^4$ are in linearly general position, since $a\ne\pm 1$.
Thus, there exists a unique hyperplane section of the quadric $Q_{\mathbb{C}}$ that contains these four points,
which implies that it is also defined over $\Bbbk$. This implies that $Q$ is a smooth quadric 3-fold in $\mathbb{P}^4$.
Now, we let $F$ be the fiber of the conic bundle $\eta$ over the singular point of the curve $\mathscr{C}$.
Observe that $F$ is defined over $\Bbbk$, and $F=2\ell$, where $\ell$ is a geometrically irreducible curve in $X$ that is also defined over $\Bbbk$.
Then $\pi(\ell)$ is a line in $Q$, which implies that $Q(\Bbbk)\ne\varnothing$, so that $X(\Bbbk)\ne\varnothing$ by Lemma~\ref{lemma:nonempty},
which is a contradiction.
\end{proof}

\begin{lemma}
\label{lemma:pointless-3-12}
Suppose that $X$ is contained in Family \textnumero 3.12 and $X(\Bbbk)=\varnothing$.
Then $X_{\mathbb{C}}$ is K-polystable.
\end{lemma}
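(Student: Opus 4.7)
The plan is to follow the general strategy laid out at the start of this section. Assume for contradiction that $X_{\mathbb{C}}$ is not K-polystable. This yields a geometrically irreducible curve $Z \subset X$ defined over $\Bbbk$ and a prime divisor $\mathbf{F}$ over $X_{\mathbb{C}}$ defined over $\Bbbk$, with $Z$ the center of $\mathbf{F}$ on $X$, such that $\delta_p(X_{\mathbb{C}}) \leqslant 1$ for every $p \in Z_{\mathbb{C}}$. By the Mori--Mukai classification \cite{MoMu81,MoMu83}, $X_{\mathbb{C}}$ is the blowup of $\mathbb{P}^3$ along a disjoint union of a line $L$ and a twisted cubic $C_3$. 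The pencil of quadrics in $\mathbb{P}^3$ containing both $L$ and $C_3$ induces a quadric surface fibration $\phi\colon X_{\mathbb{C}} \to \mathbb{P}^1$, and from the blowup $\pi\colon X_{\mathbb{C}} \to \mathbb{P}^3$ one has a second birational contraction $X_{\mathbb{C}} \to V$ associated to the third extremal ray. Using Lemma~\ref{lemma:Prokhorov}, each of these descends to $\Bbbk$: in particular $X$ admits a quadric fibration $\phi\colon X \to C_2$ with $C_2$ a (possibly pointless) conic over $\Bbbk$, and a $\Bbbk$-birational morphism $\pi\colon X \to U$ with $U$ a $\Bbbk$-form of $\mathbb{P}^3$.

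First I would eliminate the possibility that $Z$ is contained in one of the $\pi$-exceptional divisors. This is analogous to the first half of Lemma~\ref{lemma:pointless-2-16}: we apply Theorem~\ref{theorem:Hamid-Ziquan-Kento-1} with $S$ taken to be the relevant $\pi$-exceptional surface $E_L$ or $E_{C_3}$, compute $S_X(E)$ from the Zariski decomposition of $-K_X - u E$, and bound $S(W^E_{\bullet,\bullet}; Z)$ by specializing $Z$ to a generator of the cone of curves on the ruled surface $E$. Lemma~\ref{lemma:SB} and Lemma~\ref{lemma:plane} applied to $\pi(Z) \subset U$ further restrict the options, since $U \not\simeq \mathbb{P}^3$ (else $X(\Bbbk) \neq \varnothing$ by Lemma~\ref{lemma:nonempty}).

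Next I would fix a general point $p \in Z_{\mathbb{C}}$ and let $S$ be the fiber of $\phi$ through $p$. When $\phi(Z) = \mathbb{P}^1$, the surface $S$ is a smooth quadric, and Abban--Zhuang applied to a flag $p \in A \subset S$ (with $A$ a line on $S$) gives $\delta_p(X_{\mathbb{C}}) > 1$ via Theorems~\ref{theorem:Hamid-Ziquan-Kento} and~\ref{theorem:Hamid-Ziquan-Kento-2}, parallel to the computations in Lemma~\ref{lemma:pointless-2-13} and Lemma~\ref{lemma:pointless-2-16}; this is a contradiction. Therefore $\phi(Z)$ is a single point, and the fiber $\mathcal{F} = \phi^{-1}(\phi(Z))$ containing $Z$ must be defined over $\Bbbk$.

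The final step is to analyze the singular fiber $\mathcal{F}$: over $\mathbb{C}$ it is either a quadric cone or a reducible union of two planes. A quadric cone has a unique vertex, which would be a $\Bbbk$-point of $X$, contradicting $X(\Bbbk) = \varnothing$. A reducible fiber consists of two planes whose intersection is a line; this line is then $\Bbbk$-rational or the unique $\Bbbk$-irreducible component of $\mathrm{Sing}(\mathcal{F})$, and in either configuration one extracts a $\Bbbk$-point (either directly on the line, or by pairing with the exceptional loci of $\pi$), again a contradiction via Lemma~\ref{lemma:nonempty}. The main obstacle will be the explicit Abban--Zhuang lower bound $\delta_p(X_{\mathbb{C}}) > 1$ on a general smooth fiber of $\phi$: this requires the careful Zariski decomposition of $-K_X - uS$ (whose pseudo-effective threshold and negative parts must be computed in terms of $E_L$ and $E_{C_3}$) followed by intricate volume integrals of restrictions to the quartic del Pezzo section $S$, closely modeled on the calculation in the second half of Lemma~\ref{lemma:pointless-2-16}.
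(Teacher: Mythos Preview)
Your proposal contains a genuine geometric error in the setup. You claim that the pencil of quadrics in $\mathbb{P}^3$ containing both the line $L$ and the twisted cubic $C_3$ gives a quadric surface fibration $\phi\colon X_{\mathbb{C}}\to\mathbb{P}^1$. But no such pencil exists: quadrics through a twisted cubic form a net (a $\mathbb{P}^2$), and a general disjoint line imposes three further linear conditions, so generically there is \emph{no} quadric containing both curves, let alone a pencil. The correct fibration on $X_{\mathbb{C}}$ coming from projection away from $L$ is a $\mathbb{P}^2$-bundle $V\to\mathbb{P}^1$ (where $V$ is the blowup of $\mathbb{P}^3$ along $L$), and after blowing up the strict transform of $C_3$ one obtains a fibration in sextic del~Pezzo surfaces, not quadrics. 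Your subsequent analysis of singular fibers (quadric cones, pairs of planes) therefore does not apply, and the Abban--Zhuang estimates you sketch would have to be redone from scratch on del~Pezzo surfaces of degree~$6$, which is a substantially different computation (and you also refer to ``the quartic del~Pezzo section $S$'' at the end, which matches neither picture).

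The paper's proof bypasses all of this by invoking a classification result due to Denisova \cite{Denisova}: $X_{\mathbb{C}}$ fails to be K-polystable if and only if the natural triple cover $\widetilde{C}\to Z$ (where $\widetilde{C}$ is the strict transform of the twisted cubic on the blowup of $U$ along $L$, and $Z$ is the base conic of the $\mathbb{P}^2$-bundle) has a \emph{unique} ramification point of ramification index~$3$. Such a point, being unique and intrinsic, is automatically defined over $\Bbbk$, so $\widetilde{C}(\Bbbk)\ne\varnothing$ and hence $X(\Bbbk)\ne\varnothing$ by Lemma~\ref{lemma:nonempty}. This is the same arithmetic mechanism used in Lemma~\ref{lemma:pointless-4-13}, and it avoids any $\delta$-invariant computation on $X$ itself.
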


\begin{proof}
It follows from \cite{MoMu81,MoMu83,Matsuki} and Lemma~\ref{lemma:nonempty} that there exists
a birational morphism $\pi\colon X\to U$ such that $U$ is a pointless form of $\mathbb{P}^3$,
and $\pi$ is the blowup of two disjoint smooth geometrically irreducible and geometrically rational curves $L$ and $C$ such that $-K_{U}\cdot L=4$ and $-K_{U}\cdot C=12$.
Over $\mathbb{C}$, the curve $L_{\mathbb{C}}$ is a line in $U_{\mathbb{C}}\simeq\mathbb{P}^3$, and $C_{\mathbb{C}}$ is a twisted cubic.
Let $f\colon V\to U$ be the blowup of the curve $L$, and let $\widetilde{C}$ be the strict transform of the curve $C$ on  $V$.
Then there exists a Sarkisov link
$$
\xymatrix@R=1em{
&V\ar@{->}[ld]_{f}\ar@{->}[rd]^{g}&&\\%
U&&Z}
$$
where $Z$ is a conic in $\mathbb{P}^2$, and $g_{\mathbb{C}}$ is a $\mathbb{P}^2$-bundle over $Z_{\mathbb{C}}\simeq\mathbb{P}^1$.
Moreover, the map $g_{\mathbb{C}}$ induces a finite morphism $\omega\colon \widetilde{C}\to Z$ of degree $3$.
Furthermore, it follows from \cite{Denisova} that $X_{\mathbb{C}}$ is not K-polystable if and only if the triple cover $\omega_{\mathbb{C}}\colon \widetilde{C}_{\mathbb{C}}\to Z_{\mathbb{C}}$  has a unique ramification point of ramification index $3$.
Thus, if $X_{\mathbb{C}}$ is not K-polystable, then the set $\widetilde{C}(\Bbbk)$ is not empty --- it contains the ramification point of index $3$ of the finite morphism $\omega$,
so $X(\Bbbk)\ne\varnothing$ by Lemma~\ref{lemma:nonempty}.
So, if $X(\Bbbk)=\varnothing$, then $X_{\mathbb{C}}$ is K-polystable.
\end{proof}

\begin{lemma}
\label{lemma:pointless-3-13}
Suppose that $X$ is contained in Family \textnumero 3.13 and $X(\Bbbk)=\varnothing$.
Then $X_{\mathbb{C}}$ is K-polystable.
\end{lemma}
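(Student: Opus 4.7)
The plan is to follow the general strategy of Section~\ref{section:pointless-3-folds}. Assume $X_{\mathbb{C}}$ is not K-polystable; then, as recalled at the start of the section, there exists a prime divisor $\mathbf{F}$ over $X$ defined over $\Bbbk$ computing $\delta(X_{\mathbb{C}})\leqslant 1$. Its center $Z\subset X$ is neither a surface (by \cite[Theorem~3.17]{Book}) nor a point (since $X(\Bbbk)=\varnothing$), so $Z$ is a geometrically irreducible curve defined over $\Bbbk$.

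First I would recall the Mori--Mukai description of $X_{\mathbb{C}}$ in Family \textnumero 3.13 together with its extremal contractions, and use Lemma~\ref{lemma:Prokhorov} to show they descend to $\Bbbk$. This produces a diagram featuring a birational extremal contraction $\pi\colon X\to V$ (with $V$ a $\Bbbk$-form of a lower-rank Fano variety, $\pi$ blowing up a smooth curve) together with a non-birational contraction $\phi\colon X\to B$ whose general geometric fiber is a smooth del Pezzo surface. By Lemma~\ref{lemma:nonempty}, both $V$ and $B$ are pointless over $\Bbbk$.

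The main step is then to analyse the location of $Z$ relative to $\phi$. If $\phi(Z)$ dominates $B$, then for a general point $p\in Z_{\mathbb{C}}$ the fiber $F$ of $\phi_{\mathbb{C}}$ through $p$ is smooth, and Abban--Zhuang inequalities of the type \cite[Lemma~2.1]{CheltsovDenisovaFujita} (compare Lemmas~\ref{lemma:pointless-2-5}, \ref{lemma:pointless-2-10}, \ref{lemma:pointless-3-5}, \ref{lemma:pointless-3-7}) reduce $\delta_p(X_{\mathbb{C}})\leqslant 1$ to a bound $\delta(F)\leqslant c<1$; since the general geometric fibre is a smooth del Pezzo with $\delta$ above~$c$, this is a contradiction. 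If instead $\phi(Z)$ is a point, then $Z$ lies in the fiber $F$ of $\phi$ through it and so $F$ is defined over $\Bbbk$; $F$ cannot be a cone (its vertex would be a $\Bbbk$-point), so $F$ has Du Val singularities, and the classification of singular del Pezzo surfaces with low $\delta$-invariant \cite{Denisova-cubic,Denisova-DP,CorayTsfasman1988} forces $F$ to have a worst singularity that is either unique or pairs up in a way that produces a $\Bbbk$-point, contradicting $X(\Bbbk)=\varnothing$.

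The hard part will be the residual case where $Z$ lies in the $\pi$-exceptional divisor $E$. Here I would compute $S_X(E)$ and $S(W^{E}_{\bullet,\bullet};Z_{\mathbb{C}})$ via the Zariski decomposition of $-K_X-uE$, imitating the calculations in Lemmas~\ref{lemma:pointless-2-16} and \ref{lemma:pointless-3-7}. Using the natural projection $E_{\mathbb{C}}\to\pi(E_{\mathbb{C}})$ and the observation that $\pi(Z)$ must equal the blown-up curve (otherwise $\pi(Z)$ would be a $\Bbbk$-point of $V$, contradicting Lemma~\ref{lemma:nonempty}), I would bound $S(W^{E}_{\bullet,\bullet};Z_{\mathbb{C}})\leqslant S(W^{E}_{\bullet,\bullet};\mathbf{s})$ for a section class $\mathbf{s}$ on $E_{\mathbb{C}}$. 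An explicit integration is expected to give $S(W^{E}_{\bullet,\bullet};\mathbf{s})<1$, contradicting the inequality forced by \cite[Corollary~1.110]{Book} and completing the proof.
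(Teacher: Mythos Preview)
Your proposal is a generic template that does not match the geometry of Family~\textnumero 3.13, and several of its steps would fail. First, Lemma~\ref{lemma:Prokhorov} is stated only for $\rho(X_{\mathbb{C}})=2$, whereas here $\rho(X_{\mathbb{C}})=3$; more seriously, the three extremal birational contractions of $X_{\mathbb{C}}$ (to smooth $(1,1)$-divisors in $\mathbb{P}^2\times\mathbb{P}^2$) are all of the same type, so the Galois group may permute them and none need descend individually to~$\Bbbk$. Second, there is no del Pezzo fibration $\phi\colon X\to B$ of the kind you postulate: the non-birational contractions obtained by composition are conic bundles over $\mathbb{P}^2$, not del Pezzo fibrations over a curve, so the whole fibrewise $\delta$-invariant comparison you outline (\`a la Lemmas~\ref{lemma:pointless-2-5}, \ref{lemma:pointless-3-7}) never gets off the ground.

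The paper's proof is entirely different and much shorter. It quotes \cite[\S~5.19]{Book}, which shows that the smooth non-K-polystable members of Family~\textnumero 3.13 form a single isomorphism class over $\mathbb{C}$, given by an explicit complete intersection $V\subset\mathbb{P}^2\times\mathbb{P}^2\times\mathbb{P}^2$. It then observes that the three generators $E_{x,y},E_{y,z},E_{x,z}$ of the effective cone of $V$ meet in a single point, and that this point is intrinsically characterised (as the unique triple point of the unique reducible anticanonical-type boundary made of those three generators). Hence every $\Bbbk$-form of $V$ has a $\Bbbk$-point, contradicting $X(\Bbbk)=\varnothing$. No $\delta$-computation or Abban--Zhuang flag argument is used at all for this family.
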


\begin{proof}
Let $V$ be the complete intersection in $\mathbb{P}^2\times\mathbb{P}^2\times\mathbb{P}^2$
that is given by the following system of equations:
$$
\left\{\aligned
&x_1y_1+x_2y_2+x_3y_3=0,\\
&y_1z_1+y_2z_2+y_3z_3=0, \\
&x_1z_2+x_2z_1+x_2z_3-x_3z_2-2x_3z_3=0,
\endaligned
\right.
$$
where $([x_1:x_2:x_3],[y_1:y_2:y_3],[z_1:z_2:z_3])$ are coordinates on the product $\mathbb{P}^2_{x_1,x_2,x_3}\times\mathbb{P}^2_{y_1,y_2,y_3}\times\mathbb{P}^2_{z_1,z_2,z_3}$.
If $X_{\mathbb{C}}$ is not K-polystable, then it follows from \cite[\S~5.19]{Book} that $X$ is a $\Bbbk$-form of $V$.
Let us show that any $\Bbbk$-form of $V$ has a $\Bbbk$-point. Set
\begin{align*}
W_{x,y}&=\{x_1y_1+x_2y_2+x_3y_3=0\}\subset\mathbb{P}^2_{x_1,x_2,x_3}\times\mathbb{P}^2_{y_1,y_2,y_3},\\
W_{y,z}&=\{y_1z_1+y_2z_2+y_3z_3=0\}\subset\mathbb{P}^2_{y_1,y_2,y_3}\times\mathbb{P}^2_{z_1,z_2,z_3},\\
W_{x,z}&=\{x_1z_2+x_2z_1+x_2z_3-x_3z_2-2x_3z_3=0\}\subset\mathbb{P}^2_{x_1,x_2,x_3}\times\mathbb{P}^2_{z_1,z_2,z_3}.
\end{align*}
Then $W_{x,y}$, $W_{y,z}$, $W_{x,z}$ are smooth,
and natural projections of $\mathbb{P}^2_{x_1,x_2,x_3}\times\mathbb{P}^2_{y_1,y_2,y_3}\times\mathbb{P}^2_{z_1,z_2,z_3}$ to its factor
induce birational morphisms $\pi_{x,y}\colon V\to W_{x,y}$, $\pi_{y,z}\colon V\to W_{y,z}$, $\pi_{x,z}\colon V\to W_{x,z}$.
Let $E_{x,y}$, $E_{y,z}$, $E_{x,z}$ be the exceptional surfaces of the morphisms $\pi_{x,y}$, $\pi_{y,z}$, $\pi_{x,z}$, respectively.
Then
\begin{align*}
E_{x,y}&=\{x_1y_3-x_2y_2+2x_3y_2-x_3y_3=0, x_1y_1-x_2y_2-x_3y_1=0, x_2y_1-x_2y_3-2x_3y_1=0\}\cap W_{x,y},\\
E_{y,z}&=\{y_2z_2+2y_2z_3+y_3z_1+y_3z_3=0, y_1z_1+y_1z_3-y_2z_2=0, y_1z_2+2y_1z_3+y_3z_2=0\}\cap W_{y,z},\\
E_{x,z}&=\{x_2z_3-x_3z_2=0, x_1z_2-x_2z_1=0, x_1z_3-x_3z_1=0\}\cap W_{x,z}.
\end{align*}
The divisors $E_{x,y}$, $E_{y,z}$, $E_{x,z}$ generate the cone of effective divisors of the 3-fold $V_{\mathbb{C}}$.
Over $\mathbb{C}$, we have
$$
E_{x,y}\cap E_{y,z}\cap E_{x,z}=\big([1:0:0],[0:1:0],[0:0:1]\big).
$$
This shows that every $\Bbbk$-form of $V$ contains a $\Bbbk$-point ---
the unique singular point of multiplicity $3$ of the unique effective reduced divisor that splits over $\mathbb{C}$ into the sum of three surfaces that generate the cone of effective divisors of the geometric model of the $\Bbbk$-form.
\end{proof}

\begin{lemma}
\label{lemma:pointless-4-13}
Suppose that $X$ is contained in Family \textnumero 4.13 and $X(\Bbbk)=\varnothing$.
Then $X_{\mathbb{C}}$ is K-polystable.
\end{lemma}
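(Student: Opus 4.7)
The plan is to apply the strategy described at the beginning of Section~\ref{section:pointless-3-folds}. Suppose $X_{\mathbb{C}}$ is not K-polystable; the strategy then furnishes a prime divisor $\mathbf{F}$ over $X$ defined over $\Bbbk$ with $A_X(\mathbf{F})/S_X(\mathbf{F}) \leq 1$, whose center $Z$ is a geometrically irreducible curve defined over $\Bbbk$, and such that $\delta_p(X_{\mathbb{C}}) \leq 1$ for every $p \in Z_{\mathbb{C}}$. The aim is to contradict this.

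First I would describe the Mori cone $\overline{\mathrm{NE}}(X_{\mathbb{C}})$ using \cite{MoMu81,MoMu83,Matsuki}, identifying its extremal rays and the associated contractions (a mixture of divisorial contractions to lower-rank Fano 3-folds and fibrations to curves or surfaces). Since the numerical classes of these extremal rays are distinct, $\mathrm{Gal}(\mathbb{C}/\Bbbk)$ cannot permute them, so (an iterated application of) Lemma~\ref{lemma:Prokhorov} descends every extremal contraction to $\Bbbk$. Combining with Lemma~\ref{lemma:nonempty}, the hypothesis $X(\Bbbk) = \varnothing$ forces the target of every descended birational contraction to be pointless, and the geometric fiber over every $\Bbbk$-point in the base of a descended fibration (in particular the fiber containing $Z$, when $Z$ is vertical) to be a pointless $\Bbbk$-form of the generic fiber.

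Next I would constrain $Z$. By \cite[Theorem~3.17]{Book}, $Z$ is not a divisor; by $X(\Bbbk) = \varnothing$ it is not a point. Pushing $Z$ forward along each descended morphism and invoking Lemma~\ref{lemma:nonempty} whenever the image would be a $\Bbbk$-point, a short case analysis in the spirit of Lemmas~\ref{lemma:pointless-3-7} and \ref{lemma:pointless-3-10} restricts $Z_{\mathbb{C}}$ either to the exceptional divisor of a descended blowup or to a single fiber of one of the fibrations. In the latter case the fiber is automatically defined over $\Bbbk$ (and pointless); by the classification of possible Du~Val degenerations and the corresponding $\delta$-invariants, Galois-invariance of the singular locus cuts this down further, as in Lemmas~\ref{lemma:pointless-2-10} and \ref{lemma:pointless-3-5}.

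Finally, for a general $p \in Z_{\mathbb{C}}$ I would apply the Abban--Zhuang flag estimates, Theorems~\ref{theorem:Hamid-Ziquan-Kento}--\ref{theorem:Hamid-Ziquan-Kento-2}, to a flag $p \in C \subset S \subset X_{\mathbb{C}}$, with $S$ chosen as the pullback of a line under one of the descended fibrations (or a smooth fiber of a contraction to $\mathbb{P}^1$) and $C$ either a fiber of an induced conic bundle on $S$ or the curve $Z_{\mathbb{C}}$ itself. The Zariski decomposition of $-K_X - uS$, computed piecewise in $u$, and the subsequent restriction volume computations parallel those in the proofs of Lemmas~\ref{lemma:pointless-2-13} and \ref{lemma:pointless-2-16}. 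The main obstacle will be arranging that $S_X(S) < 1$, $S(W^S_{\bullet,\bullet}; C) < 1$, and $S(W^{S,C}_{\bullet,\bullet,\bullet}; p) < 1$ hold simultaneously for a single good choice of flag, with likely a separate treatment when the fiber through $p$ has a unique worst singular point (which would be forced to be $\Bbbk$-rational and so give the contradiction $X(\Bbbk) \neq \varnothing$ directly). Once the three flag inequalities are established, Theorem~\ref{theorem:Hamid-Ziquan-Kento-2} forces $\delta_p(X_{\mathbb{C}}) > 1$, contradicting the upper bound $\delta_p(X_{\mathbb{C}}) \leq 1$ and completing the proof.
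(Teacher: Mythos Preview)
Your proposal is not wrong in spirit, but it is far more laborious than what the paper does, and it never engages with the specific geometry of Family \textnumero 4.13. The paper's proof bypasses the destabilizing-divisor machinery entirely. Using \cite{MoMu81,MoMu83,Matsuki}, there is a birational morphism $\pi\colon X\to S\times Z$ where $S$ is a $\Bbbk$-form of $\mathbb{P}^1\times\mathbb{P}^1$, $Z$ is a conic, and $\pi$ blows up a smooth geometrically rational curve $C$ with $C_{\mathbb{C}}$ of degree $(1,1,3)$. The key input is that \cite[\S~5.12]{Book} already classifies the non-K-polystable members of this family: $X_{\mathbb{C}}$ is K-polystable unless $C_{\mathbb{C}}$ is projectively equivalent to a single specific curve, and in that exceptional case the natural triple cover $C_{\mathbb{C}}\to Z_{\mathbb{C}}$ has a \emph{unique} ramification point of ramification index $3$. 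Uniqueness makes this point Galois-invariant, hence $C(\Bbbk)\ne\varnothing$, and Lemma~\ref{lemma:nonempty} gives $X(\Bbbk)\ne\varnothing$, a contradiction. This is the same style of argument as in Lemmas~\ref{lemma:pointless-2-24}, \ref{lemma:pointless-3-12}, and \ref{lemma:pointless-3-13}.

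The contrast: your route treats Family \textnumero 4.13 like the ``hard'' families (\textnumero 2.13, \textnumero 2.16, \textnumero 3.7) where no such classification shortcut exists and one is forced into flag computations. Here the classification of non-K-polystable members is already known and pins down a distinguished $\Bbbk$-point directly, so the Abban--Zhuang estimates, the case analysis on where $Z$ sits, and the Zariski decomposition computations are all unnecessary. Your sketch could in principle be carried out, but it would require substantial explicit computation that the paper avoids in one line by citing \cite[\S~5.12]{Book}.
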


\begin{proof}
Using \cite{MoMu81,MoMu83,Matsuki}, we see that there is a~birational morphism $\pi\colon X\to S\times Z$
such that $S$ is a possibly pointless form of $\mathbb{P}^1\times\mathbb{P}^1$, $Z$ is a conic in $\mathbb{P}^2$,
and $\pi$ is the blowup of a smooth geometrically irreducible and geometrically rational curve $C$
such that $C_{\mathbb{C}}$ is a curve of degree $(1,1,3)$ in $S_{\mathbb{C}}\times Z_{\mathbb{C}}\simeq\mathbb{P}^1\times\mathbb{P}^1\times\mathbb{P}^1$.
Over $\mathbb{C}$, one can choose coordinates $([x_0:x_1],[y_0:y_1],[z_0:z_1])$ on $\mathbb{P}^1\times\mathbb{P}^1\times\mathbb{P}^1$ such that
$C_{\mathbb{C}}$ is given by one of the following two equations:
$$
x_0y_1-x_1y_0=x_0^3z_0+x_1^3z_1+\lambda\big(x_0x_1^2z_0+x_0^2x_1z_1\big)=0
$$
for some $\lambda\in\mathbb{C}\setminus\{\pm 1,\pm 3\}$, or
\begin{equation}
\label{equation:4-13-unstable}
x_0y_1-x_1y_0=x_0^3z_0+x_1^3z_1+x_0x_1^2z_0=0.
\end{equation}
Moreover, it follows \cite[\S~5.12]{Book} that $X_{\mathbb{C}}$ is always K-semistable,
and $X_{\mathbb{C}}$ is not K-polystable if and only if the curve $C_{\mathbb{C}}$ can be given by the equation~\eqref{equation:4-13-unstable}.
Note that in this (non-K-polystable) case, the natural triple cover $C_{\mathbb{C}}\to Z_{\mathbb{C}}$ has a unique ramification point with ramification index $3$,
which implies that this point is defined over $\Bbbk$ and, in particular, the set $C(\Bbbk)$ is not empty.
Hence, if $X_{\mathbb{C}}$ is not K-polystable, then $X(\Bbbk)\ne\varnothing$ by Lemma~\ref{lemma:nonempty},
which contradicts our assumption.
\end{proof}

%%%%%%%%%%%%%%%%%%%%%%%%%%%%%%%%%%%%%%%%%%%%%%%%%%%%%%%%%%%%%%
\section{Examples of pointless smooth Fano 3-folds}
\label{section:pointless}

In this section, we provide examples of pointless smooth Fano 3-folds in the families studied in Section\,\ref{section:pointless-3-folds}.

\begin{example}
\label{example:1-9}
It follows from \cite{SV00} that there exists a unique pointless $\mathbb{Q}$-form $V$ of the five-dimensional homogeneous space $G_2/P$
of the exceptional simple algebraic group of type $G_2$ by a maximal parabolic subgroup $P$.
By \cite[Theorem 3.1]{LandsbergManivel}, we can describe $V$ as the subvariety of the Grassmannian $\mathrm{Gr}(2,V_7)$ parametrizing
planes on which the octonionic multiplication is identically zero, where $V_7$ is a seven-dimensional vector space of imaginary octonions.
Note that $V$ is a $G_2$-torsor  defined by the unique non-zero pure Rost symbol $\{-1,-1,-1\}$ in the Milnor K-theory $K^M_3(\mathbb{Q})/2$ modulo $2$.
It follows from \cite{Mukai} that $V_{\mathbb{C}}$ is a smooth Fano 5-fold, and its Picard group $\operatorname{Pic}(V_{\mathbb{C}})$
is generated by a divisor $H$ with $-K_{V_{\mathbb{C}}}\sim 3H$ and $H^5=18$.
We also know that $|H|$ gives an embedding $V_{\mathbb{C}}\hookrightarrow \mathbb{P}^{13}$.
Since the class $H$ is $\operatorname{Gal}(\overline{\mathbb{Q}}\slash\mathbb{Q})$-invariant, we also have an embedding $V\hookrightarrow U$ into a $\Bbbk$-form $U$ of  $\mathbb{P}^{13}$.
Set $D=K_U\vert_V-5K_V$. Then $D$ is defined over $\mathbb{Q}$ and $D_{\mathbb{C}}\sim H_{\mathbb{C}}$,
so $|D|$ gives an embedding of $V$ into $\mathbb{P}^{13}$. Now we take $X=D_1\cap D_2$ where $D_1$ and $D_2$ are general divisors in $|D|$.
Then $X$ is a pointless $\mathbb{Q}$-form of a smooth Fano 3-fold belonging to Family \textnumero 1.9.
\end{example}

%%%%%%%%%%%%%%%%%%%%%%%%%%%%%%%%%%%%%%%%
\begin{example}
\label{example:1-10}
It follows from \cite[Theorem 1.1]{Kollar-Olaf} that there exists a non-empty connected family of smooth prime Fano 3-folds $X$ of degree $22$
defined over $\mathbb{R}$ such that $X_{\mathbb{C}}$ is smooth Fano 3-fold in Family \textnumero 1.10 and $X(\mathbb{R})=\varnothing$.
\end{example}
%%%%%%%%%%%%%%%%%%%%%%%%%%%%%%%%%%%%%%%%
%%%%%%%%%%%%%%%%%%%%%%%%%%%%%%%%%%%%%%
\begin{example}
\label{example:2-5}
Let $V$ be a smooth cubic 3-fold in $\mathbb{P}^4$ defined over $\mathbb{Q}$ without $\mathbb{Q}$-points \cite{Mordell},
let $C$ be an intersection of $V$ with any codimension two linear subspace such that $C$ is smooth,
and let $\pi\colon X\to V$ be the blowup of the curve $C$.
Then $X(\mathbb{Q})=\varnothing$, and $X_{\mathbb{C}}$ is a smooth Fano 3-fold in Family \textnumero 2.5.
To present an explicit example of $V$ and $C$, let
$$
Y=\big\{x_1^3+2x_2^3+4x_3^3+x_1x_2x_3+7(x_4^3+2x_5^3+4x_6^3+x_4x_5x_6)=0\big\}\subset\mathbb{P}^5,
$$
where $x_1,x_2,x_3,x_4,x_5,x_6$ are coordinates on $\mathbb{P}^5$. Then $Y$ is a smooth cubic 4-fold defined over $\mathbb{Q}$,
which does not contain $\mathbb{Q}$-points \cite{DaiXu}.
Indeed, the congruence $x_1^3+2x_2^3+4x_3^3+x_1x_2x_3\equiv 0\ \mathrm{mod}\ 7$ has only trivial solutions,
which easily implies that $Y$ does not have points over $\mathbb{Q}_{7}$, so it does not have $\mathbb{Q}$-points either.
Now, we can let $V=Y\cap\{x_6=0\}$ and $C=V\cap\{x_4=x_5=0\}$.
\end{example}
%%%%%%%%%%%%%%%%%%%%%%%%%%%%%%%%%%%%%%

\begin{example}
\label{example:2-10}
Let $V$ be a pointless smooth complete intersection of two quadrics in $\mathbb{P}^5$ defined over $\mathbb{R}$.
For instance, let
$$
V=\Big\{\sum_{i=1}^6 x_i^2 = \sum_{i=1}^6 a_i x_i^2 =0\Big\}\subset\mathbb{P}^5,
$$
where $a_1,a_2,a_3,a_4,a_5,a_6$ are real numbers such that $a_i\neq a_j$ whenever $i \neq j$,
and  $x_1,x_2,x_3,x_4,x_5,x_6$ are coordinates on $\mathbb{P}^5$.
Now, we take $C$ to be an intersection of $V$ with any codimension two linear subspace such that $C$ is smooth,
e.g.\ $C=\{x_0=x_1=0\}\cap V$, and let $\pi\colon X\to V$ be the blowup of the curve $C$.
Then $X(\mathbb{R})=\varnothing$, and $X_{\mathbb{C}}$ is a smooth Fano 3-fold in Family \textnumero 2.10.
\end{example}
%%%%%%%%%%%%%%%%%%%%%%%%%%%%%%%%%%%%%%%%

%%%%%%%%%%%%%%%%%%%%%%%%%%%%%%%%%%%%%%%%
\begin{example}
\label{example:2-12}
Explicit examples of real pointless smooth Fano 3-folds whose geometric models belong to Family 2.12 have been constructed in \cite{CheltsovLiMauPinardin}.
\end{example}
%%%%%%%%%%%%%%%%%%%%%%%%%%%%%%%%%%%%%%%%

\begin{example}
\label{example:2-13}
Over $\mathbb{R}$, let
$$
C=\{x^6+x^4y^2+x^2y^4+y^6+z^2=0\}\subset\mathbb{P}(1_{x},1_{y},3_{z}),
$$
and let $\phi\colon \mathbb{P}(1_{x},1_{y},3_{z})\to\mathbb{P}^4$ given by $[x:y:z]\mapsto[x^3:x^2y:xy^2:y^3:z]$.
Then $C$ is a smooth pointless real hyperelliptic curve of genus $2$,
and $\phi(C)\simeq C$ is a curve of degree $6$ that is contained in the smooth pointless real quadric 3-fold
$$
Q=\big\{x_1^2+x_2^2+x_3^2+x_4^2+x_5^2=0\big\}\subset\mathbb{P}^4,
$$
where $x_1,x_2,x_3,x_4,x_5$ are projective coordinates on $\mathbb{P}^4$.
Let $\pi\colon X\to Q$ be the blowup of the curve $\phi(C)$.
Then $X_{\mathbb{C}}$ is a smooth Fano 3-fold in Family \textnumero 2.13, and $X(\mathbb{R})=\varnothing$ by Lemma~\ref{lemma:nonempty}.
\end{example}

\begin{example}
\label{example:2-16}
Over $\mathbb{R}$, let
$$
C=\{x_1^2+x_2^2+x_3^2=0,x_5=0,x_6=0,x_6=0\}\subset \mathbb{P}^5,
$$
and let $V$ be the complete intersection of two quadrics in $\mathbb{P}^5$ that is given by
$$
\left\{\aligned
&x_1^2+x_2^2+x_3^2+x_4^2+x_5^2+x_6^2=0, \\
&1983x_1x_4+1973x_2x_5+1967x_3x_6=0,
\endaligned
\right.
$$
where  $x_1,x_2,x_3,x_4,x_5,x_6$ are coordinates on $\mathbb{P}^5$.
Then both $C$ and $V$ are smooth and pointless.
Let $\pi\colon X\to V$ be the blowup of the conic $C$.
Then $X_{\mathbb{C}}$ is a smooth Fano 3-fold in Family \textnumero 2.16, and $X(\mathbb{R})=\varnothing$ by Lemma~\ref{lemma:nonempty}.
\end{example}

%%%%%%%%%%%%%%%%%%%%%%%%%%%%%%%%%%%%%%%%

\begin{example}
\label{example:2-19}
Let $U$ be the unique real form of $\mathbb{P}^3$ that has no real points.
Then $\mathrm{Pic}(U)=\mathbb{Z}[Q]$ with $-K_U\sim 2Q$,
and $U$ contains a twisted line $L$, that is, $L_{\mathbb{C}}$ is a line in $U_{\mathbb{C}}\simeq\mathbb{P}^3$.
Let $S$ be a general surface in $|Q|$ containing $L$.
Then $S_{\mathbb{C}}\simeq\mathbb{P}^1\times\mathbb{P}^1$,
since otherwise $S_{\mathbb{C}}$ would be a quadric cone whose vertex yields an $\mathbb{R}$-point in $U$.
Since $S$ contains $L$, it follows that $S$ is isomorphic to $\mathbb{P}^1\times C$ for some pointless conic $C$.
Now we let $\Gamma$ be a general curve in $|-K_S+L|$,
and let $X\to U$ be the blowup of $U$ along $\Gamma$.
Then $X_{\mathbb{C}}$ is a smooth Fano 3-fold in Family \textnumero 2.19,
and it follows from Lemma~\ref{lemma:nonempty} that $X(\mathbb{R})=\varnothing$, as $U(\mathbb{R})=\varnothing$.
\end{example}
%%%%%%%%%%%%%%%%%%%%%%%%%%%%%%%%%%%%%%%%

%%%%%%%%%%%%%%%%%%%%%%%%%%%%%%%%%%%%%%%%
\begin{example}
\label{example:2-21}
Let $C$ be the conic $\{x^2+y^2+z^2=0\}\subset\mathbb{P}^2$,
where $x$, $y$, $z$ are projective coordinates on $\mathbb{P}^2$.
Then $C$ is smooth and without $\mathbb{R}$-points.
Let $\phi\colon \mathbb{P}^2\to\mathbb{P}^5$ be the second Veronese embedding given by $[x:y:z] \mapsto [x^2:y^2:z^2:xy:xz:yz]$.
Then
$$
\phi(C)=\big\{F_1=F_2=F_3=F_4=F_5=F_6=x_1+x_2+x_3=0\big\}\subset\mathbb{P}^5,
$$
where $F_1=x_1x_2-x_4^2$, $F_2=x_1x_3-x_5^2$, $F_3=x_2x_3-x_6^2$, $F_4=x_1x_6-x_4x_5$, $F_5=x_2x_5-x_4x_6$, $F_6=x_3x_4-x_5x_6$,
and $x_1,x_2,x_3,x_4,x_5,x_6$ are coordinates on $\mathbb{P}^5$.
Let
$$
\widetilde{Q}=\Big\{ \sum_{i=0}^5 x_i^2=0\Big\}=\big\{(x_1+x_2+x_3)^2 -2 (F_1+F_2+F_3)=0\big\}\subset\mathbb{P}^5.
$$
Then $\widetilde{Q}(\mathbb{R})=\varnothing$ and $\phi(C)\subset \widetilde{Q}$.
Let $H=\{x_1+x_2+x_3=0\}$ and $Q=H\cap \widetilde{Q}$.
Then $Q$ is a smooth pointless quadric hypersurface in $H\simeq\mathbb{P}^4$ containing $\phi(C)$.
Thus, blowing up $Q$ along $\phi(C)$, we obtain a pointless smooth Fano 3-fold, over $\mathbb{R}$, whose geometric model is contained in Family \textnumero 2.21.
\end{example}
%%%%%%%%%%%%%%%%%%%%%%%%%%%%%%%%%%%%%%%%

\begin{example}
\label{example:2-24}
Let $S$ be a pointless $\mathbb{Q}$-form of $\mathbb{P}^2$, and let $V=S\times S$. Then it follows from \cite[Chapter~7]{Collio} that $\mathrm{Pic}(V)$ contains a line bundle $L$ such that $L_{\mathbb{C}}$ is a divisor of degree $(1,-1)$ on $V_{\mathbb{C}}\simeq\mathbb{P}^2\times \mathbb{P}^2$. Let $X$ be a general divisor in the linear system $|L+\pi_2^*(-K_S)|$, where $\pi_2\colon V\to S$ is the projection to the second factor. Then $X$ is smooth, and $X_{\mathbb{C}}$ is a smooth Fano 3-fold in Family \textnumero 2.24. By construction, we have $X(\mathbb{Q})=\varnothing$, because $V$ does not have points in $\mathbb{Q}$.
\end{example}

\begin{example}
\label{example:3-2}
Let $C$ be a pointless conic in $\mathbb{P}^2$ over $\mathbb{R}$,
and let $\mathcal{E}$ be the restriction of the tangent bundle of  $\mathbb{P}^2$ to $C$.
Then it follows from \cite{BiswasNagaraj} that $\mathcal{E}$ is an indecomposable vector bundle on $C$ and $\mathcal{E}_{\mathbb{C}}$
splits as $\mathcal{O}_{\mathbb{P}^1}(3)\oplus \mathcal{O}_{\mathbb{P}^1}(3)$ on $C_{\mathbb{C}}\simeq\mathbb{P}^1$. Set
$$
V=\mathbb{P}\big(\mathcal{O}_{C}(-K_C)\oplus \mathcal{O}_{C}(-K_C)\oplus\mathcal{E}\otimes\mathcal{O}_{C}(K_C)\big),
$$
let $\eta\colon V\to C$ be the natural projection, let $M$ be the tautological vector bundle on $V$,
and let $X$ be a general divisor in the linear system $|3M-\pi^*(-K_C)|$. Then $X$ is smooth, and it follows from  \cite[\S~11]{CheltsovPrzyjalkowskiShramov} that $X_{\mathbb{C}}$ is a smooth Fano 3-fold in Family \textnumero 3.2.
We have $X(\mathbb{R})=\varnothing$, since $C(\mathbb{R})=\varnothing$.
\end{example}

%%%%%%%%%%%%%%%%%%%%%%%%%%%%%%%%%%%%%%%
\begin{example}
\label{example:3-5}
Let $C$ be a pointless real conic in $\mathbb{P}^2$,
let $S=C\times C$, let $\Delta$ be the diagonal curve in $S$,
and let $B$ be a general curve in the linear system $|\Delta+\mathrm{pr}_2^*(-2K_C)|$, where $\mathrm{pr}_2\colon S\to C$ is the projection to the second factor. Then $B_{\mathbb{C}}$ is a divisor of degree $(1,5)$ on $S_{\mathbb{C}}\simeq\mathbb{P}^1\times\mathbb{P}^1$.
Now, we identify $S$ with a surface in $C\times \mathbb{P}^2$ via the embedding $C\hookrightarrow\mathbb{P}^2$ of the second factor of $S$ and regard $B$ as a curve in $C\times \mathbb{P}^2$. Let $\pi\colon X\to C\times\mathbb{P}^2$ be the blowup of the curve $B$. Then $X(\mathbb{R})=\varnothing$, and $X_{\mathbb{C}}$ is a smooth Fano 3-fold in Family \textnumero 3.5.
\end{example}

\begin{example}
\label{example:3-6}
In the notations and assumptions of Example~\ref{example:2-19},
let $Q_1$ and $Q_2$ be two general surfaces in $|Q|$, let $C=Q_1\cap Q_2$, and let $\pi\colon X\to U$ be the blowup of the curves $L$ and $C$.
Then $X(\mathbb{R})=\varnothing$, and $X_{\mathbb{C}}$ is a smooth Fano 3-fold in Family \textnumero 3.6.
\end{example}
%%%%%%%%%%%%%%%%%%%%%%%%%%%%%%%%%%%%%%%

%%%%%%%%%%%%%%%%%%%%%%%%%%%%%%%%%%%%%%%
\begin{example}
\label{example:3-7}
Let $S$ be a $\mathbb{Q}$-form of $\mathbb{P}^2$ with no $\mathbb{Q}$-points, and let $S^\prime$ be the pointless $\mathbb{Q}$-form of $\mathbb{P}^2$ whose class in the Brauer group of $\mathbb{Q}$ is the inverse of the class of $S$. Set $V=S\times S^\prime$. Then $\mathrm{Pic}(V)$ contains a divisor $D$ such that $D_{\mathbb{C}}$ is a divisor of degree $(1,1)$ on $V_{\mathbb{C}}\simeq\mathbb{P}^2\times\mathbb{P}^2$. Let $Y_1$, $Y_2$, $Y_3$ be general divisors in $|D|$, set $C=Y_1\cap Y_2\cap Y_3$, and let $\pi\colon X\to Y_1$ be the blowup of the curve $C$.
Then $X(\mathbb{Q})=\varnothing$, and $X_{\mathbb{C}}$ is smooth Fano 3-fold in Family \textnumero 3.7.
\end{example}
%%%%%%%%%%%%%%%%%%%%%%%%%%%%%%%%%%%%%%%

%%%%%%%%%%%%%%%%%%%%%%%%%%%%%%%%%%%%%%%
\begin{example}
\label{example:3-10}
Let $Q$ be a pointless real smooth quadric 3-fold in $\mathbb{P}^4$, let $\Pi_1$ and $\Pi_2$ be general disjoint two-dimensional linear subspaces in $\mathbb{P}^4$, and let $\pi\colon X\to Q$ be the blowup of the conics $Q\cap\Pi_1$ and $Q\cap\Pi_2$. Then $X(\mathbb{R})=\varnothing$, and $X_{\mathbb{C}}$ is a smooth Fano 3-fold in Family \textnumero 3.10.
\end{example}
%%%%%%%%%%%%%%%%%%%%%%%%%%%%%%%%%%%%%%%

%%%%%%%%%%%%%%%%%%%%%%%%%%%%%%%%%%%%%%%
\begin{example}
\label{example:3-12}
In the notations and assumptions of Example~\ref{example:2-19},
let $S_1$ and $S_2$ be two general surfaces in $|Q|$ that contain the twisted line $L$. Then it follows from Example~\ref{example:2-19} that $Q_1\cdot Q_2=L+C$
where $C$ is a smooth geometrically rational curve such that $C_{\mathbb{C}}$ is a twisted cubic curve in $U_{\mathbb{C}}\simeq\mathbb{P}^3$. Let $L^\prime$ be a twisted line in $U$ such that $L^\prime_{\mathbb{C}}\cap C_{\mathbb{C}}=\varnothing$, and let $\pi\colon X\to U$ be the blowup of the curves $L^\prime$ and $C$.
Then, by construction, $X(\mathbb{R})=\varnothing$, and $X_{\mathbb{C}}$ is a smooth Fano 3-fold in Family \textnumero 3.12.
\end{example}
%%%%%%%%%%%%%%%%%%%%%%%%%%%%%%%%%%%%%%%

%%%%%%%%%%%%%%%%%%%%%%%%%%%%%%%%%%%%%%%
\begin{example}
\label{example:3-13}
Let $Q$ be the real smooth pointless quadric in $\mathbb{P}^4$ given by
$$
x^2+y^2+z^2+t^2+w^2=0,
$$
where $x,y,z,t,w$ are coordinates on $\mathbb{P}^4$.
Let $S$ be the hyperplane section of $Q$ that is cut out by $w=0$.
Then $S_{\mathbb{C}}$ contains conjugated lines $L_1=\{w=0,x=iy,z=it\}$ and
$L_2=\{w=0,x=-iy,z=-it\}$, and the curve $L_1+L_2$ is defined over $\mathbb{R}$.
Let $\alpha\colon\widetilde{Q}\to Q$ be the blowup of the curve $L_1+L_2$.
Then we have the following commutative diagram:
$$
\xymatrix@R=1em{
&\widetilde{Q}\ar@{->}[ld]_{\alpha}\ar@{->}[rd]^{\beta}&&\\%
Q\ar@{-->}[rr]_{\chi}&& W}
$$
where $W$ is a smooth Fano 3-fold with $W_{\mathbb{C}}$ a divisor of degree $(1,1)$ in $\mathbb{P}^2\times\mathbb{P}^2$,
$\beta$ is a birational morphism that contracts the strict transform of the surface $S$ on the threefold $\widetilde{Q}$ to a smooth curve in $W$,
and $\chi$ is a birational map.
Now, let $C_2$ be the conic in $Q$ that is cut out by the plane $\{x+t=0,y+z=0\}$.
The conic $C_2$ is disjoint from the curve $L_1+L_2$. Set $C=\chi(C_2)$, which is a smooth curve in $W$, where $\mathrm{pr}_1(C_{\mathbb{C}})$ and $\mathrm{pr}_2(C_{\mathbb{C}})$ are conics in $\mathbb{P}^2$,
and the induced morphisms $C_{\mathbb{C}}\to\mathbb{P}^2$ gives isomorphisms $C_{\mathbb{C}}\simeq\mathrm{pr}_1(C_{\mathbb{C}})$
and $C_{\mathbb{C}}\simeq\mathrm{pr}_2(C_{\mathbb{C}})$, where $\mathrm{pr}_1\colon W_{\mathbb{C}}\to\mathbb{P}^2$ and
$\mathrm{pr}_2\colon W_{\mathbb{C}}\to\mathbb{P}^2$ are projections to the first and the second factors of $\mathbb{P}^2\times\mathbb{P}^2$, respectively.
Thus, if we blowup $W$ along $C$, we obtain a pointless 3-fold over $\mathbb{R}$, whose geometric model is a smooth Fano 3-fold in Family \textnumero 3.13.
\end{example}
%%%%%%%%%%%%%%%%%%%%%%%%%%%%%%%%%%%%%%%

%%%%%%%%%%%%%%%%%%%%%%%%%%%%%%%%%%%%%%%
\begin{example}
\label{example:4-13}
Let $Q=C\times C$, where $C$ is a pointless conic defined over $\mathbb{R}$ and set $V=Q\times C$. Consider a divisor $Z\subset Q$ such that $Z_\mathbb{C}$ has degree $(1,1)$ on $Q_\mathbb{C}\cong \mathbb{P}^1\times\mathbb{P}^1$.
Let $S=\mathrm{pr}_1^*(Z)\subset V$, where $\mathrm{pr}_1\colon V \to Q$ is the projection onto the first factor, so that $S\cong Z\times C$. Observe that $S_\mathbb{C}\cong Z_\mathbb{C}\times C_\mathbb{C}\cong \mathbb{P}^1\times\mathbb{P}^1$. As both $Z$ and $C$ are pointless, and $\mathbb{P}^1$ has only one nontrivial form, we conclude that $Z\cong C$. Once again, let $D\subset S$ be a divisor so that $D_\mathbb{C}$ has degree $(1,1)$ on $S_\mathbb{C}\cong \mathbb{P}^1\times\mathbb{P}^1$. Let $B$ be a general curve in $|D+\pi_1^*(-K_Z)|$ where $\pi_1\colon S \to Z$ is the natural projection. Note that $B_\mathbb{C}$ is a divisor on $S_\mathbb{C}\cong \mathbb{P}^1\times\mathbb{P}^1$ of degree $(3,1)$, hence $B_\mathbb{C}$ is a curve on $V_\mathbb{C}$ of degree $(1,1,3)$. Let $\pi\colon X \to V$ be the blowup of $V$ along $B$. Then we obtain a pointless 3-fold over $\mathbb{R}$, whose geometric model is a smooth Fano 3-fold in Family \textnumero 4.13.
\end{example}
%%%%%%%%%%%%%%%%%%%%%%%%%%%%%%%%%%%%%%%

\medskip
\noindent
\textbf{Acknowledgements.} We thank J\'er\'emy Blanc, Jean-Louis Colliot-Th\'el\`ene, Adrien Dubouloz,
Kento Fujita, Laurent Manivel, Alexander Merkurjev, Alena Pirutka, Evgeny Shinder, Ronan Terpereau, Yuri Tschinkel, Alexander Vishik, and Ziquan Zhuang for fruitful discussions.
The bulk of this paper was worked out during a two-week CIRM RIR program in Luminy, and a two months stay of the second author in Saitama University.
We thank the staff in both institutions for facilitating high quality working environments for research.
Ivan Cheltsov has been supported by JSPS Invitational Fellowships for Research in Japan (S24067), EPSRC grant EP/Y033485/1,
and Simons Collaboration grant \emph{Moduli of Varieties}. Hamid Abban has been supported by EPSRC grant EP/Y033450/1 and a Royal Society International Collaboration Award ICA$\backslash$1$\backslash$231019. Takashi Kishimoto has been supported by JSPS KAKENHI Grant Number 23K03047.

\end{document}